\documentclass[]{article}

\usepackage{amsmath,amsthm}
\usepackage{amssymb}
\usepackage[letterpaper]{geometry}
\usepackage{comment}
\usepackage{amsmath,amsthm,amsfonts,amssymb,bm,mathtools,mathrsfs}
\usepackage{enumitem}
\usepackage{tikz-cd}
\usepackage{float}
\usetikzlibrary{patterns,automata,arrows,shapes,snakes,topaths,trees,backgrounds,positioning,through,calc}
\usepackage{quiver}
\usepackage[hidelinks]{hyperref}
\usepackage{cleveref,caption}
\usepackage{multirow}
\title{Fundamental Logic Through the Lens of Modality}
\author{Wesley H. Holliday$^\dagger$ and Guillaume Massas$^\ddagger$ \\ {\small $\dagger$ University of California, Berkeley and $\ddagger$ Chapman University}}
\date{July 22, 2026}

\input{fitch.sty}

\usepackage{natbib}
\setcitestyle{round}

\theoremstyle{definition}
\newtheorem{proposition}{Proposition}[section]
\newtheorem{lemma}[proposition]{Lemma}

\newtheorem{theorem}[proposition]{Theorem}
\newtheorem{corollary}[proposition]{Corollary}

\newtheorem{definition}[proposition]{Definition}
\newtheorem{fact}[proposition]{Fact}
\newtheorem{example}[proposition]{Example}
\newtheorem{remark}[proposition]{Remark}
\newtheorem{question}{Question}
\crefname{chapter}{Chapter}{Chapters}
\crefname{lemma}{Lemma}{Lemmas}
\crefname{theorem}{Theorem}{Theorems}
\crefname{definition}{Definition}{Definitions}
\crefname{proposition}{Proposition}{Propositions}
\crefname{notation}{Notation}{}
\crefname{corollary}{Corollary}{Corollaries}
\crefname{section}{Section}{Sections}
\crefname{remark}{Remark}{Remarks}

\usepackage{subfigure}

\newcommand{\op}{\mathrel{\mathchoice{\hbox{$\triangleleft \mkern-8.3mu\mid$}}
{\hbox{$\triangleleft \mkern-8.3mu\mid$}}
{\hbox{$\scriptstyle{\triangleleft \mkern-3.6mu\mid}$}}
{\hbox{$\scriptscriptstyle{\triangleleft \mkern-4.1mu\mid}$}}}}
\newcommand{\po}{\mathrel{\mathchoice{\hbox{$\mid\mkern-8.3mu\triangleright$}}
{\hbox{$\mid\mkern-8.3mu\triangleright$}}
{\hbox{$\scriptstyle{\mid\mkern-3.6mu\triangleright}$}}
{\hbox{$\scriptscriptstyle{\mid\mkern-4.1mu\triangleright}$}}}}
\newcommand{\opo}{\mathrel{\mathchoice{\hbox{$\triangleleft \mkern-8.3mu \mid  \mkern-8.3mu\triangleright$}}
{\hbox{$\triangleleft \mkern-8.3mu \mid  \mkern-8.3mu\triangleright$}}
{\hbox{$\scriptstyle{\triangleleft \mkern-3.6mu \mid  \mkern-3.6mu\triangleright}$}}
{\hbox{$\scriptscriptstyle{\triangleleft \mkern-4.1mu \mid  \mkern-4.1mu\triangleright}$}}}}

\makeatletter
\newcommand*{\shortdashleftarrow}{%
  \mathrel{\mathchar"0\hexnumber@\symAMSa4C\dabar@}}
\newcommand*{\shortdashrightarrow}{%
  \mathrel{\dabar@\mathchar"0\hexnumber@\symAMSa4B}}
\makeatother

\newcommand{\sset}{\subseteq}

\newcommand{\Po}{\mathscr{P}}
\newcommand{\me}{\land}
\newcommand{\bigme}{\bigwedge}
\newcommand{\jo}{\lor}
\newcommand{\bigjo}{\bigvee}

\newcommand{\mb}{\mathbf}
\renewcommand{\phi}{\varphi}

\newcommand{\upset}[1]{\mathord{\uparrow \! #1}}
\newcommand{\dnset}[1]{\mathord{\downarrow \! #1}}

\begin{document}

\maketitle

\begin{abstract} Fundamental logic is a non-classical logic based only on the introduction and elimination rules for conjunction, disjunction, negation, and the quantifiers in a Fitch-style natural deduction system. In this paper, we attempt to obtain a better understanding of fundamental logic and its semantics through the lens of modality. Using modal logic, we develop means of mutual understanding between the fundamental logician, on the one hand, and the orthologician and intuitionistic logician, on the other: we prove that the G\"{o}del-McKinsey-Tarski (GMT) translation of intuitionistic logic into the classical modal logic $\mathsf{S4}$ is a full and faithful embedding of fundamental logic into the \textit{orthological} version of $\mathsf{S4}$; that the Goldblatt translation of orthologic into the classical modal logic $\mathsf{KTB}$ is a full and faithful embedding of fundamental logic into an \textit{intuitionistic} version of $\mathsf{KTB}$; and that the GMT translation is a full and faithful embedding of intuitionistic logic into a modal extension of fundamental logic.
\end{abstract}

\setcounter{tocdepth}{2}
\tableofcontents

\section{Introduction}

There is a rich tradition in logic of gaining insight into non-classical logics by translating them into classical modal logics. Perhaps the most famous example is G\"{o}del's \citeyearpar{Godel1933b} translation of intuitionistic logic into the classical modal logic $\mathsf{S4}$, which McKinsey and Tarski \citeyearpar{McKinsey1948} proved to be full and faithful. Another well-known example is Goldblatt's \citeyearpar{Goldblatt1974} translation of orthologic into the classical modal logic $\mathsf{KTB}$. In this paper, we attempt to use the lens of modality to gain insight into another non-classical logic, namely \textit{fundamental logic} (\citealt{Holliday2023}). The twist is that we will do so by translating fundamental logic into modal logics based on other, more familiar, \textit{non-classical} logics---intuitionistic logic and orthologic.

As we will review below, fundamental logic is a non-classical logic based only on the introduction and elimination rules for conjunction, disjunction, negation, and the quantifiers in a Fitch-style natural deduction system.  Adding to fundamental logic the rule of Reductio Ad Absurdum yields a proof system for orthologic, while adding the rule that Fitch called Reiteration yields a proof system for the $\to$-free fragment of intuitionistic logic. Deductively, fundamental logic stands to orthologic as intuitionistic logic stands to classical logic: the G\"{o}del-Gentzen double negation translation of classical logic into intuitionistic logic (\citealt{Godel1933b}, \citealt{Gentzen1936}) is a full and faithful translation of orthologic into fundamental logic. Semantically, fundamental logic can be characterized by a relational semantics, using a relation of ``openness'' between partial states; adding extra constraints on openness yields semantics for orthologic or intuitionistic logic. 

In this paper, we develop means of mutual understanding between the fundamental logician, on the one hand, and the orthologician and intuitionistic logician, on the other, with the following main results. 

\begin{theorem}\label{FirstThm} The G\"{o}del-McKinsey-Tarski translation of intuitionistic logic into classical $\mathsf{S4}$ is also a full and faithful translation of fundamental logic into \emph{ortho}-$\mathsf{S4}$, i.e., $\mathsf{S4}$ on an orthological base.
\end{theorem}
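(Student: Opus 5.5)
We prove the two directions separately, working semantically. Soundness (faithfulness in the "if" direction) is proof-theoretic; the converse uses the relational semantics of fundamental logic together with a representation of ortho-$\mathsf{S4}$ models. Throughout, the GMT translation $t$ is
\[ t(p)=\Box p,\quad t(\neg\varphi)=\Box\neg t(\varphi),\quad t(\varphi\wedge\psi)=t(\varphi)\wedge t(\psi),\quad t(\varphi\vee\psi)=\Box(t(\varphi)\vee t(\psi)). \]
The claim to establish is that $\varphi\vdash_{\mathsf{F}}\psi$ if and only if $t(\varphi)\vdash_{\mathsf{OS4}} t(\psi)$. Here $\mathsf{OS4}$ is orthologic plus a $\Box$ satisfying the $\mathsf{S4}$ principles: monotonicity, $\Box(a\wedge b)=\Box a\wedge\Box b$, $\Box\top=\top$, $\Box a\le a$, and $\Box a\le\Box\Box a$.

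\textbf{Left to right.} We induct on fundamental-logic derivations, or equivalently on a sequent/consequence-pair presentation of fundamental logic. First we check that every translated formula is $\Box$-fixed: $t(\varphi)\dashv\vdash\Box t(\varphi)$. This holds by construction and uses T and 4. Each rule then needs a matching derivable step in $\mathsf{OS4}$.
\begin{itemize}
\item The $\wedge$ rules are immediate.
\item $\vee$-introduction uses $a\le a\vee b$ and then $\Box$-monotonicity, since $a=\Box a$.
\item $\vee$-elimination: from $t(\varphi)\le\Box c$ and $t(\psi)\le\Box c$ we get $t(\varphi)\vee t(\psi)\le \Box c$ by orthologic, hence $\Box(t\varphi\vee t\psi)\le \Box c$ by T.
\item For negation, the $\neg$-introduction rule of fundamental logic (from $\varphi\vdash\psi$ and $\varphi\vdash\neg\psi$, conclude $\neg\varphi$), in the Fitch setting without reiteration of side assumptions, corresponds to the orthological principle that $a\le b$ implies $\neg b\le\neg a$. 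Under translation this becomes: if $a\le b$ then $\Box\neg b\le\Box\neg a$, which is monotonicity.
\item $\neg$-elimination, $\varphi\wedge\neg\varphi\vdash\bot$, follows from T.
\end{itemize}
The main care here is matching the exact rules of fundamental logic with their restricted contexts. We will rely on the paper's earlier algebraic or consequence-pair characterization of $\vdash_{\mathsf{F}}$.

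\textbf{Right to left.} This is the main obstacle. Suppose $\varphi\nvdash_{\mathsf{F}}\psi$. By completeness of fundamental logic for its relational semantics, there is a frame $(S,\sqsubseteq,\between)$ of partial states with an openness relation, and a valuation, such that $\varphi$ is true at some state where $\psi$ fails.

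From this frame we build an ortho-$\mathsf{S4}$ model. Take the ortholattice of $\perp$-regular sets for the symmetric reflexive relation of \emph{compatibility}, $x\perp y$ iff $\exists z\sqsupseteq x$ with $z\between y$ (or whichever symmetric closure the paper uses). Define $\Box A=\{x: \forall x'\sqsupseteq x,\ x'\in A\}$, the interior of $A$ with respect to the refinement preorder.

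We then need to check three things, and the most delicate verification is that $\Box$ preserves the orthoclosed sets, so that this is a modal ortholattice.
\begin{enumerate}
\item $\Box$ maps orthoclosed sets to orthoclosed sets and satisfies the $\mathsf{S4}$ laws. The $\mathsf{S4}$ laws come from reflexivity and transitivity of $\sqsubseteq$. For closure we expect to need the frame condition linking $\between$ and $\sqsubseteq$.
\item $\Box$-fixed orthoclosed sets are exactly the fundamental propositions of the frame, meaning the sets that are upsets and closed under the openness-induced closure.
\item On those, $\Box\neg$ and $\Box(\cdot\vee\cdot)$ compute fundamental negation and disjunction.
\end{enumerate}

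Given these, induction yields $[\![t(\chi)]\!]=[\![\chi]\!]_{\mathsf{F}}$, which transfers the countermodel. If the direct frame construction fails, the fallback is algebraic. Fundamental lattices (bounded lattices with a weak pseudocomplementation) embed into $\Box$-fixpoints of a modal ortholattice via a canonical extension or filter-ideal representation. We would then verify $\Box\neg$ and $\Box\vee$ on the fixpoints as above, and conclude by algebraic completeness of $\mathsf{OS4}$.
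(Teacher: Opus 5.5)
Your faithfulness direction is fine in outline, with two corrections. First, the boxed disjunction clause is harmless: in any $\mathbf{OS4}$-lattice $\Box a\vee\Box b\leq\Box(\Box a\vee\Box b)$, so $\Box$-fixpoints are closed under joins. Second, fundamental $\neg$I is not the rule you state. It is dual self-adjointness ($a\leq\neg b$ implies $b\leq\neg a$, with $\psi$ established \emph{outside} the subproof). This is more than antitonicity, since it also gives $a\leq\neg\neg a$. Its translation still goes through: from $t(\varphi)\leq\Box\neg t(\psi)\leq\neg t(\psi)$ we get $t(\psi)\leq\neg t(\varphi)$, hence $t(\psi)=\Box t(\psi)\leq\Box\neg t(\varphi)$.

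The genuine gap is the fullness direction. There all the work is deferred to three claims you only expect to hold, and with your ingredients they are false. In the paper's terms your compatibility relation reads: $x\perp y$ iff some $z\leq_{\op}x$ has $z\op y$. This relation is in fact symmetric, by pseudo-symmetry. But for any $A$ closed under prerefinement, $\neg_{\perp}A=\neg_{\op}A$. So a fundamental proposition is $\perp$-regular iff $\neg_{\op}\neg_{\op}A=A$, which fails whenever the dual algebra is not an ortholattice. Concretely, take $X=\{x,y\}$ with $\op$ reflexive plus $y\op x$, whose dual algebra is the three-element chain. Here $\perp$ is total, so the proposition $\{y\}$ is not $\perp$-regular, refuting your claim 2. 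By contrast, with the symmetric kernel (here the identity), the $\Box_{\leq_{\op}}$-fixpoints of $\wp(X)$ are exactly $\emptyset,\{y\},X$.

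The right compatibility relation is the symmetric \emph{kernel} $\opo$, with prerefinement $\leq_{\op}$ as the $\mathsf{S4}$ relation. The idea is that openness should \emph{factor} as compatibility followed by refinement: $y\op x$ iff $y\opo z\leq_{\op}x$ for some $z$. Building compatibility from refinement followed by openness, as you do, gets this backwards.

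Even the kernel construction fails on an arbitrary countermodel frame. Take $X=\{x,y,w\}$ with $\op$ reflexive plus $x\op y$, $y\op w$, $w\op y$; this frame is prefactoring but not postfactoring. The proposition $\{w\}$ is not $\opo$-regular, since its double $\opo$-complement is $\{y,w\}$. Moreover $\Box_{\leq_{\op}}$ sends the $\opo$-regular set $\{y,w\}$ to $\{w\}$. So your claims 1 and 2 both fail here.

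The missing idea is \emph{balancedness}:
\begin{itemize}
\item prefactoring: $y\op x$ implies $y\opo z\leq_{\op}x$ for some $z$;
\item postfactoring: $x\op y$ implies $y\opo z\leq_{\po}x$ for some $z$.
\end{itemize}
You also need to choose a countermodel frame that is guaranteed to be balanced. The paper uses the reflexive canonical frame of a fundamental lattice refuting $\varphi\vdash\psi$. The witnesses are $(\{1\},y_I)$ for postfactoring and $(x_F,I(x_F))$ for prefactoring, where $I(x_F)$ is the ideal generated by $\{\neg a\mid a\in x_F\}$.

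Balancedness then does the following work:
\begin{itemize}
\item it makes $(X,\opo,\leq_{\op})$ an $\mathbf{OS4}$-frame;
\item postfactoring gives $\chi_{\op}(X)\subseteq\chi_{\opo}(X)$;
\item prefactoring gives $\Box_{\leq_{\op}}\neg_{\opo}A=\neg_{\op}A$;
\item since $\Box_{\leq_{\op}}$ is right adjoint to the inclusion $\iota\colon\chi_{\op}(X)\to\chi_{\opo}(X)$, joins agree and the range of $\Box_{\leq_{\op}}$ is exactly $\chi_{\op}(X)$.
\end{itemize}
Your algebraic fallback only names this embedding theorem as the goal and supplies none of these steps, so the hard direction remains unproved.
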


\begin{theorem}\label{SecondThm} The Goldblatt translation of orthologic into classical $\mathsf{KTB}$ is also a full and faithful translation of fundamental logic into \emph{intuitionistic} $\mathsf{KTB}$, defined in Fischer Servi's \citeyearpar{FischerServi1984} style of intuitionistic modal~logic.
\end{theorem}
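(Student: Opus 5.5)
I would argue semantically. Soundness and completeness theorems relate fundamental logic to its relational semantics and intuitionistic $\mathsf{KTB}$ to birelational frames, and I would show that the two classes of models mutually simulate each other along the Goldblatt translation.

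Recall the setup. The Goldblatt translation $g$ sends $p$ to $\Box\Diamond p$, commutes with $\wedge$, sets $g(\neg\varphi)=\Box\neg g(\varphi)$, and (in the disjunction-friendly version) sets $g(\varphi\vee\psi)=\Box\Diamond(g(\varphi)\vee g(\psi))$. A fundamental frame is a set $X$ with an openness relation $\sqsubseteq$ (reflexive, and satisfying the fundamental-frame conditions). On such a frame, propositions are the fixpoints $c_\sqsubseteq$ of the associated closure operator, and negation is $\neg A=\{x : \forall y\sqsubseteq x\; y\notin A\}$.

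\textbf{Faithfulness.} Suppose $\Gamma\vdash_{\mathsf{F}}\varphi$. I will show $g[\Gamma]\vdash g(\varphi)$ in intuitionistic $\mathsf{KTB}$ (Fischer Servi style) by induction on Fitch derivations.
\begin{enumerate}
\item For each rule, the translated instance is derivable. The $\wedge$ rules are immediate.
\item The $\vee$-introduction rules use $\chi\to\Box\Diamond\chi$, which is available from T and B.
\item $\vee$-elimination needs the conclusion to be ``stable'', i.e.\ every $g(\chi)$ satisfies $\Box\Diamond g(\chi)\to g(\chi)$. I prove this first, by induction on $\chi$, using the IK/FS interaction axioms.
\item The negation rules need $g(\varphi)\wedge\Box\neg g(\varphi)\vdash\bot$ (by T) and a $\neg$-introduction whose subproof, once translated, yields $\Box\neg$. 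Here the Fitch restriction on reiteration is essential. In fundamental logic, a $\neg$-intro subproof may only use reiterated premises that are themselves negations or otherwise ``boxed''. Their translations begin with $\Box$ and are therefore reiterable into a $\Box$-context via the necessitation/K pattern.
\end{enumerate}
Since $\Box\neg\psi$ is $\Box(\psi\to\bot)$, the translated negation introduction is exactly intuitionistic necessitation applied to a derivation from boxed hypotheses. Alternatively, I would just check soundness semantically, which may be cleaner.

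\textbf{Fullness.} Suppose $\Gamma\nvdash_{\mathsf{F}}\varphi$. By completeness, take a fundamental model $(X,\sqsubseteq,V)$ with a point $x$ satisfying $\Gamma$ but not $\varphi$. I turn it into an intuitionistic $\mathsf{KTB}$ model.
\begin{enumerate}
\item Take the intuitionistic order $\le$ to be refinement, i.e.\ $\le$ is $\sqsubseteq$ or its refinement part as given by the frame's definition.
\item Take the modal relation $R$ to be ``compatibility'': $xRy$ iff some $z$ is open to both, i.e.\ $\exists z\,(z\sqsubseteq x \wedge z\sqsubseteq y)$, or the symmetric-reflexive relation induced by openness.
\item Check the Fischer Servi frame conditions (F1) and (F2) relating $\le$ and $R$, together with reflexivity and symmetry of $R$, so that T and B hold.
\item Keep the valuation $V$; fundamental propositions are $\le$-upsets, as required.
\end{enumerate}
Then I prove the truth lemma: $x\Vdash_{\mathsf{F}}\psi$ iff $x\Vdash g(\psi)$. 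The idea is that $\Box\Diamond$ over $R$ computes the fundamental closure $c_\sqsubseteq$, and $\Box\neg$ computes fundamental negation. The truth lemma gives the countermodel for $g[\Gamma]\nvdash g(\varphi)$. If fundamental frames with $\sqsubseteq$ alone do not directly yield FS conditions, I would first pass to a canonical or duplicated frame (e.g.\ pairs of states) where $\le$ and $R$ can be separated.

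\textbf{Main obstacle.} The hard part is the fullness step: choosing $\le$ and $R$ on a fundamental frame so that three things hold at once:
\begin{enumerate}
\item Fischer Servi's interaction conditions are satisfied;
\item intuitionistic $\Box\neg$ (which quantifies over $\le$ then $R$) coincides with fundamental negation;
\item $\Box\Diamond$ matches the closure operator.
\end{enumerate}
My first attempt would set $\le$ to be the identity and $R=\sqsubseteq$ symmetrized. With $\le$ the identity the model is essentially classical $\mathsf{KTB}$, which cannot work, since Goldblatt's result would then force orthologic. So nontrivial $\le$ is needed, and I expect the right choice is $\le\,=\,\sqsubseteq$ restricted to refinement, with $R$ defined via common open refinements.
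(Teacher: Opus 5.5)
\textbf{Gap: fullness.} The gap is in fullness, which is where the content of the theorem lies. You never fix $\le$ and $R$, and the candidates you float do not work on an arbitrary fundamental countermodel.

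Take $X=\{x,y,w\}$ with openness (your $\sqsubseteq$) reflexive, plus $y\op x$, $x\op w$, $w\op x$. This is a reflexive fundamental frame.
\begin{itemize}
\item Openness is not transitive here ($y\op x\op w$ but not $y\op w$), so it cannot serve as $\le$.
\item Pairing prerefinement $\le_{\op}$ with the symmetric kernel $\opo$ violates Fischer Servi's condition. We have $y\le_{\op}x\opo w$, but the only $v$ with $y\opo v$ is $y$, and $y\not\le_{\op}w$. With the symmetric closure instead, $x$ is the only other candidate, and $x\not\le_{\op}w$ either.
\end{itemize}

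The real obstruction is structural. What makes $\Box\diamondsuit$ compute $c_{\op}$ and $\Box\neg$ compute $\neg_{\op}$ (Lemma~\ref{reductlma}) is that openness is the composite: $y\op x$ iff $\exists z\,(y\top z\le x)$. Every such composite is strongly factoring (Lemma~\ref{fstofunlma}): $y\op x$ implies $y\opo z\preccurlyeq_{\op}x$ for some $z$, where $z\preccurlyeq_{\op}x$ means $z\le_{\op}x$ and $x\le_{\po}z$. The frame above is not strongly factoring. Only $z=y$ is available, and $x\not\le_{\po}y$ because $x\op w$ but not $y\op w$. So it is the reduct of no \textbf{FSTB}-frame on its carrier, and your proposal offers no other mechanism for the truth lemma.

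\textbf{What is missing.} Two ingredients are needed.
\begin{itemize}
\item The correct pair of relations is $\le\,:=\,\preccurlyeq_{\op}$ and $\top:=\opo$. The postrefinement half of $\preccurlyeq_{\op}$, together with strong factoring, is what delivers Fischer Servi's condition. So on a strongly factoring frame, $(X,\preccurlyeq_{\op},\opo)$ is an \textbf{FSTB}-frame with reduct $(X,\op)$.
\item You need a representation that actually is strongly factoring. The paper embeds the fundamental lattice $L$ into the dual algebra of its reflexive canonical frame $\mathsf{FI}(L)$. Given $y\op x$ with $x=(x_F,x_I)$, it uses the witness $(x_F,I(x_F))$, where $I(x_F)$ is the ideal generated by $\{\neg a\mid a\in x_F\}$. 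One then shows $y\opo(x_F,I(x_F))$ and $I(x_F)\subseteq x_I$ (Theorem~\ref{embthm2}).
\end{itemize}
Your fallback of passing ``to a canonical or duplicated frame'' gestures in this direction. But without identifying strong factoring and proving it for a specific frame, fullness is not established.

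\textbf{Faithfulness sketch.} Your syntactic faithfulness argument misdescribes $\mathsf{F}$. It has no reiteration at all, and a $\neg$I subproof uses the parent's $\psi$ only as the target of $\neg\psi$. The translated rule instead follows from $\diamondsuit\dashv\Box$ (axiom 6 of Definition~\ref{fstblat}) and $\neg\diamondsuit a\le\Box\neg a$ (axiom 3 with $b=0$, using $\Box 0=0$). Alternatively, argue semantically as the paper does, via the reduct of an \textbf{FSTB}-frame, where condition 4 yields $\neg_{\op}A=\Box_\top\neg A$.
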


\begin{theorem}\label{ThirdThm} The G\"{o}del-McKinsey-Tarski translation of intuitionistic logic into classical $\mathsf{S4}$ is also a full and faithful translation of the $\to$-free fragment of intuitionistic logic into $\mathsf{FN4}$, an extension of $\mathsf{S4}$ on a fundamental base that captures a strong notion of necessity.
\end{theorem}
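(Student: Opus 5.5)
We prove \Cref{ThirdThm} in two halves, soundness (faithfulness in the left-to-right direction) and conservativity (fullness), organized around the relational semantics for fundamental logic. Let $t$ be the G\"{o}del-McKinsey-Tarski translation restricted to the $\to$-free language: $t(p)=\Box p$, $t(\neg\varphi)=\Box\neg t(\varphi)$, and $t$ commutes with $\wedge$ and $\vee$. We take $\mathsf{FN4}$ to be fundamental logic extended with an $\mathsf{S4}$-style necessity $\Box$ satisfying K, T and 4 together with the ``strong necessity'' principle. Concretely, that principle should say that boxed formulas may be reiterated into subproofs, so that $\Box$ behaves like a local version of Fitch's Reiteration. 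The goal is to show that $\varphi \vdash_{\mathsf{IPC}^{\to\text{-free}}} \psi$ iff $t(\varphi)\vdash_{\mathsf{FN4}} t(\psi)$.

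\textbf{Soundness.} We proceed by induction on derivations in the natural deduction system for the $\to$-free fragment, i.e., fundamental logic plus Reiteration. The introduction and elimination rules for $\wedge$, $\vee$ and $\neg$ are mirrored directly in $\mathsf{FN4}$ using the fundamental base. Two auxiliary facts are needed along the way:
\begin{enumerate}
\item every translated formula is provably equivalent to a boxed formula, i.e., $t(\chi)\vdash \Box t(\chi)$, which follows from 4 and the behavior of $\Box$ over $\wedge$ and $\vee$ on translations;
\item the negation clause $\Box\neg$ supports $\neg$-introduction and $\neg$-elimination.
\end{enumerate}
The crucial case is Reiteration. Since each $t(\chi)$ is (equivalent to) a boxed formula, the strong necessity rule of $\mathsf{FN4}$ lets us reiterate it into the subproof. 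This is exactly where the extension of $\mathsf{S4}$ on a fundamental base is used.

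\textbf{Fullness.} We argue semantically. Suppose $\varphi\nvdash\psi$ intuitionistically. Take an intuitionistic Kripke countermodel $(W,\le,V)$ with a point $w$ satisfying $\varphi$ but not $\psi$. We turn it into a model for $\mathsf{FN4}$: a fundamental frame (states with an openness relation) carrying a preorder interpreting $\Box$. The natural choice is to let the states be $W$, to let openness be the intuitionistic compatibility relation $x\sqsubseteq y$ (so that the fundamental clauses collapse to the intuitionistic ones), and to let $R={\le}$ for $\Box$. We then need to check two things:
\begin{enumerate}
\item this structure validates the $\mathsf{FN4}$ axioms, in particular strong necessity, which should correspond to $R$ being contained in the relation used for reiteration;
\item by induction, $x\Vdash t(\chi)$ iff $x\Vdash\chi$ in the intuitionistic model.
\end{enumerate}
The latter transfers the countermodel, giving $t(\varphi)\nvdash_{\mathsf{FN4}} t(\psi)$ by soundness of $\mathsf{FN4}$ for its semantics, which we assume is established when $\mathsf{FN4}$ is introduced.

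\textbf{Main obstacle.} The delicate step is the negation and disjunction clauses in the fullness induction. In fundamental semantics, $\neg$ and $\vee$ are interpreted via openness and a closure operator rather than pointwise. We must check that when openness is the intuitionistic order, $\Box\neg t(\chi)$ and $t(\chi_1)\vee t(\chi_2)$ agree with the intuitionistic clauses on upsets. If the direct identification fails for $\vee$ because of a fixpoint closure, we first try to show that the admissible propositions of the constructed frame are exactly the $\le$-upsets, so that the closure is trivial.

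Alternatively, we could obtain fullness syntactically. This route defines a reverse translation that erases boxes and shows that every $\mathsf{FN4}$ derivation between translated formulas erases to an intuitionistic derivation, with strong necessity erasing to Reiteration.
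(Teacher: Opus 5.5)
Your outline works and has the same two-part structure as the paper. The fullness half is essentially the paper's argument. The faithfulness half follows the paper's appendix rather than its main-text proof, and your sketch puts the difficulty in the wrong place.

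\textbf{Fullness.} Your countermodel construction is the relational form of Proposition~\ref{FN4toInt}. It needs one precision: openness must be the extension order itself ($y\op x$ iff $x\le y$). It must not be a symmetric ``has a common extension'' compatibility relation, which would leave only the regular upsets as propositions and would not transfer the intuitionistic countermodel. With the extension order:
\begin{itemize}
\item the propositions are exactly the upsets, so joins are unions and $\neg_{\op}$ is intuitionistic negation;
\item $\Box$ for the forward relation $R={\le}$ is the identity on upsets.
\end{itemize}
So the dual algebra is a Heyting algebra with identity $\Box$. This is an $\mathbf{FN4}$-algebra, because items 2--3 of Definition~\ref{FN4AlgDef} reduce to distributivity and pseudocomplementation. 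The obstacle you anticipated does not arise, and the induction is trivial. Your box-erasure alternative is the syntactic version of the same idea.

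\textbf{Faithfulness: comparison with the paper.} The paper's main argument is algebraic. In any $\mathbf{FN4}$-algebra the boxed elements form a pseudocomplemented distributive lattice under $\wedge$, $\vee$ and $\Box\neg$ (Proposition~\ref{BoxedElementsProp1}), and completeness of $\mathsf{FN4}$ for $\mathbf{FN4}$-algebras then finishes the proof with no bookkeeping. Your proof-translation route is the constructive alternative the paper gives in Theorem~\ref{ProofTransform}.

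\textbf{Faithfulness: where the work actually is.} Once translated formulas are boxed, Reiteration is immediate. The hard case is $\neg$I, and it cannot be ``mirrored directly.'' The fundamental $\neg$I rule yields only $\neg t(\varphi)$, whereas you need $t(\neg\varphi)=\Box\neg t(\varphi)$. Your fact~(1) does not help, since $\neg t(\varphi)$ is not a translated formula. The $\neg$I case has to be handled as follows:
\begin{enumerate}
\item Open a $\Box$-subproof.
\item Bring $t(\psi)$ into it by $\Box$E, using $\Box t(\psi)$ obtained outside via fact~(1).
\item For every $\rho$ the translated subproof reiterates, bring $\Box t(\rho)$ into the $\Box$-subproof by $\Box$E. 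This requires $\Box\Box t(\rho)$ outside, obtained via fact~(1) and the \textsf{4} Rule. It is necessary because a $\neg$I-subproof inside a $\Box$-subproof may only $\Box$-reiterate boxed formulas that occur in that $\Box$-subproof.
\item Run the translated subproof inside, apply \textsf{T} to its conclusion $\Box\neg t(\psi)$, then apply $\neg$I, and close with $\Box$I.
\end{enumerate}
Accordingly, your induction hypothesis must be stated for proofs given $\{\Box t(\rho)\mid\rho\in R\}$. The paper streamlines this bookkeeping in two ways. It uses $\mu^+$ (Definition~\ref{mu+Def}), which makes every translated formula syntactically boxed. It also uses $\underline{\mathsf{IPC}}^-$, in which each subproof reiterates at most one formula, taken from its immediate parent proof.
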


\noindent A natural question concerns adding a conditional connective $\to$ to this picture. As discussed in \citealt{Holliday2023}, there are multiple options for adding a conditional to fundamental logic. In recent work building on the results of the present paper, Zhicheng Chen (personal communication) takes the ``preconditional'' approach of \citealt{Holliday2024b} and extends Theorems \ref{FirstThm} and \ref{SecondThm} accordingly. In this paper, we add a different conditional to fundamental logic, based on the standard introduction and elimination rules for $\to$, and extend Theorem~\ref{ThirdThm} accordingly to cover full intuitionistic first-order logic.

The rest of the paper is organized as follows. In Section \ref{PrelimSection}, we review all of the necessary background concerning non-classical and modal logics, deductively and semantically. In Sections~\ref{OS4Section}, \ref{KTBSection}, and \ref{FN4Section}, we prove Theorems \ref{FirstThm}, \ref{SecondThm}, and \ref{ThirdThm}, respectively. In Section \ref{FurtherDirections}, we discuss three further directions for exploration suggested by our three main results. We conclude with some brief final reflections in Section~\ref{Conclusion}.

\section{Preliminaries}\label{PrelimSection}

\subsection{Logics}\label{LogicsSec}

Given a set $\mathsf{Prop}$ of propositional variables, we define the language $\mathcal{L}(\wedge,\vee,\neg)$ by the following grammar:
\[\varphi::= \top\mid p\mid (\varphi\wedge\varphi)\mid (\varphi\vee\varphi)\mid \neg\varphi,\]
where $p\in\mathsf{Prop}$. As an abbreviation, we define $\bot:=\neg\top$.

\begin{definition} Following \citealt{Holliday2023},\footnote{This definition is slightly different than the one in \citealt{Holliday2023}, though they agree concerning which formulas are derivable from a given formula, with the exception of $\top$, which was not included in \citealt{Holliday2023}. The definition in \citealt{Holliday2023} includes a clause for extending a proof with a subproof, and the clauses for the introduction and elimination rules only add formulas, not subproofs and formulas as we do here. The presentation here will be more convenient for our purposes in Section~\ref{FN4Section}.} the set of \textit{proofs of fundamental} (\textit{propositional}) \textit{logic} is defined inductively as the smallest set containing for each formula $\varphi$ the sequence $\langle \varphi\rangle$ and satisfying the following closure conditions for $1\leq i,j\leq n$:
\begin{itemize}
\item If $\langle \sigma_1,\dots,\sigma_n\rangle$ is a proof, then so is $\langle \sigma_1,\dots,\sigma_n,\top\rangle$ ($\top$I).
\item If $\langle \sigma_1,\dots,\sigma_n\rangle$ is a  proof and $\sigma_i,\sigma_j$  are formulas, then $\langle \sigma_1,\dots,\sigma_n,\sigma_i\wedge\sigma_j\rangle$ is a proof ($\wedge$I).
\item If $\langle \sigma_1,\dots,\sigma_n\rangle$ is a  proof and $\sigma_i$ is a formula of the form $\varphi\wedge\psi$, then $\langle \sigma_1,\dots,\sigma_n,\varphi\rangle$ and $\langle \sigma_1,\dots,\sigma_n,\psi\rangle$ are proofs ($\wedge$E).
\item If $\langle \sigma_1,\dots,\sigma_n\rangle$ is a  proof and  $\sigma_i$ is a formula, then for any formula $\varphi$,  both $\langle \sigma_1,\dots,\sigma_n,\sigma_i\vee\varphi\rangle$ and $\langle \sigma_1,\dots,\sigma_n,\varphi\vee \sigma_i\rangle$ are proofs ($\vee$I).
\item If $\langle \sigma_1,\dots,\sigma_n\rangle$ is a  proof, $\sigma_i$ is a formula of the form $\varphi\vee\psi$, $\tau_1$ is a proof beginning with $\varphi$ and ending with $\chi$, and $\tau_2$ is a proof beginning with $\psi$ and ending with $\chi$, then $\langle \sigma_1,\dots,\sigma_n, \tau_1,\tau_2,\chi\rangle$ is a proof ($\vee $E).
\item If $\langle \sigma_1,\dots,\sigma_n\rangle$ is a  proof, $\sigma_i$ is a formula  $\psi$, and $\tau$ is a proof beginning with $\varphi$ and ending with $\neg\psi$, then $\langle \sigma_1,\dots,\sigma_n,\tau,\neg\varphi\rangle$ is a proof ($\neg$I).
\item If $\langle \sigma_1,\dots,\sigma_n\rangle$ is a  proof and $\sigma_i$ and $\sigma_j$ are formulas of the form $\varphi$ and $\neg\varphi$, respectively, then for any formula $\psi$, $\langle \sigma_1,\dots,\sigma_n,\psi\rangle$ is a proof ($\neg$E). 
\end{itemize}
\end{definition}

The inductive cases for $\wedge$, $\vee$, and $\neg$ are represented diagrammatically in Figure \ref{FitchRules}. Indentation is used to represent proofs within proofs, called \textit{subproofs}. The horizontal line at the beginning of a proof highlights the first item in the proof, which is considered the \textit{assumption} of the proof. The last item in the proof is considered the \textit{conclusion} of the proof. For the base case, we represent the proof $\langle \varphi\rangle$ diagrammatically as a proof with assumption $\varphi$ and conclusion $\varphi$:
\[\begin{nd}
\hypo [\,]  {1} {\varphi}
\have [\,] {2} {\varphi}
\end{nd}\]

\begin{definition}
Define a binary relation $\vdash_\mathsf{F}$ on $\mathcal{L}(\wedge,\vee,\neg)$ as follows: $\varphi\vdash_\mathsf{F}\psi$ if there is some proof in fundamental logic whose assumption is $\varphi$ and whose conclusion is $\psi$.
\end{definition}

\begin{figure}[H]
\begin{center}

\begin{minipage}{2in}
\[\begin{nd}
\have [\vdots] {4} {\vdots}
\have [i] {5}  {\varphi_s}
\have [\vdots] {8}   {\vdots}
\have [j] {9} {\varphi_t}
\have [\vdots] {10}   {\vdots}
\have [k] {11}   {\varphi_1\wedge\varphi_2}\ai{5,9}
\end{nd}\]
\end{minipage}\;\;\,\begin{minipage}{2.25in}
\[\begin{nd}
\have [\vdots] {4} {\vdots}
\have [i] {5}  {\varphi_1\wedge\varphi_2}
\have [\vdots] {8}   {\vdots}
\have [j] {9} {\varphi_s} \ae{5}
\end{nd}\]
\end{minipage}
\begin{minipage}{2.45in}
\[\begin{nd}
\have [\vdots] {1} {\vdots} 
\have [i] {2} {\varphi_s} 
\have [\vdots] {3} {\vdots} 
\have [j] {4} {\varphi_1\vee\varphi_2}\oi{2}
\end{nd}\]
\end{minipage}\;\;\,\begin{minipage}{2.4in}

\[\begin{nd}
\have [\vdots] {} {\vdots} 
\have [i] {0} {\varphi\vee\psi}
\have [\vdots] {1} {\vdots} 
\open
\hypo [j] {2} {\varphi}
\have [\vdots] {3} {\vdots}
\have [k] {5} {\chi}
\close
\open
\hypo [l] {7}  {\psi}
\have [\vdots] {8} {\vdots}
\have [m] {9} {\chi}
\close
\have [n] {n} {\chi} \oe{0,2-5,7-9}
\end{nd}\]\vspace{.1in}
\end{minipage}

\begin{minipage}{2in}
\[\begin{nd}
\have [\vdots] {0} {\vdots}
\have [i] {3}   {\psi}
\have [\vdots] {}  {\vdots}
\open
\hypo [j] {1} {\varphi}
\have [\vdots] {4}   {\vdots}
\have [k] {5}   {\neg\psi}
\close
\have [l] {6} {\neg\varphi} \ni{1-5,3}
\end{nd}\]
\end{minipage}\begin{minipage}{2.25in}
\[\begin{nd}
\have [\vdots] {4} {\vdots}
\have [i] {5}  {\varphi}
\have [\vdots] {6} {\vdots}
\have [j] {7}  {\neg\varphi}
\have [\vdots] {8} {\vdots}
\have [k] {9}{\psi} \ne{5, 7}
\end{nd}\]
\end{minipage}
\end{center}
\caption{Fitch-style introduction and elimination rules for fundamental logic, where $s,t\in\{1,2\}$.}\label{FitchRules}
\end{figure}

\textit{Orthologic} (\citealt{Goldblatt1974}), denoted $\mathsf{O}$, adds to fundamental logic the rule of Reductio Ad Absurdum, depicted in Figure \ref{RAAFig}:
\begin{itemize}
\item If $\langle \sigma_1,\dots,\sigma_n\rangle$ is a  proof, $\sigma_i$ is a formula  $\psi$, and $\tau$ is a proof beginning with $\neg\varphi$ and ending with $\neg\psi$, then $\langle \sigma_1,\dots,\sigma_n,\tau,\varphi\rangle$ is a proof (RAA).
\end{itemize}

\begin{figure}[H]
\begin{center}
\begin{minipage}{2.5in}
\[\begin{nd}
\have [\vdots] {0} {\vdots}
\have [i] {3}   {\psi}
\have [\vdots] {4}   {\vdots}
\open
\hypo[ j] {1} {\neg \varphi}
\have [\vdots] {}  {\vdots}
\have [k] {5}   {\neg\psi}
\close
\have [l]{6} {\varphi} \RAA{1-5,3}
\end{nd}
\]\end{minipage}
\end{center}
\caption{The Reductio ad Absurdum (RAA) rule of orthologic.}\label{RAAFig}
\end{figure}

\textit{Intuitionistic logic} in the $\{\wedge,\vee,\neg\}$-fragment, denoted $\mathsf{IPC}^{-}$, adds to fundamental logic the rule of Reiteration, depicted in Figure \ref{ReitFig}. For a rigorous inductive definition, this requires defining the notion of a \textit{proof from a set $R$} of reiterable formulas, which is done in Appendix~\ref{AppendixIPC}.

\begin{figure}[H]
\begin{center}
\begin{minipage}{2.5in}
\[\begin{nd}
\have [\vdots] {0} {\vdots}
\have [i] {3}   {\psi}
\have [\vdots] {4}   {\vdots}
\open
\hypo[ j] {1} {\varphi}
\have [\vdots] {}  {\vdots}
\have [k] {5}   {\psi} \r{3}
\end{nd}
\]\end{minipage}
\end{center}
\caption{The Reiteration rule of intuitionistic logic.}\label{ReitFig}
\end{figure}

\textit{Classical logic}, denoted $\mathsf{CPC}$, is obtained by adding both the Reiteration and Reductio ad Absurdum rules. Quantifiers can also be added to the language, with their standard introduction and elimination rules. To keep this introduction brief, we postpone a discussion of first-order logics to \cref{FirstOrderExt1}. The relationships between fundamental logic, the $\to$-free fragment of intuitionistic logic, orthologic and classical logic are summarized in the ``fundamental diamond'' of Figure~\ref{fundiamond}.

\begin{figure}[H]
    \centering
    \begin{tikzpicture}[
        ->,>={stealth'[length=6pt]},
        shorten >=1pt,shorten <=1pt,
        semithick,
        desc/.style={fill=white,inner sep=2pt,font=\small}
    ]
        \node (CPC) at (2,4) {$\mathsf{CPC}$};
        \node (O)   at (0,2) {$\mathsf{O}$};
        \node (IPC) at (4,2) {$\mathsf{IPC}^-$};
        \node (F)   at (2,0) {$\mathsf{F}$};

        \path (O)   edge node[desc] {Reit} (CPC);
        \path (IPC) edge node[desc] {RAA}  (CPC);
        \path (F)   edge node[desc] {RAA}  (O);
        \path (F)   edge node[desc] {Reit} (IPC);
    \end{tikzpicture}
    \caption{The ``fundamental diamond'' relating fundamental logic, orthologic, $\mathsf{IPC^-}$, and classical logic.}
    \label{fundiamond}
\end{figure}

Finally, we turn to modal logics. The language $\mathcal{L}(\wedge,\vee,\neg,\Box)$ is given by the following grammar:
\[\varphi::= \top\mid p\mid (\varphi\wedge\varphi)\mid (\varphi\vee\varphi)\mid \neg\varphi\mid \Box\varphi,\]
where $p\in\mathsf{Prop}$. Following Fitch~\citeyearpar{Fitch1966}, we make a distinction between ordinary subproofs and $\Box$-subproofs, where the latter are used for $\Box$-Introduction and $\Box$-Elimination, as shown in Figure~\ref{BoxRules}. For a rigorous inductive definition, we must keep track of a set $B$ of boxed formulas to which $\Box$E can be applied, which is done in Appendix~\ref{AppendixModal}. Let $\mathsf{FK}$ (resp.~$\mathsf{OK}$) be the minimal modal extension of fundamental logic (resp.~orthologic) with $\Box$I and $\Box$E. The logics $\mathsf{FS4}$ and $\mathsf{OS4}$ add to $\mathsf{FK}$ and $\mathsf{OK}$, respectively, the \textsf{T} Rule and \textsf{4} Rule depicted in Figure~\ref{S4Rules}, which can be stated as inductive clauses in an obvious way.

\begin{figure}[H]
\begin{center}
\begin{minipage}{2.5in}
\[\begin{nd}
\have [\vdots] {0} {\vdots}
\open
\have [i] {3}   {\hspace{-.24in}\Box\;\;\;}
\have [\vdots] {4}   {\vdots}
\have [j] {6}   {\varphi}
\close
\have [k]{7} {\Box\varphi} \boxi{3-6}
\end{nd}
\]\end{minipage}\hspace{.1in}\begin{minipage}{2.5in}\[\begin{nd}
\have [\vdots] {} {\vdots}
\have [i] {0} {\Box\varphi}
\have [\vdots] {1} {\vdots}
\open
\have [j] {3}   {\hspace{-.24in}\Box\;\;\;}
\have [\vdots] {4}   {\vdots}
\have [k] {6}   {\varphi} \boxe{0}
\end{nd}
\]
\end{minipage}
\end{center}
\caption{Introduction and elimination rules for $\Box$.}\label{BoxRules}
\end{figure}

\begin{figure}[h!]
\begin{center}
\begin{minipage}{2.5in}
\[\begin{nd}
\have [\vdots] {0} {\vdots}
\have [i] {1} {\Box\varphi}
\have [\vdots] {2} {\vdots}
\have [\vdots] {3} {\varphi} \TAx{1}
\end{nd}
\]\end{minipage}\begin{minipage}{2.5in}\[\begin{nd}
\have [\vdots] {0} {\vdots}
\have [i] {1} {\Box\varphi}
\have [\vdots] {2} {\vdots}
\have [\vdots] {3} {\Box\Box\varphi}  \FourAx{1}
\end{nd}
\]
\end{minipage}\end{center}

\caption{$\mathsf{S4}$ principles for $\Box$.}\label{S4Rules}
\end{figure}

\subsection{Algebras}

We now introduce classes of algebras corresponding to the logics of the previous section. Given a bounded lattice $L$, we denote its meet and join operations by $\wedge$ and $\vee$, trusting that no confusion will arise between lattice operations and logical connectives; we denote its minimum and maximum by $0$ and $1$, respectively, and we assume throughout that $0\neq 1$.

Algebraic semantics for fundamental logic can be given using bounded lattices equipped with what some authors (\citealt{Dzik2006,Dzik2006b}, \citealt{Almeida2009}) call a weak pseudocomplementation.

\begin{definition} A \textit{fundamental lattice} is an algebra $(L,\neg)$ where $L$ is a bounded lattice and $\neg$ is a unary operation on $L$ satisfying the following for all $a,b\in L$:
\begin{enumerate}
\item if $a\leq \neg b$, then $b\leq\neg a$ (dual self-adjointness);
\item $a\wedge\neg a=0$ (semicomplementation).
\end{enumerate}
A \textit{fundamental embedding} of a fundamental lattice $(L,\neg)$ into a fundamental lattice $(L',\neg')$ is a lattice embedding $e:L\to L'$ that also preserves $\neg$.\end{definition}

Dual self-adjointness corresponds to the introduction rule for negation ($\neg$I), while semicomplementation corresponds to the elimination rule for negation ($\neg$E). Instead of dual self-adjointness, one may equivalently use antitonicity ($a\leq b$ implies $\neg b\leq\neg a$) and double inflationarity ($a\leq \neg\neg a$).

As usual, a valuation $\theta:\mathsf{Prop}\to L$ on a fundamental lattice extends to an interpretation $\tilde{\theta}:{\mathcal{L}(\wedge,\vee,\neg)\to L}$ in the obvious recursive way. We can then define a semantic consequence relation as follows: $\varphi\vDash_\mathbb{F}\psi$ iff for all fundamental lattices $L$ and valuations $\theta:\mathsf{Prop}\to L$, we have $\tilde{\theta}(\varphi)\leq \tilde{\theta}(\psi)$. Then it is straightforward to prove the soundness and completeness theorem stating that $\varphi\vdash_\mathsf{F}\psi$ iff $\varphi\vDash_\mathbb{F}\psi$ (see \citealt[\S~3]{Holliday2023}).

Algebraic semantics for orthologic can be given using special fundamental lattices.

\begin{definition} An \textit{ortholattice} is a fundamental lattice $(L,\neg)$ such that for all $a\in L$, $\neg\neg a\leq a$.
\end{definition}

Algebraic semantics for the $\to$-free fragment of intuitionistic logic can be given using the following special case of fundamental lattices and generalization of Heyting algebras.

\begin{definition} A \textit{pseudocomplemented distributive lattice} is an algebra $(L,\neg)$ where $L$ is a bounded distributive lattice and $\neg$ is a unary operation on $L$ such that for all $a,b\in L$, $a\wedge b =0$ iff $a\leq\neg b$.
\end{definition}

Finally, algebraic semantics for the modal logic $\mathsf{FK}$ (resp.~$\mathsf{OK}$) can be given using fundamental lattices (resp.~ortholattices) expanded with a normal box operation.

\begin{definition}\label{FunNecLat} A \textit{fundamental necessity lattice} (resp.~orthomodal lattice) is an algebra $(L,\neg,\Box)$ where $(L,\neg)$ is a fundamental lattice (resp.~ortholattice) and $\Box$ is a unary operation on $L$ such that $\Box 1=1$ and $\Box (a\wedge b)=\Box a\wedge\Box b$ for all $a,b\in L$.
\end{definition}

\subsection{Frames}

The ``intended'' semantics for fundamental logic is a relational, modal-style semantics. In particular, \citealt{Holliday2023} studies a relational semantics that combines Plo\v{s}\v{c}ica's \citeyearpar{Ploscica1995} relational representation of lattices with Birkhoff's \citeyearpar{Birkhoff1940} relational interpretation of negation. For references to many related semantics, see \S~4.1 of \citealt{Holliday2023}. Here we review only the technical details needed for our later developments.

\subsubsection{Relational Frames}

We start with some general facts about closure operators on powerset lattices induced by relations on sets.

\begin{definition} A \textit{relational frame} is a pair $(X,\op)$ where $X$ is a nonempty set and $\op$ is a binary relation on~$X$. A state $x\in X$ is \textit{absurd} if there is no $y\op x$; otherwise it is \textit{non-absurd}. 
\end{definition}

We call $\op$ the relation of \textit{openness} (for motivation, see \citealt[Remark 4.2]{Holliday2023}). When $y\op x$, we say that $y$ is \textit{open to} $x$.

\begin{definition} Given a relational frame $(X,\op)$, we define a map $c_{\op} : \wp(X)\to\wp(X)$ by
\[c_{\op} (U) = \{x\in X\mid \forall x'\op x\,\exists x''\po x'\colon x''\in U\}\]
A \textit{proposition in} $(X,\op)$ is a subset $U\subseteq X$ such that $U = c_{\op} (U)$.\end{definition}

\begin{lemma}\label{Closure} For any relational frame $(X,\op)$, $c_{\op}$ is a closure operator:
\begin{enumerate}
\item\label{Closure1} for any $U,V\subseteq X$, if $U\subseteq V$, then $c_{\op} (U)\subseteq c_{\op} (V)$;
\item for any $U\subseteq X$, $c_{\op}(U)=c_{\op} (c_{\op} (U))$;
\item\label{Closure3} for any $U\subseteq X$, $U\subseteq c_{\op}(U)$.
\end{enumerate}
\end{lemma}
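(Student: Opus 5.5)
We verify the three closure properties directly from the definition of $c_{\op}$. Write $x'' \po x'$ for $x'' \op x'$ read in the other direction, so the definition says: $x\in c_{\op}(U)$ iff every $x'$ open to $x$ has some $x''$ to which $x'$ is open with $x''\in U$. Note that the relation $\po$ is just $\op$ read backwards.

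For monotonicity (part~\ref{Closure1}), suppose $U\subseteq V$ and $x\in c_{\op}(U)$. For any $x'\op x$ there is $x''\po x'$ with $x''\in U\subseteq V$. Hence $x\in c_{\op}(V)$.

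For inflationarity (part~\ref{Closure3}), let $x\in U$ and $x'\op x$. Then $x\po x'$, so we may take $x'':=x$, which lies in $U$. Thus $x\in c_{\op}(U)$. This is where the choice of quantifier pattern matters: the witness $x''$ is required to stand in the converse relation to $x'$, so $x$ itself always serves.

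For idempotence, the inclusion $c_{\op}(U)\subseteq c_{\op}(c_{\op}(U))$ follows from part~\ref{Closure3} applied to $c_{\op}(U)$. The reverse inclusion is the only step needing thought, and I expect it to be the main (though mild) obstacle. Suppose $x\in c_{\op}(c_{\op}(U))$ and $x'\op x$. By hypothesis there is $x''\po x'$ with $x''\in c_{\op}(U)$. Since $x''\po x'$ means $x'\op x''$, we apply membership of $x''$ in $c_{\op}(U)$ to the state $x'$. This yields some $x'''\po x'$ with $x'''\in U$, which is exactly the witness needed for $x$ at $x'$. Hence $x\in c_{\op}(U)$.

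The key observation throughout is that the state $x'$ open to $x$ is also open to the intermediate witness $x''$. So the same $x'$ can be reused one level down, collapsing the double application of $c_{\op}$.
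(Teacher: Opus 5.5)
Your proof is correct. Monotonicity is immediate. For inflationarity, $x$ itself is a witness because $x'\op x$ means $x\po x'$. For idempotence, you rightly reuse the same $x'$: it is open to the intermediate witness $x''\in c_{\op}(U)$, which yields some $x'''\po x'$ with $x'''\in U$.

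The paper gives no proof of this lemma and treats it as standard. The route it implicitly relies on, given its later remark that $c_{\op}=\neg_{\op}\neg_{\po}$, is that $U\subseteq\neg_{\op}V$ iff $V\subseteq\neg_{\po}U$. In other words, $\neg_{\op}$ and $\neg_{\po}$ form an antitone Galois connection on $\wp(X)$, and the composite of such a connection is automatically a closure operator.

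The two approaches compare as follows:
\begin{itemize}
\item Your direct elementwise check is fully self-contained. It also shows exactly where the converse relation in the definition matters, namely in the inflationarity step.
\item The Galois-connection route gets all three properties at once from a single equivalence. It also gives $\neg_{\op}\neg_{\po}\neg_{\op}=\neg_{\op}$ for free, so every set of the form $\neg_{\op}V$ is a proposition; the paper uses this fact implicitly when it treats $\neg_{\op}$ as an operation on propositions.
\end{itemize}
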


\begin{proposition} For any relational frame $(X,\op)$, the set of all propositions in $(X,\op)$ ordered by inclusion forms a complete lattice $\chi_{\op}(X)$ where 
\begin{eqnarray*}
\underset{i\in I}{\bigwedge}U_i & =& \underset{i\in I}{\bigcap}U_i  \\
\underset{i\in I}{\bigvee}U_i & =& c_{\op}\bigg(\underset{i\in I}{\bigcup}U_i\bigg)= \{x\in X\mid \forall x'\op x\;\exists x''\po x'\;\exists i\in I\colon x''\in U_i\}.
\end{eqnarray*}
The set of absurd states of $(X,\op)$ is the minimum element $0$ of this lattice. Moreover, the operation $\neg_{\op}$ on propositions defined by 
\[\neg_{\op} U = \{x\in X\mid \forall x'\op x \, (x'\not\in U)\}\]
satisfies the following conditions:
\begin{enumerate}
\item $\neg_{\op} X=0$;
\item if $U\subseteq V$, then $\neg_{\op} V\subseteq\neg_{\op} U$.
\end{enumerate}
 Finally, the operator $c_{\op}$ on the powerset of $X$ corresponds to the composition of the operation $\neg_{\op}$ with the operation $\neg_{\po}$ induced by the converse $\po$ of the relation $\op$: 
\[\neg_{\po} U = \{x\in X\mid \forall x'\po x \, (x'\not\in U)\}\]
In other words, we have that:
\[c_{\op}(U) = \neg_{\op}\neg_{\po}U\] for any $U \sset X$. We call $(\chi_{\op}(X),\neg_{\op})$ the \textit{dual algebra} of $(X,\op)$.\end{proposition}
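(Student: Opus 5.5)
We first establish the final identity $c_{\op}(U)=\neg_{\op}\neg_{\po}U$, since the remaining claims follow from it together with Lemma~\ref{Closure}. This is a direct unfolding of definitions. A state $x$ belongs to $\neg_{\op}\neg_{\po}U$ iff every $x'\op x$ lies outside $\neg_{\po}U$. That holds iff for every $x'\op x$ there is some $x''\po x'$ with $x''\in U$, which is exactly the defining condition of $c_{\op}(U)$.

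Next we prove completeness and the description of meets and joins. Let $\{U_i\}_{i\in I}$ be propositions. For meets, put $V=\bigcap_i U_i$. By monotonicity (Lemma~\ref{Closure}.\ref{Closure1}), $c_{\op}(V)\subseteq c_{\op}(U_i)=U_i$ for each $i$, so $c_{\op}(V)\subseteq V$. Inflationarity (Lemma~\ref{Closure}.\ref{Closure3}) gives the reverse inclusion, so $V$ is a proposition. Since the order is inclusion, $V$ is the greatest lower bound. The case $I=\emptyset$ yields $X$, which is a proposition because $X\subseteq c_{\op}(X)\subseteq X$.

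For joins, $c_{\op}(\bigcup_i U_i)$ is a proposition by idempotence and contains every $U_i$ by inflationarity. If $W$ is any proposition containing every $U_i$, then monotonicity gives $c_{\op}(\bigcup_i U_i)\subseteq c_{\op}(W)=W$, so this set is the least upper bound. Its explicit description comes from unfolding $c_{\op}$ on a union.

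The minimum element is $c_{\op}(\emptyset)=\{x\mid \forall x'\op x\,\exists x''\po x'\colon x''\in\emptyset\}$. A state satisfies this iff it has no $x'\op x$, i.e., iff it is absurd. This set is the least proposition because it is the empty join.

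For negation, we first check that $\neg_{\op}U$ is a proposition. Apply the identity established above with the roles of $\op$ and $\po$ swapped: $c_{\po}=\neg_{\po}\neg_{\op}$. It is a closure operator by Lemma~\ref{Closure} applied to the frame $(X,\po)$. Then
\[c_{\op}(\neg_{\op}U)=\neg_{\op}\neg_{\po}\neg_{\op}U=\neg_{\op}(c_{\po}(U)).\]
Since $\neg_{\op}$ is clearly antitone and $U\subseteq c_{\po}(U)$, we get $\neg_{\op}c_{\po}(U)\subseteq\neg_{\op}U$. The reverse inclusion is inflationarity of $c_{\op}$, so $\neg_{\op}U$ is a proposition.

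Antitonicity (condition 2) holds because the condition $x'\notin V$ implies $x'\notin U$ whenever $U\subseteq V$.

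For condition 1, $\neg_{\op}X$ consists of the states $x$ with no $x'\op x$ at all, i.e., the absurd states, which form $0$.

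The only step needing care is that $\neg_{\op}U$ is again a proposition. The swap of $\op$ and $\po$ handles it, so we expect no real obstacle.
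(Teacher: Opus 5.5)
Your proof is correct and complete. Every step checks out, including the one non-routine point: swapping $\op$ and $\po$ gives $c_{\op}(\neg_{\op}U)=\neg_{\op}(c_{\po}(U))\sset\neg_{\op}U$, which shows that $\neg_{\op}U$ is a proposition for \emph{every} $U\sset X$ and so justifies treating $\neg_{\op}$ as an operation on $\chi_{\op}(X)$. The paper states this proposition without proof, as background from earlier work, and your derivation from Lemma~\ref{Closure} together with the identity $c_{\op}=\neg_{\op}\neg_{\po}$ is the standard argument.
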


\begin{lemma}\label{PrerefineLem} In a relational frame $(X,\op)$, for any $x,y\in X$, the following are equivalent:
\begin{enumerate}
\item for all $z\in X$, if $z\op y$, then $z\op x$;
\item for any proposition $U$, if $x\in U$, then $y\in U$.
\end{enumerate}
If these conditions hold, we say that \textit{$y$ prerefines $x$}, denoted $y \leq_{\op} x$. Moreover, when $y$ prerefines $x$ according to the \emph{converse} of $\op$, i.e., when $y \leq_{\po} x$, we say that $y$ \textit{postrefines} $x$.
\end{lemma}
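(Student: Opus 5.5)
We prove (1)$\Rightarrow$(2) and (2)$\Rightarrow$(1). The natural route for both is to write propositions as fixpoints of $c_{\op}$, which has the explicit form
\[c_{\op}(U)=\{w\mid \forall w'\op w\,\exists w''\po w'\colon w''\in U\}.\]
The plan is to use this description directly for one direction and a suitably chosen proposition for the other.

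For (1)$\Rightarrow$(2), assume every $z\op y$ also satisfies $z\op x$, and let $U$ be a proposition with $x\in U$. Since $U=c_{\op}(U)$, it suffices to show $y\in c_{\op}(U)$. So take any $z\op y$. By (1), $z\op x$. Because $x\in U=c_{\op}(U)$, there is some $x''\po z$ with $x''\in U$. This is exactly the witness required for $y$ at $z$, so $y\in c_{\op}(U)=U$.

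For (2)$\Rightarrow$(1), the idea is to apply (2) to the least proposition containing $x$, namely $U=c_{\op}(\{x\})$. This set is a proposition by the idempotence clause of Lemma~\ref{Closure}, and it contains $x$ by inflationarity (Lemma~\ref{Closure}.\ref{Closure3}). Hypothesis (2) then gives $y\in c_{\op}(\{x\})$. Unpacking the definition:
\[\text{for every } z\op y \text{ there is } x''\po z \text{ with } x''\in\{x\}, \text{ i.e.\ } x''=x.\]
Since $x\po z$ means $z\op x$, we obtain $z\op x$, which is (1).

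Neither direction presents a real obstacle. The only point needing care is choosing the principal proposition $c_{\op}(\{x\})$ in the second direction and keeping the orientation of $\op$ versus $\po$ straight when unpacking $c_{\op}$. The notion of postrefinement is purely definitional: it is the same relation computed for the converse frame $(X,\po)$, so no separate argument is needed.
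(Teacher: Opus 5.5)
Your proof is correct. In (1)$\Rightarrow$(2) the unpacking of $c_{\op}$ is right, and you could even take $x$ itself as the witness, since $z\op x$ and $x\in U$. In (2)$\Rightarrow$(1), testing against the principal proposition $c_{\op}(\{x\})$ is the right move, and you handle the orientation $x\po z \iff z\op x$ correctly. The paper states this lemma as background without proof, so there is no argument of its own to compare against; yours is the standard one.
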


\subsubsection{Fundamental Frames}

Let us now characterize relational frames whose dual algebra is a fundamental lattice.

\begin{definition} A \textit{fundamental frame} is a relational frame $(X,\op)$ satisfying the following conditions:
\begin{enumerate}
\item Pseudo-reflexivity: for all non-absurd $x\in X$, there is a $y\op x$ that prerefines $x$;
\item Pseudo-symmetry: for all $x\in X$ and $y\op x$, there is a $z\op y$ that prerefines $x$.
\end{enumerate}
\end{definition}

\begin{lemma}[\citealt{Holliday2023}, Prop.~4.14.1-2] For any relational frame $(X,\op)$:
\begin{enumerate}
\item $\op$ is pseudo-reflexive iff for every proposition $U$ in $(X,\op)$, we have $U\wedge\neg_{\op} U= 0$;
\item $\op$ is pseudo-symmetric iff for every proposition $U$ in $(X,\op)$, we have $U\subseteq \neg_{\op}\neg_{\op} U$.
\end{enumerate}
\end{lemma}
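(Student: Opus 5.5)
Both parts go through the characterization of prerefinement in Lemma~\ref{PrerefineLem}: $y\leq_{\op}x$ iff every proposition containing $x$ contains $y$. Two further facts will be used repeatedly.

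First, for any $x$, the principal closure $c_{\op}(\{x\})$ is the least proposition containing $x$. By Lemma~\ref{PrerefineLem}, it therefore equals $\{y\mid y\leq_{\op}x\}$.

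Second, $\neg_{\op}U$ is always a proposition, i.e.\ $c_{\op}(\neg_{\op}U)=\neg_{\op}U$. I would take this as part of the dual algebra proposition. If needed, it can be checked directly: $\neg_{\op}U=\neg_{\op}(\neg_{\po}\neg_{\op}\ldots)$, using the Galois identity $\neg_{\op}\neg_{\po}\neg_{\op}=\neg_{\op}$.

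\textbf{Part 1.} Suppose pseudo-reflexivity holds, and let $x\in U\cap\neg_{\op}U$. If $x$ were non-absurd, there would be $y\op x$ with $y\leq_{\op}x$. Since $x\in U$ and $U$ is a proposition, $y\in U$. But $x\in\neg_{\op}U$ forbids any $y\op x$ from lying in $U$, a contradiction. So $x$ is absurd, and $U\wedge\neg_{\op}U=0$, since $0$ is the set of absurd states.

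Conversely, suppose the equation holds, and let $x$ be non-absurd. Put $U=c_{\op}(\{x\})$. Then $x\notin\neg_{\op}U$, since otherwise $x\in U\cap\neg_{\op}U=0$. So there is $y\op x$ with $y\in U$, i.e.\ $y\leq_{\op}x$.

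\textbf{Part 2.} Suppose pseudo-symmetry holds, and let $x\in U$ with $y\op x$. We need some $z\op y$ with $z\in U$. Pseudo-symmetry gives $z\op y$ with $z\leq_{\op}x$, and then $z\in U$. Hence $y\notin\neg_{\op}U$, and so $x\in\neg_{\op}\neg_{\op}U$.

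Conversely, assume $U\subseteq\neg_{\op}\neg_{\op}U$ for all propositions $U$. Given $y\op x$, take $U=c_{\op}(\{x\})$. Then $x\in\neg_{\op}\neg_{\op}U$, so $y\notin\neg_{\op}U$. Hence some $z\op y$ lies in $U$, which means $z\leq_{\op}x$.

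\textbf{Main obstacle.} The one delicate step is justifying that $c_{\op}(\{x\})$ is exactly the set of prerefinements of $x$. This follows from Lemma~\ref{Closure}: $c_{\op}(\{x\})$ is the least proposition containing $x$, because any proposition $V\ni x$ satisfies $c_{\op}(\{x\})\subseteq c_{\op}(V)=V$. Combining this with Lemma~\ref{PrerefineLem}(2) gives the identification. Everything else is direct unfolding of the definitions.
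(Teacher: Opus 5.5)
Your proof is correct. The paper gives no proof of its own here; it simply cites Holliday 2023, Prop.~4.14. Your argument is the standard one: the forward directions use the fact that propositions are closed under prerefinement, and the converse directions test against the principal proposition $c_{\op}(\{x\})=\{y\mid y\leq_{\op}x\}$. (That identification is even more immediate than your ``main obstacle'' paragraph suggests. Unfolding the definition, $y\in c_{\op}(\{x\})$ iff every $y'\op y$ satisfies $y'\op x$, which is exactly clause 1 of Lemma~\ref{PrerefineLem}.)
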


Intuitively, the openness relation $\op$ in a fundamental frame $(X,\op)$ can be understood by viewing points in $X$ as information states carrying both positive and negative information. Let us say that a state $x$ \textit{accepts} a proposition $U$ whenever $x \in U$, and $x$ \textit{rejects} a proposition $U$ whenever $x \op y$ implies $y \notin U$, i.e., no state to which $x$ is open accepts $U$. Under this interpretation, the openness relation can be viewed as a kind of ``one-sided compatibility'' between states relative to the propositions they accept and reject. Given two states $x, y$, we have that $y \op x$ if and only if there is no proposition $U$ that $x$ accepts and $y$ rejects.\footnote{For more on this characterization of the openness relation in fundamental frames, see \citealt[Remark~4.2]{Holliday2023}} 

In the next sections, we will make use of some canonical ways of embedding a fundamental lattice into the dual algebra of some fundamental frame. Here we recall two specific constructions.

\begin{theorem}[\citealt{Holliday2023}, Thm.~4.24.4] \label{prinfrm} Let $L$ be a fundamental lattice, $\mathrm{V}$ a join-dense subset of $L$, and $\Lambda$ a meet-dense subset of $L$. Then where
\begin{itemize}
\item  $X=\{(a,\neg a)\mid a\in\mathrm{V},a\neq 0\}\cup \{(1,b)\mid b\in \Lambda, b\neq 1\}$ and
\item $\op$ is the binary relation on $X$ defined by $(a,b)\op (c,d)$ iff $c\not\leq b$,
\end{itemize}
the following holds:
\begin{enumerate}
\item $(X,\op)$ is a fundamental frame,\footnote{In fact, $\op$ is not only pseudo-reflexive but reflexive.} and
\item the map
\[ a \mapsto a^\downarrow := \{(c,d) \in X \mid c \leq a\}\] is a fundamental embedding of $L$ into the dual algebra of $(X,\op)$ that preserves all existing meets and joins.
\end{enumerate}
\end{theorem}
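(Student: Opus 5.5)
The plan is to verify everything directly from the definitions, using the following coordinate description of the relation: for $y=(e,f)$ and $x=(c,d)$ in $X$, we have $y\op x$ iff $c\not\leq f$. Join-density of $\mathrm{V}$ will be used in the form "if $a\not\leq b$, then some $v\in\mathrm{V}$ has $v\leq a$ and $v\not\leq b$", and meet-density of $\Lambda$ dually.

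\emph{Step 1: frame conditions.} First I check that $\op$ is reflexive.
\begin{itemize}
\item For a point $(a,\neg a)$ with $a\neq 0$: if $a\leq\neg a$, then $a=a\wedge\neg a=0$ by semicomplementation, which is excluded.
\item For a point $(1,b)$ with $b\neq 1$: clearly $1\not\leq b$.
\end{itemize}
Reflexivity gives pseudo-reflexivity at once, since $x$ itself is open to $x$ and prerefines $x$.

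For pseudo-symmetry, let $x=(c,d)$ and $y=(e,f)$ with $y\op x$, i.e.\ $c\not\leq f$. Observe that any $z=(v,\neg v)$ with $v\leq c$ prerefines $x$: if $w\op z$, then $v\not\leq w_2$, hence $c\not\leq w_2$, hence $w\op x$. So it suffices to find $v\in\mathrm{V}$ with $v\leq c$ and $e\not\leq\neg v$, which is what $z\op y$ requires.
\begin{itemize}
\item If $y=(1,f)$, any nonzero $v\leq c$ in $\mathrm{V}$ works, since $1\not\leq\neg v$ when $v\neq 0$. Such a $v$ exists: join-density applied to $c\not\leq f$ gives $v\leq c$ with $v\not\leq f$, so $v\neq 0$.
\item If $y=(e,\neg e)$, then $c\not\leq\neg e$. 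Join-density gives $v\in\mathrm{V}$ with $v\leq c$ and $v\not\leq\neg e$. By dual self-adjointness this is equivalent to $e\not\leq\neg v$, as needed.
\end{itemize}

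\emph{Step 2: $a^\downarrow$ is a proposition, and the lattice structure is preserved.} By Lemma~\ref{Closure} it suffices to show $c_{\op}(a^\downarrow)\subseteq a^\downarrow$. Suppose $x=(c,d)$ with $c\not\leq a$. Meet-density gives $\lambda\in\Lambda$ with $a\leq\lambda$ and $c\not\leq\lambda$; in particular $\lambda\neq 1$. Put $x'=(1,\lambda)$. Then $x'\op x$. Moreover, any $x''$ with $x'\op x''$ has first coordinate $\not\leq\lambda$, hence $\not\leq a$, so $x''\notin a^\downarrow$. Thus $x\notin c_{\op}(a^\downarrow)$.

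The remaining structural facts are as follows.
\begin{itemize}
\item Meets: $(\bigwedge a_i)^\downarrow=\bigcap a_i^\downarrow$ is immediate.
\item Joins, inclusion $\supseteq$: for an existing join $a=\bigvee a_i$, we have $c_{\op}(\bigcup a_i^\downarrow)\subseteq a^\downarrow$ because $a^\downarrow$ is a proposition containing each $a_i^\downarrow$.
\item Joins, inclusion $\subseteq$: take $x=(c,d)$ with $c\leq a$, and $x'=(e,f)\op x$, so $c\not\leq f$. Then $a\not\leq f$, so some $a_i\not\leq f$. Join-density gives $v\in\mathrm{V}$ with $v\leq a_i$ and $v\not\leq f$, so $v\neq 0$. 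Then $x''=(v,\neg v)$ satisfies $x'\op x''$ and $x''\in a_i^\downarrow$.
\item Bounds: $1^\downarrow=X$ and $0^\downarrow=\emptyset$. The latter is the set of absurd states, since there are none by reflexivity.
\item Injectivity: if $a\not\leq b$, join-density yields $(v,\neg v)\in a^\downarrow\setminus b^\downarrow$.
\end{itemize}

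\emph{Step 3: negation (the main step).} We must show $\neg_{\op}(a^\downarrow)=(\neg a)^\downarrow$.

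For $\supseteq$, let $x=(c,d)$ with $c\leq\neg a$, and suppose some $x'=(e,f)\op x$ had $e\leq a$. There are two cases.
\begin{itemize}
\item If $x'=(e,\neg e)$, then antitonicity gives $\neg a\leq\neg e$. So $c\leq\neg e=f$, contradicting $x'\op x$.
\item If $x'=(1,f)$, then $a=1$, so $\neg a=\neg 1=1\wedge\neg 1=0$, forcing $c=0$. This is impossible for points of $X$, whose first coordinate is always nonzero.
\end{itemize}

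For $\subseteq$, suppose $c\not\leq\neg a$. I claim some $v\in\mathrm{V}$ with $v\leq a$ has $c\not\leq\neg v$. Otherwise, dual self-adjointness gives $v\leq\neg c$ for every such $v$. Since $a$ is the join of the elements of $\mathrm{V}$ below it, this gives $a\leq\neg c$, hence $c\leq\neg a$, a contradiction. This $v$ is nonzero, because $\neg 0=1$. Then $x'=(v,\neg v)$ satisfies $x'\op x$ and $x'\in a^\downarrow$, so $x\notin\neg_{\op}(a^\downarrow)$.

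I expect this negation step to be the delicate one. It is the only place where dual self-adjointness, semicomplementation and join-density must interact, and it needs the separate boundary case $a=1$ for the $(1,b)$ points.
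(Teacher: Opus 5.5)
Your proof is correct and complete. This paper does not prove the theorem itself; it imports it from \citealt{Holliday2023} (Thm.~4.24.4), so there is no in-paper argument to compare against. Your direct verification is a self-contained substitute for that citation:
\begin{itemize}
\item reflexivity from semicomplementation;
\item pseudo-symmetry and the join clause from join-density;
\item the fact that $a^\downarrow$ is a proposition from meet-density, via the points $(1,\lambda)$;
\item the negation clause from dual self-adjointness together with join-density.
\end{itemize}

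Two small points are left implicit and are worth one line each.
\begin{itemize}
\item A relational frame must have $X\neq\varnothing$. This follows from $0\neq 1$ and join-density; it is exactly the instance $a=1$, $b=0$ of your injectivity argument.
\item In the second case of pseudo-symmetry, you need $v\neq 0$ so that $(v,\neg v)\in X$. This holds because $0\leq\neg e$ while $v\not\leq\neg e$.
\end{itemize}
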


Note that this result holds, in particular, when $\mathrm{V} = \Lambda = L$.

\begin{theorem}[\citealt{Holliday2023}, Thm.~4.30]\label{RefCanEmbed}
    Let $L$ be a fundamental lattice, and let the \emph{reflexive canonical frame} of $L$ be the relational frame $\mathsf{FI}(L) := (X, \op)$ where:
\begin{itemize}
    \item $X$ is the set of all pairs $(F,I)$ such that $F$ is a proper filter on $L$, $I$ is a proper ideal on $L$, $F\cap I=\varnothing$, and for each $a \in F$, we have $\neg a \in I$;
    \item The relation $\op$ on $X$ is given by the following condition for any $(F,I),(G,J)\in X$:
    \[(F,I)\op (G,J) \Leftrightarrow G \cap I = \varnothing.\]
\end{itemize}

Then $\mathsf{FI}(L)$ is a fundamental frame in which the relation $\op$ is reflexive. Moreover, the map
\[a \mapsto \widehat{a} := \{(F,I)\in X \mid a \in F\}\] is a fundamental embedding of $L$ into the dual algebra of $\mathsf{FI}(L)$.
\end{theorem}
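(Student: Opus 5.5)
The plan is to verify the frame conditions directly from the definition of $\op$ on $\mathsf{FI}(L)$, and then to check that $a\mapsto\widehat a$ lands in propositions and preserves the operations. Throughout I use three facts. First, $(F,I)\op(G,J)$ iff $G\cap I=\varnothing$. Second, $\neg$ is antitone and satisfies $a\wedge\neg a=0$. Third, $\neg a\vee\neg b\le\neg(a\wedge b)$, which follows from antitonicity. I also use two witness states throughout. For $c\neq 1$, the pair $(\{1\},\dnset c)$ is a state, since $\neg 1=\neg1\wedge 1=0\in\dnset c$. For $c\neq 0$, the pair $(\upset c, I_c)$ is a state, where $I_c$ is the ideal generated by $\neg c$; it is disjoint from $\upset c$ because $c\le\neg c$ would give $c=c\wedge\neg c=0$.

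\textbf{Frame conditions.} Reflexivity is immediate: $(F,I)\op(F,I)$ iff $F\cap I=\varnothing$, which holds by definition of $X$. Reflexivity gives pseudo-reflexivity, since each $x$ is open to itself and trivially prerefines itself; it also shows that there are no absurd states. The substantive step, and the main obstacle, is pseudo-symmetry.

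Let $x=(F,I)$ and $y=(G,J)$ with $y\op x$, i.e.\ $F\cap J=\varnothing$. I will take $z=(F,I_F)$, where $I_F$ is the ideal generated by $\{\neg a\mid a\in F\}$. Since $F$ is closed under meets and $\neg a_1\vee\dots\vee\neg a_n\le\neg(a_1\wedge\dots\wedge a_n)$, every element of $I_F$ lies below some $\neg a$ with $a\in F$. I then check four things.
\begin{itemize}
\item $z\in X$. If $b\le\neg a$ with $a,b\in F$, then $a\wedge b\le a\wedge\neg a=0$, contradicting properness of $F$. Taking $b=1$ shows $I_F$ is proper, and taking arbitrary $b\in F$ shows $F\cap I_F=\varnothing$. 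The condition $a\in F\Rightarrow\neg a\in I_F$ holds by construction.
\item $z\op y$. Suppose $g\in G\cap I_F$, so $g\le\neg a$ with $a\in F$. Then $a\le\neg g$ by dual self-adjointness. Since $g\in G$ and $(G,J)\in X$, we have $\neg g\in J$, so $a\in F\cap J$, a contradiction.
\item $z$ prerefines $x$. If $w=(H,K)\op z$, then $F\cap K=\varnothing$, which says exactly that $w\op x$.
\end{itemize}

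\textbf{The map $a\mapsto\widehat a$.} To show $\widehat a$ is a proposition, it suffices to show $c_{\op}(\widehat a)\subseteq\widehat a$. If $a\notin F$, set $x'=(\{1\},\dnset a)$, which is a state since $a\neq1$. Then $x'\op x$, because $F\cap\dnset a=\varnothing$. Any $x''=(H,K)$ with $x'\op x''$ satisfies $H\cap\dnset a=\varnothing$, so $a\notin H$; hence $x\notin c_{\op}(\widehat a)$.

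The lattice operations go as follows.
\begin{itemize}
\item Meets are preserved since filters are closed under $\wedge$.
\item $\widehat 1=X$, and $\widehat0=\varnothing$, which is the least proposition because there are no absurd states.
\item Joins. The inclusion $\widehat a\vee\widehat b\subseteq\widehat{a\vee b}$ follows from monotonicity and the fact that $\widehat{a\vee b}$ is a proposition. Conversely, let $a\vee b\in F$ and $x'=(F',I')\op x$. Then $a\vee b\notin I'$, so, say, $a\notin I'$, and $a\neq0$ (if $a=0$ then $b\notin I'$ and we argue with $b$). Take $x''=(\upset a,I_a)$; then $x'\op x''$ because $\upset a\cap I'=\varnothing$, and $x''\in\widehat a$.
\item Negation. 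If $\neg a\in F$ and $x'=(F',I')\op x$ with $a\in F'$, then $\neg a\in I'$, so $\neg a\in F\cap I'$, contradicting $F\cap I'=\varnothing$. Conversely, if $\neg a\notin F$, then $x'=(\upset a,I_a)$ is open to $x$, since $F\cap I_a=\varnothing$ ($I_a=\dnset\neg a$ and $F$ is an up-set). Moreover $x'\in\widehat a$, so $x\notin\neg_{\op}\widehat a$.
\item Injectivity. If $a\not\le b$, the state $(\upset a,\dnset b)$ lies in $\widehat a\setminus\widehat b$. The remaining clause $c\in\upset a\Rightarrow\neg c\in\dnset b$ needs care: I would instead use $(\upset a,J)$ with $J$ the ideal generated by $b$ and $\neg a$, and check that this $J$ is proper and disjoint from $\upset a$.
\end{itemize}
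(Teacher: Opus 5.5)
Apart from injectivity, your argument is correct. Reflexivity and pseudo-symmetry go through, and your witness $z=(F,I_F)$ is the same state $(x_F,I(x_F))$ the paper later uses for prefactoring in its lemma on balancedness. The paper gives no proof of this theorem and only cites Holliday 2023. The check that $\widehat a$ is $c_{\op}$-closed and the meet, join, and negation clauses are also fine. One small omission: in the negation case you need $a\neq 0$ to form $(\upset a,I_a)$. This holds because dual self-adjointness gives $b\le\neg 0$ for all $b$, so $\neg 0=1\in F$, and hence $\neg a\notin F$ forces $a\neq 0$.

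The genuine gap is injectivity. Your fallback state is $(\upset a,J)$ with $J=\dnset(b\vee\neg a)$, the ideal generated by $b$ and $\neg a$. It is a state only if $a\not\le b\vee\neg a$, and without distributivity this does not follow from $a\not\le b$. Concretely, take the six-element ortholattice with pairwise incomparable atoms $a,\neg a,b,\neg b$, which is a fundamental lattice. There $a\not\le b$, yet $b\vee\neg a=1$, so $J=L$ is improper and meets $\upset a$. The check you promise therefore cannot be carried out. What breaks is the distributive identity $a\wedge(b\vee\neg a)=a\wedge b$.

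The repair is immediate. Membership in $\widehat b$ depends only on the filter component of a state, so there is no reason to put $b$ into the ideal. If $a\not\le b$, then $a\neq 0$, and the state $(\upset a,I_a)$ you already built lies in $\widehat a$ but not in $\widehat b$, since $b\notin\upset a$. Equivalently, $\widehat a\subseteq\widehat b$ implies $a\le b$.
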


\subsubsection{Modal Frames}

\citealt{Holliday2022,Holliday2024} studies fundamental logic and related logics extended with modalities. The semantics interprets $\Box$ using an accessibility relation $\shortdashrightarrow$ in the usual J\'{o}nsson-Tarski style (\citealt{Jonsson1952a}), but we must impose an interaction condition between accessibility and openness to ensure that the modal operator sends propositions to propositions.\footnote{\citealt{Holliday2022,Holliday2024} writes $(X,\op, R)$ where we would write $(X,\op,\shortdashrightarrow)$ in the notation of this paper, but it will be more convenient for our purposes here to write modal frames as  $(X,\op,\shortdashleftarrow)$.}

\begin{definition}\label{ModalFrameDef} A \textit{modal frame} is a triple $(X,\op,\shortdashleftarrow)$ where $X$ is a nonempty set and $\op$ and $\shortdashleftarrow$ are binary relations on $X$ satisfying the following condition (see Figure \ref{ModalFrameCon}):
\[\mbox{if $x \shortdashrightarrow y\po z$, then $\exists x'\op x$ $\forall x''\po x' $ $\exists y''$: $x''\shortdashrightarrow y''\po z$.}\]
A \textit{fundamental modal frame} is a modal frame $(X,\op,\shortdashleftarrow)$ such that $(X,\op)$ is a fundamental frame.
\end{definition}

\begin{figure}[H]
\begin{center}
\begin{tikzpicture}[->,>={stealth'[length=6pt]},shorten >=1pt,shorten <=1pt, auto,node
distance=2cm,thick,every loop/.style={<-,shorten <=1pt}]
\tikzstyle{every state}=[fill=gray!20,draw=none,text=black]

\node (x) at (-1,0) {{$x$}};
\node (y) at (3,0) {{$y$}};
\node (z) at (3,4) {{$z$}};
\node at (5,1.5) {{\textit{$\Rightarrow$}}};

\path (x) edge[dashed,->] node {{}} (y);
\path[-{Triangle[open]},draw,thick]  (y) to node {{}} (z);

\node (x2) at (7,0) {{$x$}};
\node (x'2) at (7,4) {{$x'$}};
\node (x''2) at (8,1) {{$x''$}};
\node (y''2) at (10,1) {{$y''$}};
\node (y2) at (11,0) {{$y$}};
\node (z2) at (11,4) {{$z$}};

\path (x2) edge[dashed,->] node {{}} (y2);
\path (x''2) edge[dashed,->] node[above] {{$\exists$}} (y''2);
\path[-{Triangle[open]},draw,thick] (y2) to node {{}} (z2);
\path[-{Triangle[open]},draw,thick]  (x2) to node {{$\exists$}} (x'2);
\path[-{Triangle[open]},draw,thick]  (x''2) to node[right] {{$\forall$}} (x'2);
\path[-{Triangle[open]},draw,thick]  (y''2) to node {{}} (z2);

\end{tikzpicture}
\end{center}
\caption{Illustration of the modal frame condition in Definition \ref{ModalFrameDef}. A solid line from $w$ to $v$ indicates $w\po v$, and a dashed line from $w$ to $v$ indicates $w\shortdashrightarrow v$.}\label{ModalFrameCon}
\end{figure}

\begin{lemma} For any modal frame $(X,\op,\shortdashleftarrow)$ and proposition $U$,
\[\Box_{\shortdashleftarrow} U=\{x\in X\mid \mbox{for all } y \shortdashleftarrow x,\,  y\in U\}\]
is also a proposition, so we regard $\Box_{\shortdashleftarrow}$ as a map from propositions to propositions.
\end{lemma}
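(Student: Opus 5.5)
The plan is to verify directly that $c_{\op}(\Box_{\shortdashleftarrow} U)\subseteq \Box_{\shortdashleftarrow} U$. The reverse inclusion is Lemma~\ref{Closure}.\ref{Closure3}, so this gives $\Box_{\shortdashleftarrow}U = c_{\op}(\Box_{\shortdashleftarrow}U)$. I will argue contrapositively: take $x\notin \Box_{\shortdashleftarrow} U$ and produce a witness showing $x\notin c_{\op}(\Box_{\shortdashleftarrow}U)$. That means finding some $x'\op x$ such that no $x''\po x'$ lies in $\Box_{\shortdashleftarrow}U$.

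First I unpack the hypothesis. Since $x\notin\Box_{\shortdashleftarrow}U$, there is a $y$ with $x\shortdashrightarrow y$ and $y\notin U$. Because $U$ is a proposition, $U=c_{\op}(U)$, so $y\notin c_{\op}(U)$. Hence there is a $z\op y$, i.e.\ $y\po z$, such that every $w\po z$ satisfies $w\notin U$. Thus $z$ ``rejects'' $U$.

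Next I apply the modal frame condition of Definition~\ref{ModalFrameDef} to $x\shortdashrightarrow y\po z$. It yields an $x'\op x$ such that for every $x''\po x'$ there is a $y''$ with $x''\shortdashrightarrow y''\po z$. For such $y''$ we have $y''\po z$, so $y''\notin U$ by the choice of $z$. Since $x''\shortdashrightarrow y''$, it follows that $x''\notin\Box_{\shortdashleftarrow}U$. So $x'$ is the required witness, and $x\notin c_{\op}(\Box_{\shortdashleftarrow}U)$.

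The only delicate step is bookkeeping. One must keep the directions of $\op$, $\po$, $\shortdashrightarrow$ and $\shortdashleftarrow$ straight and match them exactly to the quantifier pattern in the definition of $c_{\op}$. Once that is done, the frame condition is precisely what transports the rejecting state $z$ back along accessibility. No properties of fundamental frames (pseudo-reflexivity or pseudo-symmetry) are needed, so the lemma holds for arbitrary modal frames.
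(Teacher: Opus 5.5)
Your proof is correct. You take $x\notin\Box_{\shortdashleftarrow}U$, extract a $U$-rejecting $z$ with $z\op y$ from the fact that $y\notin U=c_{\op}(U)$, and transport it back along accessibility via the interaction condition of Definition~\ref{ModalFrameDef}, which yields exactly the witness $x'\op x$ required for $x\notin c_{\op}(\Box_{\shortdashleftarrow}U)$. The paper states this lemma without proof, as background from Holliday's earlier work; your argument is the standard direct verification that this frame condition was designed to support, and you are right that no fundamental-frame properties are needed.
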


When $\shortdashleftarrow$ is clear from context, we write $\Box U$ instead of $\Box_{\shortdashleftarrow} U$.

\begin{definition} The \textit{dual modal algebra} of a modal frame $(X,\op,\shortdashleftarrow)$ is the algebra $(L,\neg_{\op},\Box_{\shortdashleftarrow})$ where $(L,\neg_{\op})$ is the dual algebra of $(X,\op)$.
\end{definition}

Note that standard relational semantics for classical modal logic is a special case of the definitions above. Any such relational frame $(X, \shortdashleftarrow)$ can be viewed as a modal frame $(X, \op, \shortdashleftarrow)$, where $\op$ is the identity on $X$. The condition in Figure~\ref{ModalFrameCon} is then trivially satisfied, and the dual algebra is just the complex algebra $(\Po(X), \Box_{\shortdashleftarrow})$ of the relational frame $(X, \shortdashleftarrow)$.

Finally, let us recall that fundamental modal frames provide canonical representations of fundamental necessity lattices.

\begin{theorem}[\citealt{Holliday2024}, Thm.~3.10, Thm.~5.3]\label{FunModalRep}
    Let $(L, \neg, \Box)$ be a fundamental necessity lattice. The \textit{canonical modal frame} of $(L, \neg,\Box)$ is defined as the triple $(X, \op, \shortdashleftarrow)$ where:
    \begin{itemize}
        \item $X$ is the set of all pairs $(F,I)$ of a filter $F$ and an ideal $I$ of $L$ such that  for any $a \in L$, $a \in F$ implies $\neg a \in I$;
        \item $(F,I) \op (G,J)$ iff $G \cap I = \varnothing$;
        \item $(F,I) \shortdashleftarrow (G,J)$ iff for each $a\in L$, $\Box a \in G$ implies $a \in F$.
    \end{itemize}
Then $(X, \op, \shortdashleftarrow)$ is a fundamental modal frame. Moreover, the map \[a \mapsto \widehat{a} := \{(F,I) \in X \mid a \in F\}\] is a fundamental modal embedding of $(L,\neg,\Box)$ into $(\chi_{\op}(X), \neg_{\op},\Box_{\shortdashleftarrow})$, the dual modal algebra of $(X, \op, \shortdashleftarrow)$.
\end{theorem}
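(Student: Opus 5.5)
The plan has three parts, carried out in order:
\begin{itemize}
\item[(i)] show that $(X,\op)$ is a fundamental frame;
\item[(ii)] verify the interaction condition of Definition~\ref{ModalFrameDef};
\item[(iii)] show that $a\mapsto\widehat a$ is a fundamental embedding that also commutes with $\Box$.
\end{itemize}
Note that here filters and ideals need not be proper, and disjointness is not required. So $X$ contains absurd states, for example pairs $(F,I)$ with $F\cap I\neq\varnothing$. This is harmless, and it is convenient because the constructions below never need to check properness.

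For (i) and the non-modal half of (iii), I would follow the proof of Theorem~\ref{RefCanEmbed}. The key observation is that $(G,J)$ prerefines $(F,I)$ whenever $F\subseteq G$. For pseudo-reflexivity of a non-absurd $(F,I)$, use a witness of the form $(F,I')$ with $I'\cap F=\varnothing$ and $I'$ containing $\neg F$. For pseudo-symmetry, given $(G,J)\op(F,I)$, take $z=(F,J')$, where $J'$ is the ideal generated by $J\cup\neg F$. Then:
\begin{itemize}
\item $G\cap J'=\varnothing$, using dual self-adjointness and semicomplementation as in the non-modal case;
\item $z$ prerefines $(F,I)$, since its filter is $F$.
\end{itemize}
It remains to check the following, exactly as before:
\begin{itemize}
\item $\widehat a$ is a proposition;
\item $\widehat{a\wedge b}=\widehat a\cap\widehat b$ and $\widehat{a\vee b}=\widehat a\vee\widehat b$;
\item $\widehat{\neg a}=\neg_{\op}\widehat a$;
\item the map is injective.
\end{itemize}
Injectivity uses principal pairs $({\uparrow}a,{\downarrow}\neg a)$, and the join and negation clauses use principal pairs $({\uparrow}a,{\downarrow}b)$.

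For (ii), suppose $x=(F,I)\shortdashrightarrow y=(G,J)\po z=(H,K)$. This means that $\Box b\in F$ implies $b\in G$, and that $G\cap K=\varnothing$. I would choose
\[x'=(\{1\},I_0),\qquad I_0 \text{ the ideal generated by } \{\Box b\mid b\in K\}.\]
This is a legitimate state, since $\neg 1=0\in I_0$. To see $x'\op x$, i.e.\ $F\cap I_0=\varnothing$: if $c\in F$ with $c\leq\Box b_1\vee\dots\vee\Box b_n$, then $c\leq\Box(b_1\vee\dots\vee b_n)$ by monotonicity of $\Box$. Hence $\Box(b_1\vee\dots\vee b_n)\in F$, so $b_1\vee\dots\vee b_n\in G\cap K$, a contradiction. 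Now take any $x''=(F'',I'')$ with $x''\po x'$, i.e.\ $F''\cap I_0=\varnothing$. Put
\[G''=\{b\mid\Box b\in F''\},\qquad J'' \text{ the ideal generated by } \{\neg b\mid b\in G''\}.\]
Normality of $\Box$ makes $G''$ a filter, so $y''=(G'',J'')\in X$ and $x''\shortdashrightarrow y''$. Finally $G''\cap K=\varnothing$: if $b\in K$, then $\Box b\in I_0$, so $\Box b\notin F''$, so $b\notin G''$. This gives $y''\po z$.

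For the modal half of (iii), I need $\widehat{\Box a}=\Box_{\shortdashleftarrow}\widehat a$. The inclusion $\subseteq$ is immediate from the definition of $\shortdashleftarrow$. For $\supseteq$, suppose $\Box a\notin G$ for some $(G,J)$. Let $F=\{b\mid\Box b\in G\}$, which is a filter by normality, and let $I$ be the ideal generated by $\neg F$. Then $(F,I)\in X$, $(F,I)\shortdashleftarrow(G,J)$, and $a\notin F$, so $(G,J)\notin\Box_{\shortdashleftarrow}\widehat a$.

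The main obstacle is (ii), specifically choosing $x'$. My first attempt is the choice above: an ideal of boxes of $K$-elements, with the trivial filter $\{1\}$. If it failed, the fallback would be to enlarge the filter of $x'$ to $F$ itself and recheck the pair condition.
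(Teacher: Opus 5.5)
Your witness for pseudo-symmetry fails as stated. You need $z\op y$, i.e.\ $G\cap J'=\varnothing$. But $J'\supseteq J$, and in this frame $G\cap J$ can be nonempty for a state $y=(G,J)$ that is open to $x$.

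Your remark that pairs with $F\cap I\neq\varnothing$ are absurd is also not right. In fact $(F,I)$ is absurd iff $0\in F$, since $(\{1\},\{0\})\in X$ and $(\{1\},\{0\})\op(F,I)$ whenever $0\notin F$.

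Concretely, in $\mathsf{F4}$ take $y=(\{a,1\},\{0,a\})$ and $x=(\{b,1\},\{0\})$. Both lie in $X$ and $y\op x$. Yet your $J'$, generated by $\{0,a\}\cup\{\neg b,\neg 1\}$, is $\{0,a\}$, which meets $G=\{a,1\}$.

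The problem is not confined to non-disjoint pairs. Take the six-element ortholattice with chains $0<a<b<1$ and $0<\neg b<\neg a<1$. The proper, disjoint pairs $y=(\{\neg a,1\},\{0,a\})$ and $x=(\{b,1\},\{0,\neg b\})$ satisfy $y\op x$, yet $a\vee\neg b=1\in J'$. So the appeal to ``the non-modal case'' does not cover your construction: without distributivity, joins of elements of $J$ with elements of $\neg F$ are not controlled.

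The repair is to drop $J$ and let $J'$ be the ideal generated by $\neg F$ alone, as with $(x_F,I(x_F))$ in the proof of Lemma~\ref{canballma}. Then $z=(F,J')\in X$, and it prerefines $x$ because its filter is $F$. To see $z\op y$, suppose $g\in G\cap J'$. Then $g\leq\neg f_1\vee\dots\vee\neg f_n\leq\neg(f_1\wedge\dots\wedge f_n)$ for some $f_i\in F$. Dual self-adjointness gives $f_1\wedge\dots\wedge f_n\leq\neg g$, hence $\neg g\in F$. But $g\in G$ gives $\neg g\in J$, contradicting $F\cap J=\varnothing$, which is exactly the hypothesis $y\op x$.

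With this change the proposal goes through. Pseudo-reflexivity is fine. Your verification of the interaction condition, with $x'=(\{1\},I_0)$ and $G''=\{b\mid\Box b\in F''\}$, is correct as written. Both the non-modal and modal halves of (iii) are correct as well.
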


\section{The GMT Translation into Ortho-\textsf{S4}}\label{OS4Section}

In this section, we establish our first modal translation for fundamental logic. Our motivation is the following. The standard G\"odel-McKinsey-Tarski translation of intuitionistic logic into $\mathsf{S4}$ gives, intuitively, a way for the classical logician to understand the intuitionistic logician. Given a pair of formulas $(\phi,\psi)$ of the language of intuitionistic logic, the classical logician can determine whether $\psi$ is an intuitionistic consequence of $\phi$ by first translating the two formulas into modal formulas $\tau(\phi)$ and $\tau(\psi)$ and then checking whether $\tau(\psi)$ is a consequence of $\tau(\phi)$ over $\mathsf{S4}$. Our aim is to generalize this approach beyond the distributive setting. The target logic of our translation is therefore not $\mathsf{S4}$, but rather the natural orthological counterpart to $\mathsf{S4}$, which we call $\mathsf{OS4}$. We start by introducing $\mathsf{OS4}$ and characterizing it as the logic of orthomodal preordered frames, before presenting a correspondence between \textbf{OS4}-frames and fundamental frames that will induce a translation of fundamental logic into $\mathsf{OS4}$.

\subsection{Ortho-\textsf{S4}}

From the semantic perspective, the modal logic $\mathsf{S4}$ can be both understood as the logic of modal algebras $(B,\Box)$ where $\Box$ is an interior operator (i.e., a non-increasing, idempotent necessity operator), and as the logic of reflexive and transitive relational frames. Our first task is to transfer these notions from the classical to the orthological setting.

\begin{definition}
    An \textit{\textbf{OS4}-lattice} is an orthomodal  lattice $(O, \neg, \Box)$ as in Definition~\ref{FunNecLat} such that for all $a\in O$, $\Box a \leq a$ and $\Box  a \leq \Box\Box a$.
\end{definition}

In orthomodal logic, the duality between $\Box$ and $\diamondsuit$ is preserved, since $\diamondsuit a$ is defined as $\neg \Box \neg a$. However, because orthocomplements are not pseudo-complements, many validities of a classical modal logic may fail in its orthomodal counterpart. Consider the following example in the case of $\mathsf{S4}$ and $\mathsf{OS4}$. In any \textbf{S4}-algebra $(B,\Box)$, one can easily check that $\Box\diamondsuit\Box a \land \Box \diamondsuit \Box b = \Box \diamondsuit \Box (a \land b)$ for any $a,b \in B$. However, this equality fails for \textbf{OS4}-lattices, as the counterexample in Figure~\ref{OS4LatFig} shows.

\begin{figure}[H]
\begin{center}
\begin{tikzpicture}[->,>={stealth'[length=6pt]},shorten >=1pt,shorten <=1pt, auto,node
distance=2cm,semithick,every loop/.style={<-,shorten <=1pt}]
\tikzstyle{every state}=[fill=gray!20,draw=none,text=black]

\node  (1) at (3,4) {{$1$}};
\node (x) at (0,2) {{$\neg a$}};
\node  (y) at (2,2) {{$a$}};
\node (y') at (4,2) {{$b$}};
\node  (z) at (6,2) {{$\neg b$}};
\node (0) at (3,0) {{$0$}};
\path (1) edge[-] node {{}} (y);
\path (1) edge[-] node {{}} (y');
\path (1) edge[-] node {{}} (x);
\path (1) edge[-] node {{}} (z);
\path (x) edge[-] node {{}} (0);
\path (y) edge[-] node {{}} (0);
\path (y') edge[-] node {{}} (0);
\path (z) edge[-] node {{}} (0);

\path (y) edge[loop left, min distance=1.25cm, dashed] node {{}} (y);
\path (y') edge[loop right, min distance=1.25cm, dashed] node {{}} (y');
\path (x) edge[->, bend right, dashed] node {{}} (0);
\path (z) edge[->, bend left, dashed] node {{}} (0);
\path (0) edge[loop below, min distance=1.25cm, dashed] node {{}} (0);
\path (1) edge[loop above, min distance=1.25cm, dashed] node {{}} (1);
\end{tikzpicture}
\end{center}
\caption{An \textbf{OS4}-lattice in which $\Box\diamondsuit\Box a\land\Box\diamondsuit\Box b=1$ and $\Box\diamondsuit\Box (a\land b)=0$. Solid edges give the Hasse diagram of the ortholattice, and dashed arrows represent the $\Box$ operation.}\label{OS4LatFig}
\end{figure}

Let us now consider a particular class of modal frames. Whenever $(X,\op, \shortdashleftarrow)$ is a modal frame such that $\op$ is symmetric, we will call it an \emph{orthomodal frame}, and we use the symbol $\top$ to denote the relation $\op$ in a way that emphasizes its symmetry.

\begin{definition}\label{OS4-frame}
    An \textit{\textbf{OS4}-frame} is a modal frame $(X,\top,\leq)$ as in Definition \ref{ModalFrameDef} such that:
    \begin{enumerate}
        \item $\top$ is a reflexive and symmetric relation on $X$;
        \item $\leq$ is a reflexive and transitive relation on $X$.
    \end{enumerate}
\end{definition}

When $x\top y$, we say that $x$ is \textit{compatible with} $y$. Intuitively, this means that nothing supported at $x$ is rejected at $y$ or vice versa. When $x\leq y$, we say that $x$ is an \textit{extension of} $y$. Intuitively, this means that facts about what is \textit{known} or \textit{proved} are preserved from $y$ to $x$. However, facts about what is unknown or unproved are not necessarily preserved. As a result,  $x\top y\leq z$ does not in general imply $x\top z$; for example, $z$ could settle that some proposition $P$ is unknown, which leads to incompatibility with $x$, according to which $P$ \textit{is} known, whereas the extension $y$ of $z$ achieves compatibility with $x$ by agreeing that $P$ is known.

Given an \textbf{OS4}-frame $(X, \top, \leq)$, the dual algebra induced by $\top$ forms a modal ortholattice $(\chi_\top(X),\Box_\leq) $, with the modal operation $\Box_\leq$ defined by \[\Box_\leq U = \{x \in X \mid \mbox{for all } y\leq x, \, y \in U\}\] for any $U \in \chi_\top(X)$. It is routine to check that $(\chi_\top(X), \Box_\leq)$ is an \textbf{OS4}-lattice.

Recall the definition of $\mathsf{OS4}$ from Section~\ref{LogicsSec}.

\begin{theorem}
    The logic $\mathsf{OS4}$ is sound and complete with respect to \textbf{OS4}-lattices and \textbf{OS4}-frames.
\end{theorem}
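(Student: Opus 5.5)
The proof has four steps. First, soundness and completeness with respect to \textbf{OS4}-lattices come from a standard Lindenbaum--Tarski argument. Second, soundness with respect to \textbf{OS4}-frames follows from the paper's remark that dual algebras of \textbf{OS4}-frames are \textbf{OS4}-lattices. Third, and this is the real work, completeness with respect to frames requires showing that every \textbf{OS4}-lattice embeds into the dual algebra of some \textbf{OS4}-frame. Fourth, the frame completeness result is assembled from the lattice one.

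For the algebraic part, the relation $\varphi\vdash_{\mathsf{OS4}}\psi$ is a preorder: the base proofs give reflexivity, and splicing one proof onto another gives transitivity. We then check that its quotient is an \textbf{OS4}-lattice.
\begin{itemize}
\item The $\wedge$ and $\vee$ rules make the quotient a bounded lattice.
\item The $\neg$I, $\neg$E and RAA rules give dual self-adjointness, semicomplementation and $\neg\neg a\leq a$.
\item The $\Box$I and $\Box$E rules give $\Box 1=1$ and $\Box(a\wedge b)=\Box a\wedge \Box b$. For the latter, open a $\Box$-subproof, use $\Box$E twice, then $\wedge$I and $\Box$I.
\item The \textsf{T} and \textsf{4} rules give $\Box a\leq a$ and $\Box a\leq\Box\Box a$.
\end{itemize}
One also needs that $\vdash_{\mathsf{OS4}}$ is a congruence, so that the operations are well defined. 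For $\Box$ this uses $\Box$I/$\Box$E to get monotonicity. Soundness with respect to \textbf{OS4}-lattices is the usual induction on proofs in the sense of Appendix~\ref{AppendixModal}. In that induction, a $\Box$-subproof is interpreted by passing to $\Box$ of the meet of the boxed formulas available for $\Box$E. Completeness with respect to lattices is then witnessed by the canonical valuation $p\mapsto[p]$.

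For frame completeness, I would adapt Theorem~\ref{FunModalRep} to the orthocase, in the style of Goldblatt's representation of ortholattices. Let $O$ be an \textbf{OS4}-lattice and take $X$ to be the set of proper filters of $O$. Define $F\top G$ iff there is no $a\in F$ with $\neg a\in G$; this relation is symmetric because $\neg\neg a=a$, and reflexive because each filter is proper and $a\wedge\neg a=0$. Define $G\leq F$ iff $\Box a\in F$ implies $a\in G$ for all $a$, i.e.\ $\Box^{-1}[F]\subseteq G$. This relation is reflexive by $\Box a\leq a$ and transitive by $\Box a\leq \Box\Box a$. The map $a\mapsto \widehat a=\{F\mid a\in F\}$ should be an ortholattice embedding into $\chi_\top(X)$, by Goldblatt's argument using principal filters. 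It remains to check $\widehat{\Box a}=\Box_\leq\widehat a$:
\begin{itemize}
\item If $\Box a\in F$ and $G\leq F$, then $a\in G$.
\item If $\Box a\notin F$, then the filter generated by $\Box^{-1}[F]$ is proper and omits $a$. This uses the normality of $\Box$: if $\Box b\in F$ and $b\leq a$, then $\Box a\in F$.
\end{itemize}

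The main obstacle is verifying the modal frame condition of Definition~\ref{ModalFrameDef}. It is needed so that $\Box_\leq$ maps propositions to propositions, i.e.\ so that the dual algebra is really an \textbf{OS4}-lattice. Suppose $F\leq G\top H$, writing $x\shortdashrightarrow y$ for $y\leq x$. We need an $F'\top F$ such that every $F''\top F'$ has some $G''\geq F''$ with $G''\top H$. I would first try $F'$ equal to the principal filter generated by $\Box$ of suitable elements, or simply $F'=F$ together with the choice $G''=\Box$-closure of $F''$. If that fails, I would fall back on Theorem~\ref{FunModalRep} applied to $O$ viewed as a fundamental necessity lattice, which is legitimate since ortholattices are fundamental lattices. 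That gives a fundamental modal frame of filter–ideal pairs. One then restricts to pairs $(F,\neg F)$, making $\op$ symmetric and reflexive, and replaces the accessibility relation by its reflexive transitive closure, checking that \textsf{T} and \textsf{4} make this harmless. Either way, the embedding together with algebraic completeness yields completeness with respect to \textbf{OS4}-frames.
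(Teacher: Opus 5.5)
Your route is the paper's. The algebraic half goes through the Lindenbaum algebra. The frame half uses the canonical modal frame restricted to pairs $(F,I)$ with $I=\{\neg a\mid a\in F\}$, which is exactly your frame of proper filters with Goldblatt's compatibility and $G\leq F$ iff $\Box^{-1}[F]\subseteq G$. Your checks of the two relations, of the ortholattice embedding, and of $\widehat{\Box a}=\Box_\leq\widehat a$ are fine.

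The gap is the step you yourself call the main obstacle. The modal frame condition of Definition~\ref{ModalFrameDef} is never verified, and what you say about it does not work.

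First, as written your condition has $\leq$ the wrong way round. With $x\shortdashrightarrow y$ meaning $y\leq x$, the condition reads: if $G\leq F$ and $G\top H$, then there is $F'\top F$ such that every $F''\top F'$ has some $G''\leq F''$ with $G''\top H$. Your version, with $F\leq G$ and $G''\geq F''$, is trivially met by $G''=\{1\}$ (using \textsf{T}), so it says nothing about $\Box_\leq$.

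Second, the choice $F'=F$ fails. Take the four-element Boolean algebra $2^{\{u,v\}}$ with $\Box$ the identity, and let $F=G=\{1\}$, $H=\mathord{\uparrow}\{u\}$, $F''=\mathord{\uparrow}\{v\}$. Then $F''\top F$, but the only proper filter $G''\leq F''$ is $F''$ itself. It contains $\{v\}$ while $\neg\{v\}=\{u\}\in H$, so $G''\top H$ fails.

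Third, the fallback only relocates the problem. The canonical accessibility relation is already reflexive and transitive by \textsf{T} and \textsf{4}, so taking the closure does nothing. Restricting the frame of Theorem~\ref{FunModalRep} to pairs $(F,\neg F)$ is exactly where the frame condition must be re-established, since the existential witnesses of the unrestricted frame need not have that form.

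The missing idea is to use diamonds. Put $F'=\mathord{\uparrow}\{\neg\Box\neg c\mid c\in H\}$, and given $F''$ take $G''=\Box^{-1}[F'']$, the least filter below $F''$ in $\leq$. This choice is forced: any witness $F'$ must contain each $\neg\Box\neg c$ for $c\in H$, as one sees by testing against $F''=\mathord{\uparrow}\Box\neg c$. The verification runs as follows.
\begin{itemize}
\item $F'$ is a filter, since $\neg\Box\neg(c_1\wedge c_2)\leq\neg\Box\neg c_1\wedge\neg\Box\neg c_2$.
\item $F'\top F$: if $\neg\Box\neg c\leq e$ and $\neg e\in F$, then $\neg e\leq\neg\neg\Box\neg c=\Box\neg c$. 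So $\Box\neg c\in F$, hence $\neg c\in G$, contradicting $G\top H$. Taking $e=0$ shows that $F'$ is also proper.
\item $G''\top H$: suppose $F''\top F'$, $a\in G''$ and $\neg a\in H$. Then $\Box a\in F''$ while $\neg\Box\neg\neg a=\neg\Box a\in F'$, contradicting $F''\top F'$. Taking $a=0$ shows that $G''$ is proper.
\end{itemize}
Note the essential use of $\neg\neg a=a$. With this verification, which the paper compresses into ``one can verify'', your argument is complete and coincides with the paper's.
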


\begin{proof}
    It is routine to check that $\mathsf{OS4}$ is valid on any \textbf{OS4}-lattice. Since the dual algebra of any \textbf{OS4}-frame is an \textbf{OS4}-lattice, this means that $\mathsf{OS4}$ is also sound with respect to \textbf{OS4}-frames. For completeness, we use standard techniques. First, the Lindenbaum algebra of $\mathsf{OS4}$ is easily seen to be an \textbf{OS4}-lattice, from which the completeness of $\mathsf{OS4}$ with respect to \textbf{OS4}-lattices immediately follows. Second, one can verify that the reflexive and symmetric canonical modal frame of any \textbf{OS4}-lattice $(O,\neg,\Box)$ (i.e., the restriction of the canonical modal frame of $(O,\neg,\Box)$ to pairs $(F,I)$ such that $F\cap I = \varnothing$ and $a \in F$ iff $\neg a \in I$) is an \textbf{OS4}-frame and that $(O, \neg, \Box)$ embeds into its dual algebra via the usual mapping. Applied to the Lindenbaum algebra of $\mathsf{OS4}$, this yields the completeness of the logic with respect to \textbf{OS4}-frames.
\end{proof}

\subsection{From \textbf{OS4}-Frames to Fundamental Frames}

Let us now establish a connection between $\mathsf{OS4}$ and fundamental logic by studying the relationship between \textbf{OS4}-frames and fundamental frames. We start with a canonical way of turning the former into the latter.

\begin{definition}
    Given an \textbf{OS4}-frame $(X, \top, \leq)$, we define its \textit{induced openness relation} $\op$ on $X$ by 
    \[x \op y \Leftrightarrow \exists z: x \top z \leq y.\]
\end{definition}

Note that the interaction condition between $\top$ and $\leq$ in \cref{ModalFrameDef} can be rewritten using the openness relation $\op$ as follows:

\[ z \op x \Rightarrow \exists x' \top x \, \forall x'' \top x': z \op x''.\]

\begin{lemma}\label{funredlma}
    Let $(X, \top, \leq)$ be an \textbf{OS4}-frame. Then $(X, \op)$ is a fundamental frame.
\end{lemma}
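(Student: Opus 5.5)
The plan is to verify the two defining conditions of a fundamental frame directly for the induced relation $\op$, where $x\op y$ iff $x\top z\leq y$ for some $z$. Only reflexivity and symmetry of $\top$ and reflexivity and transitivity of $\leq$ should be needed. I do not expect the interaction condition of Definition~\ref{ModalFrameDef} to play any role here.

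\emph{Pseudo-reflexivity.} Since $\top$ and $\leq$ are both reflexive, we have $x\top x\leq x$, so $x\op x$ for every $x\in X$. Hence $\op$ is in fact reflexive and no state is absurd. For any $x$ we may take $y=x$ itself, which trivially prerefines $x$ by condition~1 of Lemma~\ref{PrerefineLem}.

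\emph{Pseudo-symmetry.} Let $y\op x$, witnessed by some $u$ with $y\top u\leq x$. I propose taking this witness $u$ as the required state $z$.

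First, $u\op y$ holds: symmetry of $\top$ gives $u\top y$, and reflexivity of $\leq$ gives $y\leq y$.

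Second, $u$ prerefines $x$, which by Lemma~\ref{PrerefineLem} means that every $w\op u$ also satisfies $w\op x$. Suppose $w\op u$, witnessed by $v$ with $w\top v\leq u$. Transitivity of $\leq$ applied to $v\leq u\leq x$ gives $v\leq x$. Hence $w\top v\leq x$, that is, $w\op x$.

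The only step requiring any thought is choosing the right witness $z$ in pseudo-symmetry. A first instinct would be to invoke the rewritten interaction condition, but that condition yields $y\op x'$ rather than the needed $x'\op y$. The witness $u$ from the definition of $y\op x$ works instead. Once that choice is made, both conditions reduce to routine checks, so I do not anticipate a real obstacle.
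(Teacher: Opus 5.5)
Your proof is correct and follows the paper's argument essentially verbatim. Pseudo-reflexivity comes from the reflexivity of $\top$ and $\leq$, and for pseudo-symmetry the witness $u$ from $y \top u \leq x$ serves as the required state, with $u \op y$ by symmetry of $\top$ and $u \leq_{\op} x$ by transitivity of $\leq$; as you observe, the interaction condition is never used, which the paper itself relies on later when reusing this argument for \textbf{FSTB}-frames.
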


\begin{proof}
    We must check pseudo-reflexivity and pseudo-symmetry. Pseudo-reflexivity is clear, since both $\top$ and $\leq$ are reflexive. For pseudo-symmetry, suppose $x \op y$. We must find $y' \leq_{\op} y$ such that $y' \op x$. Since $x \op y$, there is $z \in X$ such that $x \top z \leq y$. We claim that $z$ is the required witness. Note first that $z \top x \leq x$, so $z \op x$. Hence we only need to show that $z \leq_{\op} y$. Suppose $x' \op z$. This means  there is a $z' \in X$ such that $x' \top z' \leq z$. Since $z \leq y$, it follows that $x' \top z' \leq y$, so $x' \op y$. This shows that $z \leq_{\op} y$, which completes the proof that $(X, \op)$ is a fundamental frame.
\end{proof}

Lemma~\ref{funredlma} motivates the following definition, illustrated with an example in Figure~\ref{FunReductFig}.

\begin{definition}
    Let $(X, \top, \leq)$ be an \textbf{OS4}-frame. The \textit{fundamental reduct} of $(X, \top, \leq)$ is the fundamental frame $(X, \op)$ where $\op$ is the induced openness relation of $(X, \top, \leq)$.
\end{definition}

\begin{figure}[H]
\begin{center}
\begin{tikzpicture}[->,>={stealth'[length=6pt]},shorten >=1pt,shorten <=1pt, auto,node
distance=2cm,thick,every loop/.style={<-,shorten <=1pt}]
\tikzstyle{every state}=[fill=gray!20,draw=none,text=black]

\node (0) at (2,0) {{$y$}};
\node (1) at (2,-4) {{$v$}};
\node (2) at (-2,-4) {{$w$}};
\node (3) at (-2,0) {{$x$}};

\path[{Triangle[open]}-{Triangle[open]},draw,thick] (0) to node {{}} (2);
\path[{Triangle[open]}-{Triangle[open]},draw,thick] (0) to node {{}} (3);
\path[{Triangle[open]}-{Triangle[open]},draw,thick] (2) to node {{}} (1);
\path[{Triangle[open]}-{Triangle[open]},draw,thick] (3) to node {{}} (1);

\path (0) edge[dashed,<->, bend right] node {{}} (3);

\end{tikzpicture}\qquad\qquad\qquad \begin{tikzpicture}[->,>={stealth'[length=6pt]},shorten >=1pt,shorten <=1pt, auto,node
distance=2cm,thick,every loop/.style={<-,shorten <=1pt}]

  \tikzstyle{directed_edge}=[-{Triangle[open]},draw,thick]
  \tikzstyle{edge}=[{Triangle[open]}-{Triangle[open]},draw,thick]

\node (0) at (2,0) {{$y$}};
\node (1) at (2,-4) {{$v$}};
\node (2) at (-2,-4) {{$w$}};
\node (3) at (-2,0) {{$x$}};

  \draw[directed_edge] (0) to (1);
  \draw[edge] (0) to (2);
  \draw[edge] (0) to (3);
  \draw[edge] (1) to (2);
  \draw[edge] (1) to (3);
  \draw[directed_edge] (3) to (2);
\end{tikzpicture}
\end{center}
\caption{An \textbf{OS4}-frame (left) and its fundamental reduct (right). A solid line from $w$ to $v$ indicates $v\top w$ on the left and $v\op w$ on the right; a dashed line from $w$ to $v$ on the left indicates $v\leq w$. The dual algebra of the \textbf{OS4}-frame is the $\textbf{OS4}$-lattice in Figure \ref{OS4LatFig}.}\label{FunReductFig}
\end{figure}

The relationship between the dual algebra $\chi_\top(X)$ of an \textbf{OS4}-frame $(X, \top, \leq)$ and the dual algebra $\chi_{\op}(X)$ of its fundamental reduct $(X, \op)$ is given by the following lemma.

\begin{lemma}
    Let $(X, \top, \leq)$ be an \textbf{OS4}-frame and $(X, \op)$ its fundamental reduct. Then:
    \begin{enumerate}
        \item $\chi_{\op} (X) = ran(\Box_\leq)$;
        \item The map $\Box_\leq: \chi_\top(X) \to \chi_{\op} (X)$ is right adjoint to the inclusion $\iota: \chi_{\op} (X) \to \chi_\top(X)$.
    \end{enumerate}
\end{lemma}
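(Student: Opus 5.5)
The plan is to describe $c_{\op}$ directly in terms of $\top$ and $\leq$, and then compare it with $\Box_\leq$ applied to $\top$-propositions. Both parts of the lemma should follow once two things are known: that every $\op$-proposition is a $\top$-proposition, and that every $\op$-proposition is a $\leq$-downset.

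First I would unfold the definitions. We have $x'\op x$ iff $x'\top z\leq x$ for some $z$, and $x''\po x'$ iff $x'\top w\leq x''$ for some $w$. Hence $x\in c_{\op}(U)$ iff for all $z\leq x$ and all $x'\top z$, there exist $w\top x'$ and $x''\geq w$ with $x''\in U$.

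Two inclusions then follow from reflexivity of $\top$ and $\leq$. Since $\top\subseteq\op$ (take $z=y$), we get $\neg_{\op}V\subseteq\neg_\top V$ for every $V$. Since $\po$ also contains $\top$, we get $\neg_{\po}U\subseteq\neg_\top U$. Combining these with antitonicity, $c_{\op}(U)=\neg_{\op}\neg_{\po}U$ is contained in $\neg_\top\neg_{\po}U$.

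I would rather argue through downsets:
\begin{enumerate}
\item \textbf{Every $\op$-proposition is a $\leq$-downset.} If $y\leq x$, then any $z\op y$ gives $z\top w\leq y\leq x$, so $z\op x$. Thus $y$ prerefines $x$, and by Lemma~\ref{PrerefineLem} every $\op$-proposition is a downset.
\item \textbf{Every $\op$-proposition is a $\top$-proposition.} Let $U=c_{\op}(U)$ and take $x\in c_\top(U)$. To show $x\in c_{\op}(U)$, let $x'\top z\leq x$. The natural first attempt is to use $x\in c_\top(U)$ at $x$ itself, but this is the wrong point. I would first try to show $z\in c_\top(U)$, which holds because $c_\top(U)$ is a $\leq$-downset: this is exactly where the modal frame interaction condition is needed, guaranteeing that $\Box_\leq$ maps propositions to propositions. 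Granting that, applying $z\in c_\top(U)$ to $x'\top z$ gives $x''\top x'$ with $x''\in U$, and then $x''\po x'$ as required.
\end{enumerate}
I expect step 2 to be the main obstacle. If $c_\top(U)$ is not automatically a downset, I would instead apply the rewritten interaction condition, $z\op x\Rightarrow\exists x'\top x\,\forall x''\top x'\colon z\op x''$, to transfer membership along $\leq$.

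With these facts in hand, part 1 goes as follows. For $U\in\chi_\top(X)$, the set $\Box_\leq U$ is a $\top$-proposition by the earlier modal lemma and is a $\leq$-downset by transitivity of $\leq$. It remains to show that $\top$-propositions which are downsets are $\op$-propositions. Let $V$ be such a set, let $x\in c_{\op}(V)$, and suppose $x\notin V$. Then there is $x'\top x$ with $x'\in\neg_\top V$, since $V=c_\top V$. But $x'\op x$, so some $x''\po x'$ lies in $V$, say $x'\top w\leq x''$. As $V$ is a downset, $w\in V$, which contradicts $x'\in\neg_\top V$. Hence $\chi_{\op}(X)$ is exactly the set of $\top$-propositions that are downsets, which is $ran(\Box_\leq)$. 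Here the inclusion $ran(\Box_\leq)\subseteq\chi_{\op}(X)$ is what was just shown, and the reverse inclusion holds because $\Box_\leq V=V$ for any downset $V$.

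For part 2, given $V\in\chi_{\op}(X)$ and $U\in\chi_\top(X)$, we need $V\subseteq U$ iff $V\subseteq\Box_\leq U$. The backward direction holds because $\Box_\leq U\subseteq U$ by reflexivity. The forward direction holds because $V$ is a downset: if $V\subseteq U$ then $V=\Box_\leq V\subseteq\Box_\leq U$ by monotonicity.
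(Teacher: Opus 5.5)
Your proof is correct, but it is organized differently from the paper's. You first characterize $\chi_{\op}(X)$ intrinsically as the set of $\top$-propositions that are $\leq$-downsets, and both parts then follow. First, $ran(\Box_\leq)$ is visibly that same set. Second, the adjunction is a direct check, whereas the paper cites the general fact that an interior operator, corestricted to its range, is right adjoint to the inclusion.

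The paper never isolates this characterization. For $ran(\Box_\leq)\subseteq\chi_{\op}(X)$ it runs essentially your contradiction argument. It needs an extra step through some $y\leq x$ with $y\notin V$, because it works with $V$ rather than with the downset $\Box_\leq V$. For the converse, it exhibits the explicit preimage $U=\Box_\leq\neg_\top\neg_{\po}U$ by a short chain of equivalences. Since $\top$ is symmetric, $\neg_\top\neg_{\po}U$ is automatically a $\top$-proposition. So the paper gets "$\op$-propositions are $\top$-propositions" for free from the modal lemma, without anything like your step 2. The paper's route gives an explicit formula, in the spirit of $\mu(\neg\varphi)=\Box\neg\mu(\varphi)$. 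Yours gives a more transparent description of $\chi_{\op}(X)$.

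On your hedged step 2, the claim that $c_\top(U)$ is a downset is true, but only because $U$ is a downset (step 1). From $U=\Box_\leq U\subseteq\Box_\leq c_\top(U)$ and the modal lemma, you get $c_\top(U)\subseteq c_\top(\Box_\leq c_\top(U))=\Box_\leq c_\top(U)$.

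More simply, evaluating at $x$ itself is not the wrong move once you apply the rewritten interaction condition there. Given $x\in c_\top(U)$ and $x'\op x$, that condition yields $x_1\top x$ such that $x'\op x_2$ for all $x_2\top x_1$. Then $x\in c_\top(U)$ supplies such an $x_2\in U$, and $x_2\po x'$. Hence $c_\top(U)\subseteq c_{\op}(U)$ for every $U$, with no downset hypothesis needed.
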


\begin{proof}
 Note that since $(\chi_\top(X), \Box_\leq)$ is an \textbf{OS4}-lattice, $\Box_\leq$ is an interior operator. By standard order-theoretic considerations, this means that the map $\Box_\leq: \chi_\top(X) \to ran(\Box_\leq)$ is right adjoint to the inclusion map $\iota: ran(\Box_\leq) \to \chi_\top(X)$. Hence part $2$ follows immediately from part $1$. For the proof of part $1$, suppose first that $U$ is of the form $\Box_\leq V$ for some $V \in \chi_\top(X)$, and suppose that $x \notin U$. Then there is a $y \leq x$ such that $y \notin V$. Since $V \in \chi_\top(X)$, this means there is a $z \top y$ such that $z' \notin V$ for any $z' \top z$. Now notice that $z \op x$. We claim that whenever $z \op w$, $w \notin U$. Indeed, if there is a $w \in U$ such that $z \op w$, then there is a $z'$ such that $z \top z' \leq w$. But $z' \leq w$ implies that $z' \in V$, contradicting the fact that $z \top z'$ implies $z' \notin V$. This shows that $U \in \chi_{\op}(X)$. For the converse, suppose that $U \in \chi_{\op}(X)$. Note that this implies $U = \neg_{\op} \neg_{\po} U$. We claim that $U = \Box_\leq \neg_\top \neg_{\po} U$. Clearly, this is enough to conclude that $U \in ran(\Box_\leq)$. Now note the following chain of equivalences:
 \begin{align*}
     x \in \Box_\leq \neg_\top \neg_{\po} U &\Leftrightarrow \forall y \, \forall z (z \top y \leq x \Rightarrow z \notin \neg_{\po} U) \\
     &\Leftrightarrow \forall z \op x,\, z \notin \neg_{\po} U \\
     &\Leftrightarrow x \in \neg_{\op} \neg_{\po} U.
 \end{align*}
This completes the proof.
\end{proof}

Let us now investigate further the relationship between the two relations on an \textbf{OS4}-frame and the fundamental relation on its fundamental reduct. We start from the following definition.

\begin{definition}
    Let $(X, \op)$ be a fundamental frame. The \textit{symmetric kernel} of $\op$ is the relation $\opo \,=\, \op \cap \po$.
\end{definition}

The following observation can be extracted from the proof of \cref{funredlma}.

\begin{remark} \label{rmk1}
    Let $(X, \top, \leq)$ be an \textbf{OS4}-frame and $(X, \op)$ its fundamental reduct. Then $\top$ is a subrelation of the symmetric kernel of $\op$, and $\leq$ is a subrelation of the prerefinement relation $\leq_{\op}$ (recall Lemma~\ref{PrerefineLem}).
\end{remark}

\begin{proof}
    Suppose $x \top y$. Then we have that $x \top y \leq y$ and $y \top x \leq x$, which implies that $x \op y$ and $y \op x$. Moreover, suppose that $x \leq y$, and let $z \op x$. Then there is a $z' \in X$ such that $z \top z' \leq x \leq y$, so $z \op y$. This shows that $x \leq_{\op} y$.
\end{proof}

Given this observation, one may wonder when the relation $\top$ coincides with the symmetric kernel of $\op$, and when the relation $\leq$ coincides with the prerefinement $\leq_{\op}$. This leads to the following definitions.

\begin{definition}\label{SkelReg}
    Let $(X, \top, \leq)$ be an \textbf{OS4}-frame.
    \begin{enumerate}
        \item\label{Skel} $(X, \top ,\leq)$ is \textit{skeletal} if for any $x, y \in X$, if there are $z_1, z_2$ such that $x \top z_1 \leq y$ and $y \top z_2 \leq x$, then $x \top y$ (see Figure~\ref{SkelFig}).
        \item $(X, \top, \leq)$ is \textit{regular} if for any $x \in X$, $\dnset x = \{y \in X \mid y \leq x\} \in \chi_\top(X)$.
    \end{enumerate}
\end{definition}

\begin{figure}[H]

\begin{center}
\begin{tikzpicture}[->,>={stealth'[length=6pt]},shorten >=1pt,shorten <=1pt, auto,node
distance=2cm,thick,every loop/.style={<-,shorten <=1pt}]
\tikzstyle{every state}=[fill=gray!20,draw=none,text=black]

\node (x) at (0,0) {{$x$}};
\node (z_1) at (0,-4) {{$z_1$}};
\node (y) at (4,0) {{$y$}};
\node (z_2) at (4,-4) {{$z_2$}};

\path (x) edge[-] node {{}} (z_1);
\path (y) edge[-] node {{}} (z_2);
\path (x) edge[->,dashed] node {{}} (z_2);
\path (y) edge[->,dashed] node {{}} (z_1);

\node at (6,-2) {{\textit{$\Rightarrow$}}};

\node (x') at (8,0) {{$x$}};
\node (z_1') at (8,-4) {{$z_1$}};
\node (y') at (12,0) {{$y$}};
\node (z_2') at (12,-4) {{$z_2$}};

\path (x') edge[-] node {{}} (y');
\path (x') edge[-] node {{}} (z_1');
\path (y') edge[-] node {{}} (z_2');
\path (x') edge[->,dashed] node {{}} (z_2');
\path (y') edge[->,dashed] node {{}} (z_1');

\end{tikzpicture}
\end{center}
\caption{The skeletal condition of Definition \ref{SkelReg}.\ref{Skel}.}\label{SkelFig}
\end{figure}

\begin{example} The $\mathbf{OS4}$ frame on the left of Figure \ref{FunReductFig} is not regular, because $\mathord{\downarrow}x=\{x,y\}\not\in \chi_\top(X)$.
\end{example}

\begin{lemma} \label{backlma}
    Let $(X, \top, \leq)$ be an \textbf{OS4}-frame and $(X, \op)$ its fundamental reduct. Then:
    \begin{enumerate}
        \item $(X, \top, \leq)$ is skeletal iff $\top$ coincides with the symmetric kernel of $\op$;
        \item $(X, \top, \leq)$ is regular iff $\leq$ coincides with $\leq_{\op}$.
    \end{enumerate}
\end{lemma}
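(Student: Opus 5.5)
Both parts compare a primitive relation of the \textbf{OS4}-frame with a relation definable from the reduct $\op$. In each part, one inclusion is already given by Remark~\ref{rmk1}, so only the converse inclusion needs an argument.

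For part 1, Remark~\ref{rmk1} gives $\top\,\subseteq\,\opo$. I would then unfold the definitions: $x\opo y$ means $x\op y$ and $y\op x$. That is, there exist $z_1,z_2$ with $x\top z_1\leq y$ and $y\top z_2\leq x$. This is exactly the hypothesis of the skeletal condition, whose conclusion is $x\top y$. So skeletality says precisely that $\opo\,\subseteq\,\top$, and part 1 is immediate.

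For part 2, Remark~\ref{rmk1} gives ${\leq}\subseteq{\leq_{\op}}$, so it suffices to show that regularity is equivalent to ${\leq_{\op}}\subseteq{\leq}$. I would use the characterization $\chi_{\op}(X)=ran(\Box_\leq)$ from the preceding lemma, together with Lemma~\ref{PrerefineLem}: $y\leq_{\op}x$ iff every $\op$-proposition containing $x$ contains $y$.

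\emph{Regular implies ${\leq_{\op}}\subseteq{\leq}$.} Assume $\dnset x\in\chi_\top(X)$. Then $\Box_\leq\dnset x\in\chi_{\op}(X)$. Since $\Box_\leq\dnset x=\{w\mid \dnset w\subseteq\dnset x\}=\dnset x$ by transitivity and reflexivity, $\dnset x$ is an $\op$-proposition. It contains $x$, so if $y\leq_{\op}x$, then $y\in\dnset x$, i.e., $y\leq x$.

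\emph{${\leq_{\op}}\subseteq{\leq}$ implies regular.} Assume ${\leq}={\leq_{\op}}$. The set $c_{\op}(\{x\})$ is the smallest $\op$-proposition containing $x$, so by Lemma~\ref{PrerefineLem} it equals $\{y\mid y\leq_{\op}x\}=\dnset x$. Hence $\dnset x\in\chi_{\op}(X)=ran(\Box_\leq)\subseteq\chi_\top(X)$, as required. The one fact to check here is $c_{\op}(\{x\})=\{y\mid y\leq_{\op} x\}$. The inclusion $\supseteq$ holds because $c_{\op}(\{x\})$ is a proposition containing $x$ (Lemma~\ref{Closure}). The inclusion $\subseteq$ holds because every proposition containing $x$ contains $c_{\op}(\{x\})$, by monotonicity and idempotence of $c_{\op}$.

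The main obstacle is the direction in part 2 from ${\leq}={\leq_{\op}}$ to regularity, since it needs that principal downsets are $\op$-propositions and that $\op$-propositions are $\top$-propositions. The identification $\chi_{\op}(X)=ran(\Box_\leq)$ handles the second point. If I wanted to avoid that lemma, I would instead verify directly that $\dnset x=\neg_{\op}\neg_{\po}\dnset x$ implies $\dnset x=\neg_\top\neg_\top\dnset x$, by the same chain of equivalences used in its proof.
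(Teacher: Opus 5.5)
Your proof is correct. Part 1 is exactly the paper's argument, but for part 2 you take a genuinely different route.

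The paper proves part 2 by direct element-chasing. For regularity $\Rightarrow$ ${\leq_{\op}}\subseteq{\leq}$, it takes $x\not\leq y$ and uses $\dnset y\in\chi_\top(X)$ to find $z\top x$ with no $\top$-neighbour in $\dnset y$, so that $z\op x$ but not $z\op y$. For the converse, it takes $z\in\neg_\top\neg_\top\dnset x$ and proves $z\leq_{\op}x$ by invoking the \textbf{OS4} interaction condition in the form $w\op z\Rightarrow\exists x'\top z\,\forall x''\top x'\colon w\op x''$.

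You instead factor everything through the preceding lemma $\chi_{\op}(X)=ran(\Box_\leq)$, together with the facts $\Box_\leq\dnset x=\dnset x$ and $c_{\op}(\{x\})=\{y\mid y\leq_{\op}x\}$. This gives a cleaner conceptual picture. Regularity says exactly that each $\dnset x$ is an $\op$-proposition. Given ${\leq}\subseteq{\leq_{\op}}$, that happens iff $\dnset x=c_{\op}(\{x\})$, i.e., $\dnset x$ is the least proposition containing $x$.

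The price is that the interaction condition becomes invisible. It is buried in the inclusion $ran(\Box_\leq)\subseteq\chi_\top(X)$, i.e., in $\Box_\leq$ preserving $\top$-propositions. The paper's proof is self-contained and shows exactly where that condition is needed.

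For the same reason, your fallback remark is a bit too quick. The chain of equivalences only yields $\neg_{\op}\neg_{\po}\dnset x=\Box_\leq\neg_\top\neg_{\po}\dnset x$. To conclude $\dnset x\in\chi_\top(X)$ from this, you still need that $\Box_\leq$ maps $\top$-propositions to $\top$-propositions, or an argument like the paper's.
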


\begin{proof}
    For part $1$, the definition of skeletal is clearly equivalent to the symmetric kernel of $\op$ being a subrelation of $\top$. Together with \cref{rmk1}, this yields the desired result. For part $2$, by \cref{rmk1} again, it is enough to show that $\leq_{\op}$ is a subrelation of $\leq$ if and only if $(X, \top, \leq)$ is regular. For the right-to-left direction, suppose that $x \nleq y$, so $x \notin \dnset y$. Since $(X, \top, \leq)$ is regular, there is a $z\top x$ such that $z' \notin \dnset y$ for any $z'\top z$. But this means that $z\op x$ and not $z \op y$, establishing that $x \not \leq_{\op} y$. For the converse direction, assume that $\forall y \top z \, \exists y' \top y: y' \leq x$. We must show that $z \leq x$ to show that $\mathord{\downarrow} x \in \chi_\top(X)$. Since, by assumption, $\leq_{\op}$ is a subrelation of $\leq$, it is enough to show that $w \op x$ whenever $w \op z$. So assume that $w \op z$. Since $(X, \top, \leq)$ is an $\mathbf{OS4}$-frame, there is an $x' \top z$ such that $w \op x''$ for any $x'' \top x'$. By our assumption on $z$, we have an $x'' \top x'$ such that $ x'' \leq x$. But then we have some $q \in X$ such that $w \top q \leq x'' \leq x$, so $w \op x$, as desired. 
\end{proof}

\subsection{From Fundamental Frames to \textbf{OS4}-Frames}

We have shown how to define a fundamental frame $(X, \op)$ from an \textbf{OS4}-frame $(X, \top, \leq)$ in such a way that the dual algebra of $(X, \op)$ is precisely the sublattice of $\Box_\leq$-fixpoints of the dual \textbf{OS4}-lattice of $(X, \top, \leq)$. Let us now investigate how to construct \textbf{OS4}-frames from fundamental frames. Recall first that, given a fundamental frame $(X, \op)$, the \textit{postrefinement} relation induced by $\op$ is the relation $\leq_{\po}$ such that $y \leq_{\po} x$ iff $\forall z \, (y \op z \Rightarrow x \op z)$ for any $x, y \in X$.

\begin{definition}
    Let $(X, \op)$ be a fundamental frame such that $\op$ is reflexive, and let $\opo$, $\leq_{\op}$, and $\leq_{\po}$ be the symmetric kernel of $\op$, prerefinement relation, and postrefinement relation, respectively.
    \begin{enumerate}
        \item $(X, \op)$ is \textit{prefactoring} if $y \op x$ implies $\exists z: y \opo z \leq_{\op} x$;
        \item $(X, \op)$ is \textit{postfactoring} if $x \op y$ implies $\exists z: y \opo z \leq_{\po} x$.
    \end{enumerate}
    Finally, $(X, \op)$ is \textit{balanced} if it is both prefactoring and postfactoring.
\end{definition}

Note that in any fundamental frame $(X, \op)$, $\exists z: y \opo z \leq_{\op} x$ implies $y \op x$, and $\exists z: y \opo z \leq_{\po} x$ implies $x \op y$. This explains the sense in which fundamental frames in which the converse holds are ``factoring'': the fundamental relation $\op$ can be factored as a composition of its symmetric kernel and the prerefinement (resp.~postrefinement) preorder it induces. Note also that being prefactoring is a natural strengthening of pseudo-symmetry, which states that $y \op x$ implies that there is $z$ such that $y \po z \leq_{\op} x$.

Intuitively, balanced frames can be understood as those fundamental frames where positive and negative information carried by states can always be separated. Recall that if $y \op x$, then the propositions supported at $x$ are not rejected at $y$. Whenever this happens, the prefactoring condition guarantees that there is a state containing at least as much positive information as $x$ that also does not reject any proposition satisfied by $y$. Dually, the postfactoring condition guarantees that there is also a state containing at least as much negative information as $y$ that also does not satisfy anything rejected at $x$. Let us now see what role these two properties play in constructing an \textbf{OS4}-frame out of a fundamental frame.

\begin{lemma}\label{orthmodcomp}
    Let $(X, \op)$ be a prefactoring fundamental frame. Then $(X, \opo, \leq_{\op})$ is an \textbf{OS4}-frame iff $(X, \op)$ is balanced.
\end{lemma}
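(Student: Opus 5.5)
The plan is to unpack the modal frame condition of Definition~\ref{ModalFrameDef} for the triple $(X,\opo,\leq_{\op})$, and then show that, in the presence of prefactoring, this condition says exactly that $(X,\op)$ is postfactoring.

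First, the easy clauses of Definition~\ref{OS4-frame}. Since the frames under consideration have $\op$ reflexive, $\opo$ is reflexive, and it is symmetric by construction. The relation $\leq_{\op}$ is a preorder directly from its definition in Lemma~\ref{PrerefineLem}. So the only thing to check is the interaction condition. As noted after the definition of the induced openness relation, that condition reads:
\[
z \op' x \;\Rightarrow\; \exists x'\opo x\;\forall x''\opo x'\colon\; z\op' x'',
\]
where $z\op' x$ means $\exists y\colon z\opo y\leq_{\op} x$ (induced openness computed from $\opo$ and $\leq_{\op}$).

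Next I show that $\op'$ coincides with $\op$. If $z\opo y\leq_{\op} x$, then $z\op y$, and since $y$ prerefines $x$ we get $z\op x$. Conversely, prefactoring says precisely that $z\op x$ implies $\exists y\colon z\opo y\leq_{\op} x$. Hence the interaction condition becomes
\[
(\ast)\qquad z\op x\;\Rightarrow\;\exists x'\opo x\;\forall x''\opo x'\colon z\op x''.
\]
Rewriting postfactoring with suitable variable names gives
\[
z\op x\;\Rightarrow\;\exists x'\colon x\opo x' \text{ and } x'\leq_{\po} z,
\]
where $x'\leq_{\po} z$ means that $x'\op w$ implies $z\op w$ for all $w$. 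It remains to show that $(\ast)$ is equivalent to this.

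From postfactoring to $(\ast)$: take the $x'$ given by postfactoring. If $x''\opo x'$, then in particular $x'\op x''$, so $z\op x''$ by postrefinement.

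From $(\ast)$ to postfactoring: this is the step that again needs prefactoring, and it is the only mildly delicate point. Take $x'$ from $(\ast)$ and suppose $x'\op w$. By prefactoring there is $u$ with $x'\opo u\leq_{\op} w$. By $(\ast)$ (using symmetry of $\opo$) we get $z\op u$. Since $u$ prerefines $w$, this gives $z\op w$. Hence $x'\leq_{\po} z$, and $x'\opo x$, which is postfactoring.

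Combining the two directions yields the stated equivalence.
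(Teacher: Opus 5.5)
Your proof is correct and takes essentially the same route as the paper. You reduce to the interaction condition, use the postfactoring witness directly in one direction, and in the other use prefactoring to pass from $\opo$-neighbours to arbitrary $\op$-successors. You also make explicit a step the paper leaves implicit: by prefactoring, the induced openness of $(X,\opo,\leq_{\op})$ coincides with $\op$.
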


\begin{proof}
    Assume that $(X, \op)$ is prefactoring. Since $\opo$ is reflexive and symmetric, and $\leq_{\op}$ is a preorder, we only need to check that the following condition holds if and only if $(X, \op)$ is postfactoring:
    \[z \op x \Rightarrow \exists x' \opo x \, \forall x'' \opo x': z \op x''.\]
Suppose first that $(X, \op)$ is postfactoring, and assume that $z \op x$. This implies there is a $y$ such that $x \opo y \leq_{\po} z$. Clearly, for any $x''$ such that $x'' \opo y$, we have that $y \op x''$ and hence also $z \op x''$. But this means that $y$ is the required witness $x'$. Conversely, suppose  that the condition above holds, and let $x \op y$. By assumption, we have an $x' \opo y$ such that for all $x'' \opo x'$, $x \op x''$. We claim that $x' \leq_{\po} x$. Suppose that $x' \op z$. Since $(X, \op)$ is prefactoring, there is a $z' \opo x'$ such that $z' \leq_{\op} z$. By our assumption on $z'$, this means that $x \op z' \leq_{\op} z$, so $x \op z$, as desired.\end{proof}

Lemma~\ref{orthmodcomp} motivates the following definition, illustrated with an example in Figure~\ref{BalCompFig}.

\begin{definition}
    Let $(X, \op)$ be a balanced fundamental frame. The \textit{orthomodal companion} of $(X, \op)$ is the \textbf{OS4}-frame $(X, \opo, \leq_{\op})$.
\end{definition}

\begin{figure}[H]
\begin{center}
 \begin{tikzpicture}[->,>={stealth'[length=6pt]},shorten >=1pt,shorten <=1pt, auto,node
distance=2cm,thick,every loop/.style={<-,shorten <=1pt}]

  \tikzstyle{directed_edge}=[-{Triangle[open]},draw,thick]
  \tikzstyle{edge}=[{Triangle[open]}-{Triangle[open]},draw,thick]

\node (0) at (2,0) {{$y$}};
\node (1) at (2,-4) {{$v$}};
\node (2) at (-2,-4) {{$w$}};
\node (3) at (-2,0) {{$x$}};

  \draw[directed_edge] (0) to (1);
  \draw[edge] (0) to (2);
  \draw[edge] (0) to (3);
  \draw[edge] (1) to (2);
  \draw[edge] (1) to (3);
  \draw[directed_edge] (3) to (2);
\end{tikzpicture}\qquad\qquad\qquad  \begin{tikzpicture}[->,>={stealth'[length=6pt]},shorten >=1pt,shorten <=1pt, auto,node
distance=2cm,thick,every loop/.style={<-,shorten <=1pt}]
\tikzstyle{every state}=[fill=gray!20,draw=none,text=black]

\node (0) at (2,0) {{$y$}};
\node (1) at (2,-4) {{$v$}};
\node (2) at (-2,-4) {{$w$}};
\node (3) at (-2,0) {{$x$}};

\path[{Triangle[open]}-{Triangle[open]},draw,thick] (0) to node {{}} (2);
\path[{Triangle[open]}-{Triangle[open]},draw,thick] (0) to node {{}} (3);
\path[{Triangle[open]}-{Triangle[open]},draw,thick] (2) to node {{}} (1);
\path[{Triangle[open]}-{Triangle[open]},draw,thick] (3) to node {{}} (1);

\path (0) edge[dashed,->] node {{}} (1);
\path (0) edge[dashed,->, bend right=20] node {{}} (2);
\path (0) edge[dashed,<->, bend right] node {{}} (3);
\path (3) edge[dashed,->, bend left=20] node {{}} (1);
\path (3) edge[dashed,->] node {{}} (2);
\end{tikzpicture}
\end{center}
\caption{A balanced fundamental frame (left) and its orthomodal companion (right). Note that the fundamental frame is the same as on the right of Figure \ref{FunReductFig}.}\label{BalCompFig}
\end{figure}

\begin{lemma} \label{keylma}
    Let $(X, \op)$ be a balanced fundamental frame and $(X, \opo, \leq_{\op})$ its orthomodal companion with dual \textbf{OS4}-lattice $(\chi_{\opo}(X),\Box_{\leq_{\op}})$. Then:

    \begin{enumerate}
        \item $\chi_{\op} (X) \sset \chi_{\opo}(X)$;
        \item for any $A \in \chi_{\opo}(X)$, $\Box_{\leq_{\op}} \neg_{\opo} A = \neg_{\op} A$;
        \item the map $\Box_{\leq_{\op}}$ is right-adjoint to the inclusion $\iota: \chi_{\op}(X) \to \chi_{\opo}(X)$, and $ran(\Box_{\leq_{\op}}) = \chi_{\op} (X)$.
    \end{enumerate}
\end{lemma}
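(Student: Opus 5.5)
The plan is to prove the three parts in order, using prefactoring and postfactoring at different points. Throughout, $\neg_{\op}$, $\neg_{\opo}$, $\neg_{\po}$ are read as operations on arbitrary subsets of $X$ given by the same defining formulas. I also use the fact, from Lemma~\ref{PrerefineLem}, that every $U\in\chi_{\op}(X)$ is a $\leq_{\op}$-downset.

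\emph{Part 1.} Let $U\in\chi_{\op}(X)$. By Lemma~\ref{Closure} it suffices to show $c_{\opo}(U)\subseteq U$. Contrapositively, given $x\notin U$, I must find $z\opo x$ such that $w\notin U$ for every $w\opo z$.
\begin{itemize}
\item Since $x\notin c_{\op}(U)$, there is an $x'\op x$ such that no $x''\po x'$ lies in $U$; that is, $x'\op x''$ implies $x''\notin U$.
\item Apply postfactoring to $x'\op x$. This gives $z$ with $x\opo z\leq_{\po} x'$.
\item Now take any $w\opo z$. Then $z\op w$, so $x'\op w$ because $z\leq_{\po}x'$, and hence $w\notin U$.
\end{itemize}
So $z$ is the required witness, and $U\in\chi_{\opo}(X)$.

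\emph{Part 2.} Unfolding the definitions, $x\in\Box_{\leq_{\op}}\neg_{\opo}A$ iff for all $y\leq_{\op}x$ and all $z\opo y$, we have $z\notin A$. I claim this is equivalent to: for all $z\op x$, $z\notin A$, which is exactly $x\in\neg_{\op}A$.
\begin{itemize}
\item If $z\opo y\leq_{\op}x$, then $z\op y$, so $z\op x$ by prerefinement. Hence the condition $x\in\neg_{\op}A$ implies the box condition.
\item Conversely, if $z\op x$, prefactoring gives $w$ with $z\opo w\leq_{\op}x$. Taking $y:=w$ in the box condition yields $z\notin A$.
\end{itemize}
This argument does not use that $A$ is a proposition, which helps in Part 3.

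\emph{Part 3.} First I show $ran(\Box_{\leq_{\op}})=\chi_{\op}(X)$.
\begin{itemize}
\item Let $V\in\chi_{\opo}(X)$. Since $\opo$ is symmetric, $V=\neg_{\opo}\neg_{\opo}V$. Part 2 then gives $\Box_{\leq_{\op}}V=\neg_{\op}(\neg_{\opo}V)$, which lies in $\chi_{\op}(X)$ because $\neg_{\op}$ of any set is a $c_{\op}$-fixpoint. I will check this last fact directly, in the same way as in the proof that $\neg_{\op}$ maps propositions to propositions.
\item Let $U\in\chi_{\op}(X)$. By Part 1, $U\in\chi_{\opo}(X)$. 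Since $U$ is a $\leq_{\op}$-downset and $\leq_{\op}$ is reflexive, $\Box_{\leq_{\op}}U=U$.
\end{itemize}
The adjunction then follows exactly as in the earlier lemma on fundamental reducts. The orthomodal companion is an \textbf{OS4}-frame by Lemma~\ref{orthmodcomp}, so $\Box_{\leq_{\op}}$ is an interior operator on $\chi_{\opo}(X)$. An interior operator corestricted to its range is right adjoint to the inclusion of that range.

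The main obstacle I expect is Part 1. There one has to choose postfactoring (rather than prefactoring) and apply it to the witness $x'\op x$ in the right orientation. The other parts are direct unfoldings once prefactoring is used in Part 2.
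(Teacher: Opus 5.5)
Your proposal is correct and follows the paper's proof essentially step for step. Part 1 applies postfactoring to the witness for $x\notin c_{\op}(U)$, Part 2 uses prefactoring together with the definition of prerefinement, and Part 3 applies Part 2 to $V=\neg_{\opo}\neg_{\opo}V$ to place the range of $\Box_{\leq_{\op}}$ inside $\chi_{\op}(X)$. The only difference is cosmetic and lies in Part 3: you compute the range directly and then get the adjunction from the general interior-operator fact, whereas the paper checks $A\sset B$ iff $A\sset\Box_{\leq_{\op}}B$ by hand and gets surjectivity because a right adjoint of an injective map is surjective.
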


\begin{proof}
     For part $1$, it is enough to show that for any $A \in \chi_{\op}(X)$, $\neg_{\opo} \neg_{\opo} A \sset \neg_{\op} \neg_{\po} A$. We argue by contraposition. Suppose that there is a $y \op x$ such that $z \notin A$ for all $z \po y$. By postfactoring, there is a $y' \in X$ such that $x \opo y' \leq_{\po} y$. Now for any $z$ such that $z \opo y'$, we have in particular that $y' \op z$, so $y \op z$. But this implies that $y' \in \neg_{\opo} A$, and therefore $x \notin \neg_{\opo} \neg_{\opo} A$. Contraposing, we obtain that $\neg_{\opo} \neg_{\opo} A \sset \neg_{\op} \neg_{\po} A$.

     For part $2$, fix $A \in \chi_{\opo}(X)$. Let us first show that $\Box_{\leq_{\op}} \neg_{\opo} A \sset \neg_{\op} A$. Assume $x \in \Box_{\leq_{\op}} \neg_{\opo} A$, and let $y \op x$. By prefactoring, we have an $x'$ such that $y \opo x' \leq_{\op} x$. By our assumption on $x$, we get that $x' \in \neg_{\opo} A$ and thus that $y \notin A$. Hence $x \in \neg_{\op} A$. Conversely, suppose that $x \in \neg_{\op} A$ and that we have $z \opo y \leq_{\op} x$. Then in particular $z \op y$, which, together with $y \leq_{\op} x$, implies that $z \op x$. Hence $z \notin A$. But this means that $y \in \neg_{\opo} A$ whenever $y \leq_{\op} x$, and thus $x \in \Box_{\leq_{\op}} \neg_{\opo} A$, as desired.

     Finally, for part $3$, note first that $\iota$ is well defined by part $1$. Moreover, by part $2$, the codomain of $\Box_{\leq_{\op}}$ is a subset of $\chi_{\op}(X)$, since any $B \in \chi_{\opo}(X)$ is of the form $\neg_{\opo} A$ for some $A \in \chi_{\opo}(X)$. Note also that the right adjoint of an injective map is always surjective. Hence we only need to verify that whenever $A \in \chi_{\op}(X)$ and $B \in \chi_{\opo} (X)$, we have that $A \sset B$ iff $A \sset \Box_{\leq_{\op}} B$. The right-to-left direction is clear since $\leq_{\op}$ is reflexive, so assume that $A \sset B$, and let $y \leq_{\op} x \in A$. Since elements of $\chi_{\op}(X)$ are closed under prerefinements, we have that $y \in A \sset B$. Hence $A \sset \Box_{\leq_{\op}} B$, which completes the proof.
\end{proof}

The previous result establishes that we can always realize the dual algebra of a balanced fundamental frame as the fixpoints of the $\Box$ operator on the dual \textbf{OS4}-lattice of its orthomodal companion. Let us conclude with the following observation regarding orthomodal companions.

\begin{lemma} \label{reglma}
    For any balanced fundamental frame $(X, \op)$, its orthomodal companion $(X, \opo, \leq_{\op})$ is skeletal and regular.
\end{lemma}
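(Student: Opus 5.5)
The natural route is through Lemma~\ref{backlma}, applied to the orthomodal companion $(X,\opo,\leq_{\op})$ of the balanced frame $(X,\op)$. Write $\op'$ for the induced openness relation of the companion, i.e.\ $x\op' y$ iff $\exists z\colon x\opo z\leq_{\op} y$. The key preliminary step is to show that $\op'$ coincides with the original relation $\op$.

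\begin{itemize}
\item If $x\opo z\leq_{\op} y$, then $x\op z$, and $z\leq_{\op} y$ gives $x\op y$.
\item Conversely, if $x\op y$, then prefactoring provides a $z$ with $x\opo z\leq_{\op} y$, so $x\op' y$.
\end{itemize}

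Hence the fundamental reduct of the orthomodal companion is exactly $(X,\op)$. Its symmetric kernel and prerefinement relation are therefore $\opo$ and $\leq_{\op}$ themselves.

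Both halves of Lemma~\ref{backlma} now apply. The companion's compatibility relation is $\opo$, which is by definition the symmetric kernel of $\op=\op'$, so the companion is skeletal by part~1. Its accessibility relation is $\leq_{\op}$, which equals $\leq_{\op'}$, so the companion is regular by part~2.

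The proof is short once the identity $\op'=\op$ is in place, and the only real work is that identity. Prefactoring does exactly what is needed for the inclusion $\op\subseteq\op'$.

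If one prefers not to route regularity through Lemma~\ref{backlma}, I would check it directly. The claim is that $\dnset x=\{y\mid y\leq_{\op} x\}$ belongs to $\chi_{\opo}(X)$. By Lemma~\ref{PrerefineLem}, this set is the intersection of all $\op$-propositions containing $x$. Each such proposition is in $\chi_{\opo}(X)$ by Lemma~\ref{keylma}.1, and $\chi_{\opo}(X)$ is closed under arbitrary intersections, so $\dnset x\in\chi_{\opo}(X)$. This gives an independent sanity check on the regularity half.
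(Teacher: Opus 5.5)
Your argument is correct, but it is organized differently from the paper's. The paper checks both properties by hand. Skeletality is the one-line observation that $x\opo z_1\leq_{\op} y$ and $y\opo z_2\leq_{\op} x$ already yield $x\op y$ and $y\op x$. Regularity is a contrapositive computation of $\neg_{\opo}\neg_{\opo}\dnset{x}\subseteq\dnset{x}$: from $z\op y$ with $z\op x$ failing, postfactoring gives $z'$ with $y\opo z'\leq_{\po} z$, and one checks $z'\in\neg_{\opo}\dnset{x}$.

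You instead first prove that $(X,\op)$ is the fundamental reduct of its own companion, an identity the paper only invokes afterwards in the proof of \cref{corcor}. You then read both properties off the characterization in \cref{backlma}. This shows that the lemma is a formal consequence of the correspondence machinery, and it delivers one half of \cref{corcor} along the way.

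The price is heavier dependencies. The direction of part~2 of \cref{backlma} you use relies on the interaction condition of the companion, hence on \cref{orthmodcomp} and postfactoring. That comes on top of the prefactoring you use for $\op\subseteq\op'$. By contrast, the paper's skeletality argument uses neither factoring condition, and its regularity argument uses only postfactoring.

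Your alternative check of regularity is the most economical of the three. Since $\dnset{x}=c_{\op}(\{x\})$ is the least $\op$-proposition containing $x$, regularity is an instance of part~1 of \cref{keylma}. The paper's computation is essentially the proof of that part specialized to $A=\dnset{x}$.
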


\begin{proof}
    Clearly, if $x \opo z_1 \leq_{\op} y$ and $y \opo z_2 \leq_{\op} x$, we have that $x \op y$ and $y \op x$. This shows that $(X,\opo, \leq_{\op})$ is skeletal. For regularity, letting $\dnset{x} = \{ y \in X \mid y \leq_{\op} x\}$ for any $x \in X$, it is enough to show that for any $x \in X$, $\neg_{\opo} \neg_{\opo} \dnset x \sset \dnset x$. Arguing by contraposition, suppose that $y \not \leq_{\op} x$. Then there is $z \op y$ for which it is not the case that $z\op x$. By postfactoring, there is a $z'$ such that $y \opo z' \leq_{\po} z$. Now we claim that $z' \in \neg_{\opo} \dnset x$. If true, this shows that $y \notin \neg_{\opo} \neg_{\opo} \dnset x$, which completes the proof. So suppose that $w \opo z'$. In particular, we have $w \po z' \leq_{\po} z$, so $z \op w$. Since we also have that not $z \op x$, it follows that $w \not \leq_{\op} x$, so $w \notin \dnset x$.\end{proof}

\begin{proposition}\label{corcor}
    There is a one-to-one correspondence between balanced fundamental frames and skeletal, regular \textbf{OS4}-frames, obtained by mapping any balanced fundamental frame to its orthomodal companion and mapping any skeletal, regular \textbf{OS4}-frame to its fundamental reduct.
\end{proposition}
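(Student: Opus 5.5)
The plan is to check that both assignments are well defined and then that they are mutually inverse, using Lemmas~\ref{funredlma}, \ref{orthmodcomp}, \ref{reglma} and \ref{backlma} as black boxes.

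\emph{From balanced fundamental frames.} Let $(X,\op)$ be a balanced fundamental frame. By Lemma~\ref{orthmodcomp}, its orthomodal companion $(X,\opo,\leq_{\op})$ is an \textbf{OS4}-frame, and by Lemma~\ref{reglma} it is skeletal and regular. Let $\op'$ be the induced openness relation of the companion, so $y\op' x$ iff $\exists z\colon y\opo z\leq_{\op} x$. Prefactoring gives $\op\,\subseteq\,\op'$. The remark after the definition of factoring (valid in any fundamental frame) gives $\op'\subseteq\,\op$. Hence $\op'=\op$, so taking the reduct of the companion returns $(X,\op)$.

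\emph{From skeletal, regular \textbf{OS4}-frames.} Let $(X,\top,\leq)$ be a skeletal, regular \textbf{OS4}-frame with fundamental reduct $(X,\op)$. By Lemma~\ref{funredlma}, $(X,\op)$ is a fundamental frame. It is reflexive, since $x\top x\leq x$. By Lemma~\ref{backlma}, skeletality gives $\top\,=\,\opo$ and regularity gives $\leq\,=\,\leq_{\op}$. Prefactoring is then immediate: if $y\op x$, there is $z$ with $y\top z\leq x$, i.e.\ $y\opo z\leq_{\op} x$.

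For postfactoring I will not argue directly. Instead I apply Lemma~\ref{orthmodcomp} to the prefactoring frame $(X,\op)$. The triple $(X,\opo,\leq_{\op})$ is literally $(X,\top,\leq)$, which is an \textbf{OS4}-frame by hypothesis, so the lemma yields that $(X,\op)$ is balanced. Its orthomodal companion is then $(X,\opo,\leq_{\op})=(X,\top,\leq)$, so taking the companion of the reduct returns the original frame.

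Together these two round trips show the maps are mutually inverse bijections. The only step that looks like an obstacle is postfactoring of the reduct, and it is absorbed by Lemma~\ref{orthmodcomp}. That lemma requires prefactoring and reflexivity as prerequisites, so these must be established first, and the identifications $\top=\,\opo$ and $\leq\,=\,\leq_{\op}$ from Lemma~\ref{backlma} are exactly what allow it to be invoked.
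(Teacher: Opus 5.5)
Your proof is correct and follows the paper's argument essentially step for step. In the first direction you use Lemma~\ref{reglma} together with prefactoring; in the second you use Lemma~\ref{backlma} to identify $(X,\top,\leq)$ with $(X,\opo,\leq_{\op})$, so that Lemma~\ref{orthmodcomp} gives balancedness. You are slightly more explicit than the paper: you check that the reduct is reflexive, which is required for prefactoring to be defined, and you prove the inclusion $\op'\subseteq\op$, which is what makes the reduct of the companion exactly $\op$.
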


\begin{proof}
    If $(X,\op)$ is a balanced fundamental frame, then its orthomodal companion is skeletal and regular by \cref{reglma}. Moreover, since $(X, \op)$ is prefactoring, it is also the fundamental reduct of its orthomodal companion. Conversely, if $(X, \top, \leq)$ is a skeletal and regular \textbf{OS4}-frame, then its fundamental reduct $(X, \op)$ is prefactoring by definition, and its orthomodal companion is precisely $(X, \top, \leq)$ by \cref{backlma}. Hence we only need to verify that $(X, \op)$ is balanced. But this follows immediately from the fact that $(X, \top, \leq)$ is the orthomodal companion of $(X, \op)$ together with \cref{orthmodcomp}.
\end{proof}

\subsection{Translating Fundamental Logic into Modal Orthologic}

Let us now use the results from the previous subsections to prove results about the following   translation $\mu$ from the language $\mathcal{L}(\wedge,\vee,\neg)$ of fundamental logic to the language $\mathcal{L}(\wedge,\vee,\neg,\Box)$ of orthomodal logic:

\begin{itemize}
    \item $\mu(\top)=\top$ and $\mu(p) = \Box p$;
    \item $\mu(\phi \me \psi) = \mu(\phi) \me \mu(\psi)$;
    \item $\mu(\phi \jo \psi) = \mu(\phi) \jo \mu(\psi)$;
    \item $\mu(\neg \phi) = \Box \neg \mu(\phi)$.
\end{itemize}

Note that $\mu$ is exactly the reduct of the G\"odel-McKinsey-Tarski translation to the implication-free fragment of intuitionistic logic. The proof of the following is routine.

\begin{lemma}
The map $\mu$ is a faithful translation of fundamental logic into $\mathsf{OS4}$: if $\phi \vdash_\mathsf{F} \psi$, then ${\mu(\phi) \vdash_\mathsf{OS4} \mu(\psi)}$.
\end{lemma}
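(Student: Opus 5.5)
We plan to establish the claim semantically via soundness and completeness, since a direct syntactic simulation of Fitch subproofs inside $\Box$-subproofs is awkward. Suppose $\phi\vdash_\mathsf{F}\psi$. By completeness of $\mathsf{OS4}$ with respect to \textbf{OS4}-lattices, it suffices to show that for every \textbf{OS4}-lattice $(O,\neg,\Box)$ and valuation $\theta$ on $O$ we have $\tilde\theta(\mu(\phi))\leq\tilde\theta(\mu(\psi))$. The idea is to reduce this to soundness of $\mathsf{F}$ over fundamental lattices, by showing that the image of $\mu$ lands in a fundamental lattice sitting inside $O$.

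\textbf{Step 1.} Let $O_\Box=ran(\Box)=\{a\in O\mid \Box a=a\}$; since $\Box$ is an interior operator, $a\in O_\Box$ iff $a=\Box b$ for some $b$. We verify that $O_\Box$ contains $0$ and $1$ ($\Box 1=1$, and $\Box 0\leq 0$) and is closed under $\wedge$ ($\Box(a\wedge b)=\Box a\wedge\Box b$) and $\vee$. For joins, if $a,b\in O_\Box$ then $a=\Box a\leq\Box(a\vee b)$ by monotonicity (a consequence of meet-preservation), similarly for $b$, so $a\vee b\leq\Box(a\vee b)\leq a\vee b$. Thus $O_\Box$ is a bounded sublattice of $O$.

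\textbf{Step 2.} Define $\neg_\Box a:=\Box\neg a$ on $O_\Box$, and check that $(O_\Box,\neg_\Box)$ is a fundamental lattice. Semicomplementation holds because $a\wedge\Box\neg a\leq a\wedge\neg a=0$. For dual self-adjointness, suppose $a\leq\Box\neg b$ with $a,b\in O_\Box$. Then $a\leq\neg b$, so $b\leq\neg\neg b\leq\neg a$ by the ortholattice laws. Since $b=\Box b$, monotonicity gives $b\leq\Box\neg a$. This step is the one place where the interplay between orthocomplementation and the interior operator matters, and it is the main thing to get right: the argument requires the $\mathsf{S4}$ principles ($\Box b=b$ via the \textsf{4} principle, and $\Box x\leq x$ via \textsf{T}).

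\textbf{Step 3.} Given $\theta$ on $O$, define $\theta'(p)=\Box\theta(p)\in O_\Box$. By induction on formulas, $\tilde\theta(\mu(\chi))=\tilde{\theta'}(\chi)$, where the right-hand side is computed in $(O_\Box,\neg_\Box)$. The cases are direct from the clauses of $\mu$ and Step 1, using that the meets and joins of $O_\Box$ are those of $O$; the $\neg$ case is exactly the definition of $\neg_\Box$. By soundness of $\mathsf{F}$ for fundamental lattices, $\tilde{\theta'}(\phi)\leq\tilde{\theta'}(\psi)$, hence $\tilde\theta(\mu(\phi))\leq\tilde\theta(\mu(\psi))$.

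Alternatively, one could carry out a syntactic induction over the proof clauses of $\mathsf{F}$, with $\neg$I simulated by RAA-style ortho reasoning inside a $\Box$-subproof; the algebraic route avoids that bookkeeping.
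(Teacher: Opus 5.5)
Your proof is correct, but it follows a different route from the one the paper indicates. The paper leaves this lemma as routine. The argument it writes out for the first-order analogue in Section~\ref{FirstOrderExt1} is syntactic: first prove $\mu(\phi)\dashv\vdash_{\mathsf{OS4}}\Box\mu(\phi)$ by induction on $\phi$, then induct on $\mathsf{F}$-proofs, translating rule by rule.

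In the propositional part of that induction, the case that needs the boxedness lemma is $\neg$I. There one derives $\Box\mu(\psi)$ on the main line and brings $\mu(\psi)$ into a $\Box$-subproof by $\Box$E. One then runs the translated $\neg$I-subproof inside it, using \textsf{T} to pass from $\Box\neg\mu(\psi)$ to $\neg\mu(\psi)$, and closes with $\Box$I, as in the $\neg$I case of Theorem~\ref{ProofTransform}. So, contrary to your closing remark, no RAA is involved.

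Your key lemma is that the $\Box$-fixpoints of an $\mathbf{OS4}$-lattice form a fundamental lattice under $\Box\neg$. This is the algebraic counterpart of the paper's frame-level fact that the dual algebra of a fundamental reduct is $ran(\Box_\leq)$. It mirrors the strategy the paper uses for $\mathsf{FN4}$ in Proposition~\ref{BoxedElementsProp1}, and for $\gamma$ via fundamental reducts of $\mathbf{FSTB}$-frames.

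The trade-off is as follows. Your route avoids all Fitch bookkeeping and isolates a clean structural fact, but it relies on soundness of $\mathsf{F}$ and completeness of $\mathsf{OS4}$ for $\mathbf{OS4}$-lattices. The syntactic route produces explicit $\mathsf{OS4}$ derivations and carries over directly to $\mathsf{QOS4}$, for which the paper proves no completeness theorem.

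One correction to your commentary on Step~2: orthocomplementation is never used there. Both $a\leq\neg b\Rightarrow b\leq\neg a$ and $a\wedge\neg a=0$ hold in every fundamental lattice. So your argument in fact shows that the $\Box$-fixpoints of any fundamental necessity lattice with $\Box a\leq a$ and $\Box a\leq\Box\Box a$ form a fundamental lattice under $\Box\neg$. That is, $\mu$ is already faithful into the weaker logic $\mathsf{FS4}$, given its routine algebraic completeness.
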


 We now prove that $\mu$ is also a full translation, meaning that if $\mu(\phi) \vdash_{\mb{OS4}} \mu(\psi)$, then $\phi \vdash_\mathsf{F} \psi$. The proof relies on the following ``McKinsey-Tarski-style'' result.

\begin{theorem} \label{embthm}
    For any fundamental lattice $(L, \neg_L)$, there is an \textbf{OS4}-lattice $(M,\neg_M,\Box_M)$ and a lattice embedding $e: L \to M$ such that:
    \begin{enumerate}
        \item for any $a \in L$, $\Box_M e(a) = e(a)$;
        \item $e(\neg_L a) = \Box_M \neg_M e(a)$.
    \end{enumerate}
\end{theorem}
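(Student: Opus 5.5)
The plan is to obtain Theorem~\ref{embthm} from the frame-theoretic machinery of the previous subsections. I would represent $L$ inside the dual algebra of a \emph{balanced} fundamental frame $(X,\op)$, and then take $M$ to be the dual \textbf{OS4}-lattice $(\chi_{\opo}(X),\neg_{\opo},\Box_{\leq_{\op}})$ of its orthomodal companion.

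Granting such a frame together with a fundamental embedding $f\colon L\to\chi_{\op}(X)$, the rest is bookkeeping with \cref{keylma}. Put $e=\iota\circ f$, where $\iota\colon\chi_{\op}(X)\to\chi_{\opo}(X)$ is the inclusion given by part~1 of \cref{keylma}.
\begin{itemize}
\item Meets in both lattices are intersections, so $\iota$ preserves meets.
\item For joins, $\iota$ is a left adjoint by part~3 of \cref{keylma}, so it preserves all joins.
\item Hence $e$ is a lattice embedding.
\item Each $e(a)$ lies in $\chi_{\op}(X)=ran(\Box_{\leq_{\op}})$ and $\Box_{\leq_{\op}}$ is idempotent, so $\Box_M e(a)=e(a)$; this is clause~1.
\item By part~2 of \cref{keylma}, $\Box_M\neg_M e(a)=\Box_{\leq_{\op}}\neg_{\opo} f(a)=\neg_{\op} f(a)=f(\neg_L a)=e(\neg_L a)$; this is clause~2.
\end{itemize}

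The real work is producing the balanced frame. My first choice would be the reflexive canonical frame $\mathsf{FI}(L)$ of Theorem~\ref{RefCanEmbed}, since it already comes with a fundamental embedding $a\mapsto\widehat a$ and a reflexive $\op$. There I would check prefactoring and postfactoring directly.

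For prefactoring, suppose $(F,I)\op(G,J)$, i.e.\ $G\cap I=\varnothing$. I would look for a witness of the form $(G,K)$. Keeping the filter $G$ automatically gives $(G,K)\leq_{\op}(G,J)$, because whether $w\op z$ holds depends only on the filter of $z$. It also gives $(F,I)\op(G,K)$ for free. What remains is to pick a proper ideal $K$ with three properties: $K$ contains $\neg g$ for all $g\in G$, $K$ is disjoint from $G$, and $F\cap K=\varnothing$, so that $(G,K)\op(F,I)$. The natural candidate is the ideal generated by $\{\neg g\mid g\in G\}$.

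Postfactoring is dual. Given $(F,I)\op(G,J)$, I would seek a witness $(H,I)$ sharing the ideal $I$, since the postrefinement relation depends only on the ideal. Here $H$ must be a filter containing $G$-compatible information, e.g.\ generated by elements of $G$, kept disjoint from $I$, and with $\neg h\in I$ for every $h\in H$.

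The main obstacle is exactly these disjointness requirements. In a non-distributive fundamental lattice, the join $\neg g_1\vee\dots\vee\neg g_n$ can land in $F$ even when every $\neg g_i$ individually avoids $F$, and semicomplementation plus dual self-adjointness give only limited control over this. If $\mathsf{FI}(L)$ turns out not to be balanced, I would modify the frame rather than the argument. Two options:
\begin{itemize}
\item Restrict to, or enlarge with, a family of filter--ideal pairs closed under the factoring moves, for example by adding the pairs $(G,K)$ and $(H,I)$ above whenever they are legitimate. One then rechecks that $a\mapsto\widehat a$ is still a fundamental embedding; pseudo-reflexivity and pseudo-symmetry are local conditions.
\item Alternatively, build the frame from the principal pairs of Theorem~\ref{prinfrm} with $\mathrm V=\Lambda=L$. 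There $(a,b)\op(c,d)$ iff $c\not\leq b$, and $z\leq_{\op} x$ holds as soon as the first coordinate of $z$ lies below that of $x$, so factoring reduces to elementwise lattice inequalities that may be easier to arrange.
\end{itemize}
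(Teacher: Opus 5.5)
Your overall route is the paper's: represent $L$ in the reflexive canonical frame $\mathsf{FI}(L)$, pass to its orthomodal companion, and read off the lattice embedding and clauses 1--2 from \cref{keylma}. That bookkeeping is correct as you wrote it. The gap is that you never prove the one step that carries the content, namely that $\mathsf{FI}(L)$ is balanced (\cref{canballma} in the paper). You set up the right conditions for both witnesses, but then call the disjointness requirements an obstacle and fall back on unspecified modifications of the frame. As written, nothing guarantees that the orthomodal companion exists. In fact the obstacle is illusory.

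For prefactoring, your candidate $K$, the ideal generated by $\{\neg g\mid g\in G\}$, is exactly the paper's $I(x_F)$. The missing observation is that you never need to control where a join $\neg g_1\vee\dots\vee\neg g_n$ lies. Dual self-adjointness gives $\neg c_1\wedge\dots\wedge\neg c_n\leq\neg(c_1\vee\dots\vee c_n)$. So if $b\leq\neg g_1\vee\dots\vee\neg g_n$ with $g_i\in G$, then $g_1\wedge\dots\wedge g_n\leq\neg\neg g_1\wedge\dots\wedge\neg\neg g_n\leq\neg(\neg g_1\vee\dots\vee\neg g_n)\leq\neg b$, i.e.\ $\neg b\in G$. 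This yields both disjointness conditions:
\begin{itemize}
\item $G\cap K=\varnothing$, since otherwise $0=b\wedge\neg b\in G$;
\item $F\cap K=\varnothing$, since if $b\in F$ then $\neg b\in I$, so $\neg b\in G\cap I$, contradicting $(F,I)\op(G,J)$.
\end{itemize}
For postfactoring, do not try to build $H$ out of $G$; already $H=G$ can fail, since $\neg g\in I$ need not hold. The trivial filter works:
\begin{itemize}
\item $(\{1\},I)\in X$, because $I$ is proper and $\neg 1=0$;
\item it postrefines $(F,I)$, since it has the same ideal;
\item $(\{1\},I)\op(G,J)$ is just $G\cap I=\varnothing$;
\item $(G,J)\op(\{1\},I)$ holds because $1\notin J$.
\end{itemize}
With these two verifications your argument is complete. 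Your second fallback, the principal-pair frame of \cref{prinfrm} with $\mathrm{V}=\Lambda=L$, is also balanced (the paper checks this in the proof of \cref{qembthm}), but it is not needed here.
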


The proof of this theorem proceeds as follows. Fix a fundamental lattice $(L,\neg)$ and consider its reflexive canonical frame $(X, \op)$. Recall that $L$ embeds into $\chi_{\op}(X)$ via the map $a \mapsto \widehat{a} = \{(F,I) \in X \mid a \in F\}$ (Theorem~\ref{RefCanEmbed}).

\begin{lemma} \label{canballma}
    For any fundamental lattice $(L, \neg)$, its reflexive canonical frame $(X, \op)$ is balanced.
\end{lemma}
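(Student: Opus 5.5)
We check directly that the reflexive canonical frame $\mathsf{FI}(L)=(X,\op)$ of Theorem~\ref{RefCanEmbed} is prefactoring and postfactoring; reflexivity of $\op$ is already given there. The plan is to write down, in each case, an explicit witness pair built from the filters and ideals at hand, and then verify the required relations using only the fundamental lattice laws.

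We first record two sufficient conditions for the refinement relations in $\mathsf{FI}(L)$.
\begin{itemize}
\item For prerefinement, if $F\subseteq H$ then $(H,K)\leq_{\op}(F,I)$. Indeed, if $(A,B)\op(H,K)$ then $H\cap B=\varnothing$, hence $F\cap B=\varnothing$, i.e.\ $(A,B)\op(F,I)$.
\item Dually, for postrefinement, if $I\subseteq K$ then $(H,K)\leq_{\po}(F,I)$. Indeed, $(H,K)\op(A,B)$ means $A\cap K=\varnothing$, which gives $A\cap I=\varnothing$.
\end{itemize}

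\emph{Prefactoring.} Suppose $(G,J)\op(F,I)$, i.e.\ $F\cap J=\varnothing$. We take $z=(F,K)$ with $K=\{b\in L\mid b\leq\neg a \text{ for some } a\in F\}$.
\begin{itemize}
\item $K$ is an ideal. It is downward closed, and $\neg a_1\vee\neg a_2\leq\neg(a_1\wedge a_2)$ by antitonicity, with $a_1\wedge a_2\in F$.
\item $K$ is proper. Note $\neg 1=0$, since $1\wedge\neg 1=0$. If $1\leq\neg a$ with $a\in F$, then $a\leq\neg\neg a\leq\neg 1=0$, contradicting properness of $F$.
\item $F\cap K=\varnothing$. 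If $a'\in F$ and $a'\leq\neg a$ with $a\in F$, then $a\wedge a'\in F$ and $a\wedge a'\leq a\wedge\neg a=0$, again contradicting properness.
\item The condition $a\in F\Rightarrow\neg a\in K$ holds by construction.
\end{itemize}
So $(F,K)\in X$, and by the first observation $(F,K)\leq_{\op}(F,I)$. It remains to show $(G,J)\opo(F,K)$. One half is $F\cap J=\varnothing$, which is our hypothesis. For the other half, $G\cap K=\varnothing$, suppose $g\in G$ and $g\leq\neg a$ with $a\in F$. Then $a\leq\neg\neg a\leq\neg g$, and $\neg g\in J$ because $g\in G$ and $(G,J)\in X$. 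Hence $a\in F\cap J$, a contradiction.

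\emph{Postfactoring.} Suppose $(F,I)\op(G,J)$, i.e.\ $G\cap I=\varnothing$. We take $z=(\{1\},I)$.
\begin{itemize}
\item $z\in X$: $1\notin I$ since $I$ is proper, and $\neg 1=0\in I$.
\item $(\{1\},I)\leq_{\po}(F,I)$ by the second observation.
\item $(G,J)\op z$ because $\{1\}\cap J=\varnothing$, as $J$ is proper.
\item $z\op(G,J)$ because $G\cap I=\varnothing$, which is our hypothesis.
\end{itemize}
Thus $(G,J)\opo(\{1\},I)\leq_{\po}(F,I)$, as postfactoring requires.

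The only step needing real care is choosing the ideal $K$ in the prefactoring case. It must be large enough to make $(F,K)$ satisfy the negation condition on $X$, yet small enough to stay disjoint from $G$. The argument above shows that the ideal generated by $\neg F$ achieves both, using double inflationarity together with $\neg g\in J$ for $g\in G$.
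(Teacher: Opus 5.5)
Your proposal is correct and follows essentially the same route as the paper. The witnesses are the same: $(\{1\},I)$ for postfactoring, and $(F,K)$ for prefactoring, where $K$ is the ideal generated by $\{\neg a\mid a\in F\}$. Your observation that $\{b\mid b\leq\neg a \text{ for some } a\in F\}$ is already closed under joins (since $\neg a_1\vee\neg a_2\leq\neg(a_1\wedge a_2)$) slightly streamlines the paper's check that $(F,K)\in X$, which works with finite joins $\neg a_1\vee\dots\vee\neg a_n$ and the detour $\neg\neg a_1\wedge\dots\wedge\neg\neg a_n\leq\neg b$.
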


\begin{proof}
We must show that $(X, \op)$ is both prefactoring and postfactoring. For clarity, we will write $x, y,...$ for the elements of $X$,  $x_F,y_F,...$ for their filter components and $x_I,y_I,...$ for their ideal components. Let us start by showing that $(X, \op)$ is postfactoring. Given $x,y \in X$ such that $y \op x$, let $y' = (\{1\},y_I)$. Clearly, $y' \in X$ and $y'\leq_{\po} y$. Since $x \op x$, we have that $1 \notin x_I$, so $x_I \cap y'_F = \varnothing$ and hence $y' \opo x$.

Next, let us show that prefactoring also holds. Fix some $x = (x_F, x_I) \in X$. We let $x' = (x_F,I(x_F))$, where $I(x_F)$ is the ideal $\{b \in L \mid \exists a_1,...,a_n \in x_F: b \leq \neg a_1 \jo ... \jo \neg a_n\}$. Note first that $x' \in X$. Indeed, we clearly have that $a \in x_F$ implies $\neg a \in I(x_F)$. Moreover, if $b \in I(x_F)$, then there are $a_1,...,a_n \in x_F$ such that $b \leq \neg a_1 \jo ... \jo \neg a_n$. We claim that $\neg \neg a_1 \land ... \land \neg \neg a_n \leq \neg b$. Clearly, this follows from $\neg \neg a_1 \land ... \land \neg \neg a_n \leq \neg (\neg a_1 \lor ... \lor \neg a_n)$. To show this, by dual self-adjointness, it is enough to show that $\neg a_i \leq \neg (\neg\neg a_1 \land...\land \neg\neg a_n)$ for any $i \leq n$. But we have $\neg \neg a_1 \land ... \land \neg \neg a_n \leq \neg \neg a_i$, so this follows from the antitonicity of $\neg$ together with the fact that $\neg \neg \neg c = \neg c$ for any $c \in L$, which is a direct consequence of dual self-adjointness. Hence $\neg \neg a_1 \me ... \me \neg \neg a_n \leq \neg b$, and $\neg b \in x_F$. From this it follows that $x_F \cap I(x_F) = \varnothing$. Otherwise, we would have that $a \me \neg a \in x_F$ for some $a \in L$, so $0 \in x_F \cap x_I$, contradicting the fact that $\op$ is reflexive. Now we claim that $y \opo x'$ for any $y \op x$. Suppose that $y \op x$. Clearly, since $x_F = x'_F$, we also have that $y \op x'$, so we only need to check that $I(x_F) \cap y_F = \varnothing$. Suppose $a \in I(x_F)$. Then, by the argument above, it follows that $\neg a \in x_F$. Moreover, $a \in y_F$ implies $\neg a \in y_I$. But this means that $y_I \cap x_F = \varnothing$ implies $I(x_F) \cap y_F = \varnothing$, so $x' \op y$. Since $x'_F=x_F$, we also have $x'\leq_{\op}x$, so $x'$ is the required witness for prefactoring.
\end{proof}

We can now complete the proof of \cref{embthm}. Fix a fundamental lattice $(L, \neg_L)$ with reflexive canonical frame $(X, \op)$. By \cref{canballma}, this fundamental frame is balanced, so by \cref{orthmodcomp}, it has an orthomodal companion $(X, \opo, \leq_{\op})$. By \cref{keylma}, the map $a \mapsto \widehat{a}$ is a lattice embedding of $L$ into the \textbf{OS4}-lattice $\chi_{\opo}(X)$. Moreover, for any $a \in L$, we have that $\Box_{\leq_{\op}} \widehat{a} = \widehat{a}$, and $\Box_{\leq_{\op}} \neg_{\opo} \widehat{a} = \neg_{\op} \widehat{a} = \widehat{\neg_L a}$.

\begin{theorem} The map $\mu$ is a full translation of fundamental logic into $\mathsf{OS4}$: if ${\mu(\phi) \vdash_\mathsf{OS4} \mu(\psi)}$, then $\phi \vdash_\mathsf{F} \psi$.
\end{theorem}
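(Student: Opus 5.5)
We argue by contraposition, using algebraic completeness of fundamental logic together with Theorem~\ref{embthm}. Suppose $\phi \nvdash_\mathsf{F} \psi$. By the soundness and completeness of $\mathsf{F}$ with respect to fundamental lattices, there is a fundamental lattice $(L,\neg_L)$ and a valuation $\theta:\mathsf{Prop}\to L$ such that $\tilde{\theta}(\phi)\not\leq\tilde{\theta}(\psi)$. (One could take the Lindenbaum algebra of $\mathsf{F}$ with the canonical valuation.) We then apply Theorem~\ref{embthm} to obtain an \textbf{OS4}-lattice $(M,\neg_M,\Box_M)$ and a lattice embedding $e:L\to M$ satisfying $\Box_M e(a)=e(a)$ and $e(\neg_L a)=\Box_M\neg_M e(a)$.

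Next we define a valuation $\theta':\mathsf{Prop}\to M$ by $\theta'(p)=e(\theta(p))$. The key step is to show, by induction on $\chi\in\mathcal{L}(\wedge,\vee,\neg)$, that $\tilde{\theta'}(\mu(\chi))=e(\tilde\theta(\chi))$.
\begin{itemize}
\item For $\top$, this holds because a lattice embedding preserves $1$ (it is bounded, as the canonical map $\widehat{\cdot}$ is).
\item For $p$, we have $\tilde{\theta'}(\Box p)=\Box_M e(\theta(p))=e(\theta(p))$ by clause~1 of Theorem~\ref{embthm}.
\item For $\wedge$ and $\vee$, the claim follows because $e$ is a lattice homomorphism.
\item For $\neg$, we have $\tilde{\theta'}(\Box\neg\mu(\chi))=\Box_M\neg_M e(\tilde\theta(\chi))=e(\neg_L\tilde\theta(\chi))$ by clause~2.
\end{itemize}

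Since $e$ is an order-embedding, $\tilde\theta(\phi)\not\leq\tilde\theta(\psi)$ gives $\tilde{\theta'}(\mu(\phi))\not\leq\tilde{\theta'}(\mu(\psi))$ in $M$. By soundness of $\mathsf{OS4}$ with respect to \textbf{OS4}-lattices, this yields $\mu(\phi)\nvdash_\mathsf{OS4}\mu(\psi)$, which is the contrapositive of the claim.

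The main obstacle is already handled by Theorem~\ref{embthm}. What remains is bookkeeping. First, we must confirm that $e$ preserves the bounds, in particular that $\widehat{1}=X$, so the $\top$ case goes through. Second, we must make sure that algebraic soundness of $\mathsf{OS4}$ is stated for consequence of the form ``$\tilde\theta(\alpha)\leq\tilde\theta(\beta)$'' under all valuations on \textbf{OS4}-lattices. Both follow from the preceding soundness and completeness theorem and the construction via the reflexive canonical frame.
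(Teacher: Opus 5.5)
Your proposal is correct and matches the paper's proof: you contrapose, take a fundamental-lattice countermodel, transfer it to the \textbf{OS4}-lattice from Theorem~\ref{embthm} via $\theta'=e\circ\theta$, verify $\tilde{\theta'}(\mu(\chi))=e(\tilde\theta(\chi))$ by induction, and conclude by soundness of $\mathsf{OS4}$. Your explicit handling of the $\top$ case, using $\widehat{1}=X$, fills in a detail the paper leaves implicit.
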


\begin{proof} Suppose that there are formulas $\phi$ and $\psi$ such that $\phi \nvdash_\mathsf{F} \psi$. Then there is a fundamental lattice $(L, \neg_L)$ and an $L$-valuation $\theta$ such that $\tilde{\theta}(\phi) \nleq \tilde{\theta}(\psi)$. By \cref{embthm}, we have an \textbf{OS4}-lattice $(M, \neg_M, \Box_M)$ and an embedding $e : L \to M$ with the properties that $\Box_M e(a) = e(a)$ and $\Box_M \neg_M e(a) = e(\neg_L a)$ for any $a \in L$. Now define an $M$-valuation $\theta'$ by letting $\theta'(p) = e(\theta(p))$ for any propositional variable $p$. We can now argue by induction that $\tilde{\theta'}(\mu(\chi)) = e(\tilde{\theta}(\chi))$ for any fundamental formula $\chi$. For the base case, we have:
\[\tilde{\theta'}(\mu(p)) = \tilde{\theta'}(\Box p) = \Box_M \theta'(p) = \Box_M e(\theta(p)) = e(\theta(p)).\] Assuming the claim holds for $\chi_1$ and $\chi_2$, we have:
\begin{align*}
     \tilde{\theta'}(\mu(\chi_1 \star \chi_2)) = \tilde{\theta'}(\mu(\chi_1) \star \mu(\chi_2)) &= \tilde{\theta'}(\mu(\chi_1)) \star_M \tilde{\theta'}(\mu(\chi_2)) \\ &= e(\tilde{\theta}(\chi_1)) \star_M e(\tilde{\theta}(\chi_2)) \\ &= e(\tilde{\theta}(\chi_1) \star_L \tilde{\theta}(\chi_2)) \\ &= e(\tilde{\theta}(\chi_1 \star \chi_2))
\end{align*} for $\star \in \{\me, \jo\}$. Finally, assuming that the claim holds for $\chi$, we have:
\[\tilde{\theta'}(\mu(\neg \chi)) = \tilde{\theta'}(\Box \neg \mu(\chi)) = \Box_M \neg_M \tilde{\theta'}(\mu(\chi)) = \Box_M \neg_M e(\tilde{\theta}(\chi)) = e(\neg_L \tilde{\theta}(\chi)) = e(\tilde{\theta}(\neg \chi)).\]
\noindent In particular, we have that $\tilde{\theta'}(\mu(\phi)) = e(\tilde{\theta}(\phi))$ and $\tilde{\theta'}(\mu(\psi)) = e(\tilde{\theta}(\psi))$. Since $e$ is an embedding, it follows that $\tilde{\theta'}(\mu(\phi)) \nleq \tilde{\theta'}(\mu(\psi))$ and hence that $\mu(\phi) \nvdash_{\mathsf{OS4}} \mu(\psi)$.\end{proof}

\subsection{First-Order Extension}\label{FirstOrderExt1}

We conclude this section by indicating how to extend the translation $\mu$ to the first-order case. Here as well, the standard method from the G\"odel-McKinsey-Tarski translation works smoothly.

The language of first-order fundamental logic is the usual first-order language with an infinite stock of individual variables, but without $\to$. First-order fundamental logic has all of the introduction and elimination rules of propositional fundamental logic plus the introduction and elimination rules for the quantifiers shown in Figure~\ref{QuantRules} (see Appendix~\ref{AppendixQuant} for a rigorous inductive definition). The first-order extension $\mathsf{QOS4}$ of $\mathsf{OS4}$ requires a bit more care to define, since the introduction and elimination rules can also be applied within $\Box$-subproofs, in which case the $\forall$I rule needs to stipulate that the quantified variable cannot occur free in any formula $\Box\varphi$ outside the $\Box$-subproof for which $\varphi$ could be brought into the subproof by $\Box$E; see Appendix~\ref{AppendixModal} for a rigorous definition.

 The translation $\mu$ from fundamental logic to $\mathsf{OS4}$ is extended to a translation from the language of first-order fundamental logic to $\mathsf{QOS4}$ via the following clauses:

\begin{itemize}
    \item $\mu(\forall x \phi(x)) = \Box \forall x \mu(\phi(x))$;
    \item $\mu(\exists x \phi(x)) = \exists x \mu(\phi(x))$.
\end{itemize}

\begin{figure}[H]
\begin{center}
\begin{minipage}{2in}
\[\begin{nd}
\hypo [\,] {} {\psi}
\have [\vdots] {0} {\vdots}
\have [i] {3}   {\varphi}
\have [\vdots] {4}   {\vdots}
\have [j]{5} {\forall v\varphi} \Ai{3}
\end{nd}
\]\[v\mbox{ not free in }\psi\]\end{minipage}\begin{minipage}{2in}
\[\begin{nd}
\have [\vdots] {0} {\vdots}
\have [i] {3}   {\forall v\varphi}
\have [\vdots] {4}   {\vdots}
\have [j]{5} {\varphi^v_u} \Ae{3}
\end{nd}
\]\[u\mbox{ substitutable for }v\mbox{ in }\varphi\]\end{minipage}\vspace{.2in}

\begin{minipage}{2.35in}
\[\begin{nd}
\have [\vdots] {0} {\vdots}
\have [i] {3}   {\varphi^v_u}
\have [\vdots] {4}   {\vdots}
\have [j]{5} {\exists v\varphi} \Ei{3}
\end{nd}
\]\[u\mbox{ substitutable for }v\mbox{ in }\varphi\]\end{minipage}\begin{minipage}{2in}
\[\begin{nd}
\have [\vdots] {0} {\vdots}
\have [i] {1} {\exists v\varphi}
\have [\vdots] {2} {\vdots}
\open
\hypo [j] {3}   {\varphi}
\have [\vdots] {4}   {\vdots}
\have [k] {6}   {\psi}
\close
\have [l]{7} {\psi} \Ee{1,3-6}
\end{nd}
\]\[v\mbox{ not free in }\psi\]\end{minipage}
\end{center}
\caption{Fitch-style rules for the logic with quantifiers.}\label{QuantRules}
\end{figure}

\begin{lemma}
    The translation $\mu$ from first-order fundamental logic to $\mathsf{QOS4}$ is faithful: for any two formulas $\phi,\psi$ of the language of first-order fundamental logic,   $\phi \vdash_{\mathsf{QF}} \psi$ implies $\mu(\phi) \vdash_{\mathsf{QOS4}} \mu(\psi)$.
\end{lemma}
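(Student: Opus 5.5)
We prove the stronger claim by induction on the construction of proofs in first-order fundamental logic: whenever there is a $\mathsf{QF}$-proof with assumption $\phi$ whose conclusion is $\psi$, there is a $\mathsf{QOS4}$-proof with assumption $\mu(\phi)$ and conclusion $\mu(\psi)$. To handle subproofs inside the induction, we actually prove that every line $\sigma_k$ of a $\mathsf{QF}$-proof $\langle\sigma_1,\dots,\sigma_n\rangle$ can be matched by a line $\mu(\sigma_k)$ in a $\mathsf{QOS4}$-proof beginning with $\mu(\sigma_1)$. In this way the translated proof is built line by line, keeping the same subproof structure as the original.

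The first step is a lemma that every translated formula is ``stable'', i.e. $\mu(\chi)\vdash_{\mathsf{QOS4}}\Box\mu(\chi)$. This goes by induction on $\chi$. The cases $\mu(p)=\Box p$, $\mu(\neg\chi)=\Box\neg\mu(\chi)$ and $\mu(\forall x\chi)=\Box\forall x\mu(\chi)$ are boxed, so the \textsf{4} rule gives stability. For $\top$ we use $\Box$I on an empty $\Box$-subproof. For $\wedge$ we combine $\Box$-distribution over conjunction with $\Box$I/$\Box$E. For $\vee$ we use $\vee$E together with the monotonicity of $\Box$ ($\Box\alpha\vdash\Box(\alpha\vee\beta)$). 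For $\exists$ we use $\exists$E and then $\exists$I inside a $\Box$-subproof, which gives $\Box\mu(\chi)\vdash\Box\exists x\mu(\chi)$.

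With stability in hand, the structural rules are immediate. The rules $\top$I, $\wedge$I, $\wedge$E, $\vee$I, $\vee$E, $\exists$I and $\exists$E translate to the same rules, because $\mu$ commutes with these connectives and with substitution of variables (so that $\mu(\phi^v_u)=\mu(\phi)^v_u$). The variable side conditions are preserved, since $\mu$ does not change free variables. For $\neg$E, we turn $\Box\neg\mu(\phi)$ into $\neg\mu(\phi)$ by the \textsf{T} rule and then apply orthological $\neg$E.

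The two remaining rules, $\neg$I and $\forall$I, require the most care. Consider $\neg$I. We have a line $\mu(\psi)$ and a subproof from $\mu(\phi)$ ending in $\Box\neg\mu(\psi)$, and we want $\Box\neg\mu(\phi)$. Apply \textsf{T} inside the subproof to get $\neg\mu(\psi)$; orthological $\neg$I then yields $\neg\mu(\phi)$ in the outer proof. This unboxed formula does not yet give the target, so we strengthen the argument. Using stability, we get $\Box\mu(\psi)$. We then open a $\Box$-subproof and bring $\mu(\psi)$ in by $\Box$E. Inside this $\Box$-subproof we redo the ordinary subproof from $\mu(\phi)$. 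This is legitimate only if that subproof uses nothing from outside other than $\mu(\psi)$. That holds because fundamental logic has no Reiteration: the only outside material the subproof could use is the main-proof line $\mu(\psi)$ cited by the $\neg$I rule itself. Then $\neg$I inside the $\Box$-subproof gives $\neg\mu(\phi)$, and $\Box$I gives $\Box\neg\mu(\phi)$.

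The rule $\forall$I is handled the same way. From $\mu(\phi)$, with $v$ not free in the assumption $\mu(\psi)$, we get $\forall v\mu(\phi)$. To box it we rely on the absence of Reiteration: every line of the translated proof is derived from the assumption $\mu(\psi)$ alone, and $\mu(\psi)$ is stable. So we move to $\Box\mu(\psi)$ and rerun the whole derivation of $\mu(\phi)$ inside a $\Box$-subproof, bringing $\mu(\psi)$ in by $\Box$E. This step meets the $\mathsf{QOS4}$ side condition for $\forall$I, because $v$ is not free in $\Box\mu(\psi)$. We then apply $\forall$I and $\Box$I to obtain $\Box\forall v\mu(\phi)$.

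I expect the main obstacle to be making this ``rerun inside a box'' step rigorous within the inductive definition of Appendix~\ref{AppendixModal}. One way is to strengthen the induction hypothesis to say that each translated proof from assumption $\mu(\sigma_1)$ can also be carried out inside a $\Box$-subproof whose reiterable box-set is $\{\Box\mu(\sigma_1)\}$. Simpler alternatives are to prove the box-closure property once and for all for any derivation from a single assumption, or to argue semantically via soundness and completeness of $\mathsf{QOS4}$.
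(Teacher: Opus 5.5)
Your proposal is correct and follows essentially the paper's route: first the stability lemma $\mu(\chi)\vdash_{\mathsf{QOS4}}\Box\mu(\chi)$ by induction on formulas, with the $\exists$ case done exactly as you describe. Then comes an induction on $\mathsf{QF}$-proofs in which $\forall$I is handled by combining stability with monotonicity of $\Box$, which is exactly what your ``rerun inside a $\Box$-subproof'' establishes. You are slightly more careful than the paper in noting that $\neg$I needs the same stability-plus-monotonicity treatment, and the one rule you did not list, $\forall$E, is routine: apply the \textsf{T} rule to $\Box\forall v\mu(\varphi)$ and then $\forall$E, using $\mu(\varphi^v_u)=\mu(\varphi)^v_u$.
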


\begin{proof}
    Notice first that for any formula $\phi$ of first-order fundamental logic, $\Box\mu(\phi)$ is equivalent to $\mu(\phi)$ over $\mathsf{QOS4}$. This is proved by an easy induction on the complexity of $\phi$. We write out only the existential case, where we must show that $\exists x \mu(\phi(x)) \vdash_{\mathsf{QOS4}} \Box \exists x\mu (\phi(x))$. Let $u$ be substitutable for $x$ in $\mu(\phi(x))$. Clearly, $\mu(\phi(u)) \vdash_{\mathsf{QOS4}} \exists x \mu(\phi(x))$, so $\Box \mu(\phi(u)) \vdash_{\mathsf{QOS4}} \Box \exists x \mu(\phi(x))$. As $\mu(\phi(u)) \vdash_{\mathsf{QOS4}} \Box \mu (\phi(u))$ by the induction hypothesis, we get that $\mu(\phi(u)) \vdash_{\mathsf{QOS4}} \Box \exists x \mu(\phi(x))$, and therefore $\exists x \mu(\phi(x)) \vdash_{\mathsf{QOS4}} \Box \exists x \mu(\phi(x))$, as desired.

    We can now show that $\phi \vdash_\mathsf{QF} \psi$ implies $\mu(\phi) \vdash_{\mathsf{QOS4}} \mu (\psi)$ for any formulas $\phi$ and $\psi$ of the language of first-order fundamental logic. We do this by induction on the length of $\mathsf{QF}$-proofs. The only non-immediate case is when the last rule applied is universal introduction. Suppose that $\phi \vdash_{\mathsf{QF}} \psi(x)$ for $x$ not free in $\phi$. By the induction hypothesis, we have that $\mu(\phi) \vdash_{\mathsf{QOS4}} \mu(\psi(x))$, so $\mu(\phi) \vdash_{\mathsf{QOS4}} \forall x\mu(\psi(x))$. This implies that $\Box \mu(\phi) \vdash_{\mathsf{QOS4}} \Box \forall x \mu(\psi(x))$. As established above, we have that $\mu(\phi) \vdash_{\mathsf{QOS4}} \Box \mu(\phi)$ and hence $\mu(\phi) \vdash_{\mathsf{QOS4}} \Box \forall x \mu(\psi(x))$. This shows that $\mu(\phi) \vdash_{\mathsf{QOS4}} \mu(\psi)$, as desired. 
\end{proof}

In order to prove that the translation is also full, we need the following strengthening of \cref{embthm}.

\begin{theorem} \label{qembthm}
    For any fundamental lattice $(L, \neg_L)$, there is a complete \textbf{OS4}-lattice $(M,\neg_M,\Box_M)$ and a lattice embedding $e: L \to M$ such that:
    \begin{enumerate}
        \item for any $a \in L$, $\Box_M e(a) = e(a)$;
        \item $e(\neg_L a) = \Box_M \neg_M e(a)$;
        \item for any $A \sset L$, if $\bigme_L A$ exists in $L$, then $e(\bigme_L A) = \Box_M\bigme_M e[A]$;
        \item for any $A \sset L$, if $\bigjo_L A$ exists in $L$, then $e(\bigjo_L A) = \bigjo_M e[A]$.
    \end{enumerate}
\end{theorem}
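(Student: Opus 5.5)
The plan is to repeat the proof of \cref{embthm}, but with a different representing frame. The reflexive canonical frame $\mathsf{FI}(L)$ does not preserve the infinite meets and joins that already exist in $L$. I will therefore use the frame of \cref{prinfrm} with $\mathrm{V}=\Lambda=L$. So $X=\{(a,\neg a)\mid a\neq 0\}\cup\{(1,b)\mid b\neq 1\}$, and $(a,b)\op(c,d)$ iff $c\not\leq b$. By \cref{prinfrm}, $a\mapsto a^\downarrow$ is a fundamental embedding of $L$ into $\chi_{\op}(X)$ that preserves all existing meets and joins.

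The first step is to check that $(X,\op)$ meets the hypotheses of \cref{orthmodcomp} and \cref{keylma}: it must be reflexive and balanced.
\begin{itemize}
\item Reflexivity. We have $a\not\leq\neg a$ when $a\neq 0$, by semicomplementation, and $1\not\leq b$ when $b\neq 1$. Semicomplementation also gives $\neg c\neq 1$ for $c\neq 0$, since $\neg c=1$ would give $c=c\wedge\neg c=0$.
\item Postfactoring. Let $y=(a,b)\op x=(c,d)$, so $c\not\leq b$ and in particular $b\neq 1$. Take $z=(1,b)$. Then $z\leq_{\po}y$, because $z\op w$ and $y\op w$ both reduce to $w_1\not\leq b$. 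We also have $x\opo z$: $z\op x$ because $c\not\leq b$, and $x\op z$ because $d\neq 1$, by the remark on $\neg c\neq 1$ above.
\item Prefactoring. Let $y=(a,b)\op x=(c,d)$, so $c\neq 0$. Take $z=(c,\neg c)$. Then $z\leq_{\op}x$, because $w\op z$ and $w\op x$ both reduce to $c\not\leq w_2$. Also $y\op z$ holds since $c\not\leq b$. For $z\op y$ we need $a\not\leq\neg c$. If $y=(1,b)$, this holds because $\neg c\neq 1$. If $y=(a,\neg a)$, then $c\not\leq\neg a$, and dual self-adjointness gives $a\not\leq\neg c$.
\end{itemize}
I expect this verification to be the main obstacle, since it is the only point where the specific shape of $X$ matters. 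The rest is a transfer of the argument for \cref{embthm}.

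With balancedness in hand, take $M=\chi_{\opo}(X)$ with $\Box_M=\Box_{\leq_{\op}}$. This is an \textbf{OS4}-lattice by \cref{orthmodcomp}. It is complete, because it is the lattice of fixpoints of a closure operator on a powerset. Let $e(a)=a^\downarrow$, viewed in $M$ via the inclusion $\iota$ of \cref{keylma}.1. By \cref{keylma}.3, $ran(\Box_M)=\chi_{\op}(X)$ and $\Box_M$ is an interior operator, so $\Box_M e(a)=e(a)$. By \cref{keylma}.2, $\Box_M\neg_M e(a)=\neg_{\op}a^\downarrow=(\neg_L a)^\downarrow$.

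For existing joins: $\iota$ is a left adjoint by \cref{keylma}.3, so it preserves all joins. Hence $e(\bigvee_L A)=\bigvee_{\chi_{\op}(X)}\{a^\downarrow\mid a\in A\}=\bigvee_M e[A]$.

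For existing meets: meets in both $\chi_{\op}(X)$ and $M$ are intersections. So $e(\bigwedge_L A)=\bigcap e[A]=\bigwedge_M e[A]$. This set lies in $\chi_{\op}(X)$ and is therefore fixed by $\Box_M$, which gives $e(\bigwedge_L A)=\Box_M\bigwedge_M e[A]$. Injectivity of $e$ and preservation of finite meets and joins follow in the same way, so $e$ is a lattice embedding.
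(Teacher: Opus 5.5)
Your proof is correct and follows the paper's route. You use the frame of \cref{prinfrm} with $\mathrm{V}=\Lambda=L$, show it is balanced, pass to its orthomodal companion, and read off conditions 1--4 from \cref{keylma} and the adjunction $\iota\dashv\Box_M$. The differences are cosmetic: your witnesses $(1,b)$ and $(c,\neg c)$ work uniformly, where the paper first treats the case $x\op y$ separately; and for condition 3 you use that $\chi_{\op}(X)$ is closed under intersections, where the paper uses that $\Box_M$ preserves limits.
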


\begin{proof}
    Let $(X, \op)$ be the fundamental frame described in \cref{prinfrm} with $\mathrm{V} = \Lambda = L$. We claim that this frame is balanced. Suppose that $y \op x$, with $x = (x_a,x_b)$ and $y = (y_a, y_b)$. Note first that if $y_a \nleq x_b$, then $x \op y$, so $x$ and $y$ are the required witnesses for pre- and postfactoring,  respectively. So assume $y_a \leq x_b$, which means that $y_a \neq 1$ and therefore that $y = (y_a, \neg y_a)$. We let $z = (x_a, \neg x_a)$. It is easy to see that $z \leq_{\op} x$ and that $y \op z$. Moreover, if $y_a \leq \neg x_a$, then, by dual self-adjointness, $x_a \leq \neg y_a$, contradicting the fact that $y \op x$. Hence $z \op y$, and $z$ is the required witness for prefactoring. For postfactoring, let $z = (1, \neg y_a)$. It is easy to see that $z \leq_{\po} y$. Moreover, we have that $x \op z$, since $x_b \neq 1$. Hence $z$ is the required witness, which completes the proof of the claim.

    Now we let $(M,\neg_M,\Box_M)$ be the dual algebra of the orthomodal companion of $(X, \op)$, which is a complete \textbf{OS4}-lattice, and we let $e$ be the composition of the map $a \mapsto a^\downarrow$ with the inclusion map $ \iota: \chi_{\op}(X) \to M$. Since the map $a \mapsto a^\downarrow$ is an embedding, we know that $e$ is an embedding. Moreover, by the same reasoning as in the proof of \cref{embthm}, we know that conditions $1$ and $2$ hold. For condition $3$, fix $A \sset L$ such that $\bigme_L A$ exists in $L$. Note first that the map $a \mapsto a^\downarrow$ preserves arbitrary meets, so $(\bigme_L A)^\downarrow = \bigcap \{a^\downarrow \mid a \in A\}$. Moreover, by \cref{keylma}, $\Box_M: M \to \chi_{\op}(X)$ is right-adjoint to $\iota$, so it preserves limits. This means that \[\Box_M (\bigcap \{a^\downarrow \mid a \in A\}) = \bigcap \{\Box_M a^\downarrow \mid a \in A\} = \bigcap \{a^\downarrow \mid a \in A\}.\] This shows that $e(\bigme_L A) = \bigcap \{a^\downarrow \mid a \in A\} = \Box_M\bigme_M e[A]$, as desired.
    Finally, for condition $4$, fix $A \sset L$ such that $\bigjo_L A$ exists in $L$. Since $\iota: \chi_{\op}(X) \to M$ has a right-adjoint, it preserves arbitrary joins. Because the map $a \mapsto a^\downarrow$ also preserves arbitrary joins, it follows that $e: L \to M$ preserves arbitrary joins in $L$, so $e(\bigjo_L A) = \bigjo_M e[A]$. This completes the proof.
\end{proof}

\begin{corollary} \label{qfullgmt}
    The translation $\mu$ is also full: for any two formulas $\phi,\psi$ of the language of first-order fundamental logic, $\mu(\phi) \vdash_{\mathsf{QOS4}} \mu(\psi)$ implies $\phi \vdash_{\mathsf{QF}} \psi$.
\end{corollary}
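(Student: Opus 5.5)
We argue by contraposition, mirroring the propositional fullness proof but with \cref{qembthm} in place of \cref{embthm}. Suppose $\phi \nvdash_{\mathsf{QF}} \psi$. We use the algebraic completeness of first-order fundamental logic via the Lindenbaum--Tarski algebra. Let $L$ be the quotient of the set of first-order formulas by interderivability in $\mathsf{QF}$, which is a fundamental lattice. The standard argument then shows that for every formula $\chi(v)$ the meet $\bigme_L\{[\chi^v_u] \mid u \text{ a variable}\}$ exists and equals $[\forall v\chi]$, and the join $\bigjo_L\{[\chi^v_u]\mid u\}$ exists and equals $[\exists v\chi]$. This uses $\forall$I/$\exists$E with fresh variables, which requires infinitely many variables (renaming bound variables if necessary). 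Thus the canonical interpretation $\chi\mapsto[\chi]$ in the ``variables as domain'' structure satisfies $[\phi]\nleq[\psi]$, and it interprets quantifiers by the existing infinitary meets and joins.

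Next, apply \cref{qembthm} to $L$. This gives a complete \textbf{OS4}-lattice $M$ and an embedding $e$ satisfying conditions 1--4. We build a $\mathsf{QOS4}$-model over $M$ whose domain is the set of variables and which interprets quantifiers by arbitrary meets and joins in $M$. Atomic formulas $P(u_1,\dots,u_n)$ are interpreted as $e([P(u_1,\dots,u_n)])$, and each variable denotes itself. We then need soundness of $\mathsf{QOS4}$ for such complete-algebra models, taking quantifiers as $\bigme_M$ and $\bigjo_M$. This should be routine: $\Box$ is a normal operator on a complete lattice, and $\forall$I inside $\Box$-subproofs is justified by the variable restriction in Appendix~\ref{AppendixModal}. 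We expect this soundness check to be the main obstacle, because we must verify that the restriction on free variables in boxed formulas exactly matches the fact that $\Box_M$ commutes with the relevant meets only in the required direction. In particular, $\Box\bigme = \bigme\Box$ holds since $\Box_M$ preserves all meets, being the right adjoint from the construction in \cref{keylma}.

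Finally, by induction on $\chi$, we show that the $M$-value of $\mu(\chi)$ equals $e([\chi])$, under all assignments given by substitution. The atomic case holds because $\mu(P)=\Box P$ and condition 1 gives $\Box_M e(a)=e(a)$. The $\wedge$, $\vee$ and $\neg$ cases are exactly as in the propositional proof. For $\forall$ we compute
\[\Box_M\bigme_M\{e([\chi^v_u])\}_u = e\Big(\bigme_L\{[\chi^v_u]\}_u\Big) = e([\forall v\chi]),\]
using condition 3. For $\exists$ we use condition 4 in the same way. Hence the value of $\mu(\phi)$ is not below the value of $\mu(\psi)$, since $e$ is an order embedding. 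By soundness, $\mu(\phi)\nvdash_{\mathsf{QOS4}}\mu(\psi)$.
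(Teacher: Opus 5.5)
Your proposal is correct and is essentially the paper's proof: contrapose, pass to the Lindenbaum algebra of $\mathsf{QF}$ (where $\forall$ and $\exists$ are the meets and joins of substitution instances), apply \cref{qembthm}, and show by induction that the value of $\mu(\chi)$ in $M$ is $e([\chi])$, using conditions 1, 3 and 4 for the atomic, $\forall$ and $\exists$ cases. The soundness step you single out as the main obstacle is actually routine and does not use complete multiplicativity of $\Box_M$: because no variable restricted by $\forall$I occurs free in the formulas of $B$, that rule inside $\Box$-subproofs is sound on any complete \textbf{OS4}-lattice using only monotonicity and finite multiplicativity of $\Box$, and the extra fact $\Box_M\bigwedge = \bigwedge\Box_M$ only adds the Barcan formula, as the paper's subsequent remark notes.
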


\begin{proof}
    Suppose that $\phi \nvdash_\mathsf{QF} \psi$. The Lindenbaum algebra $L$ of first-order fundamental logic is a fundamental lattice in which the (equivalence class of the) formula $\forall x \phi(x)$ and the (equivalence class of the) formula $\exists x \phi(x)$ are, respectively, the meet and join of the set of (equivalence classes of) formulas in $\{\phi^x_y \mid y\mbox{ substitutable for }x\mbox{ in }\phi\}$. Let $\theta$ be the canonical valuation on $L$ mapping each atomic formula to its equivalence class. By assumption,  $\tilde{\theta}(\phi) \nleq \tilde{\theta}(\psi)$. By \cref{qembthm}, there is a complete \textbf{OS4}-lattice $(M, \neg_M, \Box_M)$ and an embedding $e: L \to M$ with the properties stated in the theorem. Letting $\theta'$ be a valuation on $M$ for the language of $\mathsf{QOS4}$ given by $\theta'(\phi) = e(\theta(\phi))$ for any atomic formula $\phi$, a straightforward induction establishes that $\tilde{\theta'}(\mu(\phi)) = e(\tilde{\theta}(\phi))$ for any first-order fundamental formula $\phi$. Since $e$ is an embedding, it follows that $\tilde{\theta'}(\mu(\phi)) \nleq \tilde{\theta'}(\mu(\psi))$, so $\mu(\phi) \nvdash_{\mathsf{QOS4}} \mu(\psi)$, as desired.
\end{proof}

\begin{remark}
    In the proof of \cref{qembthm}, the $\Box_M$ modal operator in the \textbf{OS4}-lattice $M$ constructed is completely multiplicative, meaning that
    \[\Box_M \bigme A = \bigme \{ \Box_M a \mid a \in A\}\] for any $A \sset M$. On such modal ortholattices, the Barcan axiom
    \begin{itemize}
        \item[($\forall$B)] $\forall x \Box \phi(x) \vdash \Box \forall x \phi(x)$
    \end{itemize}
    is valid. This means that the $\mu$ translation is also a full and faithful translation of first-order fundamental logic into the logic $\mathsf{QOS4}$ + ($\forall $B).
\end{remark}

\section{The Goldblatt Translation into Intuitionistic \textsf{KTB}}\label{KTBSection}

In the previous section, we saw that the GMT translation of intuitionistic logic into $\mathsf{S4}$ can be generalized beyond the distributive setting to a translation of fundamental logic into $\mathsf{OS4}$. We now swap the roles of intuitionistic logic and orthologic, and generalize the Goldblatt translation of orthologic into $\mathsf{KTB}$ (\citealt{Goldblatt1974}) to a translation of fundamental logic into an intuitionistic version of $\mathsf{KTB}$. The motivation is similar to the one in the previous section. Intuitively, the Goldblatt translation can be understood as a way for the classical logician to understand the orthologician: in order to determine whether a formula $\psi$ follows from $\phi$ according to the latter, it is enough to check whether the Goldblatt translation of $\psi$ follows from the Goldblatt translation of $\phi$ over the modal logic $\mathsf{KTB}$. In this section, we generalize this state of affairs to the intuitionistic background. In what follows, we assume some basic familiarity with relational semantics for intuitionistic modal logic, particularly for Fischer Servi's intuitionistic modal logic $\mathsf{FS}$ (\citealt{FischerServi1984}).

\subsection{\textbf{FSTB}-Frames}
We start with the following definition.
\begin{definition}\label{fstblat}
    An \textit{\textbf{FSTB}-lattice} is a triple $(H, \Box, \diamondsuit)$ such that $H$ is a Heyting algebra and $\Box$ and $\diamondsuit$ are unary operations on $H$ satisfying the following axioms:
    \begin{enumerate}
        \item $\Box 1 = 1$, $\diamondsuit 0 = 0$;
        \item $\Box (a \me b) = \Box a \me \Box b$, $\diamondsuit (a \jo b) = \diamondsuit a \jo \diamondsuit b$;
        \item $\diamondsuit a \to \Box b \leq \Box (a \to b)$;
        \item $\diamondsuit (a \to b) \leq \Box a \to \diamondsuit b$;
        \item $\Box a \leq a \leq \diamondsuit a$;
        \item $\diamondsuit \Box a \leq a \leq \Box \diamondsuit a$.
        \end{enumerate}
\end{definition}

Axioms $1$ and $2$ in the definition above are standard principles of algebraic modal logic. Axioms $3$ and $4$ characterize Fischer Servi's logic $\mathsf{FS}$, while axioms $5$ and $6$ are intuitionistic versions of axioms \textsf{T} ($\Box a \leq a$) and \textsf{B}  ($a \leq \Box \diamondsuit a)$,  respectively. Recall that whenever $(B, \Box)$ is a Boolean algebra with a normal $\Box$ satisfying axiom $T$ and axiom $B$, the map $a \mapsto \Box \diamondsuit a$ is a closure operator on $B$. This is the key algebraic property used by the Goldblatt translation, which maps formulas of the language of orthologic to fixpoints of the $\Box\diamondsuit$-operator on the Lindenbaum algebra of the modal logic $\mathsf{KTB}$. The following lemma establishes that a similar situation arises in the case of \textbf{FSTB}-lattices.

\begin{lemma} For any \textbf{FSTB}-lattice $(H, \Box, \diamondsuit)$, the operation $a \mapsto \Box\diamondsuit a$ is a closure operator.
\end{lemma}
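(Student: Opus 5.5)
We need to show three things for $c(a) := \Box\diamondsuit a$: it is monotone, extensive ($a \leq \Box\diamondsuit a$), and idempotent. The plan is to prove each from the axioms of Definition~\ref{fstblat}, keeping them in that order.

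\emph{Monotonicity.} First observe that both $\Box$ and $\diamondsuit$ are monotone. If $a\leq b$, then $a = a\wedge b$, so axiom 2 gives $\Box a = \Box a\wedge\Box b\leq\Box b$. Likewise $b = a\vee b$, so $\diamondsuit b=\diamondsuit a\vee\diamondsuit b\geq\diamondsuit a$. Composing, $\Box\diamondsuit$ is monotone.

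\emph{Extensivity.} This is exactly the second half of axiom 6, $a\leq\Box\diamondsuit a$.

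\emph{Idempotence.} By extensivity applied to $\Box\diamondsuit a$, we already have $\Box\diamondsuit a\leq\Box\diamondsuit\Box\diamondsuit a$. For the converse, we use the first half of axiom 6, $\diamondsuit\Box b\leq b$, instantiated at $b=\diamondsuit a$. This yields $\diamondsuit\Box\diamondsuit a\leq\diamondsuit a$. Applying the monotone operator $\Box$ then gives $\Box\diamondsuit\Box\diamondsuit a\leq\Box\diamondsuit a$.

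Hence $\Box\diamondsuit$ is a closure operator. Note that only axioms 2 and 6 are needed; the Fischer Servi axioms 3--4 and axiom 5 play no role. There is no real obstacle here. The only point needing care is deriving monotonicity of $\diamondsuit$ from join-preservation, rather than assuming it outright.
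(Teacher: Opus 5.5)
Your proof is correct and follows the paper's argument essentially verbatim: monotonicity comes from composing the monotone maps $\Box$ and $\diamondsuit$, extensivity from the second half of axiom 6, and idempotence from instantiating $\diamondsuit\Box b\leq b$ at $b=\diamondsuit a$ and then applying $\Box$. The only difference is that you spell out how monotonicity of $\Box$ and $\diamondsuit$ follows from axiom 2, and you state the trivial inequality $\Box\diamondsuit a\leq\Box\diamondsuit\Box\diamondsuit a$; the paper leaves both implicit.
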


\begin{proof}
   Recall that a closure operator on a poset $P$ is a monotone, increasing, and idempotent map from $P$ to $P$. Since $\Box \diamondsuit$ is the composition of two monotone operators, it is also monotone. Moreover, it is clearly increasing by axiom $6$ in \cref{fstblat}. Hence we only have to show idempotence, i.e., that $\Box \diamondsuit \Box \diamondsuit a \leq \Box \diamondsuit a$ for any $a \in H$. Indeed, by axiom $6$, we have $\diamondsuit \Box \diamondsuit a \leq \diamondsuit a$, and then monotonicity of $\Box$ immediately yields $\Box \diamondsuit \Box \diamondsuit a \leq \Box \diamondsuit a$, as desired.\end{proof}

Let us now define the Fischer Servi frames that correspond to \textbf{FSTB}-lattices. Following Dragalin \citeyearpar{Dragalin1988}, we think in terms of downsets rather than upsets, so $x\leq y$ means that $x$ is an intuitionistic successor of $y$. 

\begin{definition} \label{fstbdef}
    An \textit{\textbf{FSTB}-frame} is a triple $(X, \leq, \top)$ such that:
    \begin{enumerate}
        \item $X$ is a nonempty set, and $\leq$ is a reflexive and transitive relation on $X$;
        \item $\top$ is a reflexive and symmetric relation on $X$;
        \item\label{fstbdef3} for any $x,y,z \in X$, $y \leq x \top z$ implies that there is a $w \in X$ such that $y \top w \leq z$;
        \item\label{fstbdef4} for any $x,y,z \in X$, $x \top y \geq z$ implies that there is a $w \in X$ such that $x \geq w \top z$.
    \end{enumerate}
\end{definition}
\noindent Note that the list of conditions in Definition~\ref{fstbdef} is somewhat redundant. Conditions $1$, $3$, and $4$ are the standard conditions on Fischer Servi frames. However, when the accessibility relation $\top$ is symmetric, conditions $3$ and $4$ are easily seen to be equivalent to one another.

Given an \textbf{FSTB}-frame $(X, \leq, \top)$, the \textit{complex algebra} of $(X, \leq, \top)$ is the Heyting algebra $Dn(X)$ of downsets of $(X,\leq)$, together with the two operations $\Box_\top$ and $\diamondsuit_\top$ defined by $\Box_\top A = \{x \in X \mid {\forall y, z \, (x \geq y \top z \Rightarrow z \in A)}\}$ and $\diamondsuit_\top A = \{x \in X \mid \exists y \in A : x \top y\}$.

\begin{lemma}
    For any \textbf{FSTB}-frame $(X, \leq, \top)$, its complex algebra is an \textbf{FSTB}-lattice. 
\end{lemma}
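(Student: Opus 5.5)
We must show that $Dn(X)$ with $\Box_\top,\diamondsuit_\top$ satisfies axioms 1--6 of \cref{fstblat}. The plan is to argue directly from the frame conditions: check well-definedness first, then the normality axioms, then the Fischer Servi axioms, then \textsf{T} and \textsf{B}.

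\emph{Well-definedness.} Since $Dn(X)$ is the usual Heyting algebra of downsets of a preorder, only the modal operations need attention.
\begin{itemize}
\item $\Box_\top A$ is a downset because its defining clause quantifies over all $y\leq x$, and $\leq$ is transitive.
\item $\diamondsuit_\top A$ is a downset by condition~\ref{fstbdef3}. Suppose $x\top y$ with $y\in A$, and $x'\leq x$. Condition~\ref{fstbdef3} gives $w$ with $x'\top w\leq y$. Since $A$ is a downset, $w\in A$, so $x'\in\diamondsuit_\top A$.
\end{itemize}

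\emph{Axioms 1 and 2.} These are immediate from the universal, resp.\ existential, form of the definitions.

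\emph{Axiom 3.} Recall that $(A\to B)=\{x\mid \forall y\leq x\,(y\in A\Rightarrow y\in B)\}$. Take $x\in \diamondsuit A\to\Box B$, and take $y\leq x$ with $y\top z$ and $z\in A$; we need $z\in B$. Since $y\top z\in A$, we have $y\in\diamondsuit A$. Also $y\leq x$, so $y\in\Box B$. Reflexivity of $\leq$ gives $y\geq y\top z$, hence $z\in B$.

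\emph{Axiom 4.} Take $x\top y$ with $y\in A\to B$, and take $x'\leq x$ with $x'\in\Box A$; we need $x'\in\diamondsuit B$. By condition~\ref{fstbdef3} there is $w$ with $x'\top w\leq y$. Then:
\begin{itemize}
\item $w\in A$, since $x'\in\Box A$ and $x'\geq x'\top w$;
\item $w\in A\to B$, since $w\leq y$ and $A\to B$ is a downset;
\item hence $w\in B$, using reflexivity of $\leq$ in the definition of $\to$.
\end{itemize}
So $x'\in\diamondsuit B$.

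\emph{Axiom 5.} Reflexivity of $\top$ and $\leq$ gives $\Box A\subseteq A$: for $x\in\Box A$, use $x\geq x\top x$. It also gives $A\subseteq\diamondsuit A$: for $x\in A$, use $x\top x$.

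\emph{Axiom 6.} This is where the interaction conditions matter, though the argument stays short.
\begin{itemize}
\item $\diamondsuit\Box A\subseteq A$: if $x\top y$ with $y\in\Box A$, symmetry gives $y\geq y\top x$, so $x\in A$.
\item $A\subseteq\Box\diamondsuit A$: take $x\in A$ and $x\geq y\top z$. Symmetry gives $z\top y$, and $y\in A$ because $A$ is a downset. Hence $z\in\diamondsuit A$.
\end{itemize}
Condition~\ref{fstbdef4} is not needed separately, since under symmetry it is equivalent to condition~\ref{fstbdef3}.

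The only step needing care is axiom~4 (and the downset property of $\diamondsuit_\top$), where condition~\ref{fstbdef3} must be applied to the correct triple; everything else is a one-line unfolding of definitions.
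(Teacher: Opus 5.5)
Your argument for axiom~6 is the same as the paper's; the paper cites the literature for axioms 1--5 and verifies only axiom~6. Your checks of well-definedness and of axioms 1, 2, 4 and 5 are correct. However, your verification of axiom~3 has a gap.

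To get $x\in\Box_\top(A\to B)$, you must show that each $z$ with $x\geq y\top z$ belongs to $A\to B$. That means \emph{every} $z'\leq z$ with $z'\in A$ must lie in $B$, but you only check the instance $z'=z$. The other instances do not follow automatically. Your argument uses nothing beyond reflexivity of $\leq$, so it goes through word for word on the following frame. Let $X=\{x,z,z'\}$, where the only non-reflexive $\leq$-instance is $z'\leq z$ and the only non-reflexive $\top$-instances are $x\top z$ and $z\top x$. Take $A=\{z'\}$ and $B=\varnothing$. Then $\diamondsuit_\top A\to\Box_\top B=\{x\}$. Yet $x\notin\Box_\top(A\to B)$, because $x\top z$ and $z\notin A\to B$ (since $z'\leq z$). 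This frame violates conditions~\ref{fstbdef3} and~\ref{fstbdef4} of \cref{fstbdef}, and those are exactly the conditions your argument for axiom~3 never invokes.

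The repair is one step. Suppose $y\leq x$, $y\top z$, and $z'\leq z$ with $z'\in A$. Apply condition~\ref{fstbdef4} to $y\top z\geq z'$ (or condition~\ref{fstbdef3} together with symmetry) to obtain $w$ with $y\geq w\top z'$. Then $w\leq x$ and $w\in\diamondsuit_\top A$, so $w\in\Box_\top B$. Since $w\geq w\top z'$, this gives $z'\in B$.

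So axiom~3 is exactly where the second Fischer Servi interaction condition is needed, just as axiom~4 and the downset property of $\diamondsuit_\top$ need condition~\ref{fstbdef3}. Your closing remark that axiom~4 is the only delicate step is therefore inaccurate. With this fix, your proof becomes a self-contained verification of what the paper obtains by citation for axioms 1--5.
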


\begin{proof}
    It is already known in the literature (\citealt{amati1994uniform}, \citealt{Ch23}) that the complex algebra of an \textbf{FSTB}-frame $(X, \leq, \top)$ satisfies axioms $1$-$5$ in Definition~\ref{fstblat}, so we only need to verify that $\diamondsuit_\top \Box_\top A \sset A \sset \Box_\top \diamondsuit_\top A$ for any $A \in Dn(X)$. For the first inclusion, suppose that $x \in \diamondsuit_\top \Box_\top A$. Then there is a $y \in \Box_\top A$ such that $x \top y$. Since $\top$ is symmetric, we have that $y \top x$, so $x \in A$. For the second inclusion, suppose $x \in A$ and $x \geq y \top z$. Since $A$ is a downset, $y \in A$. Since $\top$ is symmetric, $z \top y$. Hence $z \in \diamondsuit_\top A$. This shows that $x \in \Box_\top \diamondsuit_\top A$. 
\end{proof}

The previous lemma establishes that any \textbf{FSTB}-frame determines an \textbf{FSTB}-lattice. Moreover, every \textbf{FSTB}-lattice can be represented as a sublattice of the complex algebra of some \textbf{FSTB}-frame.

\begin{lemma} \label{repfstblma}
    Every \textbf{FSTB}-lattice embeds into the complex algebra of some \textbf{FSTB}-frame.
\end{lemma}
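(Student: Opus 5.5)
We use the standard canonical-frame construction for Fischer Servi logics (as in \citealt{amati1994uniform} and \citealt{Ch23}) and check the extra \textsf{TB} conditions by hand. Fix an \textbf{FSTB}-lattice $(H,\Box,\diamondsuit)$. Since we follow Dragalin's downset convention, we take $X$ to be the set of prime filters of $H$ and order them by $P\leq Q$ iff $Q\subseteq P$, so that larger filters sit lower. We define $P\top Q$ iff both $\Box a\in P\Rightarrow a\in Q$ and $a\in Q\Rightarrow\diamondsuit a\in P$ hold for all $a\in H$. The embedding is $a\mapsto\widehat a=\{P\in X\mid a\in P\}$. It is a downset, and by the prime filter theorem it is a Heyting algebra embedding into $Dn(X)$.

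The first step is to check that $(X,\leq,\top)$ is an \textbf{FSTB}-frame. The relation $\leq$ is a preorder. Reflexivity of $\top$ is axiom 5: $\Box a\leq a$ gives $\Box a\in P\Rightarrow a\in P$, and $a\leq\diamondsuit a$ gives $a\in P\Rightarrow \diamondsuit a\in P$. For symmetry, suppose $P\top Q$ and show $Q\top P$.
\begin{itemize}
\item If $\Box a\in Q$, then $\diamondsuit\Box a\in P$, and axiom 6 ($\diamondsuit\Box a\leq a$) gives $a\in P$.
\item If $a\in P$, then $\Box\diamondsuit a\in P$ by axiom 6, so $\diamondsuit a\in Q$.
\end{itemize}
Once symmetry holds, conditions \ref{fstbdef3} and \ref{fstbdef4} are equivalent, as the paper notes. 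So it suffices to prove the standard Fischer Servi condition in one form: if $Q\subseteq P$ and $Q\top R$, find $S\supseteq R$ with $P\top S$. This is the usual argument for $\mathsf{FS}$ canonical frames. One extends the filter generated by $R\cup\{a\mid\Box a\in P\}$, avoiding the ideal $\{b\mid \diamondsuit b\notin P\}$, to a prime filter. Consistency of this separation uses axioms 2 and 4, namely $\diamondsuit(a\to b)\leq\Box a\to\diamondsuit b$. Primeness of $P$ is needed so that $\{b\mid\diamondsuit b\notin P\}$ is an ideal. I will cite this step from the literature or reproduce it.

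The main step is to show that the embedding preserves the modalities. For $\diamondsuit$ we need $\widehat{\diamondsuit a}=\diamondsuit_\top\widehat a$. The inclusion from right to left is immediate from the definition of $\top$. For the other inclusion, given $\diamondsuit a\in P$ we must construct a prime $Q\ni a$ with $P\top Q$. We separate the filter generated by $\{a\}\cup\{c\mid\Box c\in P\}$ from the ideal $\{b\mid\diamondsuit b\notin P\}$. If they met, we would have $a\wedge c\leq b$ with $\Box c\in P$ and $\diamondsuit b\notin P$. Then $c\leq a\to b$, so $\Box(a\to b)\in P$. The $\mathsf{FS}$ fact $\Box(a\to b)\wedge\diamondsuit a\leq\diamondsuit b$ then puts $\diamondsuit b\in P$, a contradiction.

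For $\Box$ we need $\widehat{\Box a}=\Box_\top\widehat a$. The inclusion from left to right uses $Q\subseteq P\ni\Box a$. For the converse, suppose $\Box a\notin P$, and produce $Q\supseteq P$ and $R\not\ni a$ with $Q\top R$. First, by the prime filter theorem applied to the pair of filter $\{c\mid\Box c\in P\}$ and ideal $\downarrow a$, we obtain a prime $R'\not\ni a$. We then fix up the $\diamondsuit$-clause using axiom 3 ($\diamondsuit a\to\Box b\leq\Box(a\to b)$). This is the standard Fischer Servi argument, which finds $Q$ and $R$ simultaneously: extend $P$ to a prime $Q$ together with a prime $R\not\ni a$, using a Zorn-type argument on pairs. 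I expect this to be the main obstacle, and I would follow the treatment in \citealt{amati1994uniform}.
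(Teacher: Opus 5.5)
Your proposal is correct and follows the paper's proof. You use the same prime-filter frame, with $\leq$ as reverse inclusion and the same two-clause $\top$. You get reflexivity from axiom 5 and symmetry from axiom 6, and you take the remaining Fischer Servi facts (conditions 3--4 and preservation of $\Box,\diamondsuit$) from the standard literature, as the paper does; your extra sketches of these are sound.

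Two small remarks on the $\Box$ case. First, in the easy half of $\widehat{\Box a}=\Box_\top\widehat a$ the inclusion should read $P\subseteq Q$, not $Q\subseteq P$. Second, the hard half needs no Zorn argument on pairs, because your first prime $R'$ already works: separate the filter generated by $P\cup\{\diamondsuit r\mid r\in R'\}$ from the ideal generated by $\{\Box c\mid c\notin R'\}$. By axiom 3, $p\wedge\diamondsuit r\leq\Box c$ forces $\Box(r\to c)\in P$, hence $c\in R'$, so the separation succeeds.
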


\begin{proof}
    The proof builds on standard representation theory for Heyting algebras (see, e.g., \citealt{Esakia2019}). Fix an \textbf{FSTB}-lattice $(H, \Box, \diamondsuit)$ and let $(X, \leq, \top)$ be the birelational frame such that $X$ is the set of all prime filters on $H$, $\leq$ is the converse inclusion ordering, and $\top$ is a binary relation such that for any $p, q \in X$, $p \top q$ iff (i) $\Box a \in p$ implies $a \in q$ and (ii) $a \in q$ implies $\diamondsuit a \in p$. It is already known that $(X, \leq, \top)$ satisfies conditions $1$, $3$, and $4$ of the definition of an \textbf{FSTB}-frame and that the map $a \mapsto \{p \in X \mid a \in p\}$ is a Heyting embedding of $(H, \Box, \diamondsuit)$ into $(Dn(X), \Box_\top, \diamondsuit_\top)$ that also preserves the modal operators. Hence we only need to verify that $\top$ is reflexive and symmetric. Reflexivity is immediate given axiom $5$ from Definition~\ref{fstblat}. For symmetry, suppose that $p \top q$, and let $\Box a \in q$. Then $\diamondsuit \Box a \in p$, so $a \in p$ since $\diamondsuit \Box a \leq a$. Conversely, if $a \in p$, then $\Box \diamondsuit a \in p$ since $a \leq \Box \diamondsuit a$, so $\diamondsuit a \in q$. This shows that $q \top p$. Thus, $(X, \leq, \top)$ is an \textbf{FSTB}-frame.
\end{proof}

We conclude this introduction to \textbf{FSTB}-frames with an intuitive interpretation of the semantics for intuitionistic modal logic that they provide. As in a standard explanation of relational semantics for intuitionistic logic, given an \textbf{FSTB} frame $(X, \leq, \top)$, we may think about points in $X$ as states of information, ordered by informativeness by the relation $\leq$, where $x \leq y$ means that $x$ contains at least as much information as $y$ does. The relation $\top$, which is reflexive and symmetric, can be understood as a relation of \emph{indistinguishability} between states. The Fischer Servi conditions on the interplay between $\leq$ and $\top$ now receive a natural interpretation: if two states $x$ and $z$ are indistinguishable, then their possible futures are also indistinguishable: to any possible information increase $y$ accessible from $x$ corresponds an information increase $w$ that is accessible from $z$ and indistinguishable from $y$. 

Let us now move on to the relationship between fundamental frames and \textbf{FSTB}-frames.

\subsection{From \textbf{FSTB}-frames to Fundamental Frames}

In this section, we show how to obtain a fundamental frame from an \textbf{FSTB}-frame in a canonical way. We start with the following definition, illustrated by an example in Figure~\ref{FSTBReductFig}.

\begin{definition}
    Let $(X, \leq, \top)$ be an \textbf{FSTB}-frame. The \textit{fundamental reduct of} $(X, \leq, \top)$ is the relational frame $(X, \op)$, where $y \op x$ iff there is $z \in X$ such that $y \top z \leq x$.
\end{definition}

\noindent Thus, $y$ is open to $x$ in the fundamental reduct iff there is a possible knowledge state in the future of $x$ that is indistinguishable from $y$. Clearly, this relation need not be symmetric: just because a possible increase in information from state $x$ would make it indistinguishable from $y$ does not mean that an increase in information from $y$ would make it indistinguishable from $x$. The next lemma establishes a tight connection between the complex algebra of an \textbf{FSTB}-frame and the dual algebra of its fundamental reduct.

\begin{figure}[h]
  \begin{center}
  \begin{tikzpicture}[->,>={stealth'[length=6pt]},shorten >=1pt,shorten <=1pt, auto,node
  distance=2cm,thick,every loop/.style={<-,shorten <=1pt}]
  \tikzstyle{every state}=[fill=gray!20,draw=none,text=black]

  \node (0) at (-2,0) {{$x$}};
  \node (1) at (2,0) {{$y$}};
  \node (2) at (-2,-4) {{$w$}};
  \node (3) at (2,-4) {{$z$}};

  \path[{Triangle[open]}-{Triangle[open]},draw,thick] (0) to node {{}} (2);
  \path[{Triangle[open]}-{Triangle[open]},draw,thick, bend left=20] (0) to node {{}} (3);
  \path[{Triangle[open]}-{Triangle[open]},draw,thick] (1) to node {{}} (3);
  \path[{Triangle[open]}-{Triangle[open]},draw,thick] (2) to node {{}} (3);

  \path (0) edge[dashed,->, bend right=30] node {{}} (2);
  \path (0) edge[dashed,->, bend right=20] node {{}} (3);
  \path (1) edge[dashed,->, bend left=30] node {{}} (3);
  \end{tikzpicture}\qquad\qquad\qquad \begin{tikzpicture}[->,>={stealth'[length=6pt]},shorten >=1pt,shorten <=1pt,
  auto,node
  distance=2cm,thick,every loop/.style={<-,shorten <=1pt}]

    \tikzstyle{directed_edge}=[-{Triangle[open]},draw,thick]
    \tikzstyle{edge}=[{Triangle[open]}-{Triangle[open]},draw,thick]

  \node (0) at (-2,0) {{$x$}};
  \node (1) at (2,0) {{$y$}};
  \node (2) at (-2,-4) {{$w$}};
  \node (3) at (2,-4) {{$z$}};

    \draw[edge] (0) to (1);
    \draw[edge] (0) to (2);
    \draw[edge] (0) to (3);
    \draw[directed_edge] (1) to (2);
    \draw[edge] (1) to (3);
    \draw[edge] (2) to (3);
  \end{tikzpicture}
  \end{center}
  \caption{An \textbf{FSTB}-frame (left) and its fundamental reduct (right). On the left, a dashed (resp.~solid) line from $v$ to  $u$ indicates that $u\leq v$ (resp.~$u\top v$). On the right, a solid line from $v$ to $u$ indicates that $u\op v$.}\label{FSTBReductFig}
  \end{figure}

\begin{lemma} \label{reductlma}
    For any \textbf{FSTB}-frame $(X,\leq,\top)$, its fundamental reduct $(X, \op)$ is a fundamental frame. Moreover, the dual algebra of $(X, \op)$ coincides with the fixpoints of the $\Box_\top \diamondsuit_\top$ operation on $Dn(X)$, and $\neg_{\op} A = \Box_\top \neg A$ for any $A \in Dn(X)$.
\end{lemma}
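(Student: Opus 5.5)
The plan is to argue directly from the definition $y \op x \Leftrightarrow \exists z\,(y\top z\leq x)$, in close parallel with \cref{funredlma} and \cref{rmk1} for \textbf{OS4}-frames, and then to compute $c_{\op}$ and $\neg_{\op}$ on downsets.

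\emph{Fundamental frame.} Since $\top$ and $\leq$ are reflexive, $x\top x\leq x$, so $\op$ is reflexive and pseudo-reflexivity follows. For pseudo-symmetry, suppose $y\op x$, witnessed by $y\top z\leq x$. I take $z$ as the witness. First, $z\top y\leq y$ by symmetry of $\top$, so $z\op y$. Second, $z\leq_{\op} x$: if $u\op z$ via $u\top v\leq z$, then $u\top v\leq x$ by transitivity, so $u\op x$. The same argument shows $\leq\;\subseteq\;\leq_{\op}$. By \cref{PrerefineLem}, every proposition of $(X,\op)$ is therefore closed under $\leq$-successors, i.e., it is a downset. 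This lets us compare $\chi_{\op}(X)$ with $Dn(X)$.

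\emph{Fixpoints.} Fix $A\in Dn(X)$. I claim $c_{\op}(A)=\Box_\top\diamondsuit_\top A$, where I use $c_{\op}(A)=\{x\mid \forall x'\op x\ \exists x''\po x'\colon x''\in A\}$.
\begin{itemize}
\item Suppose $x''\in A$ and $x'\op x''$ via $x'\top v\leq x''$. Since $A$ is a downset, $v\in A$, so $x'\in\diamondsuit_\top A$.
\item Conversely, if $x'\top a$ with $a\in A$, then $x'\op a$ because $a\leq a$.
\end{itemize}
Hence $c_{\op}(A)=\{x\mid \forall x'\op x\colon x'\in\diamondsuit_\top A\}$. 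Unfolding $x'\op x$ as $x'\top z\leq x$, this set is exactly $\Box_\top\diamondsuit_\top A$. Now every $\op$-proposition is a downset, and the fixpoints of $\Box_\top\diamondsuit_\top$ on $Dn(X)$ are downsets. Consequently, the two collections coincide, and the lattice operations agree since meets are intersections in both.

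\emph{Negation.} Recall the Heyting negation in $Dn(X)$: $\neg A=\{z\mid \forall w\leq z\colon w\notin A\}$. Unfolding the definitions gives
\[
x\in\Box_\top\neg A \iff \forall y\leq x\ \forall z\top y\ \forall w\leq z\colon w\notin A,
\]
while $x\in\neg_{\op}A$ iff $\forall y\leq x\ \forall z\top y\colon z\notin A$. The inclusion $\Box_\top\neg A\subseteq\neg_{\op}A$ is immediate by taking $w=z$.

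The converse inclusion is the main obstacle, and it is where the Fischer Servi condition enters. Suppose $x\in\neg_{\op}A$, with $w\leq z\top y\leq x$. Apply condition~\ref{fstbdef3} of \cref{fstbdef} to $w\leq z\top y$: this yields some $v$ with $w\top v\leq y\leq x$. Thus $w\op x$, and so $w\notin A$, as required. I would double-check that condition~\ref{fstbdef3} is applied with the correct orientation of $\top$; this is harmless, since $\top$ is symmetric.
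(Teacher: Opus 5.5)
Your proof is correct and follows essentially the same route as the paper. The fundamental-frame part reuses the argument of Lemma~\ref{funredlma}. The fixpoint part establishes $\neg_{\op}\neg_{\po}A=\Box_\top\diamondsuit_\top A$ for downsets $A$. The negation part uses the Fischer Servi interaction condition; you invoke condition~\ref{fstbdef3} where the paper invokes condition~\ref{fstbdef4}, but these are interchangeable because $\top$ is symmetric, and your instantiation of condition~\ref{fstbdef3} is literally correct as written.
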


\begin{proof}
    Fix an \textbf{FSTB}-frame $(X, \leq, \top)$ with fundamental reduct $(X, \op)$. We first claim that $(X, \op)$ is a fundamental frame. Note that, in the proof of \cref{funredlma}, we only use the fact that the relation $\top$ is reflexive and symmetric and that the relation $\leq$ is a preorder. Since the relation $\op$ is defined exactly as the openness relation in an $\mathbf{OS4}$-frame, the same proof therefore implies that $(X, \op)$ is a fundamental frame. To see that the dual algebra of $(X, \op)$ coincides with the fixpoints of the $\Box_\top \diamondsuit_\top$ operation, note first that $\leq$ is a subrelation of the prerefinement relation $\leq_{\op}$. Consequently, since any element in the dual algebra of $(X, \op)$ is downward closed under prerefinement, it is also an element in $Dn(X)$. 
    
    Now let us show that for any $A \in Dn(X)$, $\neg_{\op} \neg_{\po} A = \Box_\top \diamondsuit_\top A$. Suppose first that $x \in \Box_\top \diamondsuit_\top A$, and let $y \op x$. Then there is a $z \in X$ such that $y \top z \leq x$. Since $\Box_\top \diamondsuit_\top A$ is a downset, $z \in \Box_\top \diamondsuit_\top A$, and hence, since $\top$ is symmetric, $y \in \diamondsuit_\top A$. But this means that there is a $z' \in A$ such that $y \top z'$. Since $y \top z' \leq z'$, it follows that $y \op z'$. Hence $x \in \neg_{\op} \neg_{\po} A$. Conversely, suppose that $x \in \neg_{\op} \neg_{\po} A$, and let $y,z \in X$ be such that $x \geq y \top z$. Then $z \op x$, so there is a $z' \in A$ such that $z \op z'$. But this means there is a $y'$ such that $z \top y' \leq z'$. Since $A$ is a downset, we have that $y' \in A$ and thus $z \in \diamondsuit_\top A$. This shows that $x \in \Box_\top \diamondsuit_\top A$. Hence for any downset $A$, $A = \neg_{\op} \neg_{\po} A$ iff $A = \Box_\top \diamondsuit_\top A$, which shows that the dual algebra of $(X, \op)$ coincides with the fixpoints of $\Box_\top \diamondsuit_\top$ in $Dn(X)$. 
    
    Finally, we show that $\Box_\top \neg A = \neg_{\op} A$ for any downset $A$. Suppose first that $x \in \Box_\top \neg A$, and let $y \op x$. Then there is a $z \in X$ such that $y \top z \leq x$. But this immediately implies that $y \in \neg A$, since $x \in \Box_\top \neg A$, and hence $y \notin A$. This shows that $x \in \neg_{\op} A$. For the converse direction, suppose that $x \in \neg_{\op} A$, and let $y$ be such that $y \top z \leq x$ for some $z \in X$. We claim that $y \in \neg A$. Suppose $y' \leq y$. Since we have $z \top y \geq y'$, by condition $4$ of \textbf{FSTB}-frames, there is a $w \in X$ such that $z \geq w \top y'$. Since $x \geq z \geq w \top y'$, this means that $y' \op x$, so $y' \notin A$. Hence $y \in \neg A$, which completes the proof.
\end{proof}

This result shows that the complex algebra of any \textbf{FSTB}-frame contains, so to speak, the dual algebra of a fundamental frame inside it as the fixpoints of the $\Box \diamondsuit$ operator. This motivates a translation of fundamental logic into the logic of \textbf{FSTB}-frames, which we will define in \cref{goltran}. For now, let us identify a particular class of \textbf{FSTB}-frames. For this, we first need the following definitions.

\begin{definition}
    Let $(X, \op)$ be a fundamental frame. For any $x, y \in X$, $x$ \textit{strongly refines} y, denoted $x \preccurlyeq_{\op} y$, if $x \leq_{\op} y$ and $y \leq_{\po} x$.
\end{definition}

Whenever $(X,\op)$ is the fundamental reduct of an \textbf{FSTB}-frame $(X, \leq, \top)$, the strong refinement relation has a natural interpretation. Indeed, $x \leq_{\op} y$ means that any state indistinguishable from a possible future state of $x$ is also indistinguishable from a possible future state of $y$. Dually, $y \leq_{\po} x$ means that any state ``in the past'' of a state indistinguishable from $y$ is also ``in the past'' of a state indistinguishable from $x$. Loosely speaking, this means that $x$ strongly refines $y$ whenever $y$ is ``always in the past'' of $x$ modulo indistinguishability, and $x$ is ``always in the future'' of $y$ modulo indistinguishability. With that interpretation in mind, the following result is not surprising.

\begin{lemma}
    Let $(X, \leq, \top)$ be an \textbf{FSTB}-frame and $(X, \op)$ its fundamental reduct. Then $\top $ is a subrelation of $\opo$, and $\leq$ is a subrelation of $\preccurlyeq_{\op}$.
\end{lemma}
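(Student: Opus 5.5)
The plan is to verify the three inclusions directly from the definition of the fundamental reduct, $y \op x$ iff $\exists z\colon y \top z \leq x$, using only reflexivity and symmetry of $\top$, reflexivity and transitivity of $\leq$, and, for the last inclusion, the Fischer Servi condition~\ref{fstbdef3} (or equivalently~\ref{fstbdef4}) of Definition~\ref{fstbdef}.

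First, $\top \subseteq \opo$. Suppose $x \top y$. Then $x \top y \leq y$ by reflexivity of $\leq$, so $x \op y$. By symmetry, $y \top x \leq x$, so $y \op x$. Hence $x \opo y$; this is the same argument as in Remark~\ref{rmk1}.

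Second, $\leq \subseteq \leq_{\op}$. Suppose $x \leq y$ and $z \op x$, say $z \top z' \leq x$. Transitivity gives $z \top z' \leq y$, so $z \op y$. Thus $x \leq_{\op} y$, again exactly as in Remark~\ref{rmk1}.

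The remaining and only genuinely new step is to show that $x \leq y$ implies $y \leq_{\po} x$, i.e., that $y \op z$ implies $x \op z$. This is where the FSTB interaction condition is needed, and I expect it to be the one real obstacle. Suppose $y \op z$, so $y \top w \leq z$ for some $w$. Then $x \leq y \top w$, and condition~\ref{fstbdef3} of Definition~\ref{fstbdef} (instantiated with the triple $x \leq y \top w$) yields some $w'$ with $x \top w' \leq w$. Transitivity gives $w' \leq z$, so $x \top w' \leq z$, i.e., $x \op z$. Combining this with the second step gives $x \preccurlyeq_{\op} y$, completing the proof. The only care required is matching the variable pattern of condition~\ref{fstbdef3} correctly; if orientation issues arise, condition~\ref{fstbdef4} together with the symmetry of $\top$ provides the same conclusion.
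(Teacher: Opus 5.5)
Your proof is correct and matches the paper's argument step for step. You get $\top \subseteq \opo$ from reflexivity of $\leq$ and symmetry of $\top$, and $\leq \subseteq \leq_{\op}$ from transitivity. The postrefinement half of $\preccurlyeq_{\op}$ then comes from applying the Fischer Servi interaction condition to $x \leq y \top w$ to obtain $x \top w' \leq w \leq z$.
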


\begin{proof}
Fix $(X, \leq,\top)$ and its fundamental reduct $(X, \op)$. Since $\leq$ is reflexive and $\top$ is symmetric, it is clear that $\top$ is a subrelation of $\opo$. Moreover, since $\leq$ is transitive, we also have that $\leq$ is a subrelation of $\leq_{\op}$. To show that $\leq$ is a subrelation of $\preccurlyeq_{\op}$, we therefore only have to show that $x \leq y$ implies $y \leq_{\po} x$. So suppose $x \leq y$ and $y \op z$. By definition, this means that there is a $w \in X$ such that $y \top w \leq z$. Since $x \leq y \top w$, by the interaction condition between $\top$ and $\leq$, we have some $w' \in X$ such that $x \top w' \leq w \leq z$, from which it follows that $x \op z$. This shows that $\leq$ is a subrelation of $\preccurlyeq_{\op}$.
\end{proof}

Let us conclude by isolating those \textbf{FSTB}-frames $(X, \leq, \top)$ for which $\leq$ and $\top$ coincide with the relations $\preccurlyeq_{\op}$ and $\opo$, respectively, induced by their fundamental reduct $(X, \op)$. In the definitions below, given an \textbf{FSTB}-frame $(X, \leq, \top)$ and a set $U \sset X$, we let $\upset U := \{y \in X \mid \exists x \in U: x \leq y\}$ and $\dnset U :=\{ y \in X \mid \exists x \in U : y \leq x\}$. Whenever $U$ is a singleton set of the form $\{x\}$, we write $\upset x$ and $\dnset x$, respectively, for the set $\upset \{x \}$ and $\dnset \{x\}$. We also consider the modality $\Box_{\po}$ given by $\Box_{\po} U = \{x\in X\mid \mbox{for all }y\po x,\, y\in U\}$.

\begin{definition}
    An \textbf{FSTB}-frame $(X, \leq, \top)$ is \textit{pseudo-regular} if it satisfies the following condition for any $x, y \in X$:
    \[\big(x \in \Box_\top \diamondsuit_\top \dnset{y} \text{ and } y \in \Box_{\po}\upset{\diamondsuit_\top \{x\}}\big) \Rightarrow x \in \dnset{{y}}.\]
\end{definition}

For the following, we say that an \textbf{FSTB}-frame $(X, \leq, \top)$ is \textit{skeletal} if $(X, \top, \leq)$ is skeletal as defined in Definition~\ref{SkelReg}.

\begin{lemma} \label{skelpsreglma}
    Let $(X, \leq, \top)$ be an \textbf{FSTB}-frame and $(X, \op)$ its fundamental reduct. Then $(X, \leq, \top)$ is skeletal iff $\top = \, \opo$, and it is pseudo-regular iff $\preccurlyeq_{\op} \,=\, \leq$.
\end{lemma}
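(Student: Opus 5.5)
The plan is to reduce both equivalences to the inclusions already obtained in the preceding lemma, which states that $\top$ is a subrelation of $\opo$ and $\leq$ is a subrelation of $\preccurlyeq_{\op}$. Given these, it suffices to show two things: skeletality is equivalent to $\opo\,\sset\,\top$, and pseudo-regularity is equivalent to $\preccurlyeq_{\op}\,\sset\,\leq$.

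For skeletality, I would argue exactly as in part 1 of \cref{backlma}. In the fundamental reduct, $x \op y$ means there is a $z_1$ with $x \top z_1 \leq y$, and $y \op x$ means there is a $z_2$ with $y \top z_2 \leq x$. So $x \opo y$ is precisely the antecedent of the skeletal condition of Definition~\ref{SkelReg}, and the condition says that $x \top y$ follows. Hence skeletal iff $\opo\,\sset\,\top$. Combined with $\top\sset\,\opo$, this gives skeletal iff $\top=\,\opo$. No \textbf{FSTB}-specific conditions are needed here.

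For pseudo-regularity, the key step is to unpack the two conjuncts of the antecedent and show they say exactly $x\leq_{\op} y$ and $y\leq_{\po} x$, i.e.\ $x\preccurlyeq_{\op} y$.

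First, $x\in\Box_\top\diamondsuit_\top\dnset y$ means: for all $w,v$ with $x\geq w\top v$, there is a $u\leq y$ with $v\top u$. Using symmetry of $\top$, the hypothesis $v\top w\leq x$ is exactly $v\op x$, and the conclusion $v\top u\leq y$ is exactly $v\op y$. So this conjunct is the statement that every $v\op x$ satisfies $v\op y$, i.e.\ $x\leq_{\op} y$.

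Second, $y\in\Box_{\po}\upset{\diamondsuit_\top\{x\}}$ means: for every $z$ with $y\op z$, we have $z\in\upset{\diamondsuit_\top\{x\}}$. The latter says there is a $u$ with $u\top x$ and $u\leq z$, i.e.\ $x\op z$. So this conjunct says $y\op z$ implies $x\op z$, which is $y\leq_{\po} x$.

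Therefore the pseudo-regularity condition reads: $x\preccurlyeq_{\op} y$ implies $x\leq y$, i.e.\ $\preccurlyeq_{\op}\,\sset\,\leq$. Together with the previous lemma, this gives pseudo-regular iff $\preccurlyeq_{\op}\,=\,\leq$.

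The only delicate point is bookkeeping of directions. One must read $\Box_{\po}$ correctly, quantifying over $z$ with $y\op z$. One must also remember that $\dnset$ and $\upset$ are taken with respect to $\leq$, and that symmetry of $\top$ is used to rewrite $x\top u$ as $u\top x$. Once these are tracked carefully, the argument is a direct chain of equivalences.
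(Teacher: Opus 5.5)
Your proposal is correct and takes essentially the same approach as the paper. Skeletality is immediately the inclusion $\opo\,\sset\,\top$. The two conjuncts of pseudo-regularity unpack, via the same chains of equivalences the paper computes, to $x\leq_{\op} y$ and $y\leq_{\po} x$. Combined with the inclusions from the preceding lemma, both equalities follow.
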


\begin{proof}
    That $(X, \leq, \top)$ is skeletal iff $\top \! = \, \opo$ is clear. To show that it is pseudo-regular iff $\preccurlyeq_{\op} \,= \, \leq$, note first that it is enough to show that pseudo-regularity is equivalent to $\preccurlyeq_{\op}$ being a subrelation of $\leq$. This, in turn, amounts to showing that for any $x, y \in X$, (i) $x \leq_{\op} y$ iff $x \in \Box_\top \diamondsuit_\top \dnset{y}$ and (ii) $y \leq_{\po} x$ iff $y \in \Box_{\po}\upset\diamondsuit_\top \{x\}$. For the first equivalence, we compute:
    \begin{align*}
        x \in \Box_\top \diamondsuit_\top\dnset{y} &\Leftrightarrow \forall z \big((\exists w :z \top w \leq x) \Rightarrow (\exists w' : z \top w' \leq y)\big)\\
        &\Leftrightarrow \forall z(z \op x \Rightarrow z \op y)\\
        &\Leftrightarrow x \leq_{\op} y.
    \end{align*}
\noindent For the second equivalence, we compute:
    \begin{align*}
        y \in \Box_{\po} \upset{\diamondsuit_\top \{x\}} &\Leftrightarrow \forall z \big( (\exists w :y \top w \leq z) \Rightarrow (\exists w': x \top w' \leq z)\big )\\
        &\Leftrightarrow \forall z(y \op z \Rightarrow x \op z)\\
        &\Leftrightarrow y \leq_{\po} x.
    \end{align*}    
    This completes the proof.
\end{proof}

Let us now see how to go from fundamental frames to \textbf{FSTB}-frames. 

\subsection{From Fundamental Frames to \textbf{FSTB}-frames}

The consideration of the strong refinement relation above motivates the following definition.

\begin{definition}
    A fundamental frame $(X, \op)$ is \textit{strongly factoring} if it is reflexive and for any $x, y \in X$, $y\op x$ implies that there is a $z \in X$ such that $y \opo z \preccurlyeq_{\op} x$.
\end{definition}

\noindent Intuitively, a strongly factoring frame is a frame where the relation $\op$ can always be decomposed into the symmetric ``compatibility'' relation induced by $\op$ and the strong refinement relation. In other words, $y \op x$ if and only if $y$ is fully compatible with a state $z$ that is ``generically'' in the future of $x$. The following establishes a connection between the strong factoring property and fundamental reducts of \textbf{FSTB}-frames.

\begin{lemma} \label{fstofunlma}
    Let $(X, \leq, \top)$ be an \textbf{FSTB}-frame. Then its fundamental reduct $(X, \op)$ is strongly factoring.
\end{lemma}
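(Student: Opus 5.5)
We must show two things: that $\op$ is reflexive, and that $y \op x$ yields a $z$ with $y \opo z \preccurlyeq_{\op} x$. Reflexivity is immediate, since $\top$ and $\leq$ are both reflexive, so $x \top x \leq x$ and hence $x \op x$.

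For factoring, suppose $y \op x$. By the definition of the fundamental reduct there is some $z$ with $y \top z \leq x$, and we take this $z$ as the witness. By the lemma just proved (that $\top$ is a subrelation of $\opo$ and $\leq$ is a subrelation of $\preccurlyeq_{\op}$), $y \top z$ gives $y \opo z$, and $z \leq x$ gives $z \preccurlyeq_{\op} x$. This is exactly the required factorization $y \opo z \preccurlyeq_{\op} x$.

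There is essentially no obstacle here. The only nontrivial ingredient is that $\leq$ is contained in the postrefinement part of $\preccurlyeq_{\op}$, and that uses the Fischer Servi interaction condition (condition~\ref{fstbdef3}); but this was already established in the preceding lemma, so we can simply invoke it.
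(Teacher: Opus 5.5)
Your proof is correct and follows the paper's argument. The paper likewise takes the witness $w$ from $y \top w \leq x$ and shows $w \preccurlyeq_{\op} x$; the only difference is that it re-derives $w \leq_{\op} x$ and $x \leq_{\po} w$ inline (the latter via condition~3 of \textbf{FSTB}-frames), where you cite the preceding lemma.
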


\begin{proof}
    For reflexivity, $x\op x$ follows from $x\top x\leq x$. For the rest of strong factoring, suppose $y \op x$. Then there is a $w \in X$ such that $y \top w \leq x$. We claim that $w \preccurlyeq_{\op} x$. To show this, we must establish that $w \leq_{\op} x$ and $x \leq_{\po} w$. Suppose first that $z \op w$. Then there is a $w'$ such that $z \top w' \leq w \leq x$, so $z \op x$. This shows that $w \leq_{\op} x$. Now suppose $x \op z$. Then there is a $w'$ such that $x \top w' \leq z$. Since we have $w \leq x \top w'$, there is a $z'$ such that $w \top z' \leq w' \leq z$. But this shows that $w \op z$, so $x \leq_{\po} w$. Therefore, $w \preccurlyeq_{\op} x$, which shows that $(X, \op)$ is strongly factoring.
\end{proof}

In fact, we can also go from strongly factoring fundamental frames to \textbf{FSTB}-frames, via the following construction.

\begin{lemma}
    Let $(X, \op)$ be a strongly factoring fundamental frame. Then $(X, \preccurlyeq_{\op}, \opo)$ is an \textbf{FSTB}-frame.
\end{lemma}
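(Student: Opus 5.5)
The plan is to verify the four conditions of Definition~\ref{fstbdef} for the triple $(X, \preccurlyeq_{\op}, \opo)$, using only the definitions of prerefinement and postrefinement plus the strong factoring hypothesis.

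First, the two structural conditions. The prerefinement relation $\leq_{\op}$ and the postrefinement relation $\leq_{\po}$ are both reflexive and transitive; this is immediate from their universal definitions in Lemma~\ref{PrerefineLem}. Since $x \preccurlyeq_{\op} y$ is the conjunction of $x \leq_{\op} y$ and $y \leq_{\po} x$, the relation $\preccurlyeq_{\op}$ is the intersection of $\leq_{\op}$ with the converse of $\leq_{\po}$. Hence it is again a preorder, which gives condition 1. For condition 2, the symmetric kernel $\opo$ is symmetric by construction. It is reflexive because strong factoring includes reflexivity of $\op$.

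The substantive step is condition~\ref{fstbdef3}. Suppose $y \preccurlyeq_{\op} x \opo z$. I will first show $y \op z$. From $x \opo z$ we have in particular $x \op z$. From $y \preccurlyeq_{\op} x$ we have $x \leq_{\po} y$, meaning that every $u$ with $x \op u$ satisfies $y \op u$. Taking $u = z$ gives $y \op z$. Now strong factoring applied to $y \op z$ yields a $w$ with $y \opo w \preccurlyeq_{\op} z$, which is exactly the required witness. The one point to be careful about is choosing the correct half of $\preccurlyeq_{\op}$ here: it is the postrefinement half that transports openness from $x$ to $y$.

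Finally, condition~\ref{fstbdef4} follows from condition~\ref{fstbdef3} by the symmetry of $\opo$, as noted after Definition~\ref{fstbdef}. Explicitly, suppose $x \opo y$ and $z \preccurlyeq_{\op} y$. Then $z \preccurlyeq_{\op} y \opo x$, so condition~\ref{fstbdef3} provides a $w$ with $z \opo w \preccurlyeq_{\op} x$. This means $x \geq w \top z$, as required.

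I do not expect a genuine obstacle in any of these steps. The only delicate point is in condition~\ref{fstbdef3}, namely making sure that strong factoring is applied to the pair $(y,z)$, which first requires establishing $y \op z$ via postrefinement as above.
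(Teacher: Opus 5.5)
Your proposal is correct and is essentially the paper's proof, mirrored: the paper checks condition~4 directly (from $x \opo y$ and $z \preccurlyeq_{\op} y$ it uses $y \leq_{\po} z$ to get $z \op x$, then applies strong factoring) and gets condition~3 by symmetry of $\opo$, while you check condition~3 directly and get condition~4 by symmetry. Your explicit remark that reflexivity of $\opo$ comes from the reflexivity clause of strong factoring fills in a detail the paper leaves implicit.
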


\begin{proof}
    Note first that since $\preccurlyeq_{\op}$ is the intersection of two preorders, it is also a preorder. Similarly, $\opo$ is the symmetric kernel of $\op$. Hence we only need to check conditions \ref{fstbdef3} and \ref{fstbdef4} in \cref{fstbdef}. Note that because $\opo$ is symmetric, it is in fact enough to check one of the two. So let us check that if $x \top y \succcurlyeq_{\op} z$, then there is a $w \in X$ such that $x \succcurlyeq_{\op} w \top z$. First, note that $x \top y \succcurlyeq_{\op} z$ implies that $y \op x$ and $y \leq_{\po} z$ and hence that $z \op x$. By strong factoring, this means that there is a $w$ such that $z \top w \preccurlyeq_{\op} x$, which completes the proof.
\end{proof}

The previous lemma motivates the following definition.

\begin{definition}
    For any strongly factoring fundamental frame $(X, \op)$, its \textit{\textbf{FSTB}-companion} is the frame $(X, \preccurlyeq_{\op}, \opo)$.
\end{definition}

    \begin{figure}[H]
  \begin{center}                                                                                          
  \begin{tikzpicture}[->,>={stealth'[length=6pt]},shorten >=1pt,shorten <=1pt, auto,node                                
  distance=2cm,thick,every loop/.style={<-,shorten <=1pt}]

    \tikzstyle{directed_edge}=[-{Triangle[open]},draw,thick]
    \tikzstyle{edge}=[{Triangle[open]}-{Triangle[open]},draw,thick]

  \node (0) at (-2,-4) {{$x$}};
  \node (1) at (2,-4) {{$y$}};
  \node (2) at (-2,0) {{$w$}};
  \node (3) at (2,0) {{$z$}};

    \draw[edge] (0) to (1);
    \draw[edge] (0) to (2);
    \draw[edge] (1) to (3);
    \draw[directed_edge] (3) to (0);
    \draw[directed_edge] (2) to (1);
    \draw[directed_edge] (3) to (2);
  \end{tikzpicture}\qquad\qquad\qquad \begin{tikzpicture}[->,>={stealth'[length=6pt]},shorten >=1pt,shorten <=1pt,
  auto,node
  distance=2cm,thick,every loop/.style={<-,shorten <=1pt}]
  \tikzstyle{every state}=[fill=gray!20,draw=none,text=black]

  \node (0) at (-2,-4) {{$x$}};
  \node (1) at (2,-4) {{$y$}};
  \node (2) at (-2,0) {{$w$}};
  \node (3) at (2,0) {{$z$}};

  \path[{Triangle[open]}-{Triangle[open]},draw,thick] (0) to node {{}} (1);
  \path[{Triangle[open]}-{Triangle[open]},draw,thick] (0) to node {{}} (2);
  \path[{Triangle[open]}-{Triangle[open]},draw,thick] (1) to node {{}} (3);

  \path (2) edge[dashed,->, bend right=30] node {{}} (0);
  \path (3) edge[dashed,->] node {{}} (0);
  \path (3) edge[dashed,->, bend left=30] node {{}} (1);
  \end{tikzpicture}
  \end{center}
  \caption{A strongly factoring fundamental frame (left) and its \textbf{FSTB}-companion (right). On the
   right, a dashed (resp.~solid) line from $v$ to $u$ indicates that $u\leq v$ (resp.~$u\top
  v$).}\label{FSTBCompFig}
  \end{figure}

As expected, there is a tight relationship between the dual algebra of a fundamental frame and the complex algebra of its \textbf{FSTB}-companion.

\begin{lemma} \label{fstbkeylma}
    Given a strongly factoring fundamental frame $(X, \op)$, its dual algebra is isomorphic to the $\Box _{\opo} \diamondsuit _{\opo}$ fixpoints of the complex algebra of its \textbf{FSTB}-companion endowed with the negation $\Box _{\opo} \neg$.
\end{lemma}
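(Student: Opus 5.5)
The plan is to reduce the statement to Lemma~\ref{reductlma}, applied to the \textbf{FSTB}-companion $(X, \preccurlyeq_{\op}, \opo)$ of $(X,\op)$. That lemma says that the fundamental reduct of any \textbf{FSTB}-frame has as dual algebra exactly the $\Box_\top\diamondsuit_\top$-fixpoints of $Dn(X)$, with negation $\Box_\top\neg$. Hence it suffices to prove that the fundamental reduct of the \textbf{FSTB}-companion is $(X,\op)$ itself. Then the two dual algebras are literally equal, not merely isomorphic. Writing $\op'$ for the induced relation, we have $y \op' x$ iff there is $z$ with $y \opo z \preccurlyeq_{\op} x$, and we must show $\op' \,=\, \op$.

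For $\op \subseteq \op'$ we use strong factoring directly: it says that $y \op x$ yields a $z$ with $y \opo z \preccurlyeq_{\op} x$, which is exactly $y \op' x$. For $\op' \subseteq \op$, suppose $y \opo z \preccurlyeq_{\op} x$. Then $y \op z$ and $z \leq_{\op} x$. Since $z \leq_{\op} x$ means that every state open to $z$ is open to $x$, we get $y \op x$. Only the prerefinement half of strong refinement is needed here. Reflexivity of $\op$, which is part of strong factoring, is already used in the previous lemma to ensure that the companion is an \textbf{FSTB}-frame, since that requires $\opo$ to be reflexive.

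With $\op' \,=\, \op$ established, we invoke Lemma~\ref{reductlma} for the \textbf{FSTB}-frame $(X, \preccurlyeq_{\op}, \opo)$. The propositions $\chi_{\op}(X)$ are then exactly the $A \in Dn(X,\preccurlyeq_{\op})$ with $A = \Box_{\opo}\diamondsuit_{\opo}A$. One point needs care here: every proposition of $(X,\op)$ must be a $\preccurlyeq_{\op}$-downset. This holds because propositions are closed under $\leq_{\op}$ by Lemma~\ref{PrerefineLem}, and $\preccurlyeq_{\op} \,\subseteq\, \leq_{\op}$. The lemma also gives $\neg_{\op}A = \Box_{\opo}\neg A$ for such $A$. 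Lattice meets agree, since both are intersections. Joins agree because the join in the fixpoint lattice of a closure operator is the closure of the union. That closure is $\Box_{\opo}\diamondsuit_{\opo}(\bigcup A_i)$ (the union is a downset), which equals $c_{\op}(\bigcup A_i) = \neg_{\op}\neg_{\po}\bigcup A_i$ by the identity in the proof of Lemma~\ref{reductlma}. Therefore the identity map is the required isomorphism.

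The main obstacle is the identification of the reduct with $\op$, and specifically confirming that Lemma~\ref{reductlma} applies verbatim with $\leq \,:=\, \preccurlyeq_{\op}$ (with order direction as in Definition~\ref{fstbdef}). That in turn rests on the previous lemma, which shows that the companion satisfies the Fischer Servi conditions. Once this is in place, the rest is bookkeeping.
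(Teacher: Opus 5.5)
Your proof is correct and is essentially the paper's argument: you reduce to Lemma~\ref{reductlma} by showing that $(X,\op)$ is the fundamental reduct of its \textbf{FSTB}-companion, using strong factoring for one inclusion and $y\op z\leq_{\op}x \Rightarrow y\op x$ for the other. Your additional bookkeeping (propositions being $\preccurlyeq_{\op}$-downsets, and joins agreeing as $\Box_{\opo}\diamondsuit_{\opo}$ of the union) is accurate and makes explicit what the paper leaves implicit.
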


\begin{proof}
    By \cref{reductlma}, it is enough to show that $(X, \op)$ is the fundamental reduct of its \textbf{FSTB}-companion. But this is immediate, since $y \op x$ iff there is a $z$ such that $y \opo z \preccurlyeq_{\op} x$. Indeed, the right-to-left direction follows from the fact that $y \opo z \preccurlyeq_{\op} x$ implies that $z \leq_{\op} x$ and $y \op z$ and hence that $y \op x$, while the converse is simply the definition of strong factoring.
\end{proof}

We are now in a position to prove the main result of this section, which will play a key role in establishing that the translation of fundamental logic into \textsf{FSTB} defined in the next section is full.

\begin{theorem} \label{embthm2}
    For any fundamental lattice $(L, \neg_L)$, there is an \textbf{FSTB}-lattice $(H, \Box, \diamondsuit)$ and a bounded meet-semilattice embedding $\nu : L \to H$ such that for any $a, b \in L$:
    \begin{itemize}
        \item $\Box \diamondsuit \nu (a) = \nu(a)$;
        \item $\nu(a \jo b) = \Box \diamondsuit (\nu(a) \jo \nu(b))$;
        \item $\nu (\neg_L a) = \Box \neg \nu (a)$.
    \end{itemize}
\end{theorem}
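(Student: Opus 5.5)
The plan mirrors the proof of \cref{embthm}, with the \textbf{FSTB}-companion playing the role of the orthomodal companion. Fix a fundamental lattice $(L,\neg_L)$. Let $(X,\op)$ be either its reflexive canonical frame $\mathsf{FI}(L)$ from \cref{RefCanEmbed}, or the frame of \cref{prinfrm} with $\mathrm{V}=\Lambda=L$. In both cases $L$ embeds into $\chi_{\op}(X)$ as a fundamental lattice. I then take $H$ to be the complex algebra $(Dn(X),\Box_{\opo},\diamondsuit_{\opo})$ of the \textbf{FSTB}-companion $(X,\preccurlyeq_{\op},\opo)$, and $\nu$ to be the composite of $a\mapsto\widehat a$ (or $a\mapsto a^\downarrow$) with the inclusion $\chi_{\op}(X)\subseteq Dn(X)$.

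The key step is to show that $(X,\op)$ is strongly factoring. Once that holds, the companion is an \textbf{FSTB}-frame, so its complex algebra is an \textbf{FSTB}-lattice. Moreover, by \cref{fstbkeylma} and \cref{reductlma}, $(X,\op)$ is the fundamental reduct of its companion, and therefore:
\begin{itemize}
\item[(i)] $\chi_{\op}(X)$ is exactly the set of $\Box\diamondsuit$-fixpoints in $Dn(X)$, which gives the first bullet;
\item[(ii)] $\neg_{\op}A=\Box\neg A$, where $\neg$ is the Heyting negation; together with $\nu$ preserving $\neg$ into $\chi_{\op}(X)$, this gives the third bullet;
\item[(iii)] joins in $\chi_{\op}(X)$ are $c_{\op}(A\cup B)=\neg_{\op}\neg_{\po}(A\cup B)=\Box\diamondsuit(A\cup B)$, by the computation in \cref{reductlma}, which applies to any downset and hence to $A\cup B$. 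This gives the second bullet.
\end{itemize}
Meets in $\chi_{\op}(X)$ are intersections, and so are meets in $Dn(X)$. Also $\nu(1)=X$ and $\nu(0)$ is the set of absurd states, which is empty since $\op$ is reflexive, so $\nu$ is a bounded meet-semilattice embedding. Injectivity is inherited from the embedding of $L$ into $\chi_{\op}(X)$.

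The main obstacle is verifying strong factoring for a concrete frame: given $y\op x$, I need $z$ with $y\opo z$, $z\leq_{\op}x$ and $x\leq_{\po}z$. I would try the frame from \cref{prinfrm}, since the factoring argument in the proof of \cref{qembthm} already does most of the case analysis. Write $x=(x_a,x_b)$ and $y=(y_a,y_b)$.

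If $y_a\nleq x_b$, then $x\opo y$, and $z=x$ works trivially.

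Otherwise $y=(y_a,\neg y_a)$ with $y_a\ne1$. The natural candidate is $z=(x_a,\neg x_a)$. We already know from that proof that $y\opo z$ and $z\leq_{\op}x$. It remains to check $x\leq_{\po}z$: if $x\op w$, i.e.\ $w_a\nleq x_b$, we need $w_a\nleq\neg x_a$. Since $x_b$ is either $\neg x_a$ (when $x$ is of the first kind) or $b$ with $x_a=1$, this fails in general. In the second case, $x=(1,b)$ and $z=(1,\neg 1)=(1,0)$, and $w_a\nleq 0$ holds only when $w_a\neq 0$, which is true. But when $x=(1,b)$, the candidate $z=(1,0)$ must also satisfy $z\opo y$, i.e.\ $y_a\nleq 0$, which is fine.

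So I expect the cases to close after some bookkeeping. If they do not, I would fall back to $\mathsf{FI}(L)$ with $z=(x_F,I(x_F))$ as in \cref{canballma}, and check $x\leq_{\po}z$ using $I(x_F)\subseteq x_I$.
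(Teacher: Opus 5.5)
Your proposal is correct and follows the paper's approach: you show that a frame into whose dual algebra $L$ embeds is strongly factoring, then read off the three bullets from \cref{reductlma} and \cref{fstbkeylma} applied to the complex algebra of the \textbf{FSTB}-companion. Your hedging about the case analysis is unnecessary, since when $y_a\leq x_b$ dual self-adjointness rules out $x=(x_a,\neg x_a)$, so $x=(1,b)$ and your witness $z=(1,0)$ works; this is the same verification the paper gives for this frame in \cref{qembthm2}, while for the present theorem the paper uses your fallback, $\mathsf{FI}(L)$ with witness $(x_F,I(x_F))$ and $I(x_F)\subseteq x_I$.
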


\begin{proof}
   Fix a fundamental lattice $(L, \neg_L)$. Since $(L, \neg_L)$ embeds into the dual algebra of its reflexive canonical frame, the statement of the theorem follows from \cref{fstbkeylma} if we can show that this frame is strongly factoring. So suppose $y \op x$. Note  that we already established in the proof of \cref{canballma} that $y \opo (x_F, I(x_F))$. Hence we need only show that $(x_F, I(x_F)) \preccurlyeq_{\op} x$, i.e., $(x_F, I(x_F))\leq_{\op} x$ and $x\leq_{\po} (x_F, I(x_F))$, which is in turn equivalent to $x_F\subseteq x_F$ and $I(x_F)\sset x_I$. To prove $I(x_F) \sset x_I$, suppose $a \in I(x_F)$. Then $a \leq \neg b_1 \jo ... \jo \neg b_n$ for some $b_1,...,b_n \in x_F$. But then $\neg b_1,...,\neg b_n \in x_I$, so $a \in x_I$. This completes the proof that the reflexive canonical frame of $(L, \neg_L)$ is strongly factoring.
\end{proof}

Finally, let us establish a one-to-one correspondence between strongly factoring fundamental frames and skeletal, pseudo-regular \textbf{FSTB}-frames. By \cref{skelpsreglma,fstofunlma}, we already know that the fundamental reduct $(X, \op)$ of a skeletal and pseudo-regular \textbf{FSTB}-frame $(X, \leq, \top)$ is strongly factoring, and moreover, that the \textbf{FSTB}-companion of $(X, \op)$ is $(X, \leq, \top)$. Hence we only need to show the following.

\begin{lemma}
    For any strongly factoring fundamental frame $(X, \op)$, its \textbf{FSTB}-companion $(X, \preccurlyeq_{\op}, \opo)$ is skeletal and pseudo-regular.
\end{lemma}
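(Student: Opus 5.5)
The plan is to use Lemma~\ref{skelpsreglma}, which reduces both properties to statements about the fundamental reduct of the companion. Write $(X,\leq,\top)$ for the \textbf{FSTB}-companion $(X,\preccurlyeq_{\op},\opo)$, and let $\op'$ be its induced openness relation, defined by $y\op' x$ iff $\exists z\colon y \opo z \preccurlyeq_{\op} x$. The first step is to note, as in the proof of \cref{fstbkeylma}, that $\op' \,=\, \op$. The inclusion from right to left is the strong factoring condition. For the other inclusion, $y\opo z$ gives $y\op z$, and then $z\leq_{\op} x$ gives $y\op x$. So the fundamental reduct of the companion is $(X,\op)$ itself.

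With this identification, Lemma~\ref{skelpsreglma} reduces skeletality to the equality $\top = \opo$ computed in the reduct. Here $\top$ is $\opo$ by definition, and the reduct is $(X,\op)$, so the symmetric kernel of the reduct is also $\opo$. Skeletality is also easy to check directly. If $x\opo z_1\preccurlyeq_{\op} y$ and $y\opo z_2\preccurlyeq_{\op} x$, then $x\op z_1$ and $z_1\leq_{\op} y$ give $x\op y$. Symmetrically we get $y\op x$, hence $x\opo y$.

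For pseudo-regularity, Lemma~\ref{skelpsreglma} reduces the claim to the statement that the strong refinement relation of the reduct coincides with the order $\leq$ of the frame. The reduct is $(X,\op)$, whose strong refinement relation is $\preccurlyeq_{\op}$, and the order of the companion is $\preccurlyeq_{\op}$ by definition. The two relations are therefore literally equal.

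The only point that needs care is that Lemma~\ref{skelpsreglma} applies only to \textbf{FSTB}-frames. This is supplied by the preceding lemma, which shows that the companion is an \textbf{FSTB}-frame. Beyond that, the argument depends on nothing except the identification of the reduct with $(X,\op)$, so I expect no real obstacle. If one wanted to avoid relying on Lemma~\ref{skelpsreglma}, the fallback would be to unfold pseudo-regularity using the two equivalences computed in its proof. These say that the antecedent of the pseudo-regularity condition amounts to $x\leq_{\op} y$ together with $y\leq_{\po} x$, that is, $x\preccurlyeq_{\op} y$, which is exactly $x\in\dnset y$ in the companion.
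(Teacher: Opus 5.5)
Your proposal is correct and follows the paper's proof. The paper also notes, by the argument in Lemma~\ref{fstbkeylma}, that $(X,\op)$ is the fundamental reduct of its \textbf{FSTB}-companion, and then reads off skeletality and pseudo-regularity from Lemma~\ref{skelpsreglma}. Your check that the companion is an \textbf{FSTB}-frame and your direct verification of skeletality fill in details the paper leaves as ``clear.''
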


\begin{proof}
    By \cref{fstbkeylma}, $(X, \op)$ is the fundamental reduct of its \textbf{FSTB}-companion. Clearly, this also implies that $(X, \preccurlyeq_{\op}, \opo)$ is skeletal and pseudo-regular.
\end{proof}

\begin{corollary}
    There is a one-to-one correspondence between strongly factoring fundamental frames and skeletal, pseudo-regular \textbf{FSTB}-frames, obtained by mapping any strongly factoring fundamental frame to its \textbf{FSTB}-companion and mapping any skeletal, pseudo-regular \textbf{FSTB}-frame to its fundamental reduct.
\end{corollary}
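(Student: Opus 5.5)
The corollary follows by combining the two directions already established, in the same way as \cref{corcor}. I will check that the two assignments are well defined and mutually inverse.

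For well-definedness in the first direction, let $(X,\op)$ be strongly factoring. By the lemma preceding the corollary, its \textbf{FSTB}-companion $(X,\preccurlyeq_{\op},\opo)$ is an \textbf{FSTB}-frame, and the last lemma shows it is skeletal and pseudo-regular. For the other direction, let $(X,\leq,\top)$ be a skeletal, pseudo-regular \textbf{FSTB}-frame. By \cref{reductlma} its fundamental reduct is a fundamental frame, and by \cref{fstofunlma} that reduct is strongly factoring. So both maps land in the intended classes.

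For the round trips, first start from a strongly factoring $(X,\op)$. The proof of \cref{fstbkeylma} shows that $y\op x$ iff $\exists z\colon y\opo z\preccurlyeq_{\op} x$. This says exactly that the fundamental reduct of $(X,\preccurlyeq_{\op},\opo)$ is $(X,\op)$ itself.

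Next start from a skeletal, pseudo-regular $(X,\leq,\top)$ with reduct $(X,\op)$. By \cref{skelpsreglma}, skeletality gives $\top\,=\,\opo$ and pseudo-regularity gives $\leq\,=\,\preccurlyeq_{\op}$. Hence the \textbf{FSTB}-companion $(X,\preccurlyeq_{\op},\opo)$ of the reduct is literally $(X,\leq,\top)$. Both compositions are therefore identities on the underlying set and relations, which yields the bijection.

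No step is a real obstacle, since everything is bookkeeping on top of the earlier lemmas. The one point needing care is the first round trip. There we must use the biconditional characterization of $\op$ from \cref{fstbkeylma}, and not merely the fact that the reduct is some fundamental frame, because the correspondence has to recover $\op$ exactly.
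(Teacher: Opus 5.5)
Your proof is correct and follows the paper's argument. The reduct direction comes from \cref{fstofunlma} together with \cref{skelpsreglma}, and the companion direction comes from the closing lemma, which rests on exactly the identity you single out from the proof of \cref{fstbkeylma}: $(X,\op)$ is the fundamental reduct of its \textbf{FSTB}-companion. One small slip: the fact that $(X,\preccurlyeq_{\op},\opo)$ is an \textbf{FSTB}-frame is the earlier lemma right after \cref{fstofunlma}, not the lemma immediately preceding the corollary.
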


\subsection{Translating Fundamental Logic into ``Intuitionistic \textsf{KTB}''} \label{goltran}

The results obtained in this section so far motivate defining a translation of fundamental logic into the following intuitionistic version of $\mathsf{KTB}$.

\begin{definition}
The logic $\mathsf{FSTB}$ is the intuitionistic modal logic obtained by adding to Fischer Servi's~\citeyearpar{FischerServi1984} logic $\mathsf{FS}$ the following axioms:
\begin{enumerate}
    \item $\Box p \to p$, $p \to \diamondsuit p$;
    \item $p \to \Box \diamondsuit p$, $\diamondsuit \Box p \to p$.
\end{enumerate}
\end{definition}

\begin{lemma}\label{FSTBcomp}
    The logic $\mathsf{FSTB}$ is sound and complete with respect to \textbf{FSTB}-frames.
\end{lemma}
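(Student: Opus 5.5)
The plan is to split the statement into an algebraic half and a frame-theoretic half, then chain them in the standard way.

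\emph{Soundness.} I would first show that $\mathsf{FSTB}$ is sound with respect to \textbf{FSTB}-lattices. The axioms of $\mathsf{FS}$ (normality of $\Box$ and $\diamondsuit$ and the two Fischer Servi interaction principles) correspond exactly to axioms 1--4 of Definition~\ref{fstblat}. The additional axioms $\Box p\to p$, $p\to\diamondsuit p$, $p\to\Box\diamondsuit p$ and $\diamondsuit\Box p\to p$ are valid precisely because of axioms 5 and 6. The rules, namely modus ponens, necessitation and substitution, preserve validity in any such algebra, since $\Box 1=1$ and the valuation is arbitrary. Soundness for \textbf{FSTB}-frames then follows at once from the earlier lemma that the complex algebra $(Dn(X),\Box_\top,\diamondsuit_\top)$ of any \textbf{FSTB}-frame is an \textbf{FSTB}-lattice.

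\emph{Completeness.} Here I would use the Lindenbaum--Tarski algebra of $\mathsf{FSTB}$: the quotient of the formulas by provable bi-implication. Because $\mathsf{FSTB}$ extends intuitionistic propositional logic, this quotient is a Heyting algebra. Replacement of provable equivalents under $\Box$ and $\diamondsuit$ is justified by necessitation and the monotonicity derivable in $\mathsf{FS}$. Each axiom schema then yields the corresponding inequality of Definition~\ref{fstblat}, so the Lindenbaum algebra is an \textbf{FSTB}-lattice. If $\nvdash_{\mathsf{FSTB}}\varphi$, then the class $[\varphi]$ is not $1$ under the canonical valuation. By Lemma~\ref{repfstblma}, this algebra embeds, preserving the Heyting and modal operations, into the complex algebra of an \textbf{FSTB}-frame, namely its prime filter frame. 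Composing the canonical valuation with this embedding gives a frame valuation on which $\varphi$ is not true everywhere, which yields frame completeness. The same argument works for consequence $\varphi\vdash\psi$, comparing $[\varphi]\le[\psi]$.

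\emph{Main obstacle.} The one point needing care is fixing the precise presentation of $\mathsf{FS}$: which axioms, and whether necessitation is included. The aim is to make the axiom-to-inequality match exact, in particular so that the Lindenbaum algebra satisfies axioms 3 and 4 in exactly the stated form. Everything else is routine, since the representation step is already supplied by Lemma~\ref{repfstblma}. That lemma relies on the known prime-filter representation for $\mathsf{FS}$ together with the verification of reflexivity and symmetry of $\top$ given there.
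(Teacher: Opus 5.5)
Your proposal is correct and follows the same route the paper indicates: soundness comes from the lemma that the complex algebra of any \textbf{FSTB}-frame is an \textbf{FSTB}-lattice. Completeness comes from the Lindenbaum algebra being an \textbf{FSTB}-lattice, combined with the prime-filter representation of Lemma~\ref{repfstblma}. Your flagged concern about the presentation of $\mathsf{FS}$ is harmless. For example, the connecting principle $\Box(a\to b)\wedge\diamondsuit a\leq\diamondsuit b$ follows from axiom~4 and monotonicity of $\diamondsuit$, via $\diamondsuit a\leq\diamondsuit((a\to b)\to b)\leq\Box(a\to b)\to\diamondsuit b$.
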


The proof of Lemma~\ref{FSTBcomp} is obtained by standard algebraic techniques, using the representation in \cref{repfstblma}. Now we recursively define the translation $\gamma$ from the language of fundamental logic to the language of intuitionistic modal logic as follows.

\begin{itemize}
    \item $\gamma(\top)=\top$ and $\gamma(p) =  \Box \diamondsuit p$;
    \item $\gamma(\phi \me \psi) = \gamma(\phi) \me \gamma(\psi)$;
    \item $\gamma(\phi \jo \psi) = \Box\diamondsuit(\gamma(\phi) \jo \gamma(\psi))$;
    \item $\gamma(\neg \phi) = \Box \neg \gamma(\phi)$.
\end{itemize}

\begin{theorem}
    The translation $\gamma$ is faithful: for any $\phi,\psi\in\mathcal{L}(\wedge,\vee,\neg)$, if $\phi \vdash_\mathsf{F} \psi$, then $\gamma(\phi) \vdash_\mathsf{FSTB} \gamma(\psi)$.
\end{theorem}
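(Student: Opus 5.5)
We argue semantically, using the soundness half of Lemma~\ref{FSTBcomp} together with the soundness of fundamental logic with respect to fundamental lattices. The plan has three steps. First, show that every \textbf{FSTB}-lattice carries a fundamental lattice on its $\Box\diamondsuit$-fixpoints. Second, show that $\gamma$ computes the interpretation in that fundamental lattice. Third, conclude via completeness of $\mathsf{FSTB}$.

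\emph{Step 1.} Fix an \textbf{FSTB}-lattice $(H,\Box,\diamondsuit)$ and let $L=\{a\in H\mid \Box\diamondsuit a=a\}$. Since $\Box\diamondsuit$ is a closure operator, $L$ is a bounded lattice with meets computed as in $H$ and joins given by $a\sqcup b=\Box\diamondsuit(a\vee b)$. The top element is $1$, since $\Box\diamondsuit 1=1$. The bottom is $\Box\diamondsuit 0=\Box 0$, which is below $0$ by axiom~5, so it equals $0$. Define $\neg_L a=\Box\neg a$, where $\neg a=a\to 0$. We must check three things.
\begin{enumerate}
\item \emph{$\neg_L a\in L$.} We need $\Box\diamondsuit\Box\neg a\leq \Box\neg a$, and this follows from $\diamondsuit\Box c\leq c$ (axiom~6) together with monotonicity of $\Box$.
\item \emph{Semicomplementation.} We have $a\wedge\Box\neg a\leq a\wedge\neg a=0$ by axiom~5.
\item \emph{Dual self-adjointness.} Suppose $a\leq\Box\neg b$. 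Then $\diamondsuit a\leq\diamondsuit\Box\neg b\leq\neg b$, so $\diamondsuit a\wedge b=0$. We want $b\leq\Box\neg a$. From $b\leq\neg\diamondsuit a$ it suffices to show $\neg\diamondsuit a\leq\Box\neg a$. This is exactly axiom~3 instantiated as $\diamondsuit a\to\Box 0\leq\Box(a\to 0)$, using $\Box 0=0$.
\end{enumerate}
Step 1 is the one real point of the proof, and the Fischer Servi axiom~3 is precisely what makes dual self-adjointness go through. Hence $(L,\neg_L)$ is a fundamental lattice.

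\emph{Step 2.} Given a valuation $\theta$ on $H$, define $\theta_L(p)=\Box\diamondsuit\theta(p)\in L$. By induction on $\chi$ we show $\tilde\theta(\gamma(\chi))=\tilde{\theta_L}(\chi)$, where the left side is computed in $H$ and the right side in $(L,\neg_L)$. Each clause of $\gamma$ matches exactly the corresponding operation on $L$:
\begin{itemize}
\item $\gamma(p)=\Box\diamondsuit p$ matches $\theta_L$;
\item $\wedge$ is the meet in $L$;
\item $\Box\diamondsuit(\cdot\vee\cdot)$ is $\sqcup$;
\item $\Box\neg$ is $\neg_L$;
\item $\top$ goes to $1$.
\end{itemize}
In particular, every $\gamma(\chi)$ denotes an element of $L$.

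\emph{Step 3.} Suppose $\phi\vdash_\mathsf{F}\psi$. By soundness of fundamental logic for fundamental lattices, $\tilde{\theta_L}(\phi)\leq\tilde{\theta_L}(\psi)$ in $L$, and since $L$ carries the order of $H$ this gives $\tilde\theta(\gamma(\phi))\leq\tilde\theta(\gamma(\psi))$ in $H$. This holds for every \textbf{FSTB}-lattice, in particular for the complex algebras of all \textbf{FSTB}-frames. By the completeness half of Lemma~\ref{FSTBcomp}, we conclude $\gamma(\phi)\vdash_\mathsf{FSTB}\gamma(\psi)$. Alternatively, one can apply the argument directly to the Lindenbaum algebra of $\mathsf{FSTB}$, which is an \textbf{FSTB}-lattice.
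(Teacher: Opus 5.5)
Your proof is correct, but it takes a different route from the paper's. The paper works at the level of frames. It fixes an \textbf{FSTB}-frame $(X,\leq,\top)$ and passes to its fundamental reduct $(X,\op)$. It then invokes Lemma~\ref{reductlma}, whose proof reuses the argument that the reduct is a fundamental frame and uses Fischer Servi frame condition~4 to obtain $\neg_{\op}A=\Box_\top\neg A$. This shows that $\theta'(p)=\tilde\theta(\Box\diamondsuit p)$ is a legitimate valuation on the dual algebra of the reduct, with $\gamma$ computing its interpretation. The paper concludes via completeness with respect to \textbf{FSTB}-frames.

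You carry out the same idea abstractly in an arbitrary \textbf{FSTB}-lattice. You verify the fundamental-lattice axioms on the $\Box\diamondsuit$-fixpoints directly, and Fischer Servi's algebraic axiom~3, instantiated at $b=0$ to give $\neg\diamondsuit a\leq\Box\neg a$, does the work that frame condition~4 does in the paper.

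Your route has two advantages:
\begin{itemize}
\item It is more elementary and self-contained. It can be run directly on the Lindenbaum algebra of $\mathsf{FSTB}$, as you note, so it does not need the frame completeness of Lemma~\ref{FSTBcomp} or the representation of Lemma~\ref{repfstblma} behind it.
\item It shows exactly which axioms of Definition~\ref{fstblat} are used: axiom~4 and the inequality $a\leq\diamondsuit a$ play no role. The same argument therefore gives faithfulness into correspondingly weaker logics.
\end{itemize}
The paper's route instead ties faithfulness to the fundamental-reduct construction. That construction is what the paper needs anyway for fullness (via Lemma~\ref{fstbkeylma} and Theorem~\ref{embthm2}) and for the correspondence between strongly factoring fundamental frames and skeletal, pseudo-regular \textbf{FSTB}-frames.
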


\begin{proof}
    Suppose $\phi \vdash_\mathsf{F} \psi$. Fix an \textbf{FSTB}-frame $(X, \leq, \top)$ and a valuation $\theta$. Consider the fundamental reduct $(X, \op)$ of this frame, and the valuation $\theta'$ given by $\theta'(p) = \tilde{\theta}(\Box \diamondsuit p)$. Note that this is a well-defined fundamental valuation by \cref{reductlma}. Moreover, a simple induction on the complexity of formulas establishes that $\tilde{\theta'}(\chi) = \tilde{\theta}(\gamma(\chi))$ for any fundamental formula $\chi$. Since $\phi \vdash_\mathsf{F} \psi$, it follows that $\tilde{\theta}(\gamma(\phi)) = \tilde{\theta'}(\phi) \sset \tilde{\theta'}(\psi) = \tilde{\theta}(\gamma(\psi))$. By the completeness of $\mathsf{FSTB}$ with respect to \textbf{FSTB}-frames, this shows that $\gamma(\phi) \vdash_{\mathsf{FSTB}} \gamma(\psi)$, which completes the proof.
\end{proof}

Let us now show that the translation is also full.

\begin{theorem} \label{fstbtrthm}
    The translation $\gamma$ is full: for any $\phi,\psi\in\mathcal{L}(\wedge,\vee,\neg)$, if $\gamma(\phi) \vdash_\mathsf{FSTB} \gamma(\psi)$, then $\phi \vdash_\mathsf{F} \psi$.
\end{theorem}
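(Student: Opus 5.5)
We argue by contraposition, in the same way as the fullness proof for $\mu$ in the previous section, now using \cref{embthm2} in place of \cref{embthm}. Suppose $\phi \nvdash_\mathsf{F} \psi$. By algebraic completeness of fundamental logic, there is a fundamental lattice $(L,\neg_L)$ and a valuation $\theta:\mathsf{Prop}\to L$ with $\tilde{\theta}(\phi)\nleq\tilde{\theta}(\psi)$. Apply \cref{embthm2} to obtain an \textbf{FSTB}-lattice $(H,\Box,\diamondsuit)$ and a bounded meet-semilattice embedding $\nu:L\to H$ with the three listed properties.

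Next we define an $H$-valuation $\theta'(p)=\nu(\theta(p))$. We then prove by induction on $\chi\in\mathcal{L}(\wedge,\vee,\neg)$ that $\tilde{\theta'}(\gamma(\chi))=\nu(\tilde{\theta}(\chi))$.
\begin{itemize}
\item For $\top$, this holds because $\nu$ is bounded.
\item For $p$, we have $\tilde{\theta'}(\Box\diamondsuit p)=\Box\diamondsuit\nu(\theta(p))=\nu(\theta(p))$ by the first property.
\item For $\wedge$, it follows because $\nu$ preserves binary meets.
\item For $\vee$, we have $\tilde{\theta'}(\Box\diamondsuit(\gamma(\chi_1)\vee\gamma(\chi_2)))=\Box\diamondsuit(\nu(\tilde\theta(\chi_1))\vee\nu(\tilde\theta(\chi_2)))=\nu(\tilde\theta(\chi_1)\vee_L\tilde\theta(\chi_2))$ by the second property.
\item For $\neg$, we have $\tilde{\theta'}(\Box\neg\gamma(\chi))=\Box\neg\nu(\tilde\theta(\chi))=\nu(\neg_L\tilde\theta(\chi))$ by the third property. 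Here $\neg$ in $H$ is Heyting negation $a\to 0$, which matches the interpretation of $\neg$ in $\mathsf{FSTB}$.
\end{itemize}

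Since $\nu$ is an order embedding (being an injective meet-semilattice homomorphism), $\tilde{\theta}(\phi)\nleq\tilde{\theta}(\psi)$ yields $\tilde{\theta'}(\gamma(\phi))\nleq\tilde{\theta'}(\gamma(\psi))$. It remains to conclude $\gamma(\phi)\nvdash_\mathsf{FSTB}\gamma(\psi)$, which requires soundness of $\mathsf{FSTB}$ with respect to \textbf{FSTB}-lattices. This is either routine, since the axioms of $\mathsf{FS}$ together with T and B correspond exactly to the conditions in \cref{fstblat}, or it follows from \cref{FSTBcomp} combined with \cref{repfstblma}. Via \cref{repfstblma}, we embed $H$ into the complex algebra of an \textbf{FSTB}-frame, which preserves the Heyting and modal operations, so the countermodel transfers to a frame countermodel, and \cref{FSTBcomp} gives non-derivability.

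The main point requiring care is that $H$ in \cref{embthm2} really is an \textbf{FSTB}-lattice whose operations are the intended ones: it should be the complex algebra $Dn(X)$ of the \textbf{FSTB}-companion of the reflexive canonical frame, with $\nu(a)=\widehat{a}$. The concern is that the negation in the third property is Heyting negation on $Dn(X)$; this is what \cref{reductlma} provides with $\neg_{\op}A=\Box_\top\neg A$. Everything else is a straightforward induction.
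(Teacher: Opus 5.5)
Your proposal is correct and follows the paper's proof essentially step for step: contraposition, a fundamental-lattice countermodel, the embedding $\nu$ from \cref{embthm2}, the valuation $\theta'(p)=\nu(\theta(p))$, the induction showing $\tilde{\theta'}(\gamma(\chi))=\nu(\tilde{\theta}(\chi))$, and the fact that $\nu$ reflects the order. Your remark that the last step needs soundness of $\mathsf{FSTB}$ with respect to $H$ only spells out something the paper leaves implicit; since $H$ is the complex algebra of an \textbf{FSTB}-frame, the soundness half of \cref{FSTBcomp} covers it directly.
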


\begin{proof}
    We argue by contraposition. Suppose $\phi \nvdash_\mathsf{F} \psi$. Then there is a fundamental lattice $(L, \neg_L)$ and a valuation $\theta$ on $(L, \neg_L)$ such that $\tilde{\theta}(\phi) \nleq \tilde{\theta}(\psi)$. By \cref{embthm2}, there is an \textbf{FSTB}-lattice $(H, \Box, \diamondsuit)$ and an embedding $\nu: L \to H$ with the properties mentioned in the theorem. Now define a valuation $\theta'$ on $(H, \Box, \diamondsuit)$ by letting $\theta'(p) = \nu(\tilde{\theta}(p))$. By induction on the complexity of formulas together with the properties of $\nu$, we have that $\tilde{\theta'}(\gamma(\chi)) = \nu(\tilde{\theta}(\chi))$ for any fundamental formula $\chi$. Since $\nu$ is an order-embedding, it follows that $\tilde{\theta'}(\gamma(\phi)) \nleq \tilde{\theta'}(\gamma(\psi))$ and hence $\gamma(\phi) \nvdash_{\mathsf{FSTB}} \gamma(\psi)$.
\end{proof}

\subsection{First-Order Extension}

We conclude this section by extending our results for $\mathsf{FSTB}$ to the first-order case. We show that the translation $\gamma$ can be extended to a translation of first-order fundamental logic into the quantified version of $\mathsf{FSTB}$, called $\mathsf{QFSTB}$, which extends $\mathsf{FSTB}$ with the introduction and elimination rules for the quantifiers as in first-order intuitionistic logic (see Appendix~\ref{AppendixQuant}). The translation $\gamma$ is extended to a translation from the language of first-order fundamental logic to $\mathsf{QFSTB}$ via the following clauses:

\begin{itemize}
    \item $\gamma(\forall x \phi(x)) = \forall x \gamma(\phi(x))$;
    \item $\gamma(\exists x \phi(x)) = \Box \diamondsuit \exists x \gamma(\phi(x))$.
\end{itemize}

\begin{lemma}
    The translation $\gamma$ from first-order fundamental logic to $\mathsf{QFSTB}$ is faithful: for any formulas $\varphi,\psi$ of the language of first-order fundamental logic, $\varphi \vdash_\mathsf{QF} \psi$ implies $\gamma (\varphi) \vdash_\mathsf{QFSTB} \gamma(\psi)$.
\end{lemma}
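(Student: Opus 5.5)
The plan is to copy the proof of faithfulness for the first-order $\mu$ translation into $\mathsf{QOS4}$: first a stability lemma for translated formulas, then an induction on the length of $\mathsf{QF}$-proofs.

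\emph{Stability lemma.} By induction on $\phi$, we show that $\gamma(\phi)$ and $\Box\diamondsuit\gamma(\phi)$ are interderivable in $\mathsf{QFSTB}$.

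The direction $\gamma(\phi)\vdash\Box\diamondsuit\gamma(\phi)$ is the axiom $p\to\Box\diamondsuit p$. For the converse, the cases $p$, $\vee$ and $\exists$ hold because their translations already start with $\Box\diamondsuit$, and $\Box\diamondsuit$ is idempotent (the closure-operator argument, which is derivable syntactically). The $\wedge$ case uses monotonicity of $\Box\diamondsuit$, giving $\Box\diamondsuit(a\wedge b)\vdash\Box\diamondsuit a\wedge\Box\diamondsuit b$, together with the induction hypothesis.

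The $\neg$ case uses $\diamondsuit\Box\neg a\vdash\neg a$ (the \textsf{B} dual instance), so $\Box\diamondsuit\Box\neg a\vdash\Box\neg a$.

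The $\forall$ case is the only genuinely new one. We need $\Box\diamondsuit\forall x\gamma(\phi)\vdash\forall x\gamma(\phi)$. From $\forall x\gamma(\phi)\vdash\gamma(\phi)$, monotonicity gives $\Box\diamondsuit\forall x\gamma(\phi)\vdash\Box\diamondsuit\gamma(\phi)\vdash\gamma(\phi)$ by the induction hypothesis. Since $x$ is not free in the premise, $\forall$I applies.

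\emph{Main induction.} We induct on $\mathsf{QF}$-proofs, showing each rule is admissible under $\gamma$. The propositional rules are handled exactly as in the propositional faithfulness theorem. Since that theorem was proved semantically, here we instead check directly that the relevant $\mathsf{FSTB}$ inferences are derivable:
\begin{itemize}
\item $\vee$I: from $a\vdash a\vee b$ and $a\vdash\Box\diamondsuit a$.
\item $\vee$E: from $\gamma(\phi)\vdash\gamma(\chi)$ and $\gamma(\psi)\vdash\gamma(\chi)$, infer $\Box\diamondsuit(\gamma\phi\vee\gamma\psi)\vdash\Box\diamondsuit\gamma\chi\vdash\gamma\chi$ by stability.
\item $\neg$I: needs the adjunction-like fact that $a\vdash\Box\neg b$ implies $b\vdash\Box\neg a$. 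It follows from $b\vdash\Box\diamondsuit b$ and $\diamondsuit b\wedge \Box\neg\ldots$; equivalently, use $a\vdash \Box\neg b$ to get $\diamondsuit a\vdash\diamondsuit\Box\neg b\vdash\neg b$, hence $b\vdash\neg\diamondsuit a$, and then check $\neg\diamondsuit a\vdash\Box\neg a$ via Fischer Servi axiom 3 with $b=0$.
\item $\neg$E: $a\wedge\Box\neg a\vdash a\wedge\neg a\vdash\bot$ by \textsf{T}.
\end{itemize}

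For the quantifiers, $\forall$I and $\forall$E transfer directly, since $\gamma$ commutes with $\forall$ and with substitution, and the eigenvariable condition is preserved because $\gamma$ does not change free variables. For $\exists$I we have $\gamma(\phi^v_u)\vdash\exists x\gamma(\phi)\vdash\Box\diamondsuit\exists x\gamma(\phi)$. For $\exists$E, from $\gamma(\phi)\vdash\gamma(\psi)$ with $v$ not free in $\psi$, we get $\exists v\gamma(\phi)\vdash\gamma(\psi)$. Applying $\Box\diamondsuit$ and stability then gives $\Box\diamondsuit\exists v\gamma(\phi)\vdash\gamma(\psi)$.

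\emph{Main obstacle.} The expected difficulty is bookkeeping rather than mathematics. Fitch proofs have nested subproofs, so the induction must be formulated over proofs with hypotheses: if $\chi$ occurs in a proof with assumption $\phi$, then $\gamma(\phi)\vdash\gamma(\chi)$. Because fundamental logic lacks Reiteration, each subproof depends only on its own assumption, which keeps this single-premise formulation adequate. The one point needing real care is $\neg$I, namely verifying $\neg\diamondsuit a\vdash\Box\neg a$ in $\mathsf{FS}$. If that direct route is awkward, we can fall back on the frame semantics of $\mathsf{QFSTB}$ (if available) or on the algebraic identity $\neg_{\op}A=\Box_\top\neg A$ of \cref{reductlma}.
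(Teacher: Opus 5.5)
Your proposal is correct and follows the paper's proof. The paper likewise proves the stability lemma $\gamma(\phi)\dashv\vdash\Box\diamondsuit\gamma(\phi)$ by induction, using the same argument in the $\forall$ case, and then inducts on $\mathsf{QF}$-proofs, handling $\exists$E via monotonicity of $\Box\diamondsuit$ and stability; your syntactic checks of the propositional rules, including $\neg$I via $\neg\diamondsuit a\vdash\Box\neg a$ from the Fischer Servi axiom, fill in cases the paper leaves implicit.
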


\begin{proof}
    Note first that a simple induction on the complexity of formulas establishes that $\gamma(\phi)$ is $\mathsf{QFSTB}$-equivalent to $\Box\diamondsuit \gamma(\phi)$ for any first-order fundamental formula $\phi$. The only nontrivial case is the universal case. Since $\Box\diamondsuit$ is a closure operator on $\mathsf{QFSTB}$-formulas, we immediately have that $\forall x \gamma(\phi(x)) \vdash_{\mathsf{QFSTB}} \Box\diamondsuit \forall x \gamma(\phi(x))$. For the converse, note that by the induction hypothesis, $\Box \diamondsuit \gamma(\phi(x)) \vdash_{\mathsf{QFSTB}} \gamma(\phi(x))$. By monotonicity of $\Box\diamondsuit$, we have that $\Box\diamondsuit \forall x \gamma(\phi(x)) \vdash_{\mathsf{QFSTB}} \Box \diamondsuit \gamma (\phi(x))$. Hence $\Box \diamondsuit \forall x \gamma(\phi(x)) \vdash_{\mathsf{QFSTB}} \gamma (\phi(x))$. Since $x$ is not free in $\Box \diamondsuit \forall x \gamma(\phi(x))$ this shows that $\Box\diamondsuit \gamma(\forall x \phi(x)) \vdash_{\mathsf{QFSTB}} \gamma(\forall x \phi(x))$, as desired.

    We can now show that $\phi \vdash_{\mathsf{QF}} \psi$ implies $\gamma(\phi) \vdash_{\mathsf{QFSTB}} \gamma(\psi)$, by induction on the length of $\mathsf{QF}$-proofs. We only write out the case when the last rule applied is existential elimination. Suppose that for some $\psi$ in which $x$ does not appear freely, we have that $\phi(x) \vdash_{\mathsf{QF}} \psi$. By the induction hypothesis, this means that $\gamma(\phi(x)) \vdash_{\mathsf{QFSTB}} \gamma(\psi)$ and hence $\exists x \gamma(\phi(x)) \vdash_{\mathsf{QFSTB}} \gamma(\psi)$. By the result established above together with the monotonicity of $\Box\diamondsuit$, we have that
    \[\Box \diamondsuit \exists x \gamma(\phi(x)) \vdash_{\mathsf{QFSTB}} \Box\diamondsuit \gamma(\psi) \vdash_{\mathsf{QFSTB}} \gamma(\psi).\] This shows that $\gamma(\exists x \phi(x)) \vdash_{\mathsf{QFSTB}} \gamma(\psi)$, as desired.
\end{proof}

To prove that the translation is also full, we first establish the following strengthening of \cref{embthm2}.

\begin{theorem} \label{qembthm2}
    For any fundamental lattice $(L, \neg_L)$, there is a complete \textbf{FSTB}-lattice $(H, \Box, \diamondsuit)$ and a bounded meet-semilattice embedding $\nu : L \to H$ such that for any $a, b \in L$:
    \begin{itemize}
        \item $\Box \diamondsuit \nu (a) = \nu(a)$;
        \item $\nu(a \jo b) = \Box \diamondsuit (\nu(a) \jo \nu(b))$;
        \item $\nu (\neg_L a) = \Box \neg \nu (a)$;
        \item for any $A \sset L$, if $\bigme_L A$ exists in $L$, then $\nu(\bigme_L A) = \bigme_H \nu[A]$;
        \item for any $A \sset L$, if $\bigjo_L A$ exists in $L$, then $\nu(\bigjo_L A) = \Box\diamondsuit(\bigjo_H \nu[A])$.
    \end{itemize}
\end{theorem}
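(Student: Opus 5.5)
The plan mirrors the proof of \cref{qembthm}, replacing the reflexive canonical frame used in \cref{embthm2} by the frame $(X,\op)$ of \cref{prinfrm} with $\mathrm{V}=\Lambda=L$. The reason for this choice is that the map $a\mapsto a^\downarrow$ into $\chi_{\op}(X)$ preserves all existing meets and joins of $L$. The first step is to show that this frame is strongly factoring.

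Reflexivity is clear. For $x=(a,\neg a)$ we have $a\not\leq\neg a$ when $a\neq 0$, since otherwise $a=a\wedge\neg a=0$. For $x=(1,b)$ we have $1\not\leq b$ when $b\neq1$.

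Now suppose $y\op x$ with $x=(x_a,x_b)$ and $y=(y_a,y_b)$. I treat two cases.
\begin{enumerate}
\item If also $x\op y$, then $y\opo x$, and $x\preccurlyeq_{\op}x$ trivially, so $z=x$ is a witness.
\item Otherwise $y_a\leq x_b$. Then, as in the proof of \cref{qembthm}, $y=(y_a,\neg y_a)$, and I take $z=(x_a,\neg x_a)$, which lies in $X$ because $x_a\neq 0$ (as $x_a\not\leq y_b$).
\end{enumerate}
In the second case, $z\leq_{\op}x$ holds because both have first coordinate $x_a$. For $x\leq_{\po}z$, if $x\op w$ then $w_a\not\leq x_b$. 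We also have $x_b\geq\neg x_a$: either $x=(x_a,\neg x_a)$, or $x=(1,x_b)$ with $\neg 1=0$. Hence $w_a\not\leq\neg x_a$, i.e.\ $z\op w$. Finally, $y\op z$ since $x_a\not\leq\neg y_a$. Also $z\op y$, since $y_a\leq\neg x_a$ would give $x_a\leq\neg y_a$ by dual self-adjointness, contradicting $y\op x$. So $y\opo z\preccurlyeq_{\op}x$ as required. Checking that this second case indeed covers $x$ of both shapes is the one spot needing care.

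Next I take $H$ to be the complex algebra $(Dn(X),\Box_{\opo},\diamondsuit_{\opo})$ of the \textbf{FSTB}-companion $(X,\preccurlyeq_{\op},\opo)$. This is an \textbf{FSTB}-lattice, and it is complete since $Dn(X)$ is a complete Heyting algebra with meets given by intersections. Let $\nu(a)=a^\downarrow$. By \cref{fstbkeylma} and \cref{reductlma}, $\chi_{\op}(X)$ is exactly the set of $\Box\diamondsuit$-fixpoints of $H$, and $\neg_{\op}A=\Box\neg A$ there. The first three bullet points then follow exactly as in \cref{embthm2}: the join in $\chi_{\op}(X)$ is $c_{\op}$ of the union, which equals $\Box\diamondsuit$ of the union. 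Moreover, $\nu$ preserves $\neg$ and is injective by \cref{prinfrm}.

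For the infinitary clauses:
\begin{itemize}
\item If $\bigme_L A$ exists, then $(\bigme_L A)^\downarrow=\bigcap_{a\in A}a^\downarrow$ by \cref{prinfrm}, and this intersection is the meet in $Dn(X)$.
\item If $\bigjo_L A$ exists, then $(\bigjo_L A)^\downarrow$ is the join in $\chi_{\op}(X)$, namely $c_{\op}(\bigcup_{a\in A}a^\downarrow)$. By the computation in \cref{reductlma}, applied to the downset $\bigcup_{a\in A}a^\downarrow=\bigjo_H\nu[A]$, this equals $\Box\diamondsuit\bigjo_H\nu[A]$.
\end{itemize}

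I expect the strong factoring verification above to be the only non-routine step.
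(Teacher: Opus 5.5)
Your proof is correct and follows the paper's route. You take the frame of \cref{prinfrm} with $\mathrm{V}=\Lambda=L$, verify that it is strongly factoring, and then use the complex algebra of its \textbf{FSTB}-companion with $\nu(a)=a^\downarrow$. The only difference is cosmetic and occurs in your second case: the paper first shows that $x$ must have the form $(1,x_b)$ and takes $z=(1,0)$, and your witness $(x_a,\neg x_a)$ is that same state, so your treatment of the shape $x=(x_a,\neg x_a)$ there is sound but covers a vacuous case.
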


\begin{proof}
    Let $(X, \op)$ be the fundamental frame described in \cref{prinfrm} with $\mathrm{V} = \Lambda = L$. We claim this frame is strongly factoring. Note first that it is reflexive. Moreover, suppose $y \op x$, with $x = (x_a,x_b)$ and $y = (y_a,y_b)$. If $x \op y$, then $x$ is the required witness $z$ for strong factoring. Otherwise, we have  $y_a \leq x_b$, so $y_a \neq 1$ and $y = (y_a, \neg y_a)$. Moreover, if $x = (x_a,\neg x_a)$, then $y_a \leq \neg x_a$ implies, by dual self-adjointness, that $x_a \leq \neg y_a$, contradicting the assumption that $y \op x$. Hence $x = (1,x_b)$. We must find $z = (z_a,z_b)$ such that $y \opo z$, $z_a \leq x_a$, and $z_b \leq x_b$. Clearly, $z = (1,0)$ is such a witness. This completes the proof of the claim. 

    Let $(H, \Box, \diamondsuit)$ be the complex algebra of the \textbf{FSTB}-companion of $(X,\op)$. By \cref{fstbkeylma}, we know that the dual algebra of $(X,\op)$ embeds into $(H,\Box,\diamondsuit)$ as the fixpoints of the closure operator $\Box\diamondsuit$ via the inclusion map $\iota$. Since $\Box\diamondsuit$ is a closure operator, its set of fixpoints is closed under arbitrary meets, and for any set $A \sset H$ of fixpoints of $\Box\diamondsuit$, their join in the algebra of $\Box\diamondsuit$ fixpoints is $\Box\diamondsuit(\bigjo_H A)$. Moreover, the map $a \mapsto a^\downarrow$ is a fundamental embedding that preserves arbitrary meets and joins. Putting these facts together, we get that the composition of the map $a \mapsto a^\downarrow$ with the inclusion $\iota: \chi_{\op}(X) \to H$ is the required embedding~$\nu$.
\end{proof}
 
\begin{corollary}
    The translation $\gamma$ is also full: for any two formulas $\phi,\psi$ of the language of first-order fundamental logic, $\gamma (\phi) \vdash_\mathsf{QFSTB} \gamma(\psi)$ implies $\phi \vdash_\mathsf{QF} \psi$.
\end{corollary}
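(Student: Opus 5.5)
We argue by contraposition, in the same way as for Corollary~\ref{qfullgmt} but using Theorem~\ref{qembthm2} in place of Theorem~\ref{qembthm}. Suppose $\phi \nvdash_\mathsf{QF} \psi$. Let $L$ be the Lindenbaum algebra of first-order fundamental logic. It is a fundamental lattice. In it, the class of $\forall x\,\phi(x)$ is the meet, and the class of $\exists x\,\phi(x)$ is the join, of the classes of the instances $\phi^x_y$ with $y$ substitutable for $x$. Let $\theta$ be the canonical valuation sending each atomic formula to its class. Then $\tilde{\theta}(\phi)\nleq\tilde{\theta}(\psi)$. Apply Theorem~\ref{qembthm2} to $L$ to get a complete \textbf{FSTB}-lattice $(H,\Box,\diamondsuit)$ and a meet-semilattice embedding $\nu:L\to H$ with the five listed properties.

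Next, define a $\mathsf{QFSTB}$-valuation $\theta'$ on $H$ by $\theta'(\alpha)=\nu(\theta(\alpha))$ for atomic $\alpha$. Completeness of $H$ lets us interpret $\forall$ and $\exists$ as meets and joins over instances, in the same algebraic style used in Corollary~\ref{qfullgmt}. The key claim is that $\tilde{\theta'}(\gamma(\chi))=\nu(\tilde{\theta}(\chi))$ for every first-order fundamental formula $\chi$, proved by induction on $\chi$.
\begin{itemize}
\item Atomic case: $\tilde{\theta'}(\Box\diamondsuit\alpha)=\Box\diamondsuit\nu(\theta(\alpha))=\nu(\theta(\alpha))$, by the first property.
\item The $\wedge$ case uses preservation of binary meets.
\item The $\vee$ case uses $\nu(a\vee b)=\Box\diamondsuit(\nu(a)\vee\nu(b))$.
\item The $\neg$ case uses $\nu(\neg_L a)=\Box\neg\nu(a)$.
\item The $\forall$ case combines $\tilde{\theta'}(\forall x\,\gamma(\phi(x)))=\bigwedge_H\{\tilde{\theta'}(\gamma(\phi^x_y))\}$ with the induction hypothesis and the fourth property, since the corresponding meet exists in $L$.
\item The $\exists$ case is $\tilde{\theta'}(\Box\diamondsuit\exists x\,\gamma(\phi(x)))=\Box\diamondsuit\bigvee_H\{\nu(\tilde\theta(\phi^x_y))\}=\nu(\tilde\theta(\exists x\,\phi(x)))$, by the fifth property.
\end{itemize}

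Given the claim, $\tilde{\theta'}(\gamma(\phi))=\nu(\tilde\theta(\phi))\nleq\nu(\tilde\theta(\psi))=\tilde{\theta'}(\gamma(\psi))$, because $\nu$ is an order-embedding. By soundness of $\mathsf{QFSTB}$ with respect to complete \textbf{FSTB}-lattices, $\gamma(\phi)\nvdash_\mathsf{QFSTB}\gamma(\psi)$.

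The main obstacle is the soundness step in the quantifier cases. We must be sure that the algebraic interpretation of $\forall$ and $\exists$ over instances is sound for the $\mathsf{QFSTB}$ quantifier rules, including quantifier rules applied inside modal contexts, on complete \textbf{FSTB}-lattices. For $\forall$I and $\exists$E in the Lindenbaum-style (term-instance) semantics, this relies on having infinitely many variables and on the substitution properties of valuations. As in Corollary~\ref{qfullgmt}, we would take this algebraic soundness as standard and concentrate the verification on the inductive claim.
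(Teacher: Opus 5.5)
Your proposal is correct and is exactly the paper's argument, which simply reruns the proof of Corollary~\ref{qfullgmt} with Theorem~\ref{qembthm2} in place of Theorem~\ref{qembthm}: contraposition through the Lindenbaum algebra of $\mathsf{QF}$ and its canonical valuation, followed by an induction showing $\tilde{\theta'}(\gamma(\chi))=\nu(\tilde{\theta}(\chi))$. The paper likewise takes the soundness step you flag as standard, and it is unproblematic here because the infinitary law $(\bigvee_i a_i)\wedge b\leq\bigvee_i(a_i\wedge b)$ needed for $\exists$E holds in any complete Heyting algebra.
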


\begin{proof}
    The proof is the same as in \cref{qfullgmt}, using \cref{qembthm2} instead of \cref{qembthm}.
\end{proof}

\begin{remark}
    It is worth mentioning that in the proof of \cref{qembthm2}, the \textbf{FSTB}-lattice $H$ constructed is actually a complete bi-Heyting algebra, i.e., a complete Heyting algebra satisfying the Meet Infinite Distributive Law:
    \[\bigme A \lor b = \bigme\{a \lor b \mid a \in A\}\] for any $A \sset H$ and $b\in H$.
On such complete lattices, the Constant Domain axiom is valid:
\begin{itemize}
    \item[(CD)] $\forall x(\phi(x) \lor \psi) \vdash \forall x \phi(x) \lor \psi$, if $x$ is not free in $\psi$.
\end{itemize}
Moreover, the modal operators $\Box$ and $\diamondsuit$ in that \textbf{FSTB}-lattice are also completely multiplicative and completely additive, respectively, meaning that the two Barcan formulas are also valid:

\begin{itemize}
    \item[($\forall$B)] $\forall x \Box \phi(x) \vdash \Box \forall x \phi(x)$;
    \item[($\exists$B)] $\diamondsuit \exists x \phi(x) \vdash \exists x \diamondsuit \phi(x)$.
\end{itemize}
This means that first-order fundamental logic can also be fully and faithfully translated into the logic $\mathsf{QFSTB}$ + (CD) + ($\forall$B) + ($\exists$B).
\end{remark}

\section{The GMT Translation into \textsf{FN4}}\label{FN4Section}
In the previous sections, we have explored translations of fundamental logic into modal logics over a stronger base logic, either orthologic (in \cref{OS4Section}) or intuitionistic logic (in \cref{KTBSection}). In this section, we explore the converse problem of translating stronger logics into modal extensions of fundamental logic. \citet[Prop.~2.3]{Holliday2023} already established that orthologic can be embedded into fundamental logic via the G\"odel-Gentzen ``double negation'' translation. Here, we tackle the more involved task of embedding intuitionistic logic into fundamental modal logic via a version of the GMT translation, before studying the interaction between this translation and the G\"odel-Gentzen translation.

\subsection{The Logic \textsf{FN4}}
In this section, we show that intuitionistic logic can be embedded into a modal extension of fundamental logic with a necessity operator. This extension, which we call $\mathsf{FN4}$, strengthens the fundamental version of $\mathsf{S4}$, $\mathsf{FS4}$, with one additional rule. The key idea behind $\mathsf{FN4}$ can be found in John Stuart Mill's \textit{A System of Logic} (\citealt{Mill1843}): 
\begin{quote}If there be any meaning which confessedly belongs to the term necessity, it is \textit{unconditionalness}. That which is necessary, that which \textit{must} be, means that which will be, whatever supposition we may make in regard to all other things. (p.~222)\end{quote}
Translated into our setting, we take this to mean that when we regard $\Box$ as true \textit{necessity}, we should accept the rule of $\Box$-Reiteration shown in Figure~\ref{BoxReitFig}, since we should hold on to $\Box\psi$ ``whatever supposition we may make in regard to all other things,'' such as $\varphi$. For a rigorous inductive definition of $\mathsf{FN4}$ \textit{proofs given $R$}, where $R$ is a set of boxed formulas that may be reiterated, see Appendix~\ref{AppendixFN4}.

\begin{figure}[H]
\begin{center}
\begin{minipage}{2.5in}
\[\begin{nd}
\have [\vdots] {0} {\vdots}
\have [i] {3}   {\Box\psi}
\have [\vdots] {4}   {\vdots}
\open
\hypo[ j] {1} {\varphi}
\have [\vdots] {}  {\vdots}
\have [k] {5}   {\Box\psi} \rone{3}
\end{nd}
\]\end{minipage}
\end{center}
\caption{The $\Box$-Reiteration rule of $\mathsf{FN4}$.}\label{BoxReitFig}
\end{figure}

The corresponding class of algebras is the following.

\begin{definition}\label{FN4AlgDef} An $\mathbf{FN4}$-algebra is an algebra $(L,\neg,\Box)$ where $(L,\neg)$ is a fundamental lattice and for all $a,b,c\in L$:
\begin{enumerate}
\item\label{FN4AlgDef1} $\Box 1=1$, $\Box(a\wedge b)=\Box a\wedge\Box b$, $\Box a\leq a$, and $\Box a\leq \Box\Box a$;
\item\label{FN4AlgDef2} $(a\vee b)\wedge \Box c\leq (a\wedge \Box c)\vee (b\wedge\Box c)$;
\item\label{FN4AlgDef3} if $a\wedge \Box c\leq \neg b$, then $b\wedge \Box c\leq\neg a$.
\end{enumerate}
\end{definition}
\noindent The second item corresponds to the use of $\Box$-Reiteration into subproofs for $\vee$E, while the third item corresponds to the use of $\Box$-Reiteration into subproofs for $\neg$I, as shown in Figure~\ref{DistPseudoFN4}.

\begin{figure}[H]
\begin{center}
\begin{minipage}{3in}
\[\begin{nd}
\hypo [1] {1} {(p\vee q)\wedge\Box r}
\have [2] {2}  {p\vee q} \ae{1}
\have [3] {3} {\Box r} \ae{1}
\open
\hypo [4] {4} {p}
\have [5] {5} {\Box r} \rone{3}
\have [6] {6} {p\wedge\Box r} \ai{4,5}
\have [7] {7} {(p\wedge\Box r)\vee (q \wedge\Box r)} \oi{6}
\close 
\open
\hypo [8] {8} {q}
\have [9] {9} {\Box r} \rone{3}
\have [10] {10} {q\wedge\Box r} \ai{8,9}
\have [11] {11} {(p\wedge\Box r)\vee (q \wedge\Box r)} \oi{10}
\close
\have [12] {12} {(p\wedge\Box r)\vee (q \wedge\Box r)} \oe{2, 4-7, 8-11}
\end{nd}
\]\end{minipage}\begin{minipage}{3in}
\[\begin{nd}
\hypo [1] {1} {q\wedge \Box r}
\have [2] {2} {q} \ae{1}
\have [3] {3} {\Box r} \ae{1}
\open
\hypo [4] {4} {p}
\have [5] {5} {\Box r} \rone{3}
\have [6] {6} {p\wedge\Box r} \ai{4,5}
\have [7] {7} {\vdots}
\have [8] {8} {\neg q} 
\close 
\have [9] {9} {\neg p} \ni{4-8, 2}
\end{nd}
\]\end{minipage}
\end{center}
\caption{$\mathsf{FN4}$ proofs of principles corresponding to Definition~\ref{FN4AlgDef}.\ref{FN4AlgDef2}-\ref{FN4AlgDef3}.}\label{DistPseudoFN4}
\end{figure}

We define a consequence relation $\vDash_{\mathbb{FN}4}$ over the class $\mathbb{FN}4$ of $\mathbf{FN4}$-algebras in the usual way. It is a straightforward exercise to establish the following soundness and completeness theorem.

\begin{proposition} For any $\varphi,\psi\in\mathcal{L}(\wedge,\vee,\neg,\Box)$, $\varphi\vdash_\mathsf{FN4}\psi$ iff $\varphi\vDash_{\mathbb{FN}4}\psi$.
\end{proposition}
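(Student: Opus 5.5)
We prove the two directions separately, soundness by induction on proofs and completeness by a Lindenbaum–Tarski construction. Because proofs in $\mathsf{FN4}$ are defined relative to a set $R$ of reiterable boxed formulas (Appendix~\ref{AppendixFN4}), we first strengthen the soundness claim so the induction goes through. Given a proof given $R$ with assumption $\varphi$ and conclusion $\psi$, we show that for every $\mathbf{FN4}$-algebra and valuation $\theta$,
\[\tilde\theta(\varphi)\wedge \tilde\theta(\Box c_1)\wedge\dots\wedge\tilde\theta(\Box c_n)\leq\tilde\theta(\sigma),\]
where $\Box c_1,\dots,\Box c_n\in R$ are the reiterable formulas actually used, and $\sigma$ is any formula line of the proof. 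Because $\Box$ preserves finite meets, the conjunction of the $\Box c_i$ is below $\Box(c_1\wedge\dots\wedge c_n)$. So it suffices to carry a single parameter $\Box c$, and proving everything relative to an arbitrary element $\Box c$ is enough. For $\Box$-subproofs we keep a separate set $B$, as in Appendix~\ref{AppendixModal}.

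The soundness cases run as follows.
\begin{itemize}
\item The rules $\top$I, $\wedge$I, $\wedge$E, $\vee$I and $\neg$E are immediate, using semicomplementation for $\neg$E.
\item $\Box$-Reiteration holds by the choice of parameter.
\item The \textsf{T} and \textsf{4} rules use $\Box a\le a$ and $\Box a\le\Box\Box a$.
\item For $\vee$E, the induction hypothesis for the subproofs gives $a\wedge\Box c\le \chi$ and $b\wedge\Box c\le\chi$. Item~\ref{FN4AlgDef2} of Definition~\ref{FN4AlgDef} then yields $(a\vee b)\wedge\Box c\le\chi$.
\item For $\neg$I, the subproof gives $a\wedge\Box c\le\neg \psi$ with $\psi$ available outside, so item~\ref{FN4AlgDef3} gives $\psi\wedge\Box c\le\neg a$.
\item For $\Box$I, a $\Box$-subproof derives $\varphi$ from formulas $d_i$ with $\Box d_i$ available outside (and reiterated boxed formulas, which by \textsf{4} are themselves of this shape). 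This gives $\bigwedge d_i\le\varphi$, hence $\bigwedge\Box d_i\le\Box\varphi$ by monotonicity and meet preservation.
\end{itemize}
The induction must also let reiterable boxed formulas pass into $\Box$-subproofs. This is handled by \textsf{4}: $\Box c\le\Box\Box c$, so $\Box c$ is available after $\Box$E.

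For completeness we build the Lindenbaum algebra. Take equivalence classes of formulas under mutual derivability, ordered by $\vdash_\mathsf{FN4}$. It is a bounded lattice with $\top$ and $\bot=\neg\top$, where $\bot$ is least via $\neg$E. Negation satisfies dual self-adjointness via $\neg$I and semicomplementation via $\neg$E. The operator $\Box$ is well defined and normal by $\Box$I/$\Box$E, and satisfies $\Box a\le a$ and $\Box a\le\Box\Box a$ by \textsf{T} and \textsf{4}. Items~\ref{FN4AlgDef2} and \ref{FN4AlgDef3} are witnessed by exactly the derivations in Figure~\ref{DistPseudoFN4}, generalized from $p,q,r$ to arbitrary formulas. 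These derivations need congruence and transitivity of $\vdash_\mathsf{FN4}$, which requires a substitution/cut lemma: proofs can be concatenated, and a proof from $\varphi$ can be used inside any context in which $\varphi$ appears. The canonical valuation then refutes any non-derivable pair.

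The main obstacle is soundness for subproof rules when reiteration and $\Box$-subproofs interleave, since the set $R$ changes as one enters $\Box$-subproofs. I would first fix the exact invariant from Appendix~\ref{AppendixFN4}, namely which boxed formulas are in $R$ inside a $\Box$-subproof, and prove the parametrized inequality above by simultaneous induction for ordinary and $\Box$-subproofs.
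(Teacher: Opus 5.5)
Your proposal is correct and is the standard argument the paper has in mind: the paper states this result as a straightforward exercise, following the same pattern it uses for $\mathsf{OS4}$, namely soundness by induction on proofs and completeness via the Lindenbaum algebra, with the derivations of Figure~\ref{DistPseudoFN4} witnessing items~\ref{FN4AlgDef2} and~\ref{FN4AlgDef3} of Definition~\ref{FN4AlgDef}. One point to make explicit: in the $\vee$E and $\neg$I cases the subproof's parameter is not the parent's $\Box c$ but its meet with the boxed lines of the parent that get reiterated (e.g.\ $\Box r$ in Figure~\ref{DistPseudoFN4}); this meet is again of the form $\Box e$ because $\Box$ preserves finite meets, and the parent's context $\tilde\theta(\varphi)\wedge\Box c$ lies below it, which is what lets items~\ref{FN4AlgDef2} and~\ref{FN4AlgDef3} carry the subproof's inequality back to the parent.
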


We can use this algebraic semantics to show $\mathsf{FN4}$ is conservative over fundamental logic.

\begin{proposition}\label{FN4Conserve} For all $\varphi,\psi\in\mathcal{L}(\wedge,\vee,\neg)$, if $\varphi\vdash_{\mathsf{FN4}}\psi$, then $\varphi\vdash_{\mathsf{F}}\psi$.
\end{proposition}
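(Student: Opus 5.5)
The plan is to argue semantically, by contraposition. Suppose $\varphi\nvdash_\mathsf{F}\psi$. By completeness of fundamental logic with respect to fundamental lattices, there is a fundamental lattice $(L,\neg)$ and a valuation $\theta$ with $\tilde\theta(\varphi)\nleq\tilde\theta(\psi)$. I would expand $(L,\neg)$ to an $\mathbf{FN4}$-algebra by choosing a suitable $\Box$. Since $\varphi,\psi$ contain no $\Box$, their interpretations are unchanged by the expansion. Soundness of $\mathsf{FN4}$ with respect to $\mathbb{FN}4$ then gives $\varphi\nvdash_\mathsf{FN4}\psi$.

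The natural candidate is the ``trivial'' necessity operator:
\[\Box 1=1 \quad\text{and}\quad \Box a=0 \text{ for } a\neq 1.\]
I then check the conditions of Definition~\ref{FN4AlgDef} in turn.

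Condition~\ref{FN4AlgDef1}. For $\Box(a\wedge b)=\Box a\wedge\Box b$, note that $a\wedge b=1$ iff $a=b=1$. The inequalities $\Box a\leq a$ and $\Box a\leq\Box\Box a$ are immediate in both cases $a=1$ and $a\neq1$.

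Conditions~\ref{FN4AlgDef2} and~\ref{FN4AlgDef3}. Split on whether $c=1$.
\begin{itemize}
\item If $c\neq 1$, then $\Box c=0$. Both sides of condition~\ref{FN4AlgDef2} collapse to $0$, and condition~\ref{FN4AlgDef3} reads $0\leq\neg a$, which is trivial.
\item If $c=1$, then $\Box c=1$. Condition~\ref{FN4AlgDef2} becomes $a\vee b\leq a\vee b$, and condition~\ref{FN4AlgDef3} becomes: $a\leq\neg b$ implies $b\leq\neg a$. This is exactly dual self-adjointness.
\end{itemize}
So no distributivity of $L$ is needed. The only place requiring care is the $c=1$ case, which reduces precisely to the axioms of a fundamental lattice.

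An alternative, if one wants $\Box$ closer to the identity, is $\Box a=a$. That choice fails, because condition~\ref{FN4AlgDef2} would force distributivity. This is why I commit to the trivial $\Box$, which I take to be the main point of the proof. I expect no real obstacle beyond confirming that the argument uses only soundness of $\mathsf{FN4}$ (the stated proposition) and completeness of $\mathsf{F}$ with respect to fundamental lattices. Both of these are already available in the paper.
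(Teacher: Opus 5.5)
Your proposal is correct and matches the paper's proof: the paper also expands a fundamental-lattice countermodel with the trivial operator ($\Box 1=1$, $\Box a=0$ otherwise) and then appeals to soundness of $\mathsf{FN4}$ with respect to $\mathbf{FN4}$-algebras. Your case split on whether $c=1$ is the verification that the paper leaves as ``easy to check.''
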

\begin{proof} If $\varphi\nvdash_{\mathsf{F}}\psi$, then by the completeness of $\mathsf{F}$ with respect to the class of fundamental lattices, there is a fundamental lattice $(L,\neg)$ and valuation $\theta$ for $(L,\neg)$ such that $\tilde{\theta}(\varphi)\not\leq \tilde{\theta}(\psi)$. Expand $(L,\neg)$ with the $\Box$ operation defined by 
\[\Box a= \begin{cases} 1&\mbox{if }a=1 \\ 0 &\mbox{ otherwise}.\end{cases}\]
Then it is easy to check that $(L,\neg,\Box)$ is an $\mathbf{FN4}$ algebra. Hence by the soundness of $\mathsf{FN4}$ with respect to the class of $\mathbf{FN4}$ algebras, we have $\varphi\nvdash_{\mathsf{FN4}}\psi$. \end{proof}

\noindent We can also use the algebraic semantics to prove that the GMT translation from $\mathsf{IPC}^{-}$ to $\mathsf{FN4}$ is full.

\begin{proposition}\label{FN4toInt} For any $\varphi,\psi\in\mathcal{L}(\wedge,\vee,\neg)$, if $\mu(\varphi)\vdash_{\mathsf{FN4}}\mu(\psi)$, then $\varphi\vdash_{\mathsf{IPC}^{-}}\psi$.
\end{proposition}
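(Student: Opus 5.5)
We argue by contraposition, using the algebraic completeness of $\mathsf{IPC}^{-}$ with respect to pseudocomplemented distributive lattices and the soundness of $\mathsf{FN4}$ with respect to $\mathbf{FN4}$-algebras. Suppose $\varphi\nvdash_{\mathsf{IPC}^-}\psi$. Then there is a pseudocomplemented distributive lattice $(L,\neg)$ and a valuation $\theta$ with $\tilde{\theta}(\varphi)\not\leq\tilde{\theta}(\psi)$. By the standard representation, we may take $(L,\neg)$ to be (a subalgebra of) a Heyting algebra. We could even take $L$ to be the Heyting algebra $Up(P)$ of upsets of a poset $P$, with $\neg U=\{x\mid \uparrow x\cap U=\varnothing\}$. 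The aim is to realize this $L$ as the algebra of $\Box$-fixpoints inside an $\mathbf{FN4}$-algebra $M$. We need the two identities $\Box e(a)=e(a)$ and $e(\neg a)=\Box\neg_M e(a)$, exactly as in Theorem~\ref{embthm}. Given these, the same induction as in the proof of fullness of $\mu$ into $\mathsf{OS4}$ shows that $\tilde{\theta'}(\mu(\chi))=e(\tilde{\theta}(\chi))$ for $\theta'=e\circ\theta$. Hence $\mu(\varphi)\nvdash_{\mathsf{FN4}}\mu(\psi)$.

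The most economical choice of $M$ is the classical McKinsey--Tarski one, $M=\wp(P)$ with Boolean complement as $\neg_M$ and $\Box U$ the interior, i.e.\ the largest upset contained in $U$. Let $e$ be the inclusion $Up(P)\hookrightarrow \wp(P)$. We first check that $M$ is an $\mathbf{FN4}$-algebra. A Boolean algebra is a fundamental lattice, and $\Box$ is an $\mathsf{S4}$ interior operator, so condition~\ref{FN4AlgDef1} holds. Condition~\ref{FN4AlgDef2} holds because $\wp(P)$ is distributive. Condition~\ref{FN4AlgDef3} holds because Boolean complement is a pseudocomplement: $a\wedge\Box c\leq\neg b$ iff $a\wedge\Box c\wedge b=0$ iff $b\wedge\Box c\leq \neg a$. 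Indeed, any $\mathsf{S4}$-algebra is an $\mathbf{FN4}$-algebra.

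Next we verify the two identities. Upsets are open, so $\Box U=U$ for $U\in Up(P)$. Moreover $\Box(P\setminus U)$ is the largest upset disjoint from $U$, and this is exactly the intuitionistic pseudocomplement of $U$. Finally $e$ preserves $\wedge$, $\vee$, $0$ and $1$, and it is injective.

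The only genuine step is therefore the reduction to the upset-algebra representation of a pseudocomplemented distributive lattice; the rest is routine. We can avoid needing an exact representation of an arbitrary $(L,\neg)$ by working directly from a Kripke countermodel. The $\{\wedge,\vee,\neg\}$-fragment is conservative under intuitionistic logic, so $\varphi\nvdash_{\mathsf{IPC}}\psi$ yields a Kripke model on a poset $P$. We then take $L=Up(P)$, which is a Heyting algebra and hence pseudocomplemented distributive. This is where care is needed: $\mathsf{IPC}^{-}$ as defined via Reiteration must be complete for Kripke semantics, and we expect the appendix treatment, or the standard conservativity of $\mathsf{IPC}$ over its $\to$-free fragment, to supply this.
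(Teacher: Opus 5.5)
Your argument is correct, but it takes a longer route than the paper. The paper never leaves the countermodel. It equips the pseudocomplemented distributive lattice $(L,\neg)$ itself with the identity map as $\Box$. Conditions~\ref{FN4AlgDef2} and~\ref{FN4AlgDef3} of Definition~\ref{FN4AlgDef} then follow immediately from distributivity and pseudocomplementation, and condition~\ref{FN4AlgDef1} is trivial. It then observes that $\tilde{\theta}(\mu(\chi))=\tilde{\theta}(\chi)$ by a trivial induction, so neither a representation theorem nor an embedding is needed.

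You instead rebuild the classical McKinsey--Tarski setting. You represent $L$ inside $Up(P)$, embed $Up(P)$ into the Boolean $\mathsf{S4}$-algebra $\wp(P)$ with the interior operator, and check that every $\mathsf{S4}$-algebra is an $\mathbf{FN4}$-algebra. The step you single out as the only genuine one goes through without any detour through Kripke models. Take $P$ to be the prime filters of $L$ ordered by inclusion. Then $a\mapsto\{F\mid a\in F\}$ sends $\neg a$ to the pseudocomplement of the image of $a$: if $\neg a\notin F$, then $f\wedge a\neq 0$ for all $f\in F$, so $F\cup\{a\}$ generates a proper filter, which extends to a prime filter.

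What your route buys is a stronger and conceptually informative fact. A failure of $\varphi\vdash_{\mathsf{IPC}^-}\psi$ is already witnessed in a classical $\mathsf{S4}$-algebra. So the proposition follows from the classical McKinsey--Tarski theorem together with the observation that every $\mathsf{FN4}$-consequence is an $\mathsf{S4}$-consequence.

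What the paper's identity-box trick buys is economy and uniformity. It transfers with no extra work to the conditional and first-order extensions (e.g., complete Heyting algebras with identity $\Box$ for $\mathsf{QFC4}$). It also transfers to the right-to-left directions of Theorems~\ref{CPCtoFN4v2} and~\ref{FM4*}. Your route needs a matching representation in each case. That is harmless for $\to$, but more delicate in the first-order case, where the embedding must preserve the infinite meets and joins interpreting the quantifiers.
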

\begin{proof} Suppose $\varphi\nvdash_{\mathsf{IPC}^{-}}\psi$. Then there is a pseudocomplemented distributive lattice $(L,\neg)$ and valuation $\theta$ on $(L,\neg)$ such that $\tilde{\theta}(\varphi)\not\leq_{L} \tilde{\theta}(\psi)$. Equip $L$ with a unary operation $\Box$ that is the identity operation. Then it is easy to see that $(L,\neg,\Box)$ is an $\mathbf{FN4}$-algebra, and a trivial induction yields
$\tilde{\theta}(\chi)=\tilde{\theta}(\mu(\chi))$. Thus, $\tilde{\theta}(\mu(\varphi))\not\leq_{L} \tilde{\theta}(\mu(\psi))$, so $\mu(\varphi)\nvdash_{\mathsf{FN4}}\mu(\psi)$ by the soundness of $\mathsf{FN4}$ with respect to $\mathbf{FN4}$-algebras.\end{proof}

\begin{remark} In the setting of classical logic, the logic of necessity is standardly taken to be $\mathsf{S5}$, not $\mathsf{S4}$. However, in the setting of fundamental logic, there are several systems one could mean by `$\mathsf{S5}$'. In fundamental logics of necessity \textit{and possibility} (\citealt{Holliday2024}), as in intuitionistic logics of necessity and possibility (\citealt{FischerServi1984}), possibility is not assumed to be the dual of necessity; one does not assume that $\diamondsuit p$ is equivalent to $\neg\Box\neg p$. One way of defining `$\mathsf{S5}$', algebraically, in such a fundamental setting is to introduce an additive $\diamondsuit$ operation satisfying $\diamondsuit \Box a\leq \Box a$ or $\diamondsuit a \leq \Box\diamondsuit a$ (which are not equivalent over fundamental modal algebras) or both. If one were to define $\mathsf{FN5}$ as the logic for such algebras, then Proposition~\ref{FN4toInt} would still hold, since in the proof, the algebra $(L,\neg,\Box,\diamondsuit)$ where both $\Box$ and $\diamondsuit$ are the identity operation would be an $\mathbf{FN5}$-algebra. However, there are other ways defining `$\mathsf{S5}$' that would break the proof of Proposition \ref{FN4toInt}, e.g., if one were to require that $\neg\Box\neg \Box a\leq \Box a$, since this becomes the intuitionistically unacceptable double negation elimination rule when $\Box$ is the identity, so $(L,\neg,\Box)$ may not be an algebra for the logic. We will return to related issues when we discuss classical logic in Section~\ref{ClassicalSect}.\end{remark}

The key to proving the converse of Proposition~\ref{FN4toInt} is the following.

\begin{proposition}\label{BoxedElementsProp1} For any $\mathbf{FN4}$-algebra $(L,\neg,\Box)$, the set $\{\Box a\mid a\in L\}$ of boxed elements becomes a distributive lattice under the meet and join operations of $L$ and pseudocomplemented under the operation $\neg_\Box$ defined by $\neg_\Box(a):=\Box\neg a$.
\end{proposition}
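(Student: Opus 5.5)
Write $L_\Box=\{\Box a\mid a\in L\}$. By the \textsf{S4} conditions in Definition~\ref{FN4AlgDef}.\ref{FN4AlgDef1}, $\Box$ is an interior operator: monotone (by meet-preservation), deflationary and idempotent. So $L_\Box$ is exactly the set of fixpoints $\{a\mid \Box a=a\}$, and it contains $1=\Box 1$.

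\emph{Closure under the operations.} Meets are immediate, since $\Box a\wedge\Box b=\Box(a\wedge b)$. For joins, monotonicity gives $\Box a=\Box\Box a\leq \Box(\Box a\vee\Box b)$, and likewise for $\Box b$, so $\Box a\vee\Box b\leq\Box(\Box a\vee\Box b)$; the reverse inequality holds because $\Box$ is deflationary. For the bottom element, note $\Box 0\leq 0$, so $0=\Box 0\in L_\Box$. Hence $L_\Box$ is a bounded sublattice of $L$. The operation $\neg_\Box a=\Box\neg a$ lands in $L_\Box$ by definition.

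\emph{Distributivity.} It suffices to prove $(x\vee y)\wedge z\leq (x\wedge z)\vee(y\wedge z)$ for $x,y,z\in L_\Box$. Since $z=\Box z$, this is an instance of Definition~\ref{FN4AlgDef}.\ref{FN4AlgDef2} with $c=z$. The reverse inequality holds in any lattice.

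\emph{Pseudocomplementation.} For $a,b\in L_\Box$ I will show $a\wedge b=0$ iff $a\leq\Box\neg b$.
\begin{itemize}
\item Right to left: $a\leq \Box\neg b\leq\neg b$, so $a\wedge b\leq b\wedge\neg b=0$ by semicomplementation.
\item Left to right: this is the main step and uses Definition~\ref{FN4AlgDef}.\ref{FN4AlgDef3}. Write $a=\Box a$. From $a\wedge b=0$ we get $b\wedge\Box a\leq 0\leq\neg 1$. Applying item~\ref{FN4AlgDef3} with $b$ in the first slot, $1$ in the second and $c=a$ gives $1\wedge\Box a\leq\neg b$, i.e.\ $a\leq\neg b$. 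Then $a=\Box a\leq\Box\neg b$ by monotonicity.
\end{itemize}
The trick is to absorb the boxed element $a$ into the context $\Box c$ and use $1$ as the other argument. Item~\ref{FN4AlgDef3} is exactly the algebraic form of $\Box$-Reiteration into a $\neg$I subproof, as illustrated in Figure~\ref{DistPseudoFN4}.

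Together these steps show that $(L_\Box,\neg_\Box)$ is a pseudocomplemented distributive lattice under the meet and join of $L$.
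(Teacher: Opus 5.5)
Your proposal is correct and follows essentially the same route as the paper: closure and distributivity come from Definition~\ref{FN4AlgDef}.\ref{FN4AlgDef1}--\ref{FN4AlgDef2} using $\Box z=z$. The key pseudocomplementation step instantiates Definition~\ref{FN4AlgDef}.\ref{FN4AlgDef3} with $1$ in the second slot and the boxed element as the context $\Box c$. The remaining differences are inessential: you spell out the lattice closure that the paper leaves as an exercise, and for the easy direction you use $\Box\neg b\leq\neg b$ with semicomplementation where the paper computes $\Box(\Box a\wedge\neg\Box a)=\Box 0=0$.
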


\begin{proof} That the boxed elements form a lattice under the meet and join operations of $L$ is a standard exercise using Definition~\ref{FN4AlgDef}.\ref{FN4AlgDef1}. That they form a \textit{distributive} lattice is immediate from Definition~\ref{FN4AlgDef}.\ref{FN4AlgDef2} together with the fact that $\Box a=\Box\Box a$ from Definition~\ref{FN4AlgDef}.\ref{FN4AlgDef1}. For pseudocomplementation under $\neg_\Box$, suppose $\Box a\wedge\Box c=0=\neg 1$. Then by Definition~\ref{FN4AlgDef}.\ref{FN4AlgDef3}, we have $1\wedge \Box c\leq \neg\Box a$ and hence $\Box c\leq \Box (1\wedge\Box c) \leq\Box\neg\Box a=\neg_\Box \Box a$. Note that we also have $\Box a\wedge \neg_\Box \Box a=\Box a\wedge \Box\neg\Box a= \Box\Box a \wedge\Box\neg\Box a=\Box(\Box a\wedge\neg\Box a)=\Box 0=0$.\end{proof}

\begin{corollary}\label{IPC-toFN4} For any $\varphi,\psi\in\mathcal{L}(\wedge,\vee,\neg)$, if $\varphi\vdash_{\mathsf{IPC}^{-}}\psi$, then $\mu(\varphi)\vdash_{\mathsf{FN4}}\mu(\psi)$.
\end{corollary}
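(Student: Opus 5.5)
We argue semantically, using the soundness and completeness of $\mathsf{FN4}$ with respect to $\mathbf{FN4}$-algebras together with the algebraic completeness of $\mathsf{IPC}^{-}$ with respect to pseudocomplemented distributive lattices. Suppose $\varphi\vdash_{\mathsf{IPC}^{-}}\psi$. To show $\mu(\varphi)\vdash_{\mathsf{FN4}}\mu(\psi)$, it suffices to fix an arbitrary $\mathbf{FN4}$-algebra $(L,\neg,\Box)$ and valuation $\theta$ on it and show $\tilde{\theta}(\mu(\varphi))\leq\tilde{\theta}(\mu(\psi))$.

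By Proposition~\ref{BoxedElementsProp1}, the set $L_\Box=\{\Box a\mid a\in L\}$, with the meet and join of $L$ and the operation $\neg_\Box a:=\Box\neg a$, is a pseudocomplemented distributive lattice. Its top is $\Box 1=1$, and its bottom is $0$, since $\Box 0\leq 0$. We then define a valuation $\theta_\Box$ on $L_\Box$ by $\theta_\Box(p):=\Box\theta(p)$.

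The key step is to show, by induction on $\chi\in\mathcal{L}(\wedge,\vee,\neg)$, that $\tilde{\theta}(\mu(\chi))=\tilde{\theta_\Box}(\chi)$, and in particular that $\tilde{\theta}(\mu(\chi))\in L_\Box$.
\begin{itemize}
\item For $\top$, both sides equal $1$.
\item For $p$, both sides equal $\Box\theta(p)$.
\item For $\wedge$ and $\vee$, the claim follows from the induction hypothesis, since $L_\Box$ is closed under the meet and join of $L$; this closure is the content of Proposition~\ref{BoxedElementsProp1} and uses $\Box\Box a=\Box a$.
\item For $\neg$, we have $\tilde{\theta}(\mu(\neg\chi))=\Box\neg\tilde{\theta}(\mu(\chi))=\neg_\Box\tilde{\theta_\Box}(\chi)=\tilde{\theta_\Box}(\neg\chi)$.
\end{itemize}

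Finally, since $\varphi\vdash_{\mathsf{IPC}^{-}}\psi$, soundness of $\mathsf{IPC}^{-}$ with respect to pseudocomplemented distributive lattices (see Appendix~\ref{AppendixIPC}) gives $\tilde{\theta_\Box}(\varphi)\leq\tilde{\theta_\Box}(\psi)$ in $L_\Box$. The order on $L_\Box$ is the restriction of the order on $L$, so $\tilde{\theta}(\mu(\varphi))\leq\tilde{\theta}(\mu(\psi))$ in $L$. Since $(L,\neg,\Box)$ and $\theta$ were arbitrary, completeness of $\mathsf{FN4}$ yields $\mu(\varphi)\vdash_{\mathsf{FN4}}\mu(\psi)$.

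The main point needing care is the join case. We need the join of two boxed elements, computed in $L$, to be boxed, i.e.\ $\Box a\vee\Box b\leq\Box(\Box a\vee\Box b)$; the induction hypothesis alone does not give this. This is exactly the closure of $L_\Box$ under the joins of $L$ asserted in Proposition~\ref{BoxedElementsProp1}, and we take it as given from there. Everything else is routine bookkeeping.
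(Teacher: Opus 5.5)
Your proof is correct and is essentially the paper's own argument, run directly rather than by contraposition. The paper likewise passes from an $\mathbf{FN4}$-algebra to the pseudocomplemented distributive lattice of boxed elements (Proposition~\ref{BoxedElementsProp1}), uses the valuation $p\mapsto\tilde{\theta}(\mu(p))=\Box\theta(p)$, and proves $\tilde{\theta'}(\chi)=\tilde{\theta}(\mu(\chi))$ by induction. The join closure you flag is immediate: $\Box a\leq\Box\Box a\leq\Box(\Box a\vee\Box b)$ and likewise for $b$, while the reverse inequality is an instance of $\Box x\leq x$.
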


\begin{proof} Suppose $\mu(\varphi)\nvdash_{\mathsf{FN4}}\mu(\psi)$, so there is an $\mathbf{FN4}$-algebra $(L,\neg,\Box)$ and valuation $\theta$ on $L$ such that $\tilde{\theta}(\mu(\varphi))\not\leq_L \tilde{\theta}(\mu(\psi))$. Let $D$ be the pseudocomplemented distributive lattice given by Proposition~\ref{BoxedElementsProp1}, and let $\theta'$ be the valuation on $D$ defined by $\theta'(p)= \tilde{\theta}(\mu(p))$. Then we can prove by induction that for any formula $\chi$, $\tilde{\theta'}(\chi)= \tilde{\theta}(\mu(\chi))$. Then $\tilde{\theta}(\mu(\varphi))\not\leq_L \tilde{\theta}(\mu(\psi))$ implies $\tilde{\theta'}(\varphi)\not\leq_D \tilde{\theta'}(\psi)$ and hence $\varphi\nvdash_{\mathsf{IPC}^{-}}\psi$.\end{proof}

In Appendix~\ref{ProofTransAppendix}, we give a more constructive, proof-theoretic proof of Corollary \ref{IPC-toFN4}, showing how proofs in $\mathbf{IPC}^-$ can be systematically translated into proofs in $\mathbf{FN4}$. 

Combining Proposition~\ref{FN4toInt} and Corollary~\ref{IPC-toFN4}, we have the desired full and faithful embedding.

\begin{theorem}\label{IntFN4FullFaithfulv1} For any $\varphi,\psi\in\mathcal{L}(\wedge,\vee,\neg)$, $\varphi\vdash_{\mathsf{IPC}^{-}}\psi$ iff $\mu(\varphi)\vdash_{\mathsf{FN4}}\mu(\psi)$.
\end{theorem}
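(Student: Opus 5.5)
The theorem follows by combining the two directions already established. For the right-to-left direction I will cite Proposition~\ref{FN4toInt}. If $\mu(\varphi)\vdash_{\mathsf{FN4}}\mu(\psi)$, take any pseudocomplemented distributive lattice with $\Box$ interpreted as the identity; this is an $\mathbf{FN4}$-algebra, and on it $\mu$ acts as the identity semantically. So any $\mathsf{IPC}^-$ countermodel would also be an $\mathsf{FN4}$ countermodel to $\mu(\varphi)\vdash\mu(\psi)$, and by completeness of $\mathsf{IPC}^-$ for pseudocomplemented distributive lattices we get $\varphi\vdash_{\mathsf{IPC}^-}\psi$.

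For the left-to-right direction I will cite Corollary~\ref{IPC-toFN4}, arguing contrapositively through Proposition~\ref{BoxedElementsProp1}. Given an $\mathbf{FN4}$-algebra $(L,\neg,\Box)$ and a valuation $\theta$ with $\tilde\theta(\mu(\varphi))\not\leq\tilde\theta(\mu(\psi))$, I pass to the algebra $D=\{\Box a\mid a\in L\}$ of boxed elements. By Proposition~\ref{BoxedElementsProp1}, $D$ is a pseudocomplemented distributive lattice under the inherited meet and join and the negation $\neg_\Box a=\Box\neg a$. I then define $\theta'(p)=\Box\theta(p)$ on $D$. An induction on $\chi$ shows $\tilde\theta'(\chi)=\tilde\theta(\mu(\chi))$. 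Here the clause $\mu(\neg\chi)=\Box\neg\mu(\chi)$ matches $\neg_\Box$ exactly, and the $\wedge$ and $\vee$ clauses work because $D$ is closed under $L$'s meets and joins. The inequality therefore fails already in $D$, so soundness of $\mathsf{IPC}^-$ gives $\varphi\nvdash_{\mathsf{IPC}^-}\psi$.

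The only point needing real verification is that $D$ is closed under joins and that distributivity and pseudocomplementation hold there. This is the content of Proposition~\ref{BoxedElementsProp1}.
\begin{itemize}
\item Closure under joins: $\Box a\vee\Box b\leq\Box(\Box a\vee\Box b)$ follows from the $\mathsf{4}$-type law $\Box a\leq\Box\Box a$ together with monotonicity of $\Box$.
\item Distributivity: this uses clause \ref{FN4AlgDef2} of Definition~\ref{FN4AlgDef}.
\item Pseudocomplementation: this uses clause \ref{FN4AlgDef3} of Definition~\ref{FN4AlgDef}.
\end{itemize}
With these in hand, the two directions combine to give the stated biconditional. A proof-theoretic alternative for the left-to-right direction would instead translate $\mathsf{IPC}^-$ derivations into $\mathsf{FN4}$ derivations. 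In that approach each use of Reiteration of a translated formula becomes a $\Box$-Reiteration, since every $\mu$-formula is $\mathsf{FN4}$-equivalent to a boxed formula.
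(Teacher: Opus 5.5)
Your proposal is correct and follows the paper's route: right-to-left is Proposition~\ref{FN4toInt} (a pseudocomplemented distributive lattice with identity $\Box$ is an $\mathbf{FN4}$-algebra on which $\mu$ acts as the identity), and left-to-right is Corollary~\ref{IPC-toFN4} via the boxed-element algebra of Proposition~\ref{BoxedElementsProp1}. One small omission: to see that $\Box a\vee\Box b$ is itself a boxed element you need the equality $\Box a\vee\Box b=\Box(\Box a\vee\Box b)$, so besides the $\mathsf{4}$-law and monotonicity you also need $\Box x\leq x$ for the reverse inequality.
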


\begin{remark}\label{WeakerBoxReit} Inspection of the proofs leading up to Theorem \ref{IntFN4FullFaithfulv1} shows that it also holds if we weaken $\Box$-Reiteration in such a way that $\Box\psi$ can only be $\Box$-reiterated into a subproof \textit{whose assumption is of the form $\Box\varphi$}. This corresponds to weaker forms of distributivity and pseudocomplementation: $(\Box a\vee \Box b)\wedge \Box c\leq (\Box a\wedge\Box c)\vee (\Box b\wedge\Box c)$, and if $\Box a\wedge\Box c\leq \neg b$, then $b\wedge\Box c\leq \neg\Box a$. These weaker forms are sufficient for the proof in Proposition~\ref{BoxedElementsProp1} that boxed elements form a pseudocomplemented distributive lattice.\end{remark}

\subsection{Conditional Extension}\label{CondExt}

In this section, we extend Theorem~\ref{IntFN4FullFaithfulv1} to full intuitionistic propositional logic including $\to$ and $\mathsf{FN4}$ extended with a conditional.\footnote{As discussed in \citealt[\S~6]{Holliday2023}, there are various ways to add a conditional to fundamental logic (also see \citealt{Holliday2024b}); here we add a weak conditional that is strong enough, given $\Box$-Reiteration, for the embedding of $\mathsf{IPC}$.} The introduction and elimination rules for $\to$ are shown in Figure~\ref{ToRules}. Let $\mathsf{IPC}$ and $\mathsf{FC4}$ be the extensions of $\mathsf{IPC}^-$ and $\mathsf{FN4}$, respectively, with these rules. As before, $\mathsf{IPC}$ allows Reiteration into subproofs for $\to$I, whereas $\mathsf{FC4}$ only allows $\Box$-Reiteration into subproofs for $\to$I. 

Let $\mathsf{FC4}^{\cong}$ be the extension of $\mathsf{FC4}$ with the congruence rules in Figure~\ref{CongRules}. These are not necessary for our translation results, but they allow $\mathsf{FC4}^{\cong}$ to prove that $\to$ is congruential (i.e., if $\varphi\dashv\vdash_{\mathsf{FC4}^{\cong}}\varphi'$ and $\psi\dashv\vdash_{\mathsf{FC4}^{\cong}}\psi'$, then $\varphi\to\psi\dashv\vdash_{\mathsf{FC4}^{\cong}}\varphi'\to\psi'$), which allows us to use algebraic-semantic arguments for $\mathsf{FC4}^{\cong}$; more involved proof-theoretic arguments for $\mathsf{FC4}$ are provided in Appendix~\ref{ProofTransAppendix}. Note that fundamental logic without $\to$ is already strong enough to prove that $\wedge$, $\vee$, and $\neg$ are congruential.

\begin{figure}[H]
\begin{center}
\begin{minipage}{2in}
\[\begin{nd}
\have [\vdots] {0} {\vdots}
\open
\hypo [i] {3}   {\hspace{-.24in}\to\;\,\varphi}
\have [\vdots] {4}   {\vdots}
\have [j] {6}   {\psi}
\close
\have [k]{7} {\varphi\to\psi} \ii{3-6}
\end{nd}
\]\end{minipage}
\begin{minipage}{2in}\[\begin{nd}
\have [\vdots] {} {\vdots}
\have [i] {0} {\varphi\to\psi}
\have [\vdots] {1} {\vdots}
\have [j] {3}   {\varphi}
\have [\vdots] {4}   {\vdots}
\have [k] {6}   {\psi} \ie{0,3}
\end{nd}
\]\end{minipage}\end{center}
\caption{Introduction and elimination rules for $\to$.}\label{ToRules}
\end{figure}

\begin{figure}[h]
\begin{center}
\begin{minipage}{2.75in}\[\begin{nd}
\have [\vdots] {} {\vdots}
\have [i] {0} {\varphi\to\psi}
\have [\vdots] {1} {\vdots}
\open
\hypo [j] {2}   {\varphi}
\have [\vdots] {3}   {\vdots}
\have [k] {4}   {\varphi'} 
\close
\open
\hypo [l] {5}   {\varphi'}
\have [\vdots] {6}   {\vdots}
\have [m] {7}   {\varphi}
\close
\have [n] {8} {\varphi'\to\psi} \cong{0,2-4,5-7}
\end{nd}
\]\end{minipage}
\begin{minipage}{2.75in}\[\begin{nd}
\have [\vdots] {} {\vdots}
\have [i] {0} {\varphi\to\psi}
\have [\vdots] {1} {\vdots}
\open
\hypo [j] {2}   {\psi}
\have [\vdots] {3}   {\vdots}
\have [k] {4}   {\psi'} 
\close
\open
\hypo [l] {5}   {\psi'}
\have [\vdots] {6}   {\vdots}
\have [m] {7}   {\psi}
\close
\have [n] {8} {\varphi\to\psi'}  \cong{0,2-4,5-7}
\end{nd}
\]\end{minipage}\end{center}
\caption{Congruence rules for $\to$ in $\mathsf{FC4}^{\cong}$. The subproofs must be proofs given $\varnothing$ (see Appendix~\ref{AppendixFN4}), so $\Box$-Reiteration cannot be used to bring formulas outside of these subproofs into the subproofs.}\label{CongRules}
\end{figure}

The class of algebras corresponding to $\mathsf{FC4}^{\cong}$ is the following.

\begin{definition} An \textit{$\mathbf{FC4}$-algebra} is an algebra $(L,\neg,\Box,\to)$ where $(L,\neg,\Box)$ is an $\mathbf{FN4}$-algebra and for all $a,b,c\in L$:
\begin{enumerate}
\item if $\Box c\wedge a\leq b$, then $\Box c\leq a\to b$; 
\item $a\wedge (a\to b)\leq b$.
\end{enumerate}
\end{definition}
\noindent The first principle corresponds to $\to$I, allowing $\Box$-Reiteration into $\to$I-subproofs, while the second corresponds to $\to$E. The soundness of $\mathsf{FC4}$ and the soundness and completeness of $\mathsf{FC4}^{\cong}$ with respect to the class $\mathbb{FC}4$ of $\mathbf{FC4}$-algebras are straightforward. 

\begin{proposition} For any $\varphi,\psi\in\mathcal{L}(\wedge,\vee,\neg,\Box,\to)$, if $\varphi\vdash_\mathsf{FC4}\psi$, then  $\varphi\vDash_{\mathbb{FC}4}\psi$; and $\varphi\vdash_{\mathsf{FC4}^{\cong}}\psi$ iff $\varphi\vDash_{\mathbb{FC}4}\psi$.
\end{proposition}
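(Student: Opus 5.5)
The proposition splits into three claims. First, $\mathsf{FC4}$ is sound with respect to $\mathbb{FC}4$. Second, $\mathsf{FC4}^{\cong}$ is sound, i.e.\ the congruence rules are valid. Third, $\mathsf{FC4}^{\cong}$ is complete. I would prove soundness first and completeness by a Lindenbaum--Tarski construction.

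\textbf{Soundness.} Since $\mathsf{FC4}$ proofs carry a set $R$ of $\Box$-reiterable formulas, induct on proofs given $R$ with a strengthened invariant. Fix an $\mathbf{FC4}$-algebra and a valuation $\theta$. For a proof with assumption $\alpha$, conclusion $\beta$, and reiterable set $R$, show
\[\tilde\theta(\alpha)\wedge \Box c\leq \tilde\theta(\beta),\]
where $\Box c$ is the meet of the values of the reiterable boxed formulas in $R$. This is a finite meet of boxed elements, hence itself of the form $\Box c$ by multiplicativity of $\Box$ and $\Box\Box a=\Box a$. More precisely, every line of the proof should lie above $\tilde\theta(\alpha)\wedge\Box c$. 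The rules are then checked one by one.
\begin{itemize}
\item $\wedge$, $\top$, $\vee$I and $\neg$E are immediate.
\item $\vee$E uses Definition~\ref{FN4AlgDef}.\ref{FN4AlgDef2}, since the reiterable context is shared into both subproofs.
\item $\neg$I uses Definition~\ref{FN4AlgDef}.\ref{FN4AlgDef3}.
\item $\Box$-Reiteration is trivial, since the reiterated formula is a conjunct of $\Box c$.
\item $\Box$I and $\Box$E need the $\Box$-subproof context to be tracked with $\Box$ applied. Here I use monotonicity of $\Box$ together with $\Box a\leq\Box\Box a$ and $\Box a\leq a$, as in standard $\mathsf{S4}$ soundness; the T and 4 rules are immediate.
\item $\to$I uses clause 1 of the $\mathbf{FC4}$-algebra definition, because only boxed formulas are reiterated into the subproof.
\item $\to$E uses clause 2.
\end{itemize}

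For $\mathsf{FC4}^{\cong}$, the congruence rules have subproofs given $\varnothing$. By the induction hypothesis those subproofs force $\tilde\theta(\varphi)=\tilde\theta(\varphi')$ outright, so $\tilde\theta(\varphi\to\psi)=\tilde\theta(\varphi'\to\psi)$ holds trivially in any algebra. This completes soundness.

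\textbf{Completeness for $\mathsf{FC4}^{\cong}$.}
\begin{enumerate}
\item Form the Lindenbaum algebra: quotient $\mathcal L(\wedge,\vee,\neg,\Box,\to)$ by interderivability $\dashv\vdash_{\mathsf{FC4}^{\cong}}$.
\item Check that this relation is a congruence for all connectives.
  \begin{itemize}
  \item For $\wedge,\vee,\neg$ this holds as in fundamental logic.
  \item For $\Box$: from $\varphi\vdash\varphi'$ derive $\Box\varphi\vdash\Box\varphi'$ by opening a $\Box$-subproof, applying $\Box$E, and then replaying the proof. This requires that the replayed proof does not itself use reiteration from outside, which holds for a closed proof.
  \item For $\to$: this is exactly what the congruence rules provide, which is why $\mathsf{FC4}^{\cong}$ rather than $\mathsf{FC4}$ is needed.
  \end{itemize}
\item Order the classes by $[\varphi]\leq[\psi]$ iff $\varphi\vdash\psi$ (a formula-to-formula consequence, with conjunction combining premises).
\item Verify the $\mathbf{FN4}$ and $\mathbf{FC4}$ axioms. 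Each one is witnessed by a derivation, e.g.\ the proofs in Figure~\ref{DistPseudoFN4} for Definition~\ref{FN4AlgDef}.\ref{FN4AlgDef2}--\ref{FN4AlgDef3}. For clause 1 of $\mathbf{FC4}$: from a derivation of $b$ from $\Box c\wedge a$, build $\Box c\vdash a\to b$ by opening a $\to$I subproof on $a$, $\Box$-reiterating $\Box c$, and replaying the derivation.
\item Take the canonical valuation $p\mapsto[p]$. Then $\tilde\theta(\chi)=[\chi]$, so $\varphi\nvdash\psi$ yields a countermodel.
\end{enumerate}

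\textbf{Main obstacle.} The delicate point is the soundness invariant for nested $\Box$-subproofs combined with $\Box$-Reiteration. I need to state precisely which formulas remain reiterable inside a $\Box$-subproof according to the Appendix's definition; presumably the set $R$ is reset or restricted to formulas obtained via $\Box$E. I then need to confirm that the value of the context stays a boxed element, so that clauses 2 and 3 of Definition~\ref{FN4AlgDef} and clause 1 of $\mathbf{FC4}$ apply. My first attempt will be to prove by induction the claim that any line lies above $\tilde\theta(\alpha)\wedge\bigwedge\tilde\theta[R]$, where every element of $R$ is boxed, and to handle $\Box$-subproofs by a separate lemma reducing them to the $\mathsf{FS4}$ case.
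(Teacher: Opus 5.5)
Your proposal is correct and is the standard argument the paper has in mind when it calls these facts straightforward. Soundness goes by induction on proofs given $R$, with the invariant that every line lies above the assumption met with the meet of the $\Box$-reiterable formulas; the $\varnothing$-restriction on the congruence subproofs is what makes those rules sound. Completeness goes via the Lindenbaum--Tarski algebra, where the congruence rules are exactly what make $\to$ well defined on equivalence classes.

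The worry in your last paragraph resolves without any reduction to $\mathsf{FS4}$. Inside a $\Box$-proof given $B$, the right invariant is that every line lies above $\bigwedge\{\tilde\theta(\varphi)\mid \Box\varphi\in B\}$, and this element need not be boxed. Clauses 2--3 of Definition~\ref{FN4AlgDef} and clause 1 of the $\mathbf{FC4}$-algebra definition only need the meet of the lines actually reiterated into a subproof to be boxed. That holds automatically, since only boxed formulas can be $\Box$-reiterated.
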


Again we can use the algebraic semantics to prove a conservativity result.

\begin{proposition}\label{FC4Conserve} For all $\varphi,\psi\in\mathcal{L}(\wedge,\vee,\neg)$, if $\varphi\vdash_{\mathsf{FC4}^{\cong}}\psi$, then $\varphi\vdash_{\mathsf{F}}\psi$.
\end{proposition}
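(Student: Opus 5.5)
Mirroring Proposition~\ref{FN4Conserve}, I will argue by contraposition through the algebraic semantics. Suppose $\varphi\nvdash_{\mathsf{F}}\psi$. Completeness of $\mathsf{F}$ for fundamental lattices gives a fundamental lattice $(L,\neg)$ and a valuation $\theta$ with $\tilde{\theta}(\varphi)\not\leq\tilde{\theta}(\psi)$. The goal is to expand $(L,\neg)$ to an $\mathbf{FC4}$-algebra $(L,\neg,\Box,\to)$. Since $\varphi,\psi$ contain no $\Box$ or $\to$, their values do not change under this expansion. Soundness of $\mathsf{FC4}^{\cong}$ with respect to $\mathbb{FC}4$ would then give $\varphi\nvdash_{\mathsf{FC4}^{\cong}}\psi$.

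For $\Box$ I will reuse the operation from Proposition~\ref{FN4Conserve}: $\Box 1=1$ and $\Box a=0$ for $a\neq 1$. That proof already shows $(L,\neg,\Box)$ is an $\mathbf{FN4}$-algebra. To recall why: the conditions in Definition~\ref{FN4AlgDef}.\ref{FN4AlgDef2}--\ref{FN4AlgDef3} collapse to trivialities or to dual self-adjointness when $\Box c\in\{0,1\}$. Multiplicativity also holds, because $a\wedge b=1$ iff $a=b=1$.

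For $\to$ I will take
\[a\to b=\begin{cases}1&\mbox{if }a\leq b\\ b&\mbox{otherwise}.\end{cases}\]
This choice has to meet the two $\mathbf{FC4}$ conditions.
\begin{enumerate}
\item Condition 1 only has content when $\Box c\neq 0$, that is, when $\Box c=1$. In that case the hypothesis $1\wedge a\leq b$ gives $a\leq b$, so $a\to b=1\geq\Box c$. When $\Box c=0$ the conclusion $\Box c\leq a\to b$ is trivial.
\item Condition 2 holds in both cases. If $a\leq b$, then $a\wedge(a\to b)=a\wedge 1=a\leq b$. If $a\not\leq b$, then $a\wedge(a\to b)=a\wedge b\leq b$.
\end{enumerate}
A simpler option is $a\to b:=b$ except when $a\leq b$, which is the same operation, so either description works.

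I expect no step to be a real obstacle. The one point that needs care is choosing $\to$ so that condition 1 holds with only the crude $\Box$ available. That is exactly why $a\to b$ must equal $1$ whenever $a\leq b$; the rest of the argument is routine checking.
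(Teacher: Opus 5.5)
Your proof is correct and follows the same route as the paper. The paper also expands the crude $\Box$ from Proposition~\ref{FN4Conserve} with a conditional equal to $1$ when $a\leq b$; it differs only in setting $a\to b=0$ rather than $b$ otherwise, and either value works, since in that case condition~2 only requires $a\wedge(a\to b)\leq b$. Your closing remark about a ``simpler option'' just restates the same operation and can be dropped.
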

\begin{proof} The proof is the same as that of Proposition \ref{FN4Conserve} except that we further expand $(L,\neg,\Box)$ to the algebra $(L,\neg, \Box,\to)$ where 
\[a\to b= \begin{cases} 1&\mbox{if }a\leq b \\ 0 &\mbox{ otherwise}.\end{cases}\]
Then it is easy to check that $(L,\neg,\Box, \to)$ is an $\mathbf{FC4}$ algebra. \end{proof}

The GMT translation extends to $\mathcal{L}(\wedge,\vee,\neg,\to)$ with the clause \[\mu(\varphi\to\psi)=\Box (\mu(\varphi)\to\mu(\psi)).\] 

\begin{proposition}\label{IntFN4FullFaithfulv2} For all $\varphi,\psi\in\mathcal{L}(\wedge,\vee,\neg,\to)$, if $\mu(\varphi)\vdash_{\mathsf{FC4}^{\cong}}\mu(\psi)$, then $\varphi\vdash_{\mathsf{IPC}}\psi$.
\end{proposition}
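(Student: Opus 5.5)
We argue by contraposition, exactly as in the proof of Proposition~\ref{FN4toInt}. Suppose $\varphi\nvdash_{\mathsf{IPC}}\psi$. By the standard algebraic completeness of $\mathsf{IPC}$ with respect to Heyting algebras, there is a Heyting algebra $(H,\to_H)$ and a valuation $\theta$ on $H$ with $\tilde{\theta}(\varphi)\not\leq_H\tilde{\theta}(\psi)$. We equip $H$ with the following structure:
\begin{itemize}
\item the pseudocomplement $\neg a:=a\to_H 0$;
\item $\Box$ taken to be the identity operation;
\item the conditional taken to be the Heyting implication $\to_H$.
\end{itemize}
The plan is to check that $(H,\neg,\Box,\to_H)$ is an $\mathbf{FC4}$-algebra. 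Then we observe that, since $\Box$ is the identity, a trivial induction gives $\tilde{\theta}(\mu(\chi))=\tilde{\theta}(\chi)$ for every $\chi\in\mathcal{L}(\wedge,\vee,\neg,\to)$. For the new clause this reads $\tilde{\theta}(\Box(\mu(\varphi_1)\to\mu(\varphi_2)))=\tilde{\theta}(\mu(\varphi_1))\to_H\tilde{\theta}(\mu(\varphi_2))=\tilde{\theta}(\varphi_1)\to_H\tilde{\theta}(\varphi_2)$. Hence $\tilde{\theta}(\mu(\varphi))\not\leq\tilde{\theta}(\mu(\psi))$, and by the soundness half of the preceding proposition (soundness of $\mathsf{FC4}^{\cong}$ with respect to $\mathbb{FC}4$) we conclude $\mu(\varphi)\nvdash_{\mathsf{FC4}^{\cong}}\mu(\psi)$.

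The verification of the $\mathbf{FC4}$-algebra axioms proceeds as follows.
\begin{itemize}
\item $(H,\neg)$ is a pseudocomplemented distributive lattice, hence a fundamental lattice. This uses dual self-adjointness via $a\le\neg b\iff a\wedge b=0\iff b\le\neg a$, and semicomplementation.
\item Definition~\ref{FN4AlgDef}.\ref{FN4AlgDef1} is trivial for the identity map.
\item Item~\ref{FN4AlgDef2} is distributivity of $H$.
\item Item~\ref{FN4AlgDef3} holds because $a\wedge c\le\neg b$ iff $a\wedge b\wedge c=0$ iff $b\wedge c\le\neg a$.
\item The two conditional axioms are the Heyting adjunction: $c\wedge a\le b$ implies $c\le a\to_H b$, and modus ponens $a\wedge(a\to_H b)\le b$.
\end{itemize}

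No step here is a real obstacle. The only point requiring care is the bookkeeping: the valuation on the $\mathbf{FC4}$-algebra is the same $\theta$, and the soundness direction invoked is the one for $\mathsf{FC4}^{\cong}$ (which includes the congruence rules). That direction is already asserted in the proposition stating soundness and completeness of $\mathsf{FC4}^{\cong}$ with respect to $\mathbb{FC}4$.
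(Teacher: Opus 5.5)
Your proposal is correct and follows the same approach as the paper. The paper likewise takes a Heyting algebra countermodel, adds the identity $\Box$ (with the Heyting implication as $\to$), notes that this gives an $\mathbf{FC4}$-algebra in which $\tilde{\theta}(\mu(\chi))=\tilde{\theta}(\chi)$, and concludes by soundness, exactly as in Proposition~\ref{FN4toInt}.
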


\begin{proof} The proof is essentially the same as the proof of Proposition~\ref{FN4toInt} except that given $\varphi\nvdash_{\mathsf{IPC}}\psi$, we get a Heyting algebra $H$ and valuation $\theta$ on $H$ such that $\tilde{\theta}(\varphi)\not\leq_H \tilde{\theta}(\psi)$. The proof then continues as in the proof of Proposition~\ref{FN4toInt}.\end{proof}

The proof of the following is very similar to that of Proposition~\ref{BoxedElementsProp1}.

\begin{proposition}\label{BoxedElementsProp2} For any $\mathbf{FC4}$-algebra $(L,\neg,\Box,\to)$, the set $\{\Box a\mid a\in L\}$ becomes a Heyting algebra under the meet and join operations of $L$, the negation operation $\neg_\Box$ defined by $\neg_\Box(a):=\Box\neg a$, and the implication operation $\to_\Box$ defined by $a\to_\Box b:= \Box (a\to b)$.
\end{proposition}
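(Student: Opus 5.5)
By Proposition~\ref{BoxedElementsProp1}, the set $B=\{\Box a\mid a\in L\}$ is already a bounded distributive lattice under the meet and join of $L$, and it is pseudocomplemented by $\neg_\Box$. So the only thing left to prove is that $\to_\Box$ is a relative pseudocomplement on $B$. Concretely, for all $x,y,z\in B$ we need three facts: $\Box(x\to y)\in B$, which is immediate; $z\wedge x\leq y$ implies $z\leq \Box(x\to y)$; and $x\wedge\Box(x\to y)\leq y$. In a bounded lattice with such an operation, the usual Heyting facts, including distributivity, follow automatically. Once we have these, the Heyting pseudocomplement $x\to_\Box 0$ will in fact coincide with $\neg_\Box x$, but we do not need this for the statement.

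For the adjunction, take $z=\Box c$, $x=\Box a$, $y=\Box b$ with $\Box c\wedge\Box a\leq\Box b$. By the first $\mathbf{FC4}$ clause with $c$ replaced by $\Box c$, the inequality $\Box\Box c\wedge \Box a\leq \Box b$ gives $\Box\Box c\leq \Box a\to\Box b$. Since $\Box c\leq\Box\Box c$, we get $\Box c\leq \Box a\to\Box b$. Monotonicity of $\Box$ (from meet preservation) together with $\Box c\leq\Box\Box c$ then yields $\Box c\leq\Box\Box c\leq\Box(\Box a\to\Box b)$, as required.

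For modus ponens, observe that $\Box a\wedge\Box(\Box a\to\Box b)\leq \Box a\wedge(\Box a\to\Box b)$ because $\Box d\leq d$. The second $\mathbf{FC4}$ clause bounds the right-hand side by $\Box b$. So $x\wedge(x\to_\Box y)\leq y$.

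Both verifications are short; the only delicate point is the adjunction. There, the first $\mathbf{FC4}$ clause is stated for arbitrary $c$, and we must apply it with the boxed element $\Box c$ and then move $\Box$ outward using axiom~4 ($\Box c\leq\Box\Box c$). This mirrors the role of $\Box$-Reiteration in $\to$I. Finally, we confirm that the bounds $0=\Box 0$ and $1=\Box 1$ lie in $B$. The first holds because $\Box 0\leq 0$, and the second by normality, so $B$ is a bounded Heyting algebra.
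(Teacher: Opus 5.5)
Your argument is correct and is essentially the proof the paper has in mind: you reuse Proposition~\ref{BoxedElementsProp1} for the distributive pseudocomplemented lattice structure, obtain residuation for $\Box(x\to y)$ from the first $\mathbf{FC4}$ clause together with monotonicity of $\Box$ and $\Box c\leq\Box\Box c$, and obtain modus ponens from $\Box d\leq d$ and the second clause, which mirrors the pseudocomplementation argument given there. One small point is that the detour through $\Box\Box c$ is unnecessary, since the first clause already applies directly to $\Box c$; as written, your premise $\Box\Box c\wedge\Box a\leq\Box b$ also silently uses $\Box\Box c\leq\Box c$, though this does not affect correctness.
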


\begin{corollary}\label{IPCtoFC4Cong} For all $\varphi,\psi\in\mathcal{L}(\wedge,\vee,\neg,\to)$, if $\varphi\vdash_{\mathsf{IPC}}\psi$, then  $\mu(\varphi)\vdash_{\mathsf{FC4}^{\cong}}\mu(\psi)$.
\end{corollary}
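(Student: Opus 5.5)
We argue semantically, mirroring Corollary~\ref{IPC-toFN4}, using the completeness of $\mathsf{FC4}^{\cong}$ with respect to the class $\mathbb{FC}4$ of $\mathbf{FC4}$-algebras. Suppose $\mu(\varphi)\nvdash_{\mathsf{FC4}^{\cong}}\mu(\psi)$. By completeness there is an $\mathbf{FC4}$-algebra $(L,\neg,\Box,\to)$ and a valuation $\theta$ on $L$ with $\tilde{\theta}(\mu(\varphi))\not\leq_L\tilde{\theta}(\mu(\psi))$. Let $H$ be the Heyting algebra of boxed elements given by Proposition~\ref{BoxedElementsProp2}, with operations $\wedge,\vee$ inherited from $L$, $\neg_\Box a=\Box\neg a$, and $a\to_\Box b=\Box(a\to b)$. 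We take $\theta'(p):=\tilde{\theta}(\mu(p))=\Box\theta(p)\in H$ and $\top$ interpreted as $1=\Box 1\in H$.

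The key step is an induction on $\chi\in\mathcal{L}(\wedge,\vee,\neg,\to)$ establishing $\tilde{\theta'}(\chi)=\tilde{\theta}(\mu(\chi))$, with the value computed in $H$ on the left and in $L$ on the right. Along the way we must check that $\tilde{\theta}(\mu(\chi))$ actually lies in $H$. This holds because every clause of $\mu$ yields either a box, a meet of boxed elements, or a join of boxed elements, and $H$ is closed under meets and joins of $L$ by Proposition~\ref{BoxedElementsProp2}.

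The individual cases are then immediate.
\begin{itemize}
\item For $p$ and $\top$, the identity holds by the choice of $\theta'$.
\item For $\wedge$ and $\vee$, the operations of $H$ are those of $L$.
\item For negation, $\tilde{\theta}(\mu(\neg\chi))=\Box\neg\tilde{\theta}(\mu(\chi))=\neg_\Box\tilde{\theta'}(\chi)$.
\item For implication, $\tilde{\theta}(\mu(\chi_1\to\chi_2))=\Box(\tilde{\theta}(\mu(\chi_1))\to\tilde{\theta}(\mu(\chi_2)))=\tilde{\theta'}(\chi_1)\to_\Box\tilde{\theta'}(\chi_2)$.
\end{itemize}
Since the order on $H$ is the restriction of the order on $L$, we obtain $\tilde{\theta'}(\varphi)\not\leq_H\tilde{\theta'}(\psi)$. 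By the soundness of $\mathsf{IPC}$ with respect to Heyting algebras, it follows that $\varphi\nvdash_{\mathsf{IPC}}\psi$.

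The only substantive ingredient is Proposition~\ref{BoxedElementsProp2}, which we may assume. If it needed checking, the main obstacle would be showing that $\Box(a\to b)$ is the relative pseudocomplement in $H$. For $\Box c\wedge a\leq b$ with $a,b,c$ boxed, the first $\mathbf{FC4}$ condition gives $\Box c\leq a\to b$, and then $\Box c=\Box\Box c\leq\Box(a\to b)$. Conversely, $a\wedge\Box(a\to b)\leq a\wedge(a\to b)\leq b$ by $\Box x\leq x$ and the second $\mathbf{FC4}$ condition.
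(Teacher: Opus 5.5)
Your proof is correct and follows essentially the paper's route, which leaves this corollary implicit as the analogue of Corollary~\ref{IPC-toFN4}: completeness of $\mathsf{FC4}^{\cong}$ with respect to $\mathbf{FC4}$-algebras, the Heyting algebra of boxed elements from Proposition~\ref{BoxedElementsProp2}, and the inductive identity $\tilde{\theta'}(\chi)=\tilde{\theta}(\mu(\chi))$. Your check that $\Box(a\to b)$ is the relative pseudocomplement on boxed elements is also correct. It fills in the part the paper only describes as ``very similar'' to Proposition~\ref{BoxedElementsProp1}.
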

In Appendix~\ref{ProofTransAppendix}, we prove the analogue of Corollary~\ref{IPCtoFC4Cong} for $\mathsf{FC4}$.

\subsection{First-Order Extension}

Let us now extend the foregoing results to first-order logic. For intuitionistic first-order logic, $\mathsf{IQC}$, we add to $\mathsf{IPC}$ introduction and elimination rules for the quantifiers; but first we must make the free variable constraints in $\forall$I and $\exists$E in Figure~\ref{QuantRules} sensitive to the set $R$ of reiterable formulas, as explained in Appendix~\ref{AppendixIQC}. Let $\mathsf{QFC4}$ be the first-order extension of $\mathsf{FC4}$ with the same introduction and elimination rules as for $\mathsf{IQC}$, also allowing these rules to apply within $\Box$-subproofs, where for $\forall$I we must add a restriction on free occurrences of the quantified variable as we did for $\mathsf{QOS4}$; see Appendix~\ref{AppendixFN4} for a rigorous definition.

We now obtain a full and faithful embedding of $\mathsf{IQC}$ into $\mathsf{QFC4}$. 

\begin{theorem}\label{QFC4Thm} For any $\varphi,\psi\in\mathcal{L}(\wedge,\vee,\neg,\to,\forall,\exists)$, $\varphi \vdash_{\mathsf{IQC}}\psi$ iff $\mu(\varphi)\vdash_{\mathsf{QFC4}} \mu(\psi)$.
\end{theorem}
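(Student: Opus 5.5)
The equivalence splits into two directions, and I will handle each by reducing to an argument already given for the propositional case.

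For the left-to-right direction (faithfulness), I would argue proof-theoretically by induction on $\mathsf{IQC}$-proofs, in the style of the translation in Appendix~\ref{ProofTransAppendix}. The plan is to convert each $\mathsf{IQC}$ proof into a $\mathsf{QFC4}$ proof of the translated formulas. The key auxiliary fact is that $\mu(\chi)\dashv\vdash_{\mathsf{QFC4}}\Box\mu(\chi)$ for every first-order $\chi$, proved by induction on $\chi$ as in the faithfulness lemma for $\mathsf{QOS4}$.

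The induction has the following cases.
\begin{itemize}
\item The $\forall$ and $\to$ cases of the auxiliary fact are immediate, since their translations begin with $\Box$.
\item The $\vee$ case uses $\Box$-Reiteration and distributivity into $\vee$E subproofs.
\item The $\exists$ case uses $\mu(\chi(u))\vdash\Box\mu(\chi(u))\vdash\Box\exists x\mu(\chi(x))$, followed by $\exists$E.
\end{itemize}
Given this fact, every formula in a translated proof is equivalent to a boxed formula. Each use of Reiteration in the $\mathsf{IQC}$ proof then becomes a $\Box$-Reiteration, after first boxing the reiterated formula via the \textsf{4} Rule.

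The quantifier rules translate as follows.
\begin{itemize}
\item $\forall$I: from a $\mathsf{QFC4}$ derivation of $\mu(\varphi(x))$, I would open a $\Box$-subproof, bring in the (boxed) reiterable context by $\Box$E, derive $\mu(\varphi(x))$, apply $\forall$I inside, and then $\Box$I. The variable restriction in $\mathsf{IQC}$ (sensitive to $R$) matches the $\mathsf{QFC4}$ restriction on boxed formulas entering the $\Box$-subproof.
\item $\forall$E: this uses the \textsf{T} Rule followed by $\forall$E.
\item $\exists$I and $\exists$E: these are direct, with Reiteration replaced by $\Box$-Reiteration.
\end{itemize}
I expect this bookkeeping of free-variable side conditions through the sets $R$ and $B$ to be the main obstacle. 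An alternative that avoids it is algebraic: an analogue of Proposition~\ref{BoxedElementsProp2} for the Lindenbaum algebra of $\mathsf{QFC4}$. The difficulty there is that $\to$ need not be congruential without the rules of $\mathsf{FC4}^{\cong}$, so the proof-theoretic route is safer.

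For the right-to-left direction (fullness), I would follow Proposition~\ref{FN4toInt}. Suppose $\varphi\nvdash_{\mathsf{IQC}}\psi$. Then by completeness of $\mathsf{IQC}$ for complete Heyting algebras (e.g.\ the MacNeille completion of its Lindenbaum algebra, or Kripke models), there is a complete Heyting algebra $H$ and an interpretation with $\tilde\theta(\varphi)\not\le\tilde\theta(\psi)$. Expand $H$ with $\Box=\mathrm{id}$, $\neg$ the pseudocomplement, and $\to$ the Heyting implication. This is an $\mathbf{FC4}$-algebra, and $\forall,\exists$ are interpreted by infinite meets and joins.

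Under this expansion $\mu$ acts as the identity semantically, because $\Box$ is the identity. So it remains to confirm that $\mathsf{QFC4}$ is sound for such complete algebraic models. This is a routine induction on proofs: in particular, the free-variable restrictions for $\forall$I inside $\Box$-subproofs are sound because $\Box$ is trivially completely multiplicative here. It then follows that $\mu(\varphi)\nvdash_{\mathsf{QFC4}}\mu(\psi)$.
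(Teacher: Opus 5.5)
Your proposal follows the paper's proof: fullness comes from a complete Heyting algebra expanded with the identity $\Box$, and faithfulness comes from an inductive translation of $\mathsf{IQC}$ proofs in which $\forall$I, $\neg$I, and $\to$I are carried out inside $\Box$-subproofs. Two small points of comparison. First, the infinitary law that actually matters for soundness is $(\bigvee_i a_i)\wedge\Box c\leq\bigvee_i(a_i\wedge\Box c)$, needed for $\exists$E with $\Box$-reiterated formulas; it holds automatically in a complete Heyting algebra, whereas complete multiplicativity of $\Box$ plays no role. Second, the paper manages the bookkeeping you flag by first passing to $\underline{\mathsf{IQC}}$ (one reiterable per subproof) and using the fully boxed $\mu^+$ (Lemma~\ref{PlusLemQFC4}, whose key $\exists$ step is exactly your boxing argument), together with Lemma~\ref{ProofToBoxproof} and a vacuous $\exists$I/$\exists$E trick for fusing proofs past $\forall$I.
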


\begin{proof} For the right-to-left direction, if $\varphi \nvdash_{\mathsf{IQC}}\psi$, then by a standard argument (see, e.g., \citealt{Scott2008}) there is a \textit{complete} Heyting algebra $H$ and predicate valuation $\theta$ on $H$ such that $\tilde{\theta}(\varphi)\not\leq_H \tilde{\theta}(\psi)$, interpreting $\forall$ and $\exists$ using meets and joins in the standard way. Equipping $H$ with a unary operation $\Box$ that is the identity operation, it is easy to see that $(H,\Box)$ is a complete $\mathbf{FC4}$-algebra, appropriate for interpreting first-order fundamental logic as in \citealt[\S5]{Holliday2023}, and satisfying the infinitary law needed for the soundness of the $\exists$E rule when $\Box$-Reiteration brings boxed formulas into the $\exists$E subproofs:
\[\big(\underset{i\in I}{\bigvee}a_i\big)\wedge\Box c\leq \underset{i\in I}{\bigvee}(a_i\wedge \Box c).\]
A trivial induction yields $\tilde{\theta}(\mu(\chi))=\tilde{\theta}(\chi)$ for all formulas $\chi$. Thus, $\tilde{\theta}(\mu(\varphi))\not\leq_{H}\tilde{\theta}(\mu(\psi))$, so $\mu(\varphi)\nvdash_{\mathsf{QFC4}}\mu(\psi)$ by the soundness of $\mathsf{QFC4}$ with respect to complete $\mathbf{FC4}$-algebras satisfying the displayed infinitary law.

The left-to-right direction is proved as Corollary~\ref{IQCtoFun} in Appendix~\ref{AppendixIQC}.\end{proof}

\subsection{Interpreting Classical Logic}\label{ClassicalSect}

In this section, we contemplate the relation of the foregoing results to the interpretation of \textit{classical} logic into fundamental logic with necessity. Recall the G\"{o}del-Gentzen double negation translation of classical logic into intuitionistic logic (\citealt{Godel1933b}, \citealt{Gentzen1936}), here restricted to the $\{\wedge,\vee,\neg\}$ fragment:
\begin{itemize}
\item $g(\top)=\top$ and  $g(p)=\neg\neg p$;
\item $g(\neg\varphi)=\neg g(\varphi)$;
\item $g(\varphi\wedge\psi)=(g(\varphi)\wedge g(\psi))$;
\item $g(\varphi\vee\psi)=\neg(\neg g(\varphi)\wedge\neg g(\psi))$.
\end{itemize}

\begin{theorem}[\citealt{Godel1933b}, \citealt{Gentzen1936}]\label{DoubleNegThm} For any $\varphi,\psi\in\mathcal{L}(\wedge,\vee,\neg)$, $\varphi\vdash_\mathsf{CPC}\psi$ iff $g(\varphi)\vdash_{\mathsf{IPC}^-}g(\psi)$.
\end{theorem}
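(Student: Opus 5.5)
This is the classical Gödel–Gentzen result, cited here only for the $\{\wedge,\vee,\neg\}$ fragment. I would prove it algebraically, using completeness of $\mathsf{IPC}^-$ for pseudocomplemented distributive lattices and of $\mathsf{CPC}$ for Boolean algebras.

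\emph{Left to right.} Suppose $g(\varphi)\nvdash_{\mathsf{IPC}^-}g(\psi)$. Then there is a pseudocomplemented distributive lattice $(L,\neg)$ and a valuation $\theta$ with $\tilde\theta(g(\varphi))\not\leq\tilde\theta(g(\psi))$. Let $B=\{a\in L\mid \neg\neg a=a\}$ be the regular elements (Glivenko). Standard facts:
\begin{itemize}
\item $B$ is closed under $\wedge$ and $\neg$.
\item With join $a\sqcup b:=\neg(\neg a\wedge\neg b)$, the structure $(B,\wedge,\sqcup,\neg,0,1)$ is a Boolean algebra.
\end{itemize}
The delicate point is complementation, $a\sqcup\neg a=1$. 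This follows from $\neg a\wedge\neg\neg a=0$. Distributivity of $B$ follows from the fact that $\neg\neg$ is a nucleus on a distributive pseudocomplemented lattice, i.e.\ it preserves finite meets. This is the main technical obstacle. I would establish $\neg\neg(a\wedge b)=\neg\neg a\wedge\neg\neg b$ using pseudocomplementation: $x\wedge a\wedge b=0$ iff $x\wedge a\leq\neg b$.

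Define the Boolean valuation $\theta'(p)=\neg\neg\theta(p)\in B$. By induction on $\chi$, $\tilde{\theta'}(\chi)=\tilde\theta(g(\chi))$; each clause of $g$ matches an operation of $B$ exactly. Hence $\tilde{\theta'}(\varphi)\not\leq\tilde{\theta'}(\psi)$ in a Boolean algebra, so $\varphi\nvdash_\mathsf{CPC}\psi$.

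\emph{Right to left.} Suppose $g(\varphi)\vdash_{\mathsf{IPC}^-}g(\psi)$. Every Boolean algebra is a pseudocomplemented distributive lattice, so the inequality $\tilde\theta(g(\varphi))\leq\tilde\theta(g(\psi))$ holds in every Boolean algebra. There $g(\chi)$ and $\chi$ receive the same value, since $\neg\neg p=p$ and $\neg(\neg x\wedge\neg y)=x\vee y$. Hence $\varphi\vDash\psi$ classically, and $\varphi\vdash_\mathsf{CPC}\psi$ by completeness.

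Alternatively, the forward direction can be done proof-theoretically by induction on $\mathsf{CPC}$ derivations. One checks the following:
\begin{itemize}
\item Each $g(\chi)$ is $\mathsf{IPC}^-$-equivalent to $\neg\neg g(\chi)$, by induction on $\chi$.
\item RAA then becomes derivable for translated formulas, via $\neg$I followed by double negation elimination on stable formulas.
\item $\vee$I and $\vee$E for the defined disjunction are derivable with Reiteration.
\end{itemize}
I would present the algebraic route as the main plan, since the preceding sections already work with the algebraic semantics.
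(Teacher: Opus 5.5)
The paper gives no proof of this statement; it is quoted from G\"odel and Gentzen. Your algebraic proof is correct, and it uses the same technique the paper applies in the proof of Theorem~\ref{CPCtoFN4v2}. There the $\neg_\Box\neg_\Box$-fixpoints of a pseudocomplemented distributive lattice are turned into a Boolean algebra with join $\neg_\Box(\neg_\Box a\wedge\neg_\Box b)$, and the translation is matched to it clause by clause.

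The one link you leave implicit is in the distributivity step. Your nucleus argument computes joins in $B$ as $\neg\neg(a\vee b)$, but you defined $a\sqcup b:=\neg(\neg a\wedge\neg b)$. These coincide because $\neg(a\vee b)=\neg a\wedge\neg b$ holds in any pseudocomplemented lattice: if $x=\neg a\wedge\neg b$, then $a,b\leq\neg x$, so $(a\vee b)\wedge x=0$. With that in hand, for regular $a,b,c$ you get $a\wedge(b\sqcup c)=\neg\neg a\wedge\neg\neg(b\vee c)=\neg\neg((a\wedge b)\vee(a\wedge c))=(a\wedge b)\sqcup(a\wedge c)$.

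Distributivity of $L$ is not actually needed here. From $a\wedge\neg(a\wedge b)\leq\neg b$ and $a\wedge\neg(a\wedge c)\leq\neg c$ you obtain $a\wedge(b\sqcup c)\leq(a\wedge b)\sqcup(a\wedge c)$ directly, as in the Glivenko--Frink theorem.

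Your right-to-left direction is fine as sketched, and so is the proof-theoretic alternative.
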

\noindent Combining this with Theorem~\ref{IntFN4FullFaithfulv1}, we obtain a full and faithful translation of classical logic into $\mathsf{FN4}$.

\begin{corollary}\label{CPCtoFN4v1} For any $\varphi,\psi\in \mathcal{L}(\wedge,\vee,\neg)$, $\varphi\vdash_{\mathsf{CPC}}\psi$ iff $\mu(g(\varphi))\vdash_{\mathsf{FN4}}\mu(g(\psi))$.
\end{corollary}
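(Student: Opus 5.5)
This corollary is a composition of two biconditionals already available in the paper, so the plan is a two-step chain. First, Theorem~\ref{DoubleNegThm} gives, for any $\varphi,\psi\in\mathcal{L}(\wedge,\vee,\neg)$, that $\varphi\vdash_{\mathsf{CPC}}\psi$ iff $g(\varphi)\vdash_{\mathsf{IPC}^-}g(\psi)$. Second, Theorem~\ref{IntFN4FullFaithfulv1} gives, for any formulas $\varphi',\psi'\in\mathcal{L}(\wedge,\vee,\neg)$, that $\varphi'\vdash_{\mathsf{IPC}^-}\psi'$ iff $\mu(\varphi')\vdash_{\mathsf{FN4}}\mu(\psi')$. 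I will instantiate the second with $\varphi'=g(\varphi)$ and $\psi'=g(\psi)$ and chain the two equivalences.

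The only thing to check is that the instantiation is legitimate, i.e., that $g$ maps $\mathcal{L}(\wedge,\vee,\neg)$ into itself. I will verify this by a trivial induction on formulas. The base cases are $g(\top)=\top$ and $g(p)=\neg\neg p$. In the inductive cases, $g$ of a negation, conjunction, or disjunction is built from the $g$-images of the immediate subformulas using only $\neg$ and $\wedge$. So no $\to$ or $\Box$ is introduced, and $g(\varphi)$ and $g(\psi)$ lie in the domain of $\mu$ and of Theorem~\ref{IntFN4FullFaithfulv1}.

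With that in place, the two directions follow directly. Left to right: $\varphi\vdash_{\mathsf{CPC}}\psi$ yields $g(\varphi)\vdash_{\mathsf{IPC}^-}g(\psi)$, which yields $\mu(g(\varphi))\vdash_{\mathsf{FN4}}\mu(g(\psi))$. Right to left: the same chain read backwards, using the converse halves of both biconditionals (Proposition~\ref{FN4toInt} inside Theorem~\ref{IntFN4FullFaithfulv1}, and the fullness half of Theorem~\ref{DoubleNegThm}).

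I expect no real obstacle here. The one point worth stating explicitly is that the version of the Gödel–Gentzen theorem used is the one for the $\{\wedge,\vee,\neg\}$-fragment with $\mathsf{IPC}^-$ as target. That is exactly how Theorem~\ref{DoubleNegThm} is stated, so its target logic matches the source logic of Theorem~\ref{IntFN4FullFaithfulv1}.
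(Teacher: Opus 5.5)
Your proposal is correct and matches the paper's own argument: the corollary is obtained by composing Theorem~\ref{DoubleNegThm} with Theorem~\ref{IntFN4FullFaithfulv1}. Your explicit check that $g$ maps $\mathcal{L}(\wedge,\vee,\neg)$ into itself, so that the composition is well defined, is a sensible detail that the paper leaves implicit.
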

\noindent The extension of Theorem~\ref{DoubleNegThm} to first-order logic (with $g(\forall v\varphi)=\forall vg(\varphi)$ and $g(\exists v\varphi)=\neg\forall v\neg g(\varphi)$) combined with Theorem~\ref{QFC4Thm} yields a full and faithful embedding of classical first-order logic into $\mathsf{QFC4}$.\footnote{For this embedding we do not need conditionals in the fundamental language, so we can give a full and faithful embedding of classical first-order logic into $\mathsf{QFN4}$, defined in the obvious way.}

A natural question is whether the order of the G\"{o}del-Gentzen and GMT translations in Corollary~\ref{CPCtoFN4v1} can be reversed, so that we first apply GMT and then G\"{o}del-Gentzen. This requires extending the G\"{o}del-Gentzen translation to $\mathcal{L}(\wedge,\vee,\neg,\Box)$. If we do so with the clause $g(\Box\varphi)=\Box g(\varphi)$, then we cannot reverse the order of the translations; for example, $\neg\neg p\vdash_{\mathsf{CPC}}p$ while $g(\mu(\neg\neg p)) =\Box\neg\Box\neg\Box\neg\neg p\not\vdash_{\mathsf{FN4}} \Box\neg\neg p = g(\mu(p))$. Instead, we will use the extension $g^*$ of the G\"{o}del-Gentzen translation to the modal language defined just like $g$ but with the following $\Box$ clause, where $\diamondsuit \varphi :=\neg\Box\neg\varphi$
\[g^*(\Box\varphi)=\Box \diamondsuit\Box g^*(\varphi).\]
We will also use a variant $\mu^+$ of the GMT translation $\mu$ that changes the $\wedge$ and $\vee$ clauses to  
\begin{itemize}
\item $\mu^+(\phi \me \psi) = \Box(\mu^+(\phi) \me \mu^+(\psi))$ and 
\item $\mu^+(\phi \jo \psi) = \Box(\mu^+(\phi) \jo \mu^+(\psi))$,
\end{itemize} respectively (see Definition~\ref{mu+Def} for the full definition). It is easy to see that for all formulas $\varphi\in\mathcal{L}(\wedge,\vee,\neg)$, $\mu(\varphi)\dashv \vdash_{\mathsf{FN4}}\mu^+(\varphi)$. Yet it is not the case that for all $\varphi\in\mathcal{L}(\wedge,\vee,\neg)$, $g^*(\mu(\varphi))\dashv \vdash_{\mathsf{FN4}}g^*(\mu^+(\varphi))$. A reason we must use $\mu^+$ is that while $\top\vdash_{\mathsf{CPC}} p\vee\neg p$, even classical $\mathsf{S4}$ cannot carry out the translated inference under $g^*\circ \mu$: \[g^*(\mu(\top))= \top  \nvdash_{\mathsf{S4}} \neg (\neg \Box \diamondsuit\Box \neg\neg p \wedge \neg \Box \diamondsuit\Box \neg \Box \diamondsuit\Box\neg\neg p)= g^*(\mu(p\vee\neg p)),\] as shown by a three-world Kripke model with $v_2 \shortdashleftarrow w\shortdashrightarrow v_1$, neither $v_1 \shortdashrightarrow v_2$ nor $v_2 \shortdashrightarrow v_1$, and $p$ true only at $v_1$. By contrast, we do have $g^*(\mu^+(\top))\vdash_{\mathsf{FN4}} g^*(\mu^+(p\vee\neg p))$. Indeed, we have the following general result.

\begin{theorem}\label{CPCtoFN4v2} For any $\varphi,\psi\in\mathcal{L}(\wedge,\vee,\neg)$, $\varphi\vdash_{\mathsf{CPC}}\psi$ iff $g^*(\mu^+(\varphi))\vdash_{\mathsf{FN4}}g^*(\mu^+(\psi))$.\end{theorem}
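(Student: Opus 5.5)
The plan is algebraic and uses the soundness and completeness of $\mathsf{FN4}$ with respect to $\mathbf{FN4}$-algebras, with one direction for each half of the biconditional.

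\emph{Right-to-left (fullness).} Suppose $\varphi\nvdash_{\mathsf{CPC}}\psi$. Take a Boolean algebra $B$ with a valuation $\theta$ such that $\tilde\theta(\varphi)\not\leq\tilde\theta(\psi)$, and equip $B$ with $\Box=\mathrm{id}$. As in Proposition~\ref{FN4toInt}, $(B,\neg,\Box)$ is an $\mathbf{FN4}$-algebra. Under $\Box=\mathrm{id}$ and $\neg\neg=\mathrm{id}$, we have $\diamondsuit=\mathrm{id}$, and a trivial induction gives $\tilde\theta(g^*(\mu^+(\chi)))=\tilde\theta(\chi)$ for every $\chi$. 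Soundness of $\mathsf{FN4}$ then gives $g^*(\mu^+(\varphi))\nvdash_{\mathsf{FN4}}g^*(\mu^+(\psi))$.

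\emph{Left-to-right (faithfulness).} Suppose $g^*(\mu^+(\varphi))\nvdash_{\mathsf{FN4}}g^*(\mu^+(\psi))$. Fix an $\mathbf{FN4}$-algebra $(L,\neg,\Box)$ and a valuation $\theta$ witnessing this.
\begin{itemize}
\item By Proposition~\ref{BoxedElementsProp1}, the boxed elements $D=\{\Box a\mid a\in L\}$ form a pseudocomplemented distributive lattice with $\neg_\Box a=\Box\neg a$.
\item By Glivenko's standard argument, the regular elements $R=\{x\in D\mid \neg_\Box\neg_\Box x=x\}$ form a Boolean algebra. Its operations are the meet of $L$, the join $x\sqcup y=\neg_\Box(\neg_\Box x\wedge\neg_\Box y)$, and complement $\neg_\Box$.
\item Note that $\Box\diamondsuit\Box a=\Box\neg\Box\neg\Box a=\neg_\Box\neg_\Box(\Box a)$. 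So every formula of the form $\Box\diamondsuit\Box\chi$ is evaluated in $R$.
\item Define the valuation $\theta'(p)=\tilde\theta(g^*(\mu^+(p)))=\neg_\Box\neg_\Box\Box\neg\neg\theta(p)\in R$.
\item Prove by induction that $\tilde\theta(g^*(\mu^+(\chi)))$ equals the Boolean value $\tilde{\theta'}(\chi)$ in $R$.
\item Since $\varphi\vdash_{\mathsf{CPC}}\psi$ implies $\tilde{\theta'}(\varphi)\leq\tilde{\theta'}(\psi)$ in the Boolean algebra $R$, and $R$ carries the order of $L$, this yields the contradiction.
\end{itemize}

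\emph{The inductive cases.}
\begin{itemize}
\item $\top$ is immediate.
\item Negation: $g^*(\mu^+(\neg\chi))=\Box\diamondsuit\Box\neg g^*(\mu^+(\chi))$. With $a$ the regular value of the inner formula, this evaluates to $\neg_\Box\neg_\Box\neg_\Box a=\neg_\Box a$.
\item Conjunction: the value is $\neg_\Box\neg_\Box(a\wedge b)$. Since $a,b\in D$, we have $a\wedge b\in D$, and regular elements are closed under meets, so this is just $a\wedge b$.
\end{itemize}

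\emph{Disjunction (the main obstacle).} Here $g^*(\mu^+(\chi_1\vee\chi_2))=\Box\diamondsuit\Box\neg(\neg A\wedge\neg B)$, where $A,B$ are the translated disjuncts with regular values $a,b$. We need
\[\neg_\Box\neg_\Box\,\Box\neg(\neg a\wedge\neg b)=\neg_\Box(\Box\neg a\wedge\Box\neg b).\]
The right-hand side is already regular, so it suffices to show $\Box\neg(\neg a\wedge\neg b)=\Box\neg(\Box\neg a\wedge\Box\neg b)$.
\begin{itemize}
\item The inequality $\leq$ follows from $\Box\neg a\leq\neg a$ and antitonicity of $\neg$.
\item For $\geq$, set $\Box c=\Box\neg(\Box\neg a\wedge\Box\neg b)$. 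It suffices to show $\Box c\leq\neg(\neg a\wedge\neg b)$, since then $\Box c=\Box\Box c\leq\Box\neg(\neg a\wedge\neg b)$ by monotonicity of $\Box$ and the $\mathsf{4}$ inequality.
\item To get $\Box c\leq\neg(\neg a\wedge\neg b)$, I would apply Definition~\ref{FN4AlgDef}.\ref{FN4AlgDef3} (the $\Box$-Reiteration form of $\neg$I). This reduces the goal to showing that $\neg a\wedge\neg b\wedge\Box c$ lies below $\neg 1=0$, i.e.\ that $(\neg a\wedge\neg b)\wedge\Box c\leq\neg\top$.
\end{itemize}

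\emph{Proof of the key inequality.} I would derive it in the proof system: from $\neg a$, $\neg b$ and the reiterated $\Box c$, enter a $\Box$-subproof. Then use the $\mathsf{T}$ rule, $\Box$-Reiteration, and the fact that $a,b$ are boxed, so $\neg a$ can be compared with $\Box\neg a$ via $\neg$I with reiterated $\Box$-formulas.

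I expect this step to be the delicate one. If the direct equality fails, the fallback is to prove only
\[\neg_\Box\neg_\Box\Box\neg(\neg a\wedge\neg b)=a\sqcup b.\]
This uses the outer double $\neg_\Box$ to absorb the discrepancy: $\neg_\Box$ of both sides should agree, because $x\wedge\Box\neg(\neg a\wedge\neg b)=0$ iff $x\wedge a=0=x\wedge b$ for boxed $x$, again by Definition~\ref{FN4AlgDef}.\ref{FN4AlgDef3}.
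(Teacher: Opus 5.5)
Your overall plan matches the paper's. For the backward direction you use a Boolean algebra with identity box. For the forward direction you use the Boolean algebra of $\neg_\Box\neg_\Box$-fixpoints among the boxed elements, with the same valuation and the same induction, and $\vee$ is the only real case. The genuine gap is your primary route for $\vee$. The equation $\Box\neg(\neg a\wedge\neg b)=\Box\neg(\Box\neg a\wedge\Box\neg b)$ is false in $\mathbf{FN4}$-algebras, and so is your key inequality $(\neg a\wedge\neg b)\wedge\Box\neg(\Box\neg a\wedge\Box\neg b)=0$. Hence no derivation of it via $\mathsf{T}$, $\Box$-Reiteration and $\neg$I can exist.

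Here is a counterexample. Every classical $\mathbf{S4}$-algebra is an $\mathbf{FN4}$-algebra, since Boolean distributivity and complementation give clauses 2 and 3 of Definition~\ref{FN4AlgDef}. Take the powerset algebra of the paper's three-world reflexive frame in which $w$ sees $v_1$ and $v_2$. Let $a=\{v_1\}$ and $b=\{v_2\}$. These are regular boxed elements; in fact they are the values of $g^*(\mu^+(p))$ and $g^*(\mu^+(\neg p))$ when $p$ holds only at $v_1$. Then $\Box\neg(\neg a\wedge\neg b)=\Box\{v_1,v_2\}=\{v_1,v_2\}$. But $\Box\neg(\Box\neg a\wedge\Box\neg b)=\Box\neg\varnothing=\{w,v_1,v_2\}$, and your key inequality fails at $w$. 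The inner terms really differ; only the outer $\Box\diamondsuit$ makes them agree, which is exactly excluded middle at $w$.

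So your fallback is not optional: it is the argument. It is precisely the paper's lemma $\Box\diamondsuit\Box\neg(\neg a\wedge\neg b)=a\vee_B b$. Your criterion is correct, but you only asserted it, and it does not come from the third clause of Definition~\ref{FN4AlgDef} alone. Put $w:=\Box\neg(\neg a\wedge\neg b)$ and let $x$ be boxed.
\begin{itemize}
\item If $x\wedge w=0$, then $x\wedge a=0$. Dual self-adjointness gives $a\le\neg(\neg a\wedge\neg b)$, hence $a=\Box a\le w$. The same holds for $b$.
\item If $x\wedge a=0=x\wedge b$, then pseudocomplementation in $D$ (this is where the third clause enters) together with $\mathsf{T}$ gives $x\le\Box\neg a\wedge\Box\neg b\le\neg a\wedge\neg b$. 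Using $\mathsf{T}$ on $w$ and semicomplementation, $x\wedge w\le(\neg a\wedge\neg b)\wedge\neg(\neg a\wedge\neg b)=0$.
\end{itemize}
Thus $\neg_\Box w=\neg_\Box a\wedge\neg_\Box b$. Applying $\neg_\Box$ once more gives $\Box\diamondsuit\Box\neg(\neg a\wedge\neg b)=\neg_\Box\neg_\Box w=a\sqcup b$.

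The paper reaches the same identity by two direct inequalities. From $a,b\le w$ it gets $a\vee_B b\le\neg_\Box\neg_\Box w$. Your correct ``$\le$'' half, $w\le\Box\neg(\neg_\Box a\wedge\neg_\Box b)=a\vee_B b$, gives the reverse inequality after applying $\neg_\Box\neg_\Box$ to both sides, using that the right side is already regular. The ingredients are the same as yours. The essential point is that the outer double negation must be used in the $\ge$ direction rather than avoided.
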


\begin{proof} From right to left, if $\varphi\nvdash_{\mathsf{CPC}}\psi$, then we have a Boolean algebra $B$ and valuation $\theta$ on $B$ such that $\tilde{\theta}(\varphi)\not\leq_B \tilde{\theta}(\psi)$. Then equipping $B$ with $\Box$ as the identity operation, $(B,\Box)$ is an algebra for $\mathsf{FN4}$, and a trivial induction shows that $\tilde{\theta}(\chi)=\tilde{\theta}(g^*(\mu^+(\chi)))$. Then the rest of the proof is analogous to that of Proposition~\ref{FN4toInt}.

From left to right, suppose $g^*(\mu^+(\varphi))\nvdash_{\mathsf{FN4}}g^*(\mu^+(\psi))$, so there is an $\mathsf{FN4}$-algebra $(L,\neg,\Box)$ and valuation $\theta$ on $L$ such that $\tilde{\theta}(g^*(\mu^+(\varphi)))\not\leq_L \tilde{\theta}(g^*(\mu^+(\psi)))$. By Proposition~\ref{BoxedElementsProp1}, the boxed elements of $(L,\neg,\Box)$ form a distributive lattice with the pseudocomplementation given by $\neg_\Box a:=\Box\neg a$. Now let $B$ be the Boolean algebra of $\neg_\Box\neg_\Box$-fixpoints in that pseudocomplemented distributive lattice. Note that for any boxed element $a$, we have $\neg_\Box \neg_\Box a = \Box\neg\Box\neg a =\Box\neg\Box\neg\Box a=\Box\diamondsuit\Box a$. Also note that in $B$, the join is computed as $a\vee_B b= \neg_\Box(\neg_\Box a\wedge\neg_\Box b)$. Let $\theta'$ be the valuation on $B$ defined by $\theta'(p)=\tilde{\theta}(g^*(\mu^+(p)))$, and let $\tilde{\theta'}$ be the extension of $\theta'$ computed inside $B$. Then we can prove by induction that for all formulas $\chi$, $\tilde{\theta'}(\chi)=\tilde{\theta}(g^*(\mu^+(\chi)))$. The $\vee$ case relies on the lemma that for all $a,b\in B$,
\[\Box\diamondsuit\Box \neg (\neg a\wedge\neg b)=a\vee_B b.\]
First, $a\leq \neg (\neg a\wedge\neg b)$, so $\Box a\leq \Box \neg (\neg a\wedge\neg b)\leq \neg_\Box\neg_\Box \Box \neg (\neg a\wedge\neg b) = \Box\diamondsuit\Box \neg (\neg a\wedge\neg b)$, and similarly $b\leq \Box\diamondsuit\Box \neg (\neg a\wedge\neg b)$, so $a\vee_B b \leq \Box\diamondsuit\Box \neg (\neg a\wedge\neg b)$. Conversely, $\neg_\Box a\wedge\neg_\Box b\leq \neg a\wedge\neg b$, so $\neg (\neg a\wedge\neg b)\leq \neg (\neg_\Box a\wedge\neg_\Box b)$ and hence $\Box\neg (\neg a\wedge\neg b)\leq \Box\neg (\neg_\Box a\wedge\neg_\Box b)$, which implies \[\neg_\Box\neg_\Box \Box\neg (\neg a\wedge\neg b)\leq \neg_\Box\neg_\Box\Box\neg (\neg_\Box a\wedge\neg_\Box b) = \Box\neg (\neg_\Box a\wedge\neg_\Box b) = a\vee_B b,\]
where for the penultimate equation we use the fact that $\Box\neg (\neg_\Box a\wedge\neg_\Box b)\in B$, so it is a fixpoint of $\neg_\Box\neg_\Box$. Then for the $\vee$ case, we have 
\begin{align*}
\tilde{\theta}(g^*(\mu^+(\alpha\vee\beta))) & = \tilde{\theta}(g^*(\Box(\mu^+(\alpha)\vee \mu^+(\beta)))) \\
& = \tilde{\theta}(\Box\diamondsuit\Box \neg (\neg g^*(\mu^+(\alpha))\wedge\neg g^*(\mu^+(\beta)))) \\
&= \Box\diamondsuit\Box \neg (\neg \tilde{\theta}(g^*(\mu^+(\alpha)))\wedge\neg \tilde{\theta}(g^*(\mu^+(\beta)))) \\ 
& = \Box\diamondsuit\Box \neg (\neg \tilde{\theta'}(\alpha)\wedge\neg \tilde{\theta'}(\beta)) \\
& = \tilde{\theta'}(\alpha)\vee_B \tilde{\theta'}(\beta)\\
&= \tilde{\theta'}(\alpha\vee\beta).
\end{align*}
Thus, $\tilde{\theta}(g^*(\mu^+(\varphi)))\not\leq_L \tilde{\theta}(g^*(\mu^+(\psi)))$ implies $\tilde{\theta'}(\varphi)\not\leq_B \tilde{\theta'}(\psi)$ and hence $\varphi\nvdash_{\mathsf{CPC}}\psi$.\end{proof}

It is natural to want even more than Theorem~\ref{CPCtoFN4v2}, namely that $\mu^+$ is a full and faithful embedding of $\mathsf{CPC}$ into some orthomodal logic $\mathsf{L}$ and then $g^*$ is a full and faithful embedding of $\mathsf{L}$ into $\mathsf{FN4}$ or some natural variant thereof. The second step would be a modal analogue of the following analogue of Theorem~\ref{DoubleNegThm}.

\begin{proposition}[\citealt{Holliday2023}] For any $\varphi,\psi\in\mathcal{L}(\wedge,\vee,\neg)$, $\varphi\vdash_\mathsf{O}\psi$ iff $g(\varphi)\vdash_{\mathsf{F}}g(\psi)$.
\end{proposition}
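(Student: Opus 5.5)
We prove the two directions separately, using the algebraic completeness of $\mathsf{O}$ with respect to ortholattices and of $\mathsf{F}$ with respect to fundamental lattices (\citealt[\S~3]{Holliday2023}).

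\emph{Left to right.} Suppose $\varphi\vdash_\mathsf{O}\psi$ but $g(\varphi)\nvdash_\mathsf{F}g(\psi)$. Then there is a fundamental lattice $(L,\neg)$ and a valuation $\theta$ with $\tilde\theta(g(\varphi))\not\leq\tilde\theta(g(\psi))$. Let $R=\{a\in L\mid a=\neg\neg a\}$ be the set of regular elements.

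We first show that $R$, with the meet of $L$, the restriction of $\neg$, and join $a\vee_R b:=\neg(\neg a\wedge\neg b)$, is an ortholattice. Recall that $\neg\neg\neg=\neg$ follows from dual self-adjointness, so $\neg$ maps $L$ into $R$. Hence $R$ contains $0=\neg 1$ and $1=\neg 0$.

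For closure under meets, take $a,b\in R$. Antitonicity and $a=\neg\neg a$ give $\neg\neg(a\wedge b)\leq \neg\neg a\wedge\neg\neg b=a\wedge b$. Double inflationarity gives the reverse inequality.

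Next, $\neg(\neg a\wedge\neg b)$ is the least upper bound of $a,b$ in $R$. It lies in $R$ because it is a negation. It is an upper bound: $\neg a\wedge\neg b\leq\neg a$ gives $a=\neg\neg a\leq\neg(\neg a\wedge\neg b)$, and similarly for $b$. It is least: if $a,b\leq c\in R$, then $\neg c\leq\neg a\wedge\neg b$, so $\neg(\neg a\wedge\neg b)\leq\neg\neg c=c$.

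Finally, on $R$ the operation $\neg$ is an involution. It is antitone and satisfies $a\wedge\neg a=0$. These facts suffice for an orthocomplementation, and $\neg\neg a\leq a$ holds in $R$ by definition. So $R$ is an ortholattice.

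Now set $\theta'(p):=\neg\neg\theta(p)\in R$. By induction we show $\tilde\theta'_R(\chi)=\tilde\theta(g(\chi))$ for every formula $\chi$. The atomic clause holds by the choice of $\theta'$. The clauses for $\neg$ and $\wedge$ hold because $R$ is closed under these operations of $L$. The clause for $\vee$ holds because $g(\alpha\vee\beta)=\neg(\neg g(\alpha)\wedge\neg g(\beta))$ is exactly the formula defining $\vee_R$.

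Soundness of $\mathsf{O}$ for ortholattices then gives $\tilde\theta'_R(\varphi)\leq\tilde\theta'_R(\psi)$. Since the order on $R$ is inherited from $L$, this yields $\tilde\theta(g(\varphi))\leq\tilde\theta(g(\psi))$, a contradiction.

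\emph{Right to left.} Suppose $\varphi\nvdash_\mathsf{O}\psi$. Then there is an ortholattice $O$ and a valuation $\theta$ refuting the inference. Every ortholattice is a fundamental lattice, so we evaluate $g(\chi)$ in $O$ itself. By induction, $\tilde\theta(g(\chi))=\tilde\theta(\chi)$: in $O$ we have $\neg\neg a=a$, and by De Morgan, which holds in ortholattices, $\neg(\neg a\wedge\neg b)=a\vee b$. Hence $g(\varphi)\nvdash_\mathsf{F}g(\psi)$ by soundness of $\mathsf{F}$.

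The main obstacle is the check that $R$ is an ortholattice, and in particular that its join is well-behaved even though $R$ need not be a sublattice of $L$ for joins. This point is handled by defining $\vee_R$ through negation, which matches the Gödel--Gentzen clause for disjunction exactly.
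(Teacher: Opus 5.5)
Your proof is correct. It also follows essentially the paper's route: the paper only cites \citealt{Holliday2023} for this statement, but its proofs of Proposition~\ref{RegInFM4*} and Theorem~\ref{FM4*}.\ref{FM4*1} run exactly your argument with a modal layer added, namely regular elements $\{\neg\neg a\mid a\in L\}$ under the meet and negation of $L$ and the join $\neg(\neg a\wedge\neg b)$ for one direction, and evaluating the translation directly in an ortholattice, where it acts as the identity, for the other.
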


Now to extend this proposition to an orthomodal logic $\mathsf{L}$ and $g^*$, an immediate observation is that $\mathsf{L}$ cannot contain the \textsf{T} Rule, because $g^*(\Box p)=\Box\diamondsuit\Box \neg\neg p \nvdash_{\mathsf{FN4}}\neg\neg p = g^*(p)$. Thus, we might seek an orthomodal logic of \textit{certainty} rather than necessity, since one can be certain of a falsehood. Accordingly, let us weaken $\mathsf{FN4}$ to a logic $\mathsf{FM4}$ by replacing the \textsf{T} Rule with the rule that lets us infer $\Box\varphi$ from $\Box\Box\varphi$ plus the rule that lets us infer $\psi$ from $\Box \bot$. Similarly, let an $\mathbf{FM4}$ algebra be defined like an $\mathbf{FN4}$ algebra but with $\Box a\leq a$ replaced by $\Box\Box a\leq\Box a$ and $\Box 0 = 0$. Then we have the following analogue of Proposition~\ref{BoxedElementsProp1}.

\begin{proposition}\label{BoxedElementsPropM} For any $\mathbf{FM4}$-algebra $(L,\neg,\Box)$, the set $\{\Box a\mid a\in L\}$ becomes a bounded distributive lattice under the meet operation of $L$, the join given by $a\vee_\Box b = \Box (a\vee b)$, and the bounds of $L$, and it is pseudocomplemented under the operation $\neg_\Box$ defined by $\neg_\Box(a):=\Box\neg a$.
\end{proposition}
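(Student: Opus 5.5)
The plan is to follow the proof of Proposition~\ref{BoxedElementsProp1}. The new ingredients are replacing $\Box a\leq a$ by $\Box\Box a\leq\Box a$, which together with $\Box a\leq\Box\Box a$ gives $\Box\Box a=\Box a$, and using $\Box 0=0$. Let $D=\{\Box a\mid a\in L\}$. Idempotence shows that $x\in D$ iff $x=\Box x$.

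First I check the lattice structure. Since $\Box 1=1$ and $\Box 0=0$, both bounds lie in $D$. Since $\Box(\Box a\wedge\Box b)=\Box\Box a\wedge\Box\Box b=\Box a\wedge\Box b$, $D$ is closed under $\wedge$. For the join, $\Box(\Box a\vee\Box b)\in D$, and by monotonicity of $\Box$ it lies above $\Box\Box a=\Box a$ and above $\Box b$. If $\Box c\in D$ is an upper bound of $\Box a$ and $\Box b$, then $\Box a\vee\Box b\leq\Box c$, so $\Box(\Box a\vee\Box b)\leq\Box\Box c=\Box c$. Hence $\vee_\Box$ is the least upper bound in $D$.

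Distributivity is the step needing care, because the join is no longer the join of $L$. Take $x,y,z\in D$. It suffices to show $x\wedge(y\vee_\Box z)\leq(x\wedge y)\vee_\Box(x\wedge z)$. Since $x=\Box x$,
\[x\wedge\Box(y\vee z)=\Box(x\wedge(y\vee z)),\]
and by Definition~\ref{FN4AlgDef}.\ref{FN4AlgDef2} with $c=x$,
\[x\wedge(y\vee z)\leq(y\wedge x)\vee(z\wedge x).\]
Applying the monotone $\Box$ gives
\[x\wedge\Box(y\vee z)\leq\Box((x\wedge y)\vee(x\wedge z))=(x\wedge y)\vee_\Box(x\wedge z).\]
The point is that the meet with a boxed element commutes into the box, so the whole computation can be done inside one $\Box$.

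Pseudocomplementation goes as in Proposition~\ref{BoxedElementsProp1}, but without the \textsf{T} Rule. First, $\neg_\Box$ maps into $D$. Next, $x\wedge\Box\neg x=\Box x\wedge\Box\neg x=\Box(x\wedge\neg x)=\Box 0=0$; here $\Box 0=0$ replaces $\Box$-Reiteration-plus-\textsf{T}. Conversely, suppose $x\wedge\Box c=0$ with $x,\Box c\in D$. Since $0\leq\neg 1$, Definition~\ref{FN4AlgDef}.\ref{FN4AlgDef3} gives $1\wedge\Box c\leq\neg x$. Then $\Box c=\Box\Box c\leq\Box\neg x=\neg_\Box x$. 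Previously this step used $\Box c\leq\Box(1\wedge\Box c)$, which needed only the \textsf{4} Rule, so it still works.

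The main obstacle is really just the distributivity computation with the modified join. Once the identity $\Box x\wedge\Box w=\Box(\Box x\wedge w)$ is in hand, everything else is routine.
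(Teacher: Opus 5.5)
Your proof is correct and matches the paper's argument. The lattice facts come from idempotence $\Box\Box a=\Box a$ together with $\Box 0=0$. Distributivity comes from moving the meet with a boxed element inside a single $\Box$ and then applying Definition~\ref{FN4AlgDef}.\ref{FN4AlgDef2}; your identity $x\wedge\Box w=\Box(x\wedge w)$ for boxed $x$ is the paper's step $\Box(\Box a\vee\Box b)\wedge\Box\Box c\leq\Box((\Box a\vee\Box b)\wedge\Box c)$. Pseudocomplementation goes exactly as in Proposition~\ref{BoxedElementsProp1}, and you correctly note that $x\wedge\neg_\Box x=0$ now rests on the axiom $\Box 0=0$ rather than on $\Box a\leq a$.
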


\begin{proof} That the boxed elements in an $\mathbf{FM4}$ algebra form a bounded lattice under the meet of $L$ and the join $\vee_\Box$ is an easy exercise. That the bounds are the same as those of $L$ follows from $\Box 0=0$. For distributivity, we must show
\[\Box (\Box a\vee\Box b)\wedge\Box c\leq \Box ((\Box a\wedge\Box c)\vee (\Box b\wedge \Box c)).\]
Now $(\Box a\vee\Box b)\wedge\Box c\leq  (\Box a\wedge\Box c)\vee (\Box b\wedge \Box c)$ holds in any $\mathbf{FM4}$ algebra (cf.~Definition~\ref{FN4AlgDef}.\ref{FN4AlgDef2}). Thus,
\begin{align*}
\Box (\Box a\vee\Box b)\wedge\Box c & \leq \Box (\Box a\vee\Box b)\wedge\Box \Box c\\
& \leq \Box ((\Box a\vee\Box b)\wedge \Box c) \\
& \leq \Box ((\Box a\wedge\Box c)\vee (\Box b\wedge \Box c)).
\end{align*}
 For pseudocomplementation under $\neg_\Box$, the proof is the same as in the proof of Proposition~\ref{BoxedElementsProp1}, where we did not use $\Box x\leq x$.
\end{proof}

Now the desired logic $\mathsf{L}$ can sit in the interval of logics given by the following theorem.

\begin{theorem}\label{CPCInterval}  Let $\mathsf{L}$  be the logic of a class of $\mathbf{FM4}$-algebras satisfying the law that $\Box\diamondsuit\Box a\leq \Box a$ and containing all Boolean $\mathbf{S5}$-algebras. Then for any $\varphi,\psi\in \mathcal{L}(\wedge,\vee,\neg)$, $\varphi\vdash_{\mathsf{CPC}}\psi$ iff $\mu^+(\varphi)\vdash_{\mathsf{L}}\mu^+(\psi)$.
\end{theorem}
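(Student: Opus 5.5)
The argument has two directions, and both go through the algebraic semantics, following the pattern of Proposition~\ref{FN4toInt} and Theorem~\ref{CPCtoFN4v2}.

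\emph{Right to left.} Suppose $\varphi\nvdash_{\mathsf{CPC}}\psi$. Take a Boolean algebra $B$ and a valuation $\theta$ with $\tilde{\theta}(\varphi)\not\leq\tilde{\theta}(\psi)$, and equip $B$ with $\Box$ as the identity operation. This $(B,\neg,\Box)$ is a Boolean $\mathbf{S5}$-algebra, since its accessibility is the identity relation, so it lies in the given class. Because $\Box$ is the identity, a trivial induction gives $\tilde{\theta}(\mu^+(\chi))=\tilde{\theta}(\chi)$ for every $\chi$. Hence $\mu^+(\varphi)\nvdash_{\mathsf{L}}\mu^+(\psi)$, by the definition of $\mathsf{L}$ as the logic of the class.

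\emph{Left to right.} Suppose $\mu^+(\varphi)\nvdash_{\mathsf{L}}\mu^+(\psi)$, witnessed by an algebra $(L,\neg,\Box)$ in the class and a valuation $\theta$. Let $D=\{\Box a\mid a\in L\}$ with the structure of Proposition~\ref{BoxedElementsPropM}: meet $\wedge$, join $\vee_\Box$, bounds of $L$, and pseudocomplement $\neg_\Box a=\Box\neg a$.

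The key step is to show that $D$ is in fact a Boolean algebra under these operations. For boxed $a=\Box b$ we have
\[\neg_\Box\neg_\Box a=\Box\neg\Box\neg\Box b=\Box\diamondsuit\Box b\leq\Box b=a\]
by the law $\Box\diamondsuit\Box a\leq\Box a$. The reverse inequality $a\leq\neg_\Box\neg_\Box a$ holds in any pseudocomplemented lattice. So every element of $D$ is regular, and a pseudocomplemented distributive lattice in which every element is regular is Boolean. Concretely, $a\vee_\Box\neg_\Box a=\neg_\Box\neg_\Box(a\vee_\Box\neg_\Box a)=\neg_\Box(\neg_\Box a\wedge\neg_\Box\neg_\Box a)=\neg_\Box 0=1$. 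The middle step uses the De Morgan law $\neg_\Box(x\vee_\Box y)=\neg_\Box x\wedge\neg_\Box y$, which is valid in any pseudocomplemented distributive lattice.

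Next define $\theta'(p)=\tilde{\theta}(\mu^+(p))=\Box\theta(p)\in D$ and prove by induction that $\tilde{\theta'}(\chi)=\tilde{\theta}(\mu^+(\chi))$, where the left side is computed in $D$. The clauses line up exactly:
\begin{itemize}
\item $\mu^+(\alpha\wedge\beta)=\Box(\mu^+\alpha\wedge\mu^+\beta)$, and since both conjuncts are boxed, $\Box(\Box x\wedge\Box y)=\Box\Box x\wedge\Box\Box y$. This equals $\Box x\wedge\Box y$ using $\Box\Box a=\Box a$, which holds in $\mathbf{FM4}$-algebras from $\Box a\leq\Box\Box a$ together with $\Box\Box a\leq\Box a$.
\item $\mu^+(\alpha\vee\beta)=\Box(\mu^+\alpha\vee\mu^+\beta)$, which is precisely $\vee_\Box$.
\item $\mu^+(\neg\alpha)=\Box\neg\mu^+\alpha$, which is precisely $\neg_\Box$.
\item $\mu^+(\top)$ is either $\top$ or $\Box\top$, and both equal $1$ since $\Box1=1$. (I would check this clause against Definition~\ref{mu+Def}.)
\end{itemize}
Because the order on $D$ is the restriction of the order on $L$, the failure $\tilde{\theta}(\mu^+\varphi)\not\leq\tilde{\theta}(\mu^+\psi)$ transfers to $\tilde{\theta'}(\varphi)\not\leq\tilde{\theta'}(\psi)$ in the Boolean algebra $D$. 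Therefore $\varphi\nvdash_{\mathsf{CPC}}\psi$.

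The main obstacle is the Booleanness of $D$. The points to verify are that Proposition~\ref{BoxedElementsPropM} gives genuine pseudocomplementation relative to the $\vee_\Box$-lattice, and that $\Box\Box a=\Box a$ holds so that $\neg_\Box\neg_\Box\Box b$ really reduces to $\Box\diamondsuit\Box b$. Everything else is routine bookkeeping.
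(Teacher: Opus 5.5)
Your proposal is correct and follows essentially the same route as the paper: a Boolean algebra with identity $\Box$ (a Boolean $\mathbf{S5}$-algebra on which $\mu^+$ acts trivially) handles the right-to-left direction. For left-to-right, the boxed elements of Proposition~\ref{BoxedElementsPropM} form a Boolean algebra because $\neg_\Box\neg_\Box\Box b=\Box\diamondsuit\Box b\leq\Box b$, and the induction $\tilde{\theta'}(\chi)=\tilde{\theta}(\mu^+(\chi))$ then transfers the countermodel; your explicit checks (excluded middle in the boxed lattice, and the $\wedge$-clause via $\Box\Box a=\Box a$) fill in details the paper leaves implicit.
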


\begin{proof} The right-to-left direction of the proof is similar to that of Theorem~\ref{CPCtoFN4v2} except we observe that $(B,\Box)$ is a Boolean $\mathbf{S5}$-algebra.

 From left to right, suppose $\mu^+(\varphi)\nvdash_{\mathsf{L}}\mu^+(\psi)$, so we have an $\mathbf{FM4}$-algebra $(L,\neg,\Box)$ that satisfies $\Box\diamondsuit\Box a\leq \Box a$ for all $a\in L$ and valuation $\theta$ on $L$ such that $\tilde{\theta}(\mu^+(\varphi)) \not\leq_L \tilde{\theta}(\mu^+(\psi))$. Now by Proposition~\ref{BoxedElementsPropM}, the boxed elements of $(L,\neg,\Box)$ form a distributive lattice that is pseudocomplemented under the operation $\neg_\Box$ defined by $\neg_\Box a:=\Box\neg a$. Now we simply observe that for any boxed element $a$, we have $\neg_\Box \neg_\Box a \leq \Box\diamondsuit \Box a\leq \Box a =a$. Hence the boxed elements now form a Boolean algebra $B$ under $\neg_\Box$. Let $\theta'$ be the valuation on $B$ defined by $\theta'(p)=\tilde{\theta}(\mu^+(p))$. Then we can prove by induction that for any formula $\chi$, $\tilde{\theta'}(\chi)=\tilde{\theta}(\mu^+(\chi))$. Thus, $\tilde{\theta}(\mu^+(\varphi)) \not\leq_L \tilde{\theta}(\mu^+(\psi))$ implies $\tilde{\theta'}(\varphi) \not\leq_B \tilde{\theta'}(\psi)$ and hence $\varphi\nvdash_{\mathsf{CPC}}\psi$.\end{proof}

Now we cannot faithfully embed orthomodal logics $\mathsf{L}$ into $\mathsf{FM4}$ itself via $g^*$, since $\neg\neg \Box p\vdash_{\mathsf{L}} \Box p$ while \[g^*(\neg\neg\Box p)=\neg\neg \Box\diamondsuit \Box \neg\neg p\nvdash_{\mathsf{FM4}} \Box\diamondsuit \Box \neg\neg p = g^*(\Box p).\]
To deal with this problem, let $\mathsf{FM4}^*$ be the extension of $\mathsf{FM4}$ with the $\diamondsuit$-RAA rule in Figure~\ref{BoxNegRAA}. Note that if we were to remove the boxes from the rule, then $\diamondsuit$-RAA would be equivalent to the intuitionistic principle that if we can prove $\neg\neg\neg\varphi$ (by deriving $\neg\psi$ from $\neg\neg\varphi$ for an established $\psi$), then we can prove $\neg\varphi$. The associated class of $\mathbf{FM4}^*$-algebras are $\mathbf{FM4}$-algebras also satisfying the principle that $\neg\neg \Box \neg a\leq \Box\neg a$. Note that the inference from $\neg\neg\Box\neg\varphi$ to $\Box\neg\varphi$ holds in Fischer Servi's minimal intuitionistic modal logic $\mathsf{FS}$ (\citealt{DasMarin2022}), and the inference from $\neg\neg\forall x \neg P(x)$ to $\forall x \neg P(x)$ holds in intuitionistic first-order logic.\footnote{Since $\forall x \neg P(x) \vdash_{\mathsf{IQC}} \neg P(x)$, we have $\neg\neg\forall x \neg P(x) \vdash_{\mathsf{IQC}} \neg\neg\neg P(x)\vdash \neg P(x)$ and hence $\neg\neg\forall x \neg P(x) \vdash_{\mathsf{IQC}} \forall x \neg P(x)$ by $\forall$I.}

\begin{figure}[H]
\begin{center}
\begin{minipage}{2.5in}
\[\begin{nd}
\have [\vdots] {0} {\vdots}
\have [i] {3}   {\psi}
\have [\vdots] {4}   {\vdots}
\open
\hypo[ j] {1} {\diamondsuit\varphi}
\have [\vdots] {}  {\vdots}
\have [k] {5}   {\neg\psi}
\close
\have [l]{6} {\Box\neg\varphi} \diRAA{1-5,3}
\end{nd}
\]\end{minipage}
\end{center}
\caption{The $\diamondsuit$-RAA rule of $\mathsf{FM4}^*$.}\label{BoxNegRAA}
\end{figure}

We can now define the desired orthomodal logic. Let $\mathsf{OMR}$ (where $\mathsf{R}$ suggests \textit{regular}, as in Proposition~\ref{RegInFM4*} below) be the orthomodal logic with $\Box$-Reiteration, the \textsf{4} Rule, and the following rules:
\begin{itemize}
    \item from $\Box\diamondsuit\Box\varphi$, infer $\Box \varphi$;
    \item from $\Box\Box\varphi$, infer $\Box\varphi$; 
    \item from $\Box\bot$, infer $\psi$.
\end{itemize}
Thus, by Theorem~\ref{CPCInterval}, $\mu^+$ is a full and faithful embedding of $\mathsf{CPC}$ into $\mathsf{OMR}$. Let $\mathbf{OMR}$ algebras be defined just like $\mathbf{FM4}$ algebras except that $(L,\neg)$ is an ortholattice and we add the requirement that $\Box\diamondsuit\Box a\leq \Box a$. As usual, one can show that $\mathsf{OMR}$ is sound and complete with respect to the class of $\mathbf{OMR}$ algebras.

\begin{samepage}
 \begin{fact}\label{OMRFM4*} $\,$
 \begin{enumerate}
     \item\label{OMRFM4*1} Every $\mathbf{OMR}$-algebra is an $\mathbf{FM4}^*$ algebra.
\item\label{OMRFM4*2} In any $\mathbf{OMR}$-algebra, $\Box a\leq \diamondsuit a$.
      \end{enumerate}
 \end{fact}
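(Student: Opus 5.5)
Part~\ref{OMRFM4*1} should follow by unfolding definitions, and part~\ref{OMRFM4*2} from the \textsf{D}-like identity $\Box a\wedge\Box\neg a=0$ together with the $\Box$-relativized pseudocomplementation clause, Definition~\ref{FN4AlgDef}.\ref{FN4AlgDef3}.

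For part~\ref{OMRFM4*1}, let $(L,\neg,\Box)$ be an $\mathbf{OMR}$-algebra.
\begin{enumerate}
\item Since $(L,\neg)$ is an ortholattice, it is in particular a fundamental lattice.
\item The remaining $\mathbf{FM4}$ requirements are imposed verbatim in the definition of $\mathbf{OMR}$-algebras, which was given ``just like $\mathbf{FM4}$ algebras.'' These are: normality of $\Box$, $\Box a\leq\Box\Box a$, $\Box\Box a\leq\Box a$, $\Box 0=0$, and the clauses of Definition~\ref{FN4AlgDef}.\ref{FN4AlgDef2}--\ref{FN4AlgDef3}. So $(L,\neg,\Box)$ is an $\mathbf{FM4}$-algebra.
\item It remains to check the extra $\mathbf{FM4}^*$ law $\neg\neg\Box\neg a\leq\Box\neg a$. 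In an ortholattice, $\neg\neg x\leq x$ for every $x$. Taking $x=\Box\neg a$ gives the law at once.
\end{enumerate}
The extra $\mathbf{OMR}$ law $\Box\diamondsuit\Box a\leq\Box a$ is not needed for this part.

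For part~\ref{OMRFM4*2}, recall that $\diamondsuit a=\neg\Box\neg a$, so the goal is $\Box a\leq\neg\Box\neg a$. First, by multiplicativity of $\Box$, semicomplementation, and $\Box 0=0$,
\[\Box\neg a\wedge\Box a=\Box(\neg a\wedge a)=\Box 0=0=\neg 1.\]
Now apply Definition~\ref{FN4AlgDef}.\ref{FN4AlgDef3} with the element $\Box\neg a$ in the role of $a$, the element $a$ in the role of $c$, and $1$ in the role of $b$. From $\Box\neg a\wedge\Box a\leq\neg 1$ it yields $1\wedge\Box a\leq\neg\Box\neg a$, that is, $\Box a\leq\diamondsuit a$.

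The only point needing care is this last step. In an ortholattice, $x\wedge y=0$ does not in general imply $x\leq\neg y$, so we cannot simply read off $\Box a\leq\neg\Box\neg a$ from $\Box a\wedge\Box\neg a=0$. That is why we route the argument through the $\Box$-Reiteration clause~\ref{FN4AlgDef3}, exactly as in the pseudocomplementation argument of Proposition~\ref{BoxedElementsProp1}.
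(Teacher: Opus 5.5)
Your proposal is correct and follows the paper's proof. Part~1 rests on the same observation: every $\mathbf{FM4}^*$ requirement is built into the $\mathbf{OMR}$ definition except $\neg\neg\Box\neg a\leq\Box\neg a$, which holds in any ortholattice. Part~2 likewise derives $\Box a\wedge\Box\neg a=\Box 0=0$ and then applies the relativized pseudocomplementation clause of Definition~\ref{FN4AlgDef}.\ref{FN4AlgDef3}; your explicit instantiation (with $\neg 1=0$) just spells out a step the paper leaves implicit.
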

 \end{samepage}
 \begin{proof} For part \ref{OMRFM4*1}, every principle of $\mathbf{FM4}^*$-algebras is explicitly built into the definition of $\mathbf{OMR}$-algebras, except for $\neg\neg\Box\neg a\leq\Box\neg a$, but this holds in any orthomodal lattice.

 For part \ref{OMRFM4*2}, $\Box a\wedge\Box \neg a\leq \Box (a\wedge\neg a)=0$, so by the version of pseudocomplementation in $\mathbf{OMR}$-algebras (cf.~Definition~\ref{FN4AlgDef}.\ref{FN4AlgDef3}), $\Box a\leq \neg\Box\neg a$.\end{proof}

 Now the key algebraic fact for relating $\mathsf{OMR}$ and $\mathsf{FM4}^*$ is the following.

\begin{proposition}\label{RegInFM4*} For any $\mathbf{FM4}^*$-algebra $(L,\neg,\Box)$, the set $\{\neg\neg a \mid a\in L\}$ of regular elements becomes an $\mathbf{OMR}$ algebra under the meet and negation operation of $L$, the join given by $a\vee_O b:=\neg(\neg a\wedge\neg b)$, and the box given by $\Box_O a:=\Box\diamondsuit \Box a$.
\end{proposition}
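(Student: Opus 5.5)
The plan is to verify the $\mathbf{OMR}$ axioms one at a time. We use two ingredients: the known fact that the regular elements of a fundamental lattice form an ortholattice, and the pseudocomplemented distributive lattice $D$ of boxed elements from Proposition~\ref{BoxedElementsPropM}.

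\emph{Step 1: the ortholattice structure.} Write $R=\{\neg\neg a\mid a\in L\}$. From \citealt{Holliday2023} (the double-negation analysis underlying the $\mathsf{O}$/$\mathsf{F}$ result), $R$ is an ortholattice under $\wedge$, $\neg$ and $a\vee_O b=\neg(\neg a\wedge\neg b)$. This uses only $\neg\neg\neg=\neg$, antitonicity and semicomplementation, and $\neg\neg$ preserves meets on $R$.

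\emph{Step 2: $\Box_O$ maps $R$ into $R$.} The $\mathbf{FM4}^*$ law $\neg\neg\Box\neg x\leq\Box\neg x$ makes every element of the form $\Box\neg x$ regular. Since $\Box_O a=\Box\neg(\Box\neg\Box a)$ has this form, $\Box_O a\in R$. Moreover $\Box_O a$ is itself a boxed element of $L$.

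\emph{Step 3: the modal laws, computed in $D$.} Let $j=\neg_\Box\neg_\Box$ on $D$. For every $a$ we have $\Box_O a=j(\Box a)$. In a pseudocomplemented distributive lattice, $j$ is a meet-preserving closure whose fixpoints include every $\neg_\Box y$. Also, for boxed $x$ we have $\Box x=x$, using $\Box\leq\Box\Box$ and $\Box\Box\leq\Box$. From these facts we get:
\begin{itemize}
\item $\Box_O(a\wedge b)=j(\Box a\wedge\Box b)=\Box_O a\wedge\Box_O b$;
\item $\Box_O 1=1$ and $\Box_O0=j(0)=0$, using $\Box0=0$;
\item $\Box_O\Box_O a=j(\Box j\Box a)=jj\Box a=\Box_O a$, which gives both the $\mathsf{4}$ law and $\Box_O\Box_O a\leq\Box_O a$;
\item $\Box_O\neg\Box_O a=j(\neg_\Box j\Box a)=\neg_\Box j\Box a$, and hence $\Box_O\diamondsuit_O\Box_O a=\Box_O\neg(\neg_\Box j\Box a)=j(\neg_\Box\neg_\Box j\Box a)=\Box_O a$.
\end{itemize}

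\emph{Step 4: the two $\Box$-Reiteration conditions inside $R$.} This is the step I expect to be the main obstacle, because $\vee_O$ is not the join of $L$. Fix $k=\Box_O c$, which is boxed in $L$.

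Pseudocomplementation (condition 3 of Definition~\ref{FN4AlgDef}) is inherited directly: if $a\wedge k\leq\neg b$, then condition 3 in $L$ gives $b\wedge k\leq\neg a$.

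For distributivity, put $u=\neg(a\wedge k)\wedge\neg(b\wedge k)$. We argue as follows.
\begin{itemize}
\item By antitonicity, $u\leq\neg(a\wedge k)$ gives $a\wedge k\leq\neg\neg(a\wedge k)\leq\neg u$.
\item Condition 3 then yields $u\wedge k\leq\neg a$, and symmetrically $u\wedge k\leq\neg b$.
\item Hence $u\wedge k\leq\neg a\wedge\neg b\leq\neg\neg(\neg a\wedge\neg b)=\neg(a\vee_O b)$.
\item Applying condition 3 once more gives $(a\vee_O b)\wedge k\leq\neg u$, which is exactly $(a\wedge k)\vee_O(b\wedge k)$.
\end{itemize}
All the elements involved lie in $R$, so together with Steps 1--3 this shows that $(R,\wedge,\vee_O,\neg,\Box_O)$ is an $\mathbf{OMR}$-algebra.
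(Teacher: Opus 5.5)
Your proposal is correct and takes essentially the same route as the paper. The paper also gets the ortholattice of regular elements, closure under $\Box_O$ from $\neg\neg\Box\neg a\leq\Box\neg a$, and the modal laws from $\Box_O a=\neg_\Box\neg_\Box\Box a$ in the pseudocomplemented distributive lattice of boxed elements; it likewise inherits pseudocomplementation because $\Box_O$ begins with $\Box$, and it proves distributivity by the same argument with $u=\neg(a\wedge k)\wedge\neg(b\wedge k)$.
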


\begin{proof} It is easy to see that taking the meet and negation operation of $L$ and the join $\vee_O$ gives an ortholattice on the set $\{\neg\neg a\mid a\in L\}$. Moreover, thanks to the principle $\neg\neg\Box\neg a = \Box\neg a$ of $\mathbf{FM4}^*$ algebras, $\{\neg\neg a\mid a\in L\}$ is closed under $\Box$ in addition to $\neg$ and hence closed under $\Box_O$. To see that the resulting orthomodal lattice is an $\mathbf{OMR}$ algebra, we must  check that each of the following holds in $(L,\neg,\Box)$:
\begin{itemize}
\item $\Box_O 1= 1$, $\Box_O(a\wedge b)=\Box_Oa\wedge\Box_Ob$, $\Box_O a =  \Box_O\Box_O a$, $\Box_O \diamondsuit_O \Box_O a\leq \Box_O a$, and $\Box_O 0=0$;
\item if $a\wedge\Box_O c\leq \neg b$, then $b\wedge\Box_O c\leq\neg a$; 
\item $(a\vee_O b)\wedge\Box_Oc\leq (a\wedge\Box_Oc)\vee_O (b\wedge\Box_Oc)$.
\end{itemize}

Note that $\Box_O a = \neg_\Box \neg_\Box \Box a$, where $\neg_\Box$ is the pseudocomplementation on boxed elements from Proposition~\ref{BoxedElementsPropM}. Thus, we have
\begin{align*}
\Box_O 1 &= \neg_\Box\neg_\Box \Box 1 = \neg_\Box\neg_\Box 1 = 1 \\  
\Box_O(a\wedge b) & = \neg_\Box\neg_\Box \Box ( a\wedge b) = \neg_\Box\neg_\Box ( \Box a\wedge \Box b) = (\neg_\Box\neg_\Box\Box a\wedge \neg_\Box\neg_\Box\Box b)= \Box_O a\wedge \Box_O b\\
\Box_O\Box_O a & =  \neg_\Box \neg_\Box \Box \neg_\Box \neg_\Box \Box a = \neg_\Box \neg_\Box \neg_\Box \neg_\Box \Box a = \neg_\Box \neg_\Box \Box a = \Box_O a \\ 
\Box_O \neg \Box_O \neg \Box_O a &= \neg_\Box \neg_\Box \Box \neg \neg_\Box \neg_\Box \Box \neg \neg_\Box \neg_\Box \Box a = \neg_\Box \neg_\Box \neg_\Box \neg_\Box \neg_\Box \neg_\Box \neg_\Box \neg_\Box \Box a =  \neg_\Box \neg_\Box \Box a =\Box_Oa \\
\Box_O 0 & = \neg_\Box\neg_\Box \Box 0 =  \neg_\Box\neg_\Box 0 = 0.
\end{align*}
The second bullet point is immediate from the form of pseudocomplementation in $\mathbf{FM4}^*$-algebras (cf.~Definition \ref{FN4AlgDef}.\ref{FN4AlgDef3}) given that $\Box_O$ begins with a $\Box$. For the third bullet point, consider
\[(a\wedge\Box_Oc)\vee_O (b\wedge\Box_Oc) = \neg \big(\neg (a\wedge\Box_Oc)\wedge \neg (b\wedge\Box_Oc)\big).\]
Let $x= \big(\neg (a\wedge\Box_Oc)\wedge \neg (b\wedge\Box_Oc)\big)$. We want to prove that $(a\vee_O b)\wedge\Box_O c\leq \neg x$. Now $x\leq \neg (a\wedge\Box_O c)$, so $a\wedge\Box_O c\leq \neg x$ by dual self-adjointness. Then by the form of pseudocomplementation in $\mathbf{FM4}^*$ algebras, we have $x \wedge \Box_O c\leq\neg a$. By analogous reasoning, we have $x \wedge \Box_O c\leq\neg b$. Thus, $x \wedge \Box_O c\leq \neg a\wedge\neg b = \neg\neg (\neg a\wedge\neg b)=\neg  (a\vee_O b) $. Then by the form of pseudocomplementation in $\mathbf{FM4}^*$ again, we have $(a\vee_O b)\wedge\Box_O c\leq \neg x$.\end{proof}

We can now finish this story by showing that $g^*$ is a full and faithful embedding of $\mathsf{OMR}$ into $\mathsf{FM4}^*$ and $\mu^+$ is a full and faithful embedding of $\mathsf{IPC}^-$ into $\mathsf{FM4}^*$. Thus, the translations allow us to reverse the arrows in the ``fundamental diamond'' in Figure~\ref{fundiamond}, obtaining the ``reversed fundamental diamond'' in Figure~\ref{reversediamond}.

\begin{theorem}\label{FM4*} $\,$
\begin{enumerate}
    \item\label{FM4*1} For all $\varphi,\psi\in\mathcal{L}(\wedge,\vee,\neg,\Box)$, $\varphi\vdash_{\mathsf{OMR}}\psi$ iff $g^*(\varphi)\vdash_{\mathsf{FM4}^*}g^*(\psi)$.
    \item\label{FM4*2} For all $\varphi,\psi\in\mathcal{L}(\wedge,\vee,\neg)$, $\varphi\vdash_{\mathsf{IPC}^-}\psi$ iff $\mu^+(\varphi)\vdash_{\mathsf{FM4}^*}\mu^+(\psi)$.
\end{enumerate}
\end{theorem}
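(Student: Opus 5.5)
Both parts are proved algebraically, in the style of Corollary~\ref{IPC-toFN4} and Theorem~\ref{CPCtoFN4v2}. In each direction I turn a countermodel on one side into a countermodel on the other. I use the soundness and completeness of $\mathsf{OMR}$ with respect to $\mathbf{OMR}$-algebras, and of $\mathsf{FM4}^*$ with respect to $\mathbf{FM4}^*$-algebras. The latter is taken to hold by the same routine argument as for $\mathsf{FN4}$.

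\emph{Part~\ref{FM4*1}, left to right.} Suppose $g^*(\varphi)\nvdash_{\mathsf{FM4}^*}g^*(\psi)$. Then there is an $\mathbf{FM4}^*$-algebra $(L,\neg,\Box)$ with a valuation $\theta$ refuting the consequence. Let $O$ be the $\mathbf{OMR}$-algebra of regular elements from Proposition~\ref{RegInFM4*}, and put $\theta'(p):=\tilde\theta(\neg\neg p)=\tilde\theta(g^*(p))\in O$.
\begin{itemize}
\item First I show by induction on modal formulas $\chi$ that $\tilde\theta(g^*(\chi))$ is regular. The $\neg$ case is clear. For $\wedge$, a meet of regular elements is regular, since $\neg\neg(\neg x\wedge\neg y)$-type reasoning gives $\neg\neg a\wedge\neg\neg b$ regular. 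The $\vee$ case is a negation. For the $\Box$ case, $\Box\diamondsuit\Box x=\Box\neg(\Box\neg\Box x)$ has the form $\Box\neg y$, which is regular by the $\mathbf{FM4}^*$ law $\neg\neg\Box\neg y=\Box\neg y$.
\item Next I show by induction that $\tilde{\theta'}(\chi)=\tilde\theta(g^*(\chi))$, with $\tilde{\theta'}$ computed in $O$. The clauses match exactly: $g^*(\neg\chi)$ uses $\neg$, $g^*(\chi\vee\chi')$ uses $\neg(\neg\cdot\wedge\neg\cdot)=\vee_O$, and $g^*(\Box\chi)=\Box\diamondsuit\Box g^*(\chi)=\Box_O$ of the value.
\end{itemize}
Since the order on $O$ is that of $L$, the non-inclusion transfers to $O$, and soundness gives $\varphi\nvdash_{\mathsf{OMR}}\psi$.

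\emph{Part~\ref{FM4*1}, right to left.} Take an $\mathbf{OMR}$-countermodel $O$. By Fact~\ref{OMRFM4*}.\ref{OMRFM4*1}, $O$ is an $\mathbf{FM4}^*$-algebra, so it suffices to show that $g^*$ acts as the identity on values in $O$.
\begin{itemize}
\item For variables this holds because $\neg\neg a=a$ in an ortholattice.
\item For $\vee$, the clause $g^*(\varphi\vee\psi)=\neg(\neg g^*(\varphi)\wedge\neg g^*(\psi))$ computes the join in an ortholattice.
\item For $\Box$ I need $\Box\diamondsuit\Box a=\Box a$. The inequality $\le$ is an $\mathbf{OMR}$ axiom. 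For $\ge$, from \textsf{4} and $\Box\Box a\le\Box a$ we get $\Box a=\Box\Box a$. Fact~\ref{OMRFM4*}.\ref{OMRFM4*2} applied to $\Box a$ gives $\Box\Box a\le\diamondsuit\Box a$. Hence $\Box a=\Box\Box a\le\Box\diamondsuit\Box a$ by monotonicity.
\end{itemize}
Then $\tilde\theta(g^*(\chi))=\tilde\theta(\chi)$, and soundness of $\mathsf{FM4}^*$ yields $g^*(\varphi)\nvdash_{\mathsf{FM4}^*}g^*(\psi)$.

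\emph{Part~\ref{FM4*2}.} This mirrors Corollary~\ref{IPC-toFN4} and Proposition~\ref{FN4toInt}.
\begin{itemize}
\item \emph{Left to right.} Given an $\mathbf{FM4}^*$-countermodel, pass to the boxed elements via Proposition~\ref{BoxedElementsPropM}. This is a pseudocomplemented distributive lattice with join $\Box(a\vee b)$, meet $\wedge$, and negation $\Box\neg$, and these are exactly the clauses of $\mu^+$. On boxed elements $\Box(a\wedge b)=a\wedge b$, using $\Box\Box a=\Box a$. Setting $\theta'(p)=\tilde\theta(\mu^+(p))$, induction gives $\tilde{\theta'}(\chi)=\tilde\theta(\mu^+(\chi))$, so the countermodel transfers to $\mathsf{IPC}^-$.
\item \emph{Right to left.} Take an $\mathsf{IPC}^-$-countermodel in a pseudocomplemented distributive lattice and let $\Box$ be the identity. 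This is an $\mathbf{FM4}^*$-algebra: the conditions $\Box\Box a\le\Box a$ and $\Box 0=0$ are trivial, Definition~\ref{FN4AlgDef}.\ref{FN4AlgDef2} is distributivity, and Definition~\ref{FN4AlgDef}.\ref{FN4AlgDef3} follows from pseudocomplementation. The law $\neg\neg\Box\neg a\le\Box\neg a$ becomes $\neg\neg\neg a=\neg a$. Then $\mu^+$ acts as the identity on values, and soundness finishes.
\end{itemize}

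The step I expect to need the most care is the regularity lemma in Part~\ref{FM4*1}, together with the $\Box$ case of the induction. The $\Box$ case relies on $\Box_O$ being literally $\Box\diamondsuit\Box$ on regular elements and on $O$ being closed under $\Box$. Both are supplied by Proposition~\ref{RegInFM4*} and the law $\neg\neg\Box\neg a=\Box\neg a$.
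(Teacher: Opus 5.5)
Your proposal is correct and follows the paper's proof essentially step for step. For the two left-to-right directions you use the regular-element $\mathbf{OMR}$-algebra of Proposition~\ref{RegInFM4*} and the boxed-element pseudocomplemented distributive lattice of Proposition~\ref{BoxedElementsPropM}; for the two right-to-left directions you use Fact~\ref{OMRFM4*}.\ref{OMRFM4*1} and the identity-$\Box$ construction, exactly as the paper does. You also supply details the paper leaves implicit, notably $\Box\diamondsuit\Box a=\Box a$ in $\mathbf{OMR}$-algebras (needed for the $\Box$ case) and the regularity lemma, though the meet case of that lemma is more cleanly justified by $\neg\neg a\wedge\neg\neg b=\neg(\neg a\vee\neg b)$, which follows from dual self-adjointness.
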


\begin{proof} For part~\ref{FM4*1}, suppose $\varphi\nvdash_{\mathsf{OMR}}\psi$, so there is an $\mathbf{OMR}$-algebra $\mathcal{O}=(L,\neg,\Box)$ and valuation $\theta$ on $\mathcal{O}$ such that $\tilde{\theta}(\varphi)\not\leq_\mathcal{O} \tilde{\theta}(\psi)$. By Fact~\ref{OMRFM4*}.\ref{OMRFM4*1}, $\mathcal{O}$ is also an $\mathbf{FM4}^*$ algebra. Now we can prove by induction that for any $\chi$, $\tilde{\theta}(\chi)=\tilde{\theta}(g^*(\chi))$. Then $\tilde{\theta}(\varphi)\not\leq_\mathcal{O} \tilde{\theta}(\psi)$ implies $\tilde{\theta}(g^*(\varphi))\not\leq_\mathcal{O} \tilde{\theta}(g^*(\psi))$ and hence $g^*(\varphi)\nvdash_{\mathsf{FM4}^*}g^*(\psi)$.

For the left-to-right direction, if $g^*(\varphi)\nvdash_{\mathsf{FM4}^*}g^*(\psi)$, then there is an $\mathbf{FM4}^*$ algebra $\mathcal{A}=(L,\neg,\Box)$ and valuation $\theta$ such that $\tilde{\theta}(g^*(\varphi))\not\leq_\mathcal{A} \tilde{\theta}(g^*(\psi))$. Now an easy induction shows that for any formula $\chi$, $\tilde{\theta}(g^*(\chi))=\neg\neg \tilde{\theta}(g^*(\chi))$, so $\tilde{\theta}(g^*(\chi))\in \{\neg\neg a\mid a\in L\}$. Let $\mathcal{O}$ be the $\mathbf{OMR}$ algebra based on $\{\neg\neg a\mid a\in L\}$ given by Proposition~\ref{RegInFM4*}, and let $\theta'$ be the valuation on $\mathcal{O}$ defined by $\theta'(p)=\tilde{\theta}(g^*(p))$. Then we can prove by induction that for any formula $\chi$, $\tilde{\theta'}(\chi)= \tilde{\theta}(g^*(\chi))$. Thus, $\tilde{\theta}(g^*(\varphi))\not\leq_\mathcal{A} \tilde{\theta}(g^*(\psi))$ implies $\tilde{\theta'}(\varphi)\not\leq_\mathcal{O} \tilde{\theta'}(\psi)$ and hence $\varphi\nvdash_{\mathsf{OMR}}\psi$.

For part~\ref{FM4*2}, the right-to-left direction of the proof is essentially the same as that of Proposition~\ref{FN4toInt}, except we use $\mu^+$ and note that adding an identity $\Box$ to a pseudocomplemented distributive lattice gives us not only an $\mathbf{FN4}$-algebra but also an $\mathbf{FM4}^*$ algebra. The proof of the left-to-right direction is essentially the same as that of Corollary~\ref{IPC-toFN4} except we use Proposition~\ref{BoxedElementsPropM} instead of Proposition~\ref{BoxedElementsProp1}.\end{proof}

\begin{figure}[H]
    \centering
    \begin{tikzpicture}[
        ->,>={stealth'[length=6pt]},
        shorten >=1pt,shorten <=1pt,
        semithick,dashed,
        desc/.style={fill=white,inner sep=2pt}
    ]
        \node (CPC) at (2,4) {$\mathsf{CPC}$};
        \node (OMR) at (0,2) {$\mathsf{OMR}$};
        \node (IPC) at (4,2) {$\mathsf{IPC}^-$};
        \node (FM4) at (2,0) {$\mathsf{FM4}^*$};

        \path (CPC) edge (OMR);
        \path (CPC) edge (IPC);
        \path (OMR) edge (FM4);
        \path (IPC) edge (FM4);

        \node[desc] at ($(CPC)!0.5!(OMR)!3pt!(CPC)$) {$\mu^+$};
        \node[desc] at ($(CPC)!0.5!(IPC)!3pt!(CPC)$) {$g$};
        \node[desc] at ($(OMR)!0.5!(FM4)!3pt!(OMR)$) {$g^*$};
        \node[desc] at ($(IPC)!0.5!(FM4)!3pt!(IPC)$) {$\mu^+$};
    \end{tikzpicture}
    \caption{The ``reversed fundamental diamond'', reversing the arrows from Figure~\ref{fundiamond}. Dashed arrows represent full and faithful translations from the source logic to the target logic.}
    \label{reversediamond}
\end{figure}

\begin{remark} The two paths from classical logic to fundamental modal logic in Figure~\ref{reversediamond} do not coincide: at the atomic level, $g^*(\mu^+(p))=g^*(\Box p) = \Box\diamondsuit\Box g^*(p)=\Box\diamondsuit\Box \neg \neg p$ is not equivalent to $\mu^+(g^*(p))= \mu^+(\neg\neg p) = \Box\neg\Box\neg \mu^+(p)=\Box\neg\Box\neg \Box p = \Box\diamondsuit\Box p$ over $\mathsf{FM4}^*$. Thus, there is a genuine difference between the fundamental logician's understanding of the classical logician as filtered through someone who accepts RAA (the orthologician) or through someone who accepts Reiteration (the intuitionist).\end{remark}

\section{Further Directions}\label{FurtherDirections}

In this section, we highlight some directions for future research suggested by the results presented here.

\subsection{Superfundamental Logics}\label{Superfun}

In \cref{OS4Section,KTBSection}, we showed that the GMT translation and the Goldblatt translation can be generalized to obtain translations of fundamental logic into orthomodal logic and intuitionistic modal logic, respectively. In their standard setting, these translations also apply to the lattice of logics extending the source logic of the translation. In our setting, this means that we can explore how the two translations of fundamental logic we have discussed behave with respect to superfundamental logics, i.e., propositional logics in the signature $\{\land, \lor, \neg\}$ that extend fundamental logic. We start with the following definitions.

\begin{definition}
    Let $\mathcal{SF}$ be the lattice of all propositional logics extending fundamental logic. Similarly, let $\mathcal{SOM}$ and $\mathcal{SIM}$ be the lattices of modal logics extending $\mathsf{OS4}$ and $\mathsf{FSTB}$, respectively.
\end{definition}

\begin{definition}
    We define the following maps:
    \begin{itemize}
        \item $\mu^*: \mathcal{SF} \to \mathcal{SOM}$, given by $\mu^*(\mathsf{L}) = \mathsf{OS4} \oplus \{\mu(\phi) \vdash \mu(\psi) \mid \phi \vdash \psi \in \mathsf{L}\}$;
        \item $\gamma^*: \mathcal{SF} \to  \mathcal{SIM}$, given by $\gamma^*(\mathsf{L}) = \mathsf{FSTB} \oplus \{\gamma(\phi) \vdash \gamma(\psi) \mid \phi \vdash \psi \in \mathsf{L}\}$.
    \end{itemize}
    
\end{definition}

Recall that a propositional logic $\mathsf{L}$ is \emph{canonical} if a variety of algebras $\mathbb{V}$ providing a sound and complete semantics for $\mathsf{L}$ is closed under canonical extensions. In the case of a superfundamental logic $\mathsf{L}$, this is equivalent to $\mathsf{L}$ being valid on the dual algebra of the reflexive canonical frame $(X, \op)$ induced by any fundamental lattice on which $\mathsf{L}$ is valid (\citealt[Lemma~4.2]{massas2024goldblatt}). Because the results presented above rely on such reflexive canonical frames, we can easily provide full and faithful translations for canonical superfundamental logics, as established by the following result.

\begin{theorem} \label{compthm}
    For any superfundamental logic $\mathsf{L} \in \mathcal{SF}$, $\mu$ and $\gamma$ are faithful translations of $\mathsf{L}$ into $\mu^*(\mathsf{L})$ and $\gamma^*(\mathsf{L})$, respectively. Moreover, if $\mathsf{L}$ is canonical, then both translations are also full.
\end{theorem}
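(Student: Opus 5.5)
Faithfulness is essentially built into the definitions. If $\phi\vdash_{\mathsf L}\psi$, then the sequent $\mu(\phi)\vdash\mu(\psi)$ is one of the added axioms of $\mu^*(\mathsf L)$, so $\mu(\phi)\vdash_{\mu^*(\mathsf L)}\mu(\psi)$. Likewise $\gamma(\phi)\vdash_{\gamma^*(\mathsf L)}\gamma(\psi)$.

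The only care needed concerns how $\mathsf{L}$'s consequences are closed under substitution. The added sequents should be read as schemata, i.e.\ closed under uniform substitution of translated formulas. Since $\mu(\phi[\chi/p])=\mu(\phi)[\mu(\chi)/\Box p]$ is not literally a substitution instance, I would instead take $\mathsf{L}$ to be closed under substitution and add all instances. No further argument is needed beyond noting that the sequent is among the generators.

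For fullness with $\mathsf L$ canonical, I argue by contraposition, following the full-translation proofs for $\mu$ and $\gamma$.

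\emph{Step 1.} Suppose $\phi\nvdash_{\mathsf L}\psi$. Take the Lindenbaum algebra $L$ of $\mathsf L$, a fundamental lattice validating $\mathsf L$, with the canonical valuation $\theta$ refuting $\phi\le\psi$.

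\emph{Step 2.} Let $(X,\op)$ be the reflexive canonical frame of $L$ (Theorem~\ref{RefCanEmbed}). Since $\mathsf L$ is canonical, by the cited characterization (\citealt[Lemma~4.2]{massas2024goldblatt}) $\mathsf L$ is valid on $\chi_{\op}(X)$.

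\emph{Step 3.} By \cref{canballma} the frame is balanced, so its orthomodal companion yields the \textbf{OS4}-lattice $M=\chi_{\opo}(X)$. By \cref{keylma}, $\chi_{\op}(X)$ is exactly the fixpoint set of $\Box_M$, with negation $\Box_M\neg_M$ and with the same meets and joins (joins agree because $\iota$ is a left adjoint). Symmetrically, by the proof of \cref{embthm2} the frame is strongly factoring, so its \textbf{FSTB}-companion has complex algebra $H$. By \cref{fstbkeylma}, $\chi_{\op}(X)$ is exactly the $\Box\diamondsuit$-fixpoints, with join $\Box\diamondsuit(-\vee-)$ and negation $\Box\neg$.

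\emph{Step 4.} I show $M\models\mu^*(\mathsf L)$. $M$ validates $\mathsf{OS4}$. For an added sequent $\mu(\alpha)\vdash\mu(\beta)$ with $\alpha\vdash_{\mathsf L}\beta$, take any valuation $v$ on $M$. Define the valuation $v'(p)=\Box_M v(p)\in\chi_{\op}(X)$. The induction from the fullness proof for $\mu$ gives $\tilde v(\mu(\chi))=\tilde{v'}(\chi)$ computed in $\chi_{\op}(X)$. Validity of $\mathsf L$ on $\chi_{\op}(X)$ then gives $\tilde v(\mu(\alpha))\le\tilde v(\mu(\beta))$. The same argument with $v'(p)=\Box\diamondsuit v(p)$ shows $H\models\gamma^*(\mathsf L)$.

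\emph{Step 5.} Apply the valuation $\widehat{\theta(p)}$ on $M$ (resp.\ $H$). By the inductions of Theorem~\ref{embthm}/\ref{fstbtrthm}, the translations evaluate to $\widehat{\tilde\theta(\chi)}$. Injectivity of $a\mapsto\widehat a$ then refutes $\mu(\phi)\vdash\mu(\psi)$ in $\mu^*(\mathsf L)$ (resp.\ $\gamma(\phi)\vdash\gamma(\psi)$ in $\gamma^*(\mathsf L)$), using soundness of the extensions with respect to algebras validating their axioms.

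The main obstacle is Step 4. It needs, first, that the added sequents are validated under \emph{all} valuations on $M$ and $H$, not just those landing in fixpoints, which is handled by pre-composing with $\Box$ or $\Box\diamondsuit$. It also needs canonicity to be exactly validity on $\chi_{\op}(X)$ for the reflexive canonical frame of the Lindenbaum algebra. I would check this first against the cited lemma.
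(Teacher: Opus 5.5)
Your proposal is correct and follows the paper's proof. Faithfulness is immediate from the generators of $\mu^*(\mathsf{L})$ and $\gamma^*(\mathsf{L})$. For fullness, the dual algebra of the reflexive canonical frame validates $\mathsf{L}$ by canonicity and sits inside the orthomodal (resp.\ \textbf{FSTB}-) companion as the $\Box$- (resp.\ $\Box\diamondsuit$-) fixpoints. Your Step 4, pulling an arbitrary valuation back along $p\mapsto\Box v(p)$ (resp.\ $p\mapsto\Box\diamondsuit v(p)$), supplies the justification the paper leaves implicit when it asserts that the companion algebra is a $\mu^*(\mathsf{L})$- (resp.\ $\gamma^*(\mathsf{L})$-) lattice.
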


\begin{proof}
    Fix a superfundamental logic $\mathsf{L}$. That $\mu$ and $\gamma$ are faithful translations into $\mu^*(\mathsf{L})$ and $\gamma^*(\mathsf{L})$, respectively, is clear. Suppose that $\mathsf{L}$ is canonical. To show that the translation $\mu$ (resp.~$\gamma$) is full, it is enough to show that for any $\mathsf{L}$-fundamental lattice $L$, there is an embedding of $L$ into a $\mu^*(\mathsf{L})$-$\mathbf{OS4}$ lattice with the properties of \cref{embthm} (resp.~an embedding of $L$ into a $\gamma^*(\mathsf{L})$-\textbf{FSTB}-lattice with the properties of \cref{embthm2}). Fix an $\mathsf{L}$-fundamental lattice $L$, and consider the dual algebra $L^\delta$ of its dual reflexive frame $(X, \op)$. Since $\mathsf{L}$ is canonical and $L^\delta$ is the canonical extension of $L$, it follows that $L^\delta$ is also an $\mathsf{L}$-fundamental lattice. Now let $(O, \Box_\leq)$ be the dual algebra of the orthomodal companion of $(X, \op)$. By the proof of \cref{embthm}, the identity is an embedding of $L^\delta$ into $(O, \Box_\leq)$ with the required properties. Moreover, by \cref{keylma}, it also maps $L^\delta$ onto the $\Box_\leq$ fixpoints of $(O, \Box_\leq)$. But from this plus the fact that $L^\delta$ is an $\mathsf{L}$-fundamental lattice, it follows that $(O, \Box_\leq)$ is a $\mu^*(\mathsf{L})$-$\mathbf{OS4}$ lattice. Similarly, letting $(A, \Box_\top, \diamondsuit_\top)$ be the \textbf{FSTB}-lattice of downsets of the \textbf{FSTB}-companion of $(X, \op)$, by \cref{embthm2} we have that the identity is an embedding of $L^\delta$ into $(A, \Box_\top, \diamondsuit_\top)$ with the required properties. Moreover, by \cref{fstbkeylma}, this embedding maps $L^\delta$ onto the $\Box_\top\diamondsuit_\top$ fixpoints of $A$. From this, together with the fact that $L^\delta$ is a $\mathsf{L}$-fundamental lattice, it follows that $(A, \Box_\top, \diamondsuit_\top)$ is a $\gamma^*(\mathsf{L})$-\textbf{FSTB}-lattice.
\end{proof}

As an example, we consider the logic $\mathsf{FEM}$, obtained by adding to fundamental logic the excluded middle $\top \vdash p \lor \neg p$. Call a modal logic $\mathsf{M}$ extending $\mathsf{OS4}$ (resp.~$\mathsf{FSTB}$) an
\textit{orthomodal} (resp.~$\mathsf{FSTB}$-) \textit{companion} of a superfundamental logic $\mathsf{L}$
if $\mu$ (resp.~$\gamma$) is a full and faithful translation of $\mathsf{L}$ into $\mathsf{M}$.

\begin{corollary}\label{FEMCor}
 The logic $\mathsf{OS4} \oplus \top \vdash \Box p \lor \Box \neg \Box p$ is an orthomodal companion of $\mathsf{FEM}$, and the logic $\mathsf{FSTB} \oplus \top \vdash \Box \diamondsuit (\Box \diamondsuit p \lor \Box \neg \Box\diamondsuit p)$ is an $\mathsf{FSTB}$-companion of $\mathsf{FEM}$.
\end{corollary}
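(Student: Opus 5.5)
Corollary~\ref{FEMCor} follows from Theorem~\ref{compthm} once two things are checked: $\mathsf{FEM}$ is canonical, and $\mu^*(\mathsf{FEM})$ and $\gamma^*(\mathsf{FEM})$ coincide, as logics, with the two displayed axiomatizations.

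\emph{Step 1: canonicity.} I would use the criterion quoted just before Theorem~\ref{compthm}. It suffices to show that if a fundamental lattice $L$ satisfies $a\vee\neg a=1$ for all $a$, then so does the dual algebra of its reflexive canonical frame $(X,\op)$. Fix a proposition $U$ and a point $x$. We need that every $x'\op x$ has some $x''\po x'$ lying in $U\cup\neg_{\op}U$. I would argue with the filter/ideal pairs directly, and failing that use the description of $\chi_{\op}(X)$ as the canonical extension $L^\delta$. The latter is available since the paper itself identifies the dual algebra with the canonical extension, citing \citealt[Lemma~4.2]{massas2024goldblatt}.

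Excluded middle is the inequation $1\leq p\vee\neg p$. Its left side is constant, and $p\vee\neg p$ is built from the join, which is an operator, and $\neg$, which reverses order. By the standard canonicity result for inequalities whose left side is closed, it lifts from $L$ to $L^\delta$. Concretely, for a closed element $k$ of $L^\delta$, the value $\neg k$ is the join of the $\neg a$ with $a\geq k$, so $k\vee\neg k$ is at least $\bigwedge_{a\geq k}a\vee\bigvee_{a\geq k}\neg a$. Compactness then gives $k\vee\neg k\geq$ some $a\vee\neg a=1$. Extending this to arbitrary elements of $L^\delta$ is the \textbf{main obstacle}. The worry is that $\neg$ is not join-reversing in a way that interacts well with arbitrary joins of closed elements. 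If the algebraic route is messy, I would try the frame route with the pairs $(F,I)$. Given $x'\op x$, I would find an $x''\po x'$ whose filter contains $a$ or $\neg a$ for suitable $a$, using that $a\vee\neg a=1$ lies in every proper filter, which forces a choice.

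\emph{Step 2: identifying the companions.} By definition, $\mu^*(\mathsf{FEM})$ is $\mathsf{OS4}$ plus the $\mu$-translations of all $\mathsf{FEM}$ sequents. Since $\mathsf{FEM}$ is axiomatized over $\mathsf{F}$ by the single sequent $\top\vdash p\vee\neg p$, and substitution instances of that sequent translate to substitution instances (modulo $\mathsf{OS4}$-provable equivalences) of its translation, it suffices to add $\mu(\top)\vdash\mu(p\vee\neg p)$. That sequent is exactly $\top\vdash\Box p\vee\Box\neg\Box p$. The same argument works for $\gamma$: $\gamma(p\vee\neg p)=\Box\diamondsuit(\Box\diamondsuit p\vee\Box\neg\Box\diamondsuit p)$. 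The substitution point needs care, because $\mu(p)=\Box p$ rather than $p$. The fix is to observe that for any $\phi$ we have $\mu(\phi)\dashv\vdash\Box\mu(\phi)$ in $\mathsf{OS4}$, and likewise $\gamma(\phi)\dashv\vdash\Box\diamondsuit\gamma(\phi)$ in $\mathsf{FSTB}$. Substituting $\mu(\phi)$ for $p$ in the axiom then yields $\mu$ of the instance.

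With both steps in hand, Theorem~\ref{compthm} gives that $\mu$ and $\gamma$ are full and faithful into these logics, so they are companions of $\mathsf{FEM}$ in the sense just defined.
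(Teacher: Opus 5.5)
Your Step~2 is fine; it is essentially the paper's ``routine induction on $\mathsf{FEM}$-proofs''. Your reduction of canonicity to the claim that the dual algebra $\chi_{\op}(X)$ of the reflexive canonical frame of an excluded-middle lattice $L$ again satisfies excluded middle is also exactly the paper's reduction. The gap is that you never prove this claim, and neither of your two routes works as stated.

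On the algebraic route, even the closed case is unjustified. The element $1$ is closed, but $k\vee\neg k$ is a join of a closed and an open element, so compactness does not apply. Moreover, for $a\geq k$ you only get $\neg a\leq\neg k$, not $a\leq k\vee\neg k$. Passing from closed elements to arbitrary ones would then need a distributive law that $L^\delta$ lacks; that is precisely your unresolved ``main obstacle''.

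On the frame route, filters in $\mathsf{FI}(L)$ are not prime, so $a\vee\neg a=1\in F$ forces no choice. More seriously, a proposition $U\in\chi_{\op}(X)$ need not be of the form $\widehat a$. So no single element $a$ decides whether a state lies in $U$ or in $\neg_{\op}U$.

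The missing idea is a single uniform, ``maximally rejecting'' witness for each state; this is the paper's Lemma~\ref{FEMframe}. Given $x=(x_F,x_I)$, put $y=(\upset\{\neg a\mid a\in x_I\},\dnset\{\neg\neg a\mid a\in x_I\})$. Excluded middle in $L$ is used only for two things. It shows $y\in X$: for instance, $\neg a\leq 0$ would give $a=a\vee\neg a=1$. It also shows $x\op y$: if $\neg a\leq b$ with $a,b\in x_I$, then $1=a\vee\neg a\leq a\vee b\in x_I$.

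Since $\neg a\in y_F$ for every $a\in x_I$, any $z\op y$ has $\neg a\notin z_I$, hence $a\notin z_F$, hence $x\op z$. Now take an arbitrary $A\subseteq X$. Either some $y'$ with $x\op y'$ lies in $A$, or none does; in the latter case every $z\op y$ lies outside $A$, so $y\in\neg_{\op}A$. Applying this at each $x'\op x$ gives $x\in c_{\op}(A\cup\neg_{\op}A)$. Hence $A\vee\neg_{\op}A=X$ for every proposition at once.

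Your ``$a$ or $\neg a$'' split does work for $U=\widehat a$, because a proper ideal cannot contain both $a$ and $\neg a$. The fix for general $U$ is to use all of $x_I$ simultaneously.
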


\noindent A routine induction on $\mathsf{FEM}$-proofs shows that the two logics in Corollary \ref{FEMCor} coincide with $\mu^*(\mathsf{FEM})$ and $\gamma^*(\mathsf{FEM})$, respectively. Thus, Corollary \ref{FEMCor} follows from \cref{compthm} once we establish that $\mathsf{FEM}$ is canonical. To do this, it is enough to show that for any fundamental lattice $L$ satisfying the equation $a \lor \neg a = 1$, the dual algebra $\chi_{\op}(X)$ of its dual reflexive frame $(X,\op)$ also satisfies excluded middle. We start with the following lemma.

\begin{lemma}\label{FEMframe}
    Let $(L,\neg)$ be a fundamental lattice satisfying excluded middle and $(X,\op)$ its reflexive canonical frame. Then the following holds:
    \[\forall x \exists y \po x \, \forall z(y \po z \Rightarrow z \po x).\]
\end{lemma}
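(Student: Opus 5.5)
The plan is to construct the witness $y$ directly from the ideal component of $x$, using excluded middle to guarantee it is well defined. Unpacking the relations: $y \po x$ means $x \op y$, i.e.\ $y_F \cap x_I = \varnothing$. The condition $y \po z \Rightarrow z \po x$ says: whenever $y_F \cap z_I = \varnothing$, we have $z_F \cap x_I = \varnothing$. So it suffices to find a $y \in X$ such that $y_F$ avoids $x_I$ but contains $\neg a$ for every $a \in x_I$.

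Indeed, given such a $y$, suppose $z \op y$ and $a \in z_F \cap x_I$. Then $\neg a \in z_I$, since $z \in X$. Also $\neg a \in y_F$, contradicting $y_F \cap z_I = \varnothing$.

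Fix $x = (x_F, x_I)$ and let
\[G = \{d \in L \mid \neg c \leq d \text{ for some } c \in x_I\}.\]
Since $\neg$ is antitone and $x_I$ is closed under finite joins, $G$ is a filter. Here I will use that $\neg(c_1 \vee c_2) = \neg c_1 \wedge \neg c_2$, which holds because $\neg$ is dually self-adjoint and so sends existing joins to meets. Clearly $\neg a \in G$ for all $a \in x_I$.

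The key step uses excluded middle. Suppose $d \in G \cap x_I$, say $\neg c \leq d$ with $c \in x_I$. Then
\[1 = c \vee \neg c \leq c \vee d \in x_I,\]
so $1 \in x_I$, contradicting properness of $x_I$. Hence $G \cap x_I = \varnothing$. Since $0 \in x_I$, it follows that $0 \notin G$, so $G$ is proper.

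Now set $y = (G, I(G))$, with $I(G)$ the ideal from the proof of \cref{canballma}. That proof shows the following. If $b \in I(G)$, then $\neg b \in G$. Consequently a common element of $G$ and $I(G)$ would put some $a \wedge \neg a = 0$ into $G$, which is impossible. Also $a \in G$ implies $\neg a \in I(G)$. And $I(G)$ is proper, since $1 \in I(G)$ would force $\neg 1 = 0 \in G$. So $y \in X$, and $y \po x$ holds because $G \cap x_I = \varnothing$. The argument in the first paragraph then finishes the proof.

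The main obstacle is the disjointness $G \cap x_I = \varnothing$. This is the only place excluded middle enters, and without distributivity a naive maximal-filter construction would not obviously work. Defining $G$ directly from negations of elements of $x_I$ avoids that problem.
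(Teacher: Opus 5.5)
Your proof is correct and follows the paper's route. You use the same filter $\upset\{\neg a \mid a\in x_I\}$, the same excluded-middle argument that it misses $x_I$ (giving $y\po x$), and the same final step via $\neg a\in y_F$. The only difference is cosmetic: you take the ideal component to be $I(G)$ and reuse the verification from the proof of \cref{canballma}, whereas the paper uses $\dnset\{\neg\neg a\mid a\in x_I\}$ and checks membership in $X$ directly — and these two ideals in fact coincide.
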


\begin{proof}
    Fix some $x \in X$ of the form $(x_F,x_I)$, and let $y = (\upset\{\neg a \mid a \in x_I\},\dnset\{\neg \neg a \mid a \in x_I\})$. To check that $y \in X$, we first show that $\upset\{\neg a \mid a \in x_I\}$ is a filter. If $\neg a \leq b$ and $\neg a' \leq b'$ for $a, a' \in x_I$, so $a\vee a'\in x_I$, then $\neg(a \lor a') = \neg a \land \neg a' \leq b \land b'\in \upset\{\neg a \mid a \in x_I\}$. To see that it is also proper, note that if $\neg a \leq 0$ for some $a \in x_I$, then $1 = a \lor \neg a = a \lor 0 = a$, which implies that $a \notin x_I$. To see that  $\dnset\{\neg \neg a \mid a \in x_I\}$ is an ideal, note that if $b\leq \neg\neg a$ and $b'\leq\neg \neg a'$ for $a,a'\in x_I$, so $a\vee a'\in x_I$, then $b\vee b'\leq \neg\neg a\vee \neg\neg a'\leq \neg\neg (a\vee a')$, so $b\vee b'\in \dnset\{\neg \neg a \mid a \in x_I\}$. To see that it is proper, note that if $1\leq \neg\neg a$, then $\neg a\leq 0$, which we ruled out above. Next, observe that $\upset\{\neg a \mid a \in x_I\}$ and  $\dnset\{\neg \neg a \mid a \in x_I\}$ are disjoint, since if there are $a,b\in x_I$ such that $\neg a\leq c \leq \neg\neg b$, then $a\vee b\in x_I$, and $\neg (a\vee b)\leq \neg a\wedge\neg b\leq \neg\neg b\wedge\neg b=0$, contradicting the properness of $\upset\{\neg a \mid a \in x_I\}$. Finally, observe that if $b\in \upset\{\neg a \mid a \in x_I\}$, so $\neg a\leq b$, then $\neg b\leq \neg\neg a$, so $\neg b\in \dnset\{\neg \neg a \mid a \in x_I\}$. Thus, we have shown that $y \in X$. 
    
    We now claim that $y \po x$. Suppose, toward a contradiction, that there is $b \in x_I$ such that $\neg a \leq b$ for some $a \in x_I$. Then $1 = a \lor \neg a \leq a \lor b$, contradicting the fact that $a \lor b \in x_I$. This shows that $y \po x$. Finally, let $z = (z_F,z_I)$ be such that $y \po z$. This means that $y_F \cap z_I = \varnothing$. Now for any $a \in x_I$, we have that $\neg a \in y_F$, which implies  $\neg a \notin z_I$. But this in turn means that $a \notin z_F$. Hence $z \po x$. This completes the proof.
\end{proof}

We can now show that the dual algebra $\chi_{\op}(X)$ of the reflexive canonical frame $(X,\op)$ of a fundamental lattice satisfying excluded middle also satisfies  excluded middle. This will establish that the logic $\mathsf{FEM}$ is canonical, as desired.

\begin{lemma}
    Let $(L,\neg)$ be a fundamental lattice satisfying excluded middle and $(X, \op)$ its reflexive canonical frame. Then $\chi_{\op}(X)$ also satisfies excluded middle.
\end{lemma}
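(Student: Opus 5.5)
We need to show that for every proposition $U \in \chi_{\op}(X)$, the join $U \vee \neg_{\op} U = c_{\op}(U \cup \neg_{\op} U)$ is all of $X$. Unfolding the closure operator, this amounts to the following: for every $x \in X$ and every $x' \op x$, there is some $x'' \po x'$ lying in $U$ or in $\neg_{\op} U$. The plan is to use the frame condition of Lemma~\ref{FEMframe}, applied at the state $x'$, to produce the required $x''$.

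So fix $x \in X$, a state $x' \op x$, and a proposition $U$. Lemma~\ref{FEMframe} applied to $x'$ gives a state $y$ with $y \po x'$, that is, $x' \op y$. Moreover, every $z$ with $y \po z$ satisfies $z \po x'$. We split into two cases according to whether $y \in \neg_{\op} U$.

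In the first case, $y \in \neg_{\op} U$. Then $x'' := y$ is the desired witness, since $y \po x'$ and $y \in \neg_{\op} U$.

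In the second case, $y \notin \neg_{\op} U$. By definition of $\neg_{\op}$ there is some $z \op y$ with $z \in U$. Since $z \op y$ is the same as $y \po z$, the property of $y$ from Lemma~\ref{FEMframe} gives $z \po x'$. Then $x'' := z$ is a state with $x'' \po x'$ and $x'' \in U$, which is what we need.

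In either case some $x'' \po x'$ lies in $U \cup \neg_{\op} U$. Since $x'$ and $x$ were arbitrary, $x \in c_{\op}(U \cup \neg_{\op} U)$ for all $x$, so $U \vee \neg_{\op} U = X$, the top element of $\chi_{\op}(X)$. The only delicate point is orientation: we must check that the relation direction in Lemma~\ref{FEMframe} ($y \po x$, i.e., $x \op y$, together with $y \po z \Rightarrow z \po x$) matches the quantifier pattern $\forall x' \op x\, \exists x'' \po x'$ in the definition of $c_{\op}$. With the lemma instantiated at $x'$ rather than at $x$, it does. Note that the argument uses only that $U$ is a set of states, so it holds for all propositions.
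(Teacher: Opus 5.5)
Your proof is correct and is essentially the paper's argument: both invoke Lemma~\ref{FEMframe} to obtain a state $y \po x'$ such that every $z \op y$ satisfies $z \po x'$, and conclude that either some state $\po x'$ lies in $U$ or $y \in \neg_{\op} U$. The differences are cosmetic. The paper case-splits on whether some state $\po x$ lies in $A$, proves the claim for an arbitrary state, and leaves the instantiation at each $x' \op x$ implicit. You instead case-split on whether $y \in \neg_{\op} U$ and make that instantiation explicit.
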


\begin{proof}
    We claim that for any $x \in X$ and any $A \sset X$, we have \[x \in \neg_{\op}\neg_{\po}(\neg_{\op}\neg_{\po} A \cup \neg_{\op} A).\]
    In fact, we show that for any $x \in X$, there is $y \po x$ such that $y \in A$ or $y \in \neg_{\op} A$. Clearly, this is enough to prove the claim. Fix some $x \in X$. Suppose that for all $y \po x$, $y \notin A$. By the previous lemma, there is a $y \po x$ such that for any $z \in X$, $y \po z$ implies $z \po x$. By assumption, we have that $z \po x$ implies $z \notin A$. But this means that $y\po z$ implies $z \notin A$, so $y \in \neg_{\op} A$. This completes the proof.
\end{proof}

As a second application of \cref{compthm}, we consider the intersection of orthologic and the $\to$-free fragment of intuitionistic logic. This logic was recently axiomatized by \cite{aguilera2025constructivequantumlogics} as follows.

\begin{definition}
    Let $\mathsf{Ex}$ be the extension of fundamental logic with the following axioms:
    \begin{itemize}
        \item $\neg \neg p \land \neg \neg q \vdash \neg\neg (p \land q)$;
        \item $p \land (q \lor r) \land \neg \neg s \vdash (p \land q) \lor (p \land r) \lor s$;
        \item $\neg \big( p \land ((q \land r) \lor (q \land s))\big) \land p \vdash (q \land (r\lor s)) \lor \neg(q \land (r \lor s))$.
    \end{itemize}
\end{definition}

Since both the $\to$-free fragment of intuitionistic logic (i.e., the logic of pseudocomplemented distributive lattices) and orthologic are canonical, standard facts about canonical extensions \citep[Thm.~2.8]{DunnGehrkePalmigiano} imply that $\mathsf{Ex}$ is also canonical. Therefore, we obtain the following corollary of \cref{compthm}.

\begin{corollary}$\,$
    \begin{enumerate}
        \item $\mu^*(\mathsf{Ex})$ is axiomatized over $\mathsf{OS4}$ by the following axioms:
        \begin{itemize}
            \item $\Box (\diamondsuit \Box p \land \diamondsuit \Box q) \vdash \Box \diamondsuit \Box(p \land q)$;
            \item $\Box p \land (\Box q \lor \Box r) \land \Box \diamondsuit \Box  s \vdash \Box(p \land q) \lor \Box(p \land r) \lor \Box s$;
            \item $\Box\big[ \neg \big( \Box p \land (\Box (q \land r) \lor \Box(q \land s))\big) \land p\big] \vdash (\Box q \land (\Box r\lor \Box s)) \lor \Box\neg (\Box q \land (\Box r \lor \Box s))$.
        \end{itemize}
        \item $\gamma^*(\mathsf{Ex})$ is axiomatized over $\mathsf{FSTB}$ by the following axioms:
        \begin{itemize}
            \item $\Box \neg (\Box \neg \Box \diamondsuit p \lor \Box \neg \Box \diamondsuit q) \vdash \Box \neg \Box \neg \Box (\diamondsuit p \land \diamondsuit q)$;
            \item $\Box \big(\diamondsuit p \land \diamondsuit (q \lor r) \land \neg \Box \neg \Box \diamondsuit s\big) \vdash \Box \diamondsuit \big(\Box(\diamondsuit p \land \diamondsuit q) \lor \Box(\diamondsuit p \land \diamondsuit r) \lor s \big)$;
            \item $\Box \big[\neg \Box \big( \diamondsuit p \land \diamondsuit(\Box (\diamondsuit q \land  \diamondsuit r) \lor \Box (\diamondsuit q \land  \diamondsuit s))\big) \land\diamondsuit p \big] \vdash \Box \diamondsuit\big[\Box (\diamondsuit q \land \diamondsuit( r\lor  s)) \lor \Box\neg\Box(\diamondsuit q \land \diamondsuit(r \lor  s))\big]$.
        \end{itemize}
    \end{enumerate}
\end{corollary}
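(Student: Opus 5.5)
The plan is to derive the corollary from Theorem~\ref{compthm}, and then to show that each of the modal logics $\mu^*(\mathsf{Ex})$ and $\gamma^*(\mathsf{Ex})$ is axiomatized by the translations of just the three axioms of $\mathsf{Ex}$. Since $\mathsf{Ex}$ is canonical, Theorem~\ref{compthm} gives that $\mu$ and $\gamma$ are full and faithful into $\mu^*(\mathsf{Ex})$ and $\gamma^*(\mathsf{Ex})$. The remaining work has three steps.

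\textbf{Step 1: the small logics suffice.} Let $\mathsf{M}_\mu$ be $\mathsf{OS4}$ plus the three sequents $\mu(\alpha_i)\vdash\mu(\beta_i)$, where $\alpha_i\vdash\beta_i$ are the $\mathsf{Ex}$ axioms. Let $\mathsf{M}_\gamma$ be defined the same way over $\mathsf{FSTB}$ using $\gamma$. Clearly $\mathsf{M}_\mu\subseteq\mu^*(\mathsf{Ex})$. For the reverse inclusion, I must show that $\phi\vdash_{\mathsf{Ex}}\psi$ implies $\mu(\phi)\vdash_{\mathsf{M}_\mu}\mu(\psi)$. I will argue algebraically. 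Let $(M,\neg,\Box)$ be any \textbf{OS4}-lattice validating $\mathsf{M}_\mu$. Its fixpoint set $ran(\Box)$, with the inherited meet and join and the negation $a\mapsto\Box\neg a$, is a fundamental lattice. This is the abstract content of the proof of faithfulness of $\mu$ for $\mathsf{F}$: $ran(\Box)$ is closed under $\wedge$, under $\vee$ (because $\Box a\vee\Box b\leq\Box(\Box a\vee\Box b)$ by monotonicity and \textsf{4}), and under $\Box\neg$. On this lattice, $\mu$-formulas are computed exactly as fundamental formulas under the valuation $p\mapsto\Box\theta(p)$. Validity of the translated axioms for every valuation says that $ran(\Box)$ validates the $\mathsf{Ex}$ axioms for every fixpoint valuation, so $ran(\Box)$ is an $\mathsf{Ex}$-lattice. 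Hence $\phi\vdash_{\mathsf{Ex}}\psi$ gives $\tilde\theta(\mu(\phi))\leq\tilde\theta(\mu(\psi))$, and completeness of $\mathsf{M}_\mu$ with respect to its \textbf{OS4}-lattices (Lindenbaum algebra) finishes this step.

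The same argument works for $\gamma$. Here I use the $\Box\diamondsuit$-fixpoints of an \textbf{FSTB}-lattice, with meet inherited from $H$, join $\Box\diamondsuit(a\vee b)$, and negation $\Box\neg$. That this is a fundamental lattice is the algebraic version of Lemma~\ref{reductlma}; I expect the frame proof there to transfer through the representation of Lemma~\ref{repfstblma}.

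\textbf{Step 2: match the displayed axioms to the literal translations.} For each displayed sequent, I will check that it is equivalent over $\mathsf{OS4}$ (resp.\ $\mathsf{FSTB}$) to the literal translation, unfolding $\diamondsuit=\neg\Box\neg$ in the orthomodal case. Two examples:
\begin{itemize}
\item $\mu(\neg\neg p\wedge\neg\neg q)=\Box\neg\Box\neg\Box p\wedge\Box\neg\Box\neg\Box q$, and by normality of $\Box$ this equals $\Box(\diamondsuit\Box p\wedge\diamondsuit\Box q)$.
\item The literal right-hand side of the first axiom is $\Box\neg\Box\neg\Box(p\wedge q)$, which is $\Box\diamondsuit\Box(p\wedge q)$.
\end{itemize}
The second and third $\mu$-axioms go the same way. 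The tools are the facts that $\mu$-images are $\Box$-fixed, $\Box(\Box a\wedge\Box b)=\Box a\wedge\Box b$, and $\Box\Box=\Box$; these let me drop or insert boxes on the joins (for example, $\Box p\wedge(\Box q\vee\Box r)$ already equals its box). For $\gamma$, the analogous simplifications use the closure laws $\Box\diamondsuit\Box\diamondsuit=\Box\diamondsuit$ and $\diamondsuit\Box\leq\mathrm{id}\leq\Box\diamondsuit$. They also use the identity $\Box\diamondsuit a\wedge\Box\diamondsuit b=\Box(\diamondsuit a\wedge\diamondsuit b)$, which rewrites $\gamma$ of conjunctions. On a left-hand side, an outer $\Box\diamondsuit$ may be stripped or added freely, since we are proving entailments between $\Box\diamondsuit$-fixed elements.

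\textbf{Main obstacle.} I expect the hardest part to be the bookkeeping in Step 2 for the third axiom in both cases, and for all three $\gamma$-axioms. There, nested $\Box\diamondsuit$'s, and the intuitionistic rather than orthocomplemented $\neg$, must be shown to collapse to exactly the displayed shape. I will handle this by proving each displayed sequent and its literal counterpart interderivable via a chain of fixpoint identities, rather than by syntactic rewriting alone.
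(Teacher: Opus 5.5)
Your proposal is correct and has the same skeleton as the paper's proof: fullness from Theorem~\ref{compthm} via canonicity of $\mathsf{Ex}$, then a reduction of $\mu^*(\mathsf{Ex})$ and $\gamma^*(\mathsf{Ex})$ to the translations of the three $\mathsf{Ex}$-axioms, then rewriting into the displayed shapes. The difference lies in your Step~1.

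The paper does that step by a routine induction on $\mathsf{Ex}$-proofs, which amounts to rerunning the faithfulness argument with the three axioms as extra rules. The one point needing care there is that a substitution instance of an $\mathsf{Ex}$-axiom translates to a substitution instance of the translated axiom only up to $\Box\mu(\chi)\dashv\vdash\mu(\chi)$ (resp.\ $\Box\diamondsuit\gamma(\chi)\dashv\vdash\gamma(\chi)$).

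You instead argue semantically. You pass to the fixpoint lattice of $\Box$ (resp.\ $\Box\diamondsuit$) in an arbitrary algebra for $\mathsf{M}_\mu$ (resp.\ $\mathsf{M}_\gamma$), then use soundness of $\mathsf{Ex}$ for $\mathsf{Ex}$-lattices and Lindenbaum completeness of the small modal logics. This handles substitution and subproofs automatically, at the cost of producing no explicit proof transformation.

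The step you left as an expectation does hold. One route is transfer through Lemma~\ref{repfstblma}: the embedding preserves $\Box$, $\diamondsuit$ and $\neg$, and fundamental lattices are closed under subalgebras. There is also a direct route:
\begin{itemize}
\item By axiom~6 of Definition~\ref{fstblat}, $\diamondsuit$ is left adjoint to $\Box$.
\item So dual self-adjointness of $\Box\neg$ on fixpoints reduces to showing that $\diamondsuit a\wedge b=0$ implies $a\wedge\diamondsuit b=0$.
\item This follows from axiom~4 of Definition~\ref{fstblat}, since $\diamondsuit\neg\diamondsuit a\leq\neg\Box\diamondsuit a\leq\neg a$.
\end{itemize}

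In Step~2 all six matchings go through. Beyond the tools you list, you need only two further facts:
\begin{itemize}
\item the Heyting law $\neg(x\vee y)=\neg x\wedge\neg y$, for the left side of the first $\gamma$-axiom;
\item the closure identity $\Box\diamondsuit(\Box\diamondsuit a\vee\Box\diamondsuit b)=\Box\diamondsuit(a\vee b)$, for the nested joins in the other two.
\end{itemize}
Stripping outer $\Box\diamondsuit$'s on left-hand sides is never actually required.
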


\begin{proof}
This follows from \cref{compthm}, together with a routine induction on $\mathsf{Ex}$-proofs and some straightforward syntactic simplifications.
\end{proof}

We do not know whether these results extend beyond the realm of canonical superfundamental logics. Therefore, we leave open for now the following problem.

\begin{question} Given an arbitrary (e.g., non-canonical) superfundamental logic $\mathsf{L}$, is $\mu$ (resp. $\gamma$) a full translation of $\mathsf{L}$ into $\mu^*(\mathsf{L})$ (resp. $\gamma^*(\mathsf{L})$)?\end{question} 

\subsection{Blok-Esakia-style Results}\label{BlokEsakia}

Our extension of the G\"odel-McKinsey-Tarski translation to fundamental logic suggests another area of research. In the case of intuitionistic logic, it is well known that Grzegorczyk's logic $\mathsf{Grz}$ is the largest modal companion of $\mathsf{IPC}$, and the function mapping any superintuitionistic logic $\mathsf{L}$ to the logic \[G(\mathsf{L})=\mathsf{Grz} \oplus \{\mu(\phi) \vdash \mu(\psi) \mid \phi \vdash_\mathsf{L} \psi\}\] is an isomorphism between the lattice of superintuitionistic logics and the lattice of modal logics extending $\mathsf{Grz}$. This result, known as the Blok-Esakia theorem (\citealt{blok1976varieties,esakia1976modal}), highlights the tight connection between intuitionistic and modal logic, and it motivates the following two questions.

\begin{question} Does fundamental logic have a greatest orthomodal companion? In other words, is there an extension $\mathsf{O}$ of $\mathsf{OS4}$ such that the $\mu$ translation maps fundamental logic fully and faithfully into $\mathsf{O}$, and, for any orthomodal companion $\mathsf{O'}$ of fundamental logic, $\mathsf{O'}$ is a sublogic of $\mathsf{O}$?\end{question}

\begin{question}Assuming that such a logic $\mathsf{O}$ exists, given a superfundamental logic $\mathsf{L}$, let $G(\mathsf{L})$ be the modal orthologic $\mathsf{O} \oplus \{\mu(\phi) \vdash \mu(\psi) \mid \phi \vdash_\mathsf{L} \psi\}$. Does this define an isomorphism between the lattice of superfundamental logics and the lattice of orthomodal logics extending $\mathsf{O}$?
\end{question}

We leave these questions open for now, and we limit ourselves to pointing out a disanalogy between the intuitionistic and fundamental case. The semantic proof that the G\"odel-McKinsey-Tarski translation is faithful relies on the fact that for any Heyting algebra $H$, there is a smallest \textbf{S4}-algebra $(B, \Box)$ such that $H$ embeds as a lattice into $B$ via a map $e$ such that $e(a \to_H b) = \Box(e(a) \to_B e(b))$. In fact, when $H$ is realized as the algebra of downsets of some poset $(P,\leq)$, then $B$ is the complex algebra of $(P, \leq)$, where $\leq$ is viewed as a reflexive and transitive accessibility relation. The fact that any Heyting algebra can be mapped to an \textbf{S4}-algebra in such a canonical way plays a significant role in the abstract machinery of the Blok-Esakia theorem (\citealt{almeida2024polyatomic}). In the fundamental setting, let us call an \emph{\textbf{OS4}-embedding} a lattice embedding $e: L \to O$ from a fundamental lattice $(L, \neg_L)$ into an \textbf{OS4}-lattice $(O, \neg_O, \Box_O)$ such that $\Box_O e(a) = e(a)$ and $e(\neg_L a) = \Box_O \neg_O e(a)$ for any $a \in L$. We will present an example of a fundamental lattice $(L, \neg_L)$ such that $(L,\neg_L)$ \textbf{OS4}-embeds minimally into two distinct, mutually incomparable \textbf{OS4}-lattices. This suggests that the correspondence between fundamental lattices and \textbf{OS4}-lattices may not be as well behaved as the correspondence between Heyting algebras and \textbf{S4}-algebras.

To see this, consider the lattice $\mathsf{F4}$ depicted in Figure~\ref{FunLat}. 
  
   \begin{figure}[H]
  \begin{center}
  \begin{tikzpicture}[->,>={stealth'[length=6pt]},shorten >=1pt,shorten <=1pt, auto,node
  distance=2cm,semithick,every loop/.style={<-,shorten <=1pt}]
  \tikzstyle{every state}=[fill=gray!20,draw=none,text=black]

  \node  (1)  at (1.5,3)   {{$1$}};
  \node  (a)  at (0,1.5)   {{$a$}};
  \node  (b)  at (3,1.5)   {{$b$}};
  \node  (0)  at (1.5,0)   {{$0$}};

  \path (1) edge[-] node {{}} (a);
  \path (1) edge[-] node {{}} (b);
  \path (a) edge[-] node {{}} (0);
  \path (b) edge[-] node {{}} (0);

  \path (a) edge[->, bend right=20, dashed] node {{}} (0);
  \path (b) edge[->, bend left=20,  dashed] node {{}} (0);
  \path (0) edge[->, bend right=12, dashed] node {{}} (1);
  \path (1) edge[->, bend right=12, dashed] node {{}} (0);
  \end{tikzpicture}
  \end{center}
  \caption{A fundamental lattice, where dashed arrows represent the $\neg$ operation.}\label{FunLat}
  \end{figure}

  \noindent Note first that if $(O,\neg_O,\Box_O)$ is an \textbf{OS4}-lattice such that there is an \textbf{OS4}-embedding $e: \mathsf{F4} \to O$, we must have that $\neg_O e(a)$ and $\neg_O e(b)$ are not fixpoints of $\Box_O$,  for otherwise we would have $\neg_Oe(a) = \neg_O e(b) = 0$, which is impossible in an ortholattice. This means that $O$ must have at least six elements. However, the two \textbf{OS4}-lattices depicted in Figure~\ref{2OS4Fig} are $6$-element ortholattices into which $\mathsf{F4}$ \textbf{OS4}-embeds. 

 \begin{figure}[H]   
  \begin{center}
  \begin{tikzpicture}[->,>={stealth'[length=6pt]},shorten >=1pt,shorten <=1pt, auto,node
  distance=2cm,semithick,every loop/.style={<-,shorten <=1pt}]
  \tikzstyle{every state}=[fill=gray!20,draw=none,text=black]
  
  \node[fill=gray!25] (1)  at (3,4) {{$1$}};
  \node               (n3) at (0,2) {{$\neg a$}};
  \node[fill=gray!25] (a)  at (2,2) {{$a$}};
  \node[fill=gray!25] (b)  at (4,2) {{$b$}};
  \node               (n5) at (6,2) {{$\neg b$}};
  \node[fill=gray!25] (0)  at (3,0) {{$0$}};
  
  \path (1) edge[-] node {{}} (n3);
  \path (1) edge[-] node {{}} (a);
  \path (1) edge[-] node {{}} (b);
  \path (1) edge[-] node {{}} (n5);
  \path (n3) edge[-] node {{}} (0);
  \path (a)  edge[-] node {{}} (0);
  \path (b)  edge[-] node {{}} (0);
  \path (n5) edge[-] node {{}} (0);
  
  \path (a)  edge[loop left,  min distance=1.0cm, dashed] node {{}} (a);
  \path (b)  edge[loop right, min distance=1.0cm, dashed] node {{}} (b);
  \path (n3) edge[->, bend right, dashed] node {{}} (0);
  \path (n5) edge[->, bend left,  dashed] node {{}} (0);
  \path (0)  edge[loop below, min distance=1.0cm, dashed] node {{}} (0);
  \path (1)  edge[loop above, min distance=1.0cm, dashed] node {{}} (1);
  \end{tikzpicture}\qquad \begin{tikzpicture}[->,>={stealth'[length=6pt]},shorten >=1pt,shorten <=1pt, auto,node
  distance=2cm,semithick,every loop/.style={<-,shorten <=1pt}]
  \tikzstyle{every state}=[fill=gray!20,draw=none,text=black]
  
  \node[fill=gray!25] (1)  at (1.5,4)   {{$1$}};
  \node[fill=gray!25] (a)  at (0,2.7)   {{$a$}};
  \node[fill=gray!25] (b)  at (3,2.7)   {{$b$}};
  \node               (nb) at (0,1.3)   {{$\neg b$}};
  \node               (na) at (3,1.3)   {{$\neg a$}};
  \node[fill=gray!25] (0)  at (1.5,0)    {{$0$}};
  
  \path (1)  edge[-] node {{}} (a);
  \path (a)  edge[-] node {{}} (nb);
  \path (nb) edge[-] node {{}} (0);
  \path (1)  edge[-] node {{}} (b);
  \path (b)  edge[-] node {{}} (na);
  \path (na) edge[-] node {{}} (0);
  
  \path (a)  edge[loop left,  min distance=1.0cm, dashed] node {{}} (a);
  \path (b)  edge[loop right, min distance=1.0cm, dashed] node {{}} (b);
  \path (nb) edge[->, bend right=60, dashed] node {{}} (0);
  \path (na) edge[->, bend left=60,  dashed] node {{}} (0);
  \path (0)  edge[loop below, min distance=1.0cm, dashed] node {{}} (0);
  \path (1)  edge[loop above, min distance=1.0cm, dashed] node {{}} (1);
  \end{tikzpicture}
  \end{center}
  \caption{Two $\mathbf{OS4}$-lattices, where dashed arrows represent
  the $\Box$ operation, into which $\mathsf{F4}$ \textbf{OS4}-embeds. The shaded nodes are the $\Box$-fixpoints.}\label{2OS4Fig}
  \end{figure}

  \noindent Clearly, neither of the ortholattices in Figure~\ref{2OS4Fig} embeds into the other. This shows that there is no ``smallest'' \textbf{OS4}-lattice into which $\mathsf{F4}$ \textbf{OS4}-embeds.

\subsection{Relational Semantics for $\mathsf{FN4}$}\label{RelationalFN4}

In Section~\ref{FN4Section}, inspired by Mill's idea of necessity as \textit{unconditionalness}, we added $\Box$-Reiteration to fundamental modal logic, allowing $\Box$ formulas to be reiterated into subproofs used for $\vee$E and $\neg$I. Algebraically, this corresponds to allowing a special case of distributivity, namely $(a\vee b)\wedge \Box c\leq (a\wedge\Box c)\vee (b\wedge\Box c)$, and a special case of pseudocomplementation, namely that if $a\wedge\Box c\leq \neg b$, then $b \wedge\Box c\leq\neg a$. Although a more concrete relational semantics for $\mathsf{FN4}$ was not needed for our results in Section~\ref{FN4Section}, there is such a semantics based on the following first-order definable class of frames.

\begin{definition}\label{FNframes} An \textit{\textbf{FN4}-frame} is a fundamental modal frame $(X,\op, \shortdashleftarrow)$ in which $\shortdashleftarrow$ is reflexive and transitive and the following conditions hold:
\begin{enumerate}
\item direct enrichment: if $x\po y$, then there is a prerefinement $y^+$ of $y$ such that $x\po y^+\shortdashleftarrow x$;
\item indirect enrichment: if $x\po z$, then there is a postrefinement $z^-$ of $z$ such that $x\po z^-$ and for any $y\po z^-$, there is a prerefinement $y^+$ of $y$ such that $z^-\op y^+\shortdashleftarrow x$.\footnote{\label{WeakerIE}For the soundness of $\mathbf{FN4}$, we can weaken this condition so that $z^-$ is not required to postrefine $z$, and $z^-\op y^+$ is replaced by $z\op y^+$. But our completeness proof also allows us to impose the stronger condition, which is more informative about $z^-$.}
\end{enumerate}
\end{definition}
\noindent For illustrations of these conditions, see Figure \ref{FNFig}.

We can understand the relationship between these conditions and our proof system intuitively as follows, by thinking of opening a subproof as moving us between states in a frame:
\begin{itemize}
\item If we are in a state $x$ and open a subproof for $\neg$I, this makes us consider an arbitrary state $y\op x$ where the assumption of the subproof holds. Now \textit{direct enrichment} says that we should be able to prerefine $y$ to a state $y^+$ to which the necessity facts from $x$ transfer, in line with $\Box$-Reiteration into $\neg$I-subproofs. Then if we can deduce at $y^+$ the negation of something true at $x$, which contradicts $x\po y^+$ given the pseudosymmetry of $\op$, this shows that there was never really any state $y\op x$ where the assumption of the subproof holds, which establishes the negation of that assumption back at $x$.
\item If we are in state $x$ and aim to prove a conclusion via proof by cases, then an arbitrary $z\op x$ poses a challenge for us to find a $v\po z$ satisfying the conclusion of the proof by cases. \textit{Indirect enrichment} says that we can postrefine $z$ to a $z^-\op x$ such that when we open a subproof for one of the cases, which takes us to a $y\po z^-$ where the assumption of the subproof holds, we can prerefine $y$ to a state $y^+\po z^-$ to which the necessity facts from $x$ transfer, in line with $\Box$-Reiteration into $\vee$E-subproofs. Then since $z^-$ postrefines $z$, $y^+\po z^-$ implies $y^+\po z$, so we can take $v=y^+$ in each case to meet the challenge.\end{itemize}

\begin{figure}[h]
\begin{center}
\subfigure[direct enrichment]{
\begin{tikzpicture}[->,>={stealth'[length=6pt]},shorten >=1pt,shorten <=1pt, auto,node
distance=2cm,thick,every loop/.style={<-,shorten <=1pt}]
\tikzstyle{every state}=[fill=gray!20,draw=none,text=black]

\node[label=center:$x$,inner sep=0pt,minimum size=.175cm] at (0,0) (x) {};
\node[label=center:$y$,inner sep=0pt,minimum size=.175cm] at (4,0) (y) {};
\path[-{Triangle[open]},draw,thick] (x) to node{{}} (y);

\node at (6,0) {{\textit{$\Rightarrow$}}};

\node[label=center:$x$,inner sep=0pt,minimum size=.175cm] at (8,0) (x2) {};
\node[label=center:$y$,inner sep=0pt,minimum size=.175cm] at (12,0) (y2) {};
\node[inner sep=0pt,minimum size=.175cm] at (12,2) (yplus) {};

\path[-{Triangle[open]},draw,thick] (x2) to node{{}} (y2);
\path[-{Triangle[open]},draw,thick,shorten >=3pt] (x2) to node{{}} (yplus);
\path (x2) edge[dashed,->,shorten >=3pt,bend left=30] node {{}} (yplus);
\path (y2) edge[double,double distance=1.5pt,-{Stealth[length=6pt]},shorten >=3pt]
  node[right] {{$\scriptstyle +$}} (yplus);

\node[anchor=center,inner sep=0pt] at (12.1,2.05) {$y^+$};

\end{tikzpicture}
}

\vspace{.25in}

\subfigure[indirect enrichment]{
\begin{tikzpicture}[->,>={stealth'[length=6pt]},shorten >=1pt,shorten <=1pt, auto,node
distance=2cm,thick,every loop/.style={<-,shorten <=1pt}]
\tikzstyle{every state}=[fill=gray!20,draw=none,text=black]

\node[label=center:$x$,inner sep=0pt,minimum size=.175cm] at (-2,4) (x) {};
\node[label=center:$z$,inner sep=0pt,minimum size=.175cm] at (2,4) (z) {};
\path[-{Triangle[open]},draw,thick] (x) to node{{}} (z);

\node at (4,4) {{\textit{$\Rightarrow$}}};

\node[label=center:$x$,inner sep=0pt,minimum size=.175cm] at (6,4) (x2) {};
\node[label=center:$z$,inner sep=0pt,minimum size=.175cm] at (10,4) (z2) {};
\node[inner sep=0pt,minimum size=.175cm] at (10,2) (zminus) {};
\node[inner sep=0pt,minimum size=.175cm] at (6,2) (yplus) {};
\node[label=center:$y$,inner sep=0pt,minimum size=.175cm] at (6,0) (y) {};

\node[anchor=center,inner sep=0pt] at (10.1,2.02) {$z^-$};
\node[anchor=center,inner sep=0pt] at (5.98,2.02) {$y^+$};

\path[-{Triangle[open]},draw,thick] (x2) to node{{}} (z2);
\path[-{Triangle[open]},draw,thick] (x2) to node{{}} (zminus);
\path[-{Triangle[open]},draw,thick] (y) to node{{}} (zminus);
\path[-{Triangle[open]},draw,thick] (yplus) to node{{}} (zminus);
\path (x2) edge[dashed,->,shorten >=3pt] node {{}} (yplus);

\path (y) edge[double,double distance=1.5pt,-{Stealth[length=6pt]},shorten >=3pt]
  node[left] {{$\scriptstyle +$}} (yplus);
\path (z2) edge[double,double distance=1.5pt,-{Stealth[length=6pt]},shorten >=3pt]
  node[right] {{$\scriptstyle -$}} (zminus);

\end{tikzpicture}
}

\end{center}
\caption{Illustration of the frame conditions in Definition \ref{FNframes}. As before, a solid line from $u$ to $v$ indicates $u\po v$ and a dashed line from $u$ to $v$ indicates $u\shortdashrightarrow v$. Here a double line from $u$ to $v$ marked with $+$ (resp.~$-$) indicates that $v$ prerefines (resp.~postrefines) $u$.}
\label{FNFig}
\end{figure}

In Appendix~\ref{FN4CompAppendix}, we prove the following theorem.

\begin{theorem} $\mathsf{FN4}$ is sound and complete with respect to the class of $\mathbf{FN4}$-frames.
\end{theorem}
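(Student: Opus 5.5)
For soundness, I will show that the dual modal algebra $(\chi_{\op}(X),\neg_{\op},\Box_{\shortdashleftarrow})$ of any $\mathbf{FN4}$-frame is an $\mathbf{FN4}$-algebra. Soundness then follows from the algebraic soundness of $\mathsf{FN4}$.

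By Theorem~\ref{FunModalRep}, the dual algebra is a fundamental necessity lattice. Reflexivity and transitivity of $\shortdashleftarrow$ give $\Box U\subseteq U$ and $\Box U\subseteq\Box\Box U$ in the usual way.

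For the pseudocomplementation clause of Definition~\ref{FN4AlgDef}.\ref{FN4AlgDef3}, I use direct enrichment. Suppose $A\cap\Box C\subseteq\neg_{\op}B$, and take $x\in B\cap\Box C$ and $y\op x$; I must show $y\notin A$. Suppose instead that $y\in A$. Direct enrichment yields a prerefinement $y^+$ of $y$ with $y^+\op x$ and $y^+\shortdashleftarrow x$. Then $y^+\in A$, since propositions are closed under prerefinement (Lemma~\ref{PrerefineLem}). Also $y^+\in C$ because $x\in\Box C$, and transitivity gives $y^+\in\Box C$. So $y^+\in\neg_{\op}B$. But $x\op y^+$ and $x\in B$, which is a contradiction.

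For distributivity (Definition~\ref{FN4AlgDef}.\ref{FN4AlgDef2}), take $x\in(A\vee B)\cap\Box C$ and $z\op x$. I need $v\po z$ with $v\in(A\cap\Box C)\cup(B\cap\Box C)$. Indirect enrichment supplies a postrefinement $z^-$ of $z$ with $z^-\op x$. Since $x\in A\vee B$, there is $y\po z^-$ with $y\in A\cup B$. The condition then gives a prerefinement $y^+$ of $y$ with $z^-\op y^+$ and $y^+\shortdashleftarrow x$. As before, $y^+$ lies in $A$ or $B$ and in $\Box C$. Finally $y^+\po z^-$ together with $z^-\leq_{\po}z$ gives $y^+\po z$, so $v=y^+$ works.

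For completeness, by algebraic completeness it suffices to embed every $\mathbf{FN4}$-algebra $(L,\neg,\Box)$ into the dual of an $\mathbf{FN4}$-frame. I will take the canonical modal frame of Theorem~\ref{FunModalRep}, restricted as needed (for instance to reflexive pairs $(F,I)$ with $F\cap I=\varnothing$), and check that the embedding $a\mapsto\widehat a$ survives the restriction. Since $\Box a\leq a$ and $\Box a\leq\Box\Box a$, the relation $\shortdashleftarrow$ is reflexive and transitive; the restriction must keep the canonical construction valid.

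For direct enrichment, given $x=(F,I)$ and $y=(G,J)$ with $y\op x$ (so $F\cap J=\varnothing$), I set $y^+=(G^+,J)$. Here $G^+$ is the filter generated by $G\cup\{\Box c\mid\Box c\in F\}$. Then $y^+$ prerefines $y$ because it keeps $J$ and enlarges $G$, and $y^+\shortdashleftarrow x$ holds by construction. I must check that $G^+$ is disjoint from $I$ and that $a\in G^+$ implies $\neg a\in J$. This is where axiom~\ref{FN4AlgDef3} enters: if $g\wedge\Box c\leq b$ with $b\in I$, then the fact that $F$ meets $\neg$-images suitably, together with $x\op y$, should force a contradiction. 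If $J$ has to be enlarged, I will use $I(\cdot)$-style ideals as in Lemma~\ref{canballma}.

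Indirect enrichment is the expected main obstacle. Given $z\op x$, I will first try $z^-=(\{1\},z_I)$ as in the postfactoring argument, or more generally $z^-=(H,z_I)$ with $H$ chosen so that the distributivity axiom~\ref{FN4AlgDef2} lets any $y\po z^-$ be enriched by the boxed part of $x_F$ while staying $\po z^-$. The key computation is that if $g\wedge\Box c$ met $z^-_I$, then distributivity over finitely many cases would contradict $x_F\cap z_I=\varnothing$. If this fails for the raw canonical frame, I will fall back on the weaker version from footnote~\ref{WeakerIE} for the completeness direction only, or else choose $z^-$'s ideal to be generated by $z_I$ together with elements $\neg\Box c$.
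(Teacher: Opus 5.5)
Your soundness half follows the paper's argument (Propositions~\ref{DirectEnrichProp}, \ref{MergProp} and Corollary~\ref{MergCor}), with one slip. The last step of your pseudocomplementation argument uses $x\op y^+$, but direct enrichment only gives $y^+\op x$. The contradiction with $y^+\in\neg_{\op}B$ has to come from $x\in B\subseteq\neg_{\op}\neg_{\op}B$ (pseudo-symmetry).

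The completeness half is where things go wrong. First, you need not restrict the canonical frame, since Theorem~\ref{FunModalRep} already supplies the embedding. In that frame, whether $y^+$ prerefines $y$ and whether $y^+\shortdashleftarrow x$ depend only on the filter of $y^+$, while $y^+\op x$ depends only on its ideal. Consequently, for direct enrichment:
\begin{itemize}
\item You cannot keep $J$: since $\Box c\in G^+$ for $\Box c\in F$, you would need $\neg\Box c\in J$, which fails e.g.\ for $J=\{0\}$.
\item The ideal must be the $J^+$ generated by $\{\neg c\mid c\in G^+\}$, and what must be checked is $F\cap J^+=\varnothing$.
\item The condition you propose, $G^+\cap I=\varnothing$, concerns $x\op y^+$. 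It is not required, and it fails whenever $G\cap I\neq\varnothing$, which $y\op x$ does not exclude.
\end{itemize}
The real check uses Definition~\ref{FN4AlgDef}.\ref{FN4AlgDef3}. An element of $F\cap J^+$ lies below some $\neg c$ with $c\geq\Box a\wedge b$, $\Box a\in F$, $b\in G$, so $\neg(\Box a\wedge b)\in F$. Applying the axiom to $b\wedge\Box a\leq\neg\neg(\Box a\wedge b)$ gives $\neg(\Box a\wedge b)\wedge\Box a\leq\neg b$. Hence $\neg b\in F\cap J$, contradicting $y\op x$.

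The main gap is indirect enrichment, where none of your candidates works. Whether $x\po z^-$, whether $z^-$ postrefines $z$, which $y$ satisfy $y\po z^-$, and whether $z^-\op y^+$ all depend only on the ideal of $z^-$. So choosing the filter $H$ cleverly cannot help.
\begin{itemize}
\item Keeping the ideal $K=z_I$ fails. In the four-element Boolean algebra with $\Box$ the identity, let $x=(\upset{a},\dnset{\neg a})$, $z=(\{1\},\{0\})$ and $y=(\upset{\neg a},\dnset{a})$. Then $y\po z^-$, but every $y^+$ prerefining $y$ with $y^+\shortdashleftarrow x$ has $a\wedge\neg a=0\in K$ in its filter.
\item Adding the elements $\neg\Box c$ (for $\Box c\in F$) to $K$ repairs this case but is still too small without complements. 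Take $\mathbf{2}\times\mathbf{3}$ with $\mathbf{3}=\{0<1<2\}$, identity box, $F=\upset{(1,1)}$ and $K=\dnset{(0,1)}$. The element $(0,2)$ meets $(1,1)$ inside $K$, yet it is not in the enlarged ideal.
\end{itemize}
The missing idea is the relativized ideal
\[K^-=\{b\in L\mid \Box a\wedge b\in K\mbox{ for some }\Box a\in F\}.\]
It is closed under joins precisely by Definition~\ref{FN4AlgDef}.\ref{FN4AlgDef2}, contains $K$ since $\Box 1=1$, and is disjoint from $F$. Set $z^-=(H,K^-)$. If $G\cap K^-=\varnothing$, then the filter generated by $G\cup\{\Box a\mid\Box a\in F\}$ is still disjoint from $K^-$: if $\Box a\wedge b\leq u$ and $\Box c\wedge u\in K$, then $\Box(c\wedge a)\wedge b\in K$, so $b\in K^-$. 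The improper ideal can then serve as the ideal of $y^+$.

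Some ideal containing $K^-$ is in fact forced. Retreating to the weaker condition of footnote~\ref{WeakerIE} does not help either: it would only give completeness for a larger frame class than Definition~\ref{FNframes}, not the stated theorem, and it needs the same ideal.
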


\noindent It is possible to prove analogous theorems for extensions of $\mathsf{FN4}$ with stronger forms of distributivity, but we will leave the project of charting the semantic landscape of extensions of $\mathsf{FN4}$ for future work.

\section{Conclusion}\label{Conclusion}

We have attempted to shed light on fundamental logic and its semantics through the lens of modal logic. On the semantic side, our results provide different perspectives on the \textit{openness} relation in fundamental frames. We started with the characterization of openness in terms of the notions of \textit{accepting} or \textit{rejecting} a proposition: a state $y$ is \textit{open to} a state $x$ if $y$ does not \textit{reject} any proposition that $x$ \textit{accepts}. This characterization explains how openness can fail to be symmetric: just because we do not reject any proposition that you accept, it does not follow that you do not reject any proposition that we accept. But this may leave one wondering how non-symmetric openness relates to symmetric notions of \textit{compatibility}. As some of our results establish (\cref{embthm,embthm2}), any fundamental lattice can be represented by a reflexive fundamental frame whose openness relation can be factored as the composition of a compatibility relation, which is reflexive and symmetric, and a \emph{positive information increase} relation, which is reflexive and transitive. In the case of balanced frames, this means that $y$ is open to $x$ iff (i) $y$ is compatible with some state that accepts every proposition that $x$ accepts and (ii) $x$ is compatible with some state that rejects every proposition that $y$ rejects. In strongly factoring frames, this means that $y$ is open to $x$ iff $y$ is compatible with a single state $z$ that accepts every proposition that $x$ accepts and does not reject any proposition that $x$ does not reject. In both cases, this gives a clearer intuition regarding the interplay between openness and its ``symmetrization'' into a compatibility relation.

On the purely logical side, the GMT translation of fundamental logic into $\mathsf{OS4}$ shows that the orthologician can reinterpret the fundamental logician modally in precisely the same way as the classical logician can reinterpret the intuitionistic logician modally. Similarly, the Goldblatt translation of fundamental logic into intuitionistic $\mathsf{KTB}$ shows that the intuitionist can reinterpret the fundamental logician modally in precisely the same way as the classical logician can reinterpret the orthologician modally. This means that via the translations $\mu$ and $\gamma$, we can now extend the ``fundamental diamond'' of Figure~\ref{fundiamond} to the ``fundamental lotus'' of Figure~\ref{funlotus}, which connects fundamental logic, (modal) orthologic, intuitionistic (modal) logic, and classical (modal) logic via additional rules and translations.

\begin{figure}[h]
    \centering
\[
\begin{tikzcd}[
    cells={
        nodes={
            minimum width=2.9em,
            inner sep=2pt
        }
    },
    arrows={
        ->,
        >={stealth'[length=6pt]},
        semithick,
        shorten >=1pt,
        shorten <=1pt
    },
    every label/.append style={fill=white,inner sep=2pt}
]
	&&& {\mathsf{CPC}} &&& \\
	{\mathsf{KTB}} &&&&&& {\mathsf{S4}} \\
	&& {\mathsf{O}} && {\mathsf{IPC}^-} \\
	& {\mathsf{FSTB}} &&&& {\mathsf{OS4}} \\
	&&& {\mathsf{F}}
	\arrow["\text{Reit}"{description}, from=3-3, to=1-4]
	\arrow["\text{Goldblatt}"{description}, dashed, from=3-3, to=2-1]
	\arrow["\text{RAA}"{description}, from=3-5, to=1-4]
	\arrow["\text{GMT}"{description}, dashed, from=3-5, to=2-7]
	\arrow["\text{RAA}"{description}, from=4-2, to=2-1]
	\arrow["\text{Reit}"{description}, from=4-6, to=2-7]
	\arrow["\text{RAA}"{description}, from=5-4, to=3-3]
	\arrow["\text{Reit}"{description}, from=5-4, to=3-5]
\arrow["{\scalebox{1.35}{$\gamma$}}"{description}, dashed, from=5-4, to=4-2]
\arrow["{\scalebox{1.35}{$\mu$}}"{description}, dashed, from=5-4, to=4-6]
\end{tikzcd}
\]
    \caption{The ``fundamental lotus'', connecting fundamental logic to various propositional and modal logics. Dashed arrows are full and faithful translations, while solid arrows are additional proof-theoretic rules.}
    \label{funlotus}
\end{figure}

Finally, the GMT translation of intuitionistic logic into fundamental logic with \textit{necessity} shows that the fundamental logician can regard the intuitionist's reasoning as correct in the realm of purely necessary propositions. Since \textit{mathematics} is plausibly a realm in which propositions are necessarily true if true, the fundamental logician can regard intuitionistic mathematical reasoning as correct. As a consequence, the fundamental logician can also regard classical mathematical reasoning as correct via a double negation reinterpretation of its claims.

\subsection*{Acknowledgements}

We are grateful for feedback on this work from audiences at the Mathematics, Philosophy, and Physics Seminar at Chapman University in October 2024, the French Philosophy of Mathematics Workshop at Archive Henri Poincaré in Nancy, France in October 2024, the Nordic Online Logic Seminar in November 2024, the LLAMA Seminar of the ILLC in March 2025, the Logica Conference in May 2025, the KCL-SNS Workshop in Logic in June 2025, the Peking University Logic Seminar in September 2025, the Sixth Taiwan Philosophical Logic Colloquium (TPLC 2025) in October 2025, the Workshop on Truthmakers, Possibilities, and Information States at the Institute of Philosophy of the Czech Academy of Sciences, Prague in November 2025, and the Workshop on Logic, Categories and Decision-Making: Algebraic and Proof-Theoretic Methods (LoCAD 2025), Amsterdam in December 2025. We are also grateful to Rodrigo Almeida, Johan van Benthem, Nick Bezhanishvili, and Ahmee Christensen for comments on earlier versions of this paper. A previous version of the paper posted online posed a question in \S~\ref{RelationalFN4} about relational semantics for $\mathbf{FN4}$ that this version resolves. Yifeng Ding informed us that GPT 5.6 Sol Ultra had found a way to answer the question, without providing its answer to us, which motivated us to  spend more time on the question to answer it in the way we do here (in particular, the previous version stated \textit{indirect enrichment} in \S~\ref{RelationalFN4} in a stronger form, which validated a stronger logic than $\mathbf{FN4}$, whereas the current form is appropriate for $\mathbf{FN4}$ itself).

\appendix

\section{Appendix}

\subsection{Formal Proofs for Intuitionistic Logic}\label{AppendixIPC}

For each finite set $R$ of formulas, the set of \textit{proofs given $R$} for $\mathsf{IPC}^{-}$ is the smallest set containing for each formula $\varphi$ the sequence  $\langle \varphi\rangle$ and satisfying the following closure conditions for $1\leq i,j\leq n$:
\begin{itemize}
\item If $\langle \sigma_1,\dots,\sigma_n\rangle$ is a proof given $R$ and $\varphi\in R$, then $\langle \sigma_1,\dots,\sigma_n,\varphi\rangle$ is a proof given $R$ (Reiteration).
\item If $\langle \sigma_1,\dots,\sigma_n\rangle$ is a proof given $R$, then so is $\langle \sigma_1,\dots,\sigma_n,\top\rangle$ ($\top$I).
\item If $\langle \sigma_1,\dots,\sigma_n\rangle$ is a  proof given $R$ and $\sigma_i,\sigma_j$  are formulas, then $\langle \sigma_1,\dots,\sigma_n,\sigma_i\wedge\sigma_j\rangle$ is a proof given $R$ ($\wedge$I).
\item If $\langle \sigma_1,\dots,\sigma_n\rangle$ is a  proof given $R$ and $\sigma_i$ is a formula of the form $\varphi\wedge\psi$, then $\langle \sigma_1,\dots,\sigma_n,\varphi\rangle$ and $\langle \sigma_1,\dots,\sigma_n,\psi\rangle$ are proofs given $R$ ($\wedge$E).
\item If $\langle \sigma_1,\dots,\sigma_n\rangle$ is a  proof given $R$ and  $\sigma_i$ is a formula, then for any formula $\varphi$,  both $\langle \sigma_1,\dots,\sigma_n,\sigma_i\vee\varphi\rangle$ and $\langle \sigma_1,\dots,\sigma_n,\varphi\vee \sigma_i\rangle$ are proofs given $R$ ($\vee$I).
\item If $\langle \sigma_1,\dots,\sigma_n\rangle$ is a  proof given $R$, $\sigma_i$ is a formula of the form $\varphi\vee\psi$, $\tau_1$ is a proof given some $R_1\subseteq R\cup\{\sigma_j\mid \sigma_j\mbox{ a formula}\}$ beginning with $\varphi$ and ending with $\chi$, and $\tau_2$ is a proof given some $R_2\subseteq R\cup\{\sigma_j\mid \sigma_j\mbox{ a formula}\}$  beginning with $\psi$ and ending with $\chi$, then $\langle \sigma_1,\dots,\sigma_n, \tau_1,\tau_2,\chi\rangle$ is a proof given $R$ ($\vee $E).
\item If $\langle \sigma_1,\dots,\sigma_n\rangle$ is a  proof given $R$, $\sigma_i$ is a formula  $\psi$, and $\tau$ is a proof given some $R'\subseteq R\cup\{\sigma_j\mid \sigma_j\mbox{ a formula}\}$ beginning with $\varphi$ and ending with $\neg\psi$, then $\langle \sigma_1,\dots,\sigma_n,\tau,\neg\varphi\rangle$ is a proof given $R$ ($\neg$I).
\item If $\langle \sigma_1,\dots,\sigma_n\rangle$ is a  proof given $R$ and $\sigma_i$ and $\sigma_j$ are formulas of the form $\varphi$ and $\neg\varphi$, respectively, then for any formula $\psi$, $\langle \sigma_1,\dots,\sigma_n,\psi\rangle$ is a proof given $R$ ($\neg$E). 
\end{itemize}

$\mathsf{IPC}$ extends $\mathsf{IPC}^-$ with the following introduction and elimination clauses for $\to$:

\begin{itemize}
\item If $\langle \sigma_1,\dots,\sigma_n\rangle$ is a  proof given $R$ and $\tau$ is a proof given some $R'\subseteq R\cup \{\sigma_j\mid\sigma_j\mbox{ a formula}\}$ beginning with $\varphi$ and ending with $\psi$, then $\langle \sigma_1,\dots,\sigma_n,\varphi\to\psi\rangle$ is a proof given $R$ ($\to$I).
\item If $\langle \sigma_1,\dots,\sigma_n\rangle$ is a  proof given $R$ and $\sigma_i,\sigma_j$  are formulas of the form $\varphi$ and $\varphi\to\psi$, respectively, then $\langle \sigma_1,\dots,\sigma_n,\psi\rangle$ is a proof given $R$ ($\to$E).
\end{itemize}

\subsection{Formal Proofs for First-Order Logics}\label{AppendixQuant}

To define the set of proofs for first-order fundamental logic, $\mathsf{QF}$, we add the following clauses to those for propositional fundamental logic:

\begin{itemize}
\item if $\langle \sigma_1,\dots,\sigma_n\rangle$ is a proof, $\sigma_i$ is a formula $\varphi$, and $v$ does not occur free in $\sigma_1$, then $\langle \sigma_1,\dots,\sigma_n,\forall v\varphi\rangle$ is a proof ($\forall$I);
\item if $\langle \sigma_1,\dots,\sigma_n\rangle$ is a proof, $\sigma_i$ is a formula of the form $\forall v\varphi$, and $u$ is substitutable for $v$ in $\varphi$, then $\langle \sigma_1,\dots,\sigma_n,\varphi^v_u\rangle$ is a proof ($\forall$E);
\item if $\langle \sigma_1,\dots,\sigma_n\rangle$ is a proof, $\sigma_i$ is a formula of the form $\varphi^v_u$, and $u$ is substitutable for $v$ in $\varphi$, then $\langle \sigma_1,\dots,\sigma_n,\exists v\varphi\rangle$ is a proof ($\exists$I);
\item if $\langle \sigma_1,\dots,\sigma_n\rangle$ is a proof, $\sigma_i$ is a formula of the form $\exists v\varphi$, $\tau$ is a proof beginning with $\varphi$ and ending with $\psi$, and $v$ does not occur free in $\psi$, then $\langle \sigma_1,\dots,\sigma_n,\tau,\psi\rangle$ is a proof ($\exists$E).
\end{itemize}

For intuitionistic first-order logic, $\mathsf{IQC}$,  we adjust the $\forall$E and $\exists$I clauses above by replacing `proof' by `proof given $R$' and modifying the $\forall$I and $\exists$E clauses so that the relevant quantified variable does not occur free in any reiterable formula:
\begin{itemize}
\item if $\langle \sigma_1,\dots,\sigma_n\rangle$ is a proof given $R$, $\sigma_i$ is a formula $\varphi$, and $v$ does not occur free in $\sigma_1$ or in any formula in $R$, then $\langle \sigma_1,\dots,\sigma_n,\forall v\varphi\rangle$ is a proof given $R$ ($\forall$I);
\item if $\langle \sigma_1,\dots,\sigma_n\rangle$ is a proof given $R$, $\sigma_i$ is a formula of the form $\exists v\varphi$, $\tau$ is a proof given some $R'\subseteq R\cup\{\sigma_j\mid \sigma_j\mbox{ a formula}\}$ beginning with $\varphi$ and ending with $\psi$, and $v$ does not occur free in $\psi$ or in any formula in $R'$, then $\langle \sigma_1,\dots,\sigma_n,\tau,\psi\rangle$ is a proof given $R$ ($\exists$E).
\end{itemize}

\subsection{Formal Proofs for Modal Logic}\label{AppendixModal}

For the minimal modal logic $\mathsf{FK}$ extending fundamental logic, we define by a simultaneous induction the set of \textit{proofs} and the set of \textit{$\Box$-proofs given $B$}, for any finite set $B$ of formulas, using all the clauses in the definition of proofs for fundamental logic, plus the following (see Figure~\ref{BoxRules}):
\begin{itemize}
\item If $\langle\sigma_1,\dots,\sigma_n\rangle$ is a proof with $\sigma_1=\top$, then $\big\langle \Box, \langle \sigma_1,\dots ,\sigma_n\rangle \big\rangle$ is a $\Box$-proof given $B$.
\item If $\big\langle \Box,\langle \sigma_1,\dots,\sigma_n\rangle\big\rangle$ is a $\Box$-proof given $B$   and $\Box\varphi\in B$, then $\big\langle \Box, \langle \sigma_1,\dots,\sigma_n,\varphi\rangle\big\rangle$ is a $\Box$-proof given $B$ ($\Box$E). 
\item If $\langle \sigma_1,\dots,\sigma_n\rangle$ is a proof and $\tau=\big\langle \Box,\langle \tau_1,\dots,\tau_m\rangle\big\rangle$ is a $\Box$-proof given some $B\subseteq \{\sigma_j\mid \sigma_j\mbox{ a formula}\}$ where $\tau_m$ is a formula, then $\langle \sigma_1,\dots,\sigma_n,\tau, \Box \tau_m\rangle$ is a proof ($\Box$I).
\end{itemize}
We also extend the introduction and elimination rules in proofs to $\Box$-proofs given $B$:
\begin{itemize}
\item If $\big\langle \Box,\langle \sigma_1,\dots,\sigma_n\rangle\big\rangle$ is a $\Box$-proof given $B$ and $\sigma_i,\sigma_j$  are formulas, then $\big\langle \Box,\langle \sigma_1,\dots,\sigma_n,\sigma_i\wedge\sigma_j\rangle \big\rangle$ is a $\Box$-proof given $B$ ($\wedge$I).
\item If $\big\langle \Box,\langle \sigma_1,\dots,\sigma_n\rangle\big\rangle$ is a $\Box$-proof given $B$ and $\sigma_i$ is a formula of the form $\varphi\wedge\psi$, then $\big\langle \Box,\langle \sigma_1,\dots,\sigma_n,\varphi\rangle\big\rangle$ and $\big\langle \Box,\langle \sigma_1,\dots,\sigma_n,\psi\rangle\big\rangle$ are $\Box$-proofs given $B$ ($\wedge$E).
\item If $\big\langle \Box,\langle \sigma_1,\dots,\sigma_n\rangle\big\rangle$ is a  $\Box$-proof given $B$ and  $\sigma_i$ is a formula, then for any formula $\varphi$, $\big\langle \Box,\langle \sigma_1,\dots,\sigma_n,\sigma_i\vee\varphi\rangle\big\rangle$ and $\big\langle \Box,\langle \sigma_1,\dots,\sigma_n,\varphi\vee \sigma_i\rangle\big\rangle$ are both $\Box$-proofs given $B$ ($\vee$I).
\item If $\big\langle \Box,\langle \sigma_1,\dots,\sigma_n\rangle\big\rangle$ is a $\Box$-proof given $B$, $\sigma_i$ is a formula of the form $\varphi\vee\psi$, $\tau_1$ is a proof beginning with $\varphi$ and ending with $\chi$, and $\tau_2$ is a proof beginning with $\psi$ and ending with $\chi$, then $\big\langle \Box,\langle \sigma_1,\dots,\sigma_n, \tau_1,\tau_2,\chi\rangle\big\rangle$ is a $\Box$-proof given $B$ ($\vee $E).
\item If $\big\langle \Box,\langle \sigma_1,\dots,\sigma_n\rangle\big\rangle$ is a $\Box$-proof given $B$, $\sigma_i$ is a formula  $\psi$, and $\tau$ is a proof beginning with $\varphi$ and ending with $\neg\psi$, then $\big\langle \Box,\langle \sigma_1,\dots,\sigma_n,\tau,\neg\varphi\rangle\big\rangle$ is a $\Box$-proof given $B$ ($\neg$I).
\item If $\big\langle \Box,\langle \sigma_1,\dots,\sigma_n\rangle\big\rangle$ is a $\Box$-proof given $B$ and $\sigma_i$ and $\sigma_j$ are formulas of the form $\varphi$ and $\neg\varphi$, respectively, then for any formula $\psi$, $\big\langle \Box,\langle \sigma_1,\dots,\sigma_n,\psi\rangle\big\rangle$ is a $\Box$-proof given $B$ ($\neg$E). 
\item If $\big\langle \Box,\langle \sigma_1,\dots,\sigma_n\rangle\big\rangle$ is a $\Box$-proof given $B$ and $\tau=\big\langle \Box,\langle \tau_1,\dots,\tau_m\rangle\big\rangle$ is a $\Box$-proof given some $B'\subseteq \{\sigma_j\mid \sigma_j\mbox{ a formula}\}$, then $\big\langle \Box,\langle \sigma_1,\dots,\sigma_n,\tau, \Box \tau_m\rangle\big\rangle$ is a $\Box$-proof given $B$ ($\Box$I).
\end{itemize}

\noindent For the T and $4$ rules, we add the following clauses:
\begin{itemize}
\item If $\langle \sigma_1,\dots,\sigma_n\rangle$ is a proof and $\sigma_i$ is a formula of the form $\Box\varphi$, then $\langle \sigma_1,\dots,\sigma_n,\varphi\rangle$ is a proof (T).
\item If $\langle \sigma_1,\dots,\sigma_n\rangle$ is a proof and $\sigma_i$ is a formula of the form $\Box\varphi$, then $\langle \sigma_1,\dots,\sigma_n,\Box\Box\varphi\rangle$ is a proof (4).
\item If $\big\langle \Box,\langle \sigma_1,\dots,\sigma_n\rangle\big\rangle$ is a $\Box$-proof given $B$ and $\sigma_i$ is a formula of the form $\Box\varphi$, then $\big\langle \Box,\langle \sigma_1,\dots,\sigma_n,\varphi\rangle\big\rangle$ is a $\Box$-proof given $B$ (T).
\item If $\big\langle \Box,\langle \sigma_1,\dots,\sigma_n\rangle\big\rangle$ is a $\Box$-proof given $B$ and $\sigma_i$ is a formula of the form $\Box\varphi$, then $\big\langle \Box,\langle \sigma_1,\dots,\sigma_n,\Box\Box\varphi\rangle \big\rangle$ is a $\Box$-proof given $B$ (4).
\end{itemize}

When defining the quantified orthomodal logic $\mathsf{QOS4}$ in Section~\ref{FirstOrderExt1}, we extend the RAA rule and the introduction and elimination rules for the quantifiers to $\Box$-proofs given $B$ as follows:

\begin{itemize}
\item If $\big\langle \Box,\langle \sigma_1,\dots,\sigma_n\rangle\big\rangle$ is a $\Box$-proof given $B$, $\sigma_i$ is a formula  $\psi$, and $\tau$ is a proof beginning with $\neg\varphi$ and ending with $\neg\psi$, then $\big\langle \Box,\langle \sigma_1,\dots,\sigma_n,\tau,\varphi\rangle\big\rangle$ is a $\Box$-proof given $B$ (RAA).
\item If $\big\langle \Box,\langle \sigma_1,\dots,\sigma_n\rangle\big\rangle$ is a $\Box$-proof given $B$, $\sigma_i$ is a formula $\varphi$, and $v$ does not occur free in $\sigma_1$ or in any formula in $B$, then $\big\langle \Box,\langle \sigma_1,\dots,\sigma_n,\forall v\varphi\rangle\big\rangle$ is a $\Box$-proof given $B$ ($\forall$I).
\item If $\big\langle \Box,\langle \sigma_1,\dots,\sigma_n\rangle\big\rangle$ is a $\Box$-proof given $B$, $\sigma_i$ is a formula of the form $\forall v\varphi$, and $u$ is substitutable for $v$ in $\varphi$, then $\big\langle \Box,\langle \sigma_1,\dots,\sigma_n,\varphi^v_u\rangle\big\rangle$ is a $\Box$-proof given $B$ ($\forall$E).
\item If $\big\langle \Box,\langle \sigma_1,\dots,\sigma_n\rangle\big\rangle$ is a $\Box$-proof given $B$, $\sigma_i$ is a formula of the form $\varphi^v_u$, and $u$ is substitutable for $v$ in $\varphi$, then $\big\langle \Box,\langle \sigma_1,\dots,\sigma_n,\exists v\varphi\rangle\big\rangle$ is a $\Box$-proof given $B$ ($\exists$I).
\item If $\big\langle \Box,\langle \sigma_1,\dots,\sigma_n\rangle\big\rangle$ is a $\Box$-proof given $B$, $\sigma_i$ is a formula of the form $\exists v\varphi$, $\tau$ is a proof beginning with $\varphi$ and ending with $\psi$, and $v$ does not occur free in $\psi$, then $\big\langle \Box,\langle \sigma_1,\dots,\sigma_n,\tau,\psi\rangle\big\rangle$ is a $\Box$-proof given $B$ ($\exists$E).
\end{itemize}

\subsection{Formal Proofs for \textsf{FN4}}\label{AppendixFN4}

The $\Box$-Reiteration rule of $\mathsf{FN4}$ is simply the restriction of the intuitionistic Reiteration rule to boxed formulas: 
\begin{itemize}
\item If $\langle \sigma_1,\dots,\sigma_n\rangle$ is a proof given $R$ and $\Box\varphi\in R$, then $\langle \sigma_1,\dots,\sigma_n,\Box\varphi\rangle$ is a proof given $R$ ($\Box$-Reiteration).
\end{itemize}

To define $\mathsf{FN4}$, we define by a simultaneous induction the set of \textit{proofs given $R$}, as in Appendix~\ref{AppendixIPC} but with $\Box$-Reiteration in place of Reiteration, and the set of \textit{$\Box$-proofs given $B$} as in Appendix~\ref{AppendixModal} but with modifications to the clauses for $\vee$E and $\neg$I to allow $\Box$-Reiteration into $\vee$E-subproofs and $\neg$I-subproofs inside $\Box$-subproofs:
\begin{itemize}
\item If $\big\langle \Box,\langle \sigma_1,\dots,\sigma_n\rangle\big\rangle$ is a $\Box$-proof given $B$, $\sigma_i$ is a formula of the form $\varphi\vee\psi$, $\tau_1$ is a proof given some $R_1\subseteq\{\sigma_j\mid \sigma_j\mbox{ a formula}\}$ beginning with $\varphi$ and ending with $\chi$, and $\tau_2$ is a proof given some $R_2\subseteq\{\sigma_j\mid \sigma_j\mbox{ a formula}\}$ beginning with $\psi$ and ending with $\chi$, then $\big\langle \Box,\langle \sigma_1,\dots,\sigma_n, \tau_1,\tau_2,\chi\rangle\big\rangle$ is a $\Box$-proof given $B$ ($\vee $E).
\item If $\big\langle \Box,\langle \sigma_1,\dots,\sigma_n\rangle\big\rangle$ is a $\Box$-proof given $B$, $\sigma_i$ is a formula  $\psi$, and $\tau$ is a proof given some $R\subseteq\{\sigma_j\mid \sigma_j\mbox{ a formula}\}$ beginning with $\varphi$ and ending with $\neg\psi$, then $\big\langle \Box,\langle \sigma_1,\dots,\sigma_n,\tau,\neg\varphi\rangle\big\rangle$ is a $\Box$-proof given $B$ ($\neg$I).
\end{itemize}

To define $\mathsf{FC4}$, we follow the same approach but we include the $\to$I and $\to$E rules from Appendix~\ref{AppendixIPC}, also allowing their application inside $\Box$-proofs.

To define $\mathsf{QFC4}$, we follow the same approach but we define \textit{proofs given $R$} using the same $\forall$I, $\forall$E, $\exists$I, and $\exists$E clauses as for $\mathsf{IQC}$ in Appendix~\ref{AppendixQuant}, and we define \textit{$\Box$-proofs given $B$} using the same $\forall$I, $\forall$E, and $\exists$I clauses as for $\mathsf{QOS4}$ from the end of Appendix~\ref{AppendixModal}, plus the following:
\begin{itemize}
\item if $\big\langle \Box,\langle \sigma_1,\dots,\sigma_n\rangle\big\rangle$ is a $\Box$-proof given $B$, $\sigma_i$ is a formula of the form $\exists v\varphi$, $\tau$ is a proof given some $R\subseteq \{\sigma_j\mid \sigma_j\mbox{ a formula}\}$ beginning with $\varphi$ and ending with $\psi$, and $v$ does not occur free in $\psi$ or in any formula in $R$, then $\big\langle \Box,\langle \sigma_1,\dots,\sigma_n,\tau,\psi\rangle\big\rangle$ is a $\Box$-proof given $B$ ($\exists$E).
\end{itemize}

\subsection{Translation of Intuitionistic Proofs into \textsf{FN4} Proofs}\label{ProofTransAppendix}

In this appendix, we give constructive, proof-theoretic arguments showing how proofs in intuitionistic logic can be translated into proofs in fundamental logic with necessity. To do so, it will be convenient to move to a more constrained proof system for intuitionistic logic in which a given subproof may only reiterate a \textit{single} formula that occurs in the proof to which the subproof directly belongs.

\begin{definition} We define the set of $\underline{\mathsf{IPC}}^{-}$-proofs given $R$, for a set $R$ with $|R|\leq 1$, in the same way as we defined $\mathsf{IPC}^{-}$-proofs given $R$ in Appendix~\ref{AppendixIPC}, except that in the clause for $\vee$E, we require that $R_1\cup R_2\subseteq \{\sigma_j\mid \sigma_j\mbox{ a formula}\}$, and in the clause for $\neg$I, we require $R'\subseteq \{\sigma_j\mid \sigma_j\mbox{ a formula}\}$, i.e., we only allow the reiteration of a formula that occurs in the proof to which the subproof directly belongs.\end{definition}
\noindent The inductive clauses for $\underline{\mathsf{IPC}}^{-}$ are written out in full in the proof of Theorem~\ref{ProofTransform} below.

Now we need a measure of proof complexity on which we can induct, for which we choose the following.

\begin{definition} The \textit{complexity} of an $\mathsf{IPC}^{-}$-proof (resp.~$\underline{\mathsf{IPC}}^{-}$-proof) $\langle \sigma_1,\dots,\sigma_n\rangle$ is the sum of the complexities of each $\sigma_i$, where if $\sigma_i$ is a formula, its complexity is $1$.
\end{definition}

\begin{fact} For any proof $\sigma=\langle \sigma_1,\dots,\sigma_n\rangle$ and $\sigma_i$, if $\sigma_i$ is a proof, then its complexity is less than that of $\sigma$, and if $\sigma_i$ is a formula with $i<n$, then the complexity of $\langle \sigma_1,\dots,\sigma_i\rangle$ is less than that of $\sigma$.\end{fact}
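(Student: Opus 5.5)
The Fact has two parts, and I will prove each directly from the definition of complexity. The complexity $c(\sigma)$ of a proof $\sigma=\langle\sigma_1,\dots,\sigma_n\rangle$ is $\sum_{k=1}^n c(\sigma_k)$. Here $c(\sigma_k)=1$ when $\sigma_k$ is a formula, and $c(\sigma_k)$ is computed recursively when $\sigma_k$ is itself a (sub)proof.

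Before the two parts, I will record a preliminary observation: every proof has complexity at least $1$. I will check this by induction along the inductive definition of proofs. The base case $\langle\varphi\rangle$ has complexity $1$. Every closure clause only appends items (formulas or subproofs) to an existing proof, so complexity never decreases. More concretely, every item contributes at least $1$: formulas contribute exactly $1$, and subproofs contribute at least $1$ by the induction hypothesis. So this observation is a simple simultaneous induction.

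For the first claim, suppose $\sigma_i$ is a proof occurring as an item of $\sigma$. In each clause that introduces subproofs ($\vee$E, $\neg$I, and similarly $\exists$E and $\to$I where they occur), the subproof is followed by a concluding formula. Hence $n\ge 2$, and at least one other item besides $\sigma_i$ contributes at least $1$. Therefore
\[c(\sigma)=c(\sigma_i)+\sum_{k\neq i}c(\sigma_k)\ \ge\ c(\sigma_i)+1>c(\sigma_i).\]
Even without appealing to the shape of the clauses, it suffices that $\sigma$ has another item. The point I need to confirm is that a subproof is never the sole item of a proof. This holds because proofs begin with a formula: the base case is $\langle\varphi\rangle$, and all clauses append to the end.

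For the second claim, let $\sigma_i$ be a formula with $i<n$. The prefix $\langle\sigma_1,\dots,\sigma_i\rangle$ then has complexity $\sum_{k\le i}c(\sigma_k)$. This equals $c(\sigma)-\sum_{k>i}c(\sigma_k)$, and the subtracted sum is nonempty and at least $1$ by the preliminary observation. So the prefix has strictly smaller complexity. Its status as a proof is not needed for the inequality.

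The only mildly delicate step is the positivity lemma, since it requires a simultaneous induction over nested subproofs, including the $\Box$-proof and given-$R$ variants. Everything else is immediate arithmetic.
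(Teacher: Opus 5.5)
Your argument is correct. The paper states this Fact without proof, treating it as immediate from the definition of complexity, and your verification is exactly the routine check it leaves implicit: complexity is positive (immediate since every proof begins with a formula), and the two strict inequalities follow because at least one further item is present. One small point: your positivity step need not mention $\Box$-proofs, since complexity is only defined here for $\mathsf{IPC}^{-}$- and $\underline{\mathsf{IPC}}^{-}$-proofs and their conditional and first-order variants.
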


Since we have defined proofs inductively as certain sequences, the following definition and lemma will be workhorses for proof-theoretic arguments.

\begin{definition} Given sequences $\langle\sigma_1,\dots,\sigma_n\rangle$ and $\langle \tau_1,\dots,\tau_m\rangle$, we define:
\begin{align*}
\langle\sigma_1,\dots,\sigma_n\rangle^\frown \langle \tau_1,\dots,\tau_m\rangle &= \langle \sigma_1,\dots,\sigma_n,\tau_1,\dots,\tau_m\rangle; \\
\langle \sigma_1,\dots,\sigma_n\rangle \bowtie \langle \tau_1,\dots,\tau_m\rangle &= \begin{cases} \langle \sigma_1,\dots,\sigma_n,\tau_2,\dots,\tau_m\rangle & \text{if }\sigma_n=\tau_1 \\ \text{undefined} & \text{otherwise};\end{cases} \\
\langle \sigma_1,\dots,\sigma_n\rangle * \langle \tau_1,\dots,\tau_m\rangle &= \begin{cases} \langle \sigma_1,\dots,\sigma_n,\tau_2,\dots,\tau_m\rangle & \text{if }\sigma_1=\tau_1 \\ \text{undefined} & \text{otherwise}.\end{cases}
\end{align*}
Thus, $\bowtie$ and $*$ have the same effect but we use them in different situations: $\bowtie$ when the conclusion of one proof matches the assumption of a second and $*$ when the assumptions of two proofs are the same. This notational distinction will help make clear why we are performing the operation in a given case.\end{definition}

\begin{lemma}\label{ProoFEM} For $\mathsf{L}\in \{\mathsf{IPC}^{-}, \underline{\mathsf{IPC}}^{-},\mathsf{FN4}\}$:
\begin{enumerate}
    \item\label{ProofRestrict} If $\langle \sigma_1,\dots,\sigma_n\rangle$ is an $\mathsf{L}$-proof given $R$ and $\sigma_i$ is a formula, then $\langle \sigma_1,\dots,\sigma_i\rangle$ is an $\mathsf{L}$-proof given $R$.
\item\label{ProofFuse} If $\langle \sigma_1,\dots,\sigma_n\rangle$ and $\langle \tau_1,\dots,\tau_m\rangle$ are $\mathsf{L}$-proofs given $R$ such that $\sigma_n=\tau_1$, then so is $\langle \sigma_1,\dots,\sigma_n\rangle\bowtie \langle \tau_1,\dots,\tau_m\rangle$.
\item\label{SameAssump} If $\langle\sigma_1,\dots,\sigma_n\rangle$ and $\langle \tau_1,\dots,\tau_m\rangle$ are $\mathsf{L}$-proofs given $R$ with $\sigma_1=\tau_1$, then $\langle\sigma_1,\dots,\sigma_n\rangle*\langle \tau_1,\dots,\tau_m\rangle$ is an $\mathsf{L}$-proof given $R$.
\end{enumerate}
\end{lemma}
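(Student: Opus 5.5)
We prove all three parts simultaneously by induction on the inductive definition of $\mathsf{L}$-proofs given $R$, or, equivalently and more conveniently, by induction on the complexity of proofs defined above. Throughout, the key observation is that every inductive clause for $\mathsf{L}$ builds a proof by appending material (a formula, or some subproofs followed by a formula) to the end of an existing proof. The side conditions of each clause refer only to formulas $\sigma_i,\sigma_j$ occurring earlier in that existing proof, or to subproofs that are themselves proofs given some $R'$ depending only on $R$ and the formulas of the ambient proof.

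For part \ref{ProofRestrict}, we induct on the derivation of $\langle \sigma_1,\dots,\sigma_n\rangle$. If $i=n$ there is nothing to prove. Otherwise the last clause applied produced $\langle\sigma_1,\dots,\sigma_n\rangle$ from a shorter proof $\langle\sigma_1,\dots,\sigma_k\rangle$ given $R$. Here $k=n-1$ for the one-formula clauses, $k=n-2$ for $\neg$I, and $k=n-3$ for $\vee$E. In every case $\sigma_k$ is a formula: a proof never ends with a subproof, since each clause appends a formula last. Any formula $\sigma_i$ with $i<n$ that is not one of the appended subproofs lies among $\sigma_1,\dots,\sigma_k$. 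The induction hypothesis then gives that $\langle\sigma_1,\dots,\sigma_i\rangle$ is a proof given $R$.

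For part \ref{ProofFuse}, we induct on the derivation of $\langle\tau_1,\dots,\tau_m\rangle$, the second proof. The base case is $m=1$: then $\langle\sigma\rangle\bowtie\langle\tau_1\rangle=\langle\sigma_1,\dots,\sigma_n\rangle$, which is a proof by assumption. In the inductive step, $\langle\tau_1,\dots,\tau_m\rangle$ arises by a clause from $\langle\tau_1,\dots,\tau_k\rangle$. By the induction hypothesis, $\rho:=\langle\sigma_1,\dots,\sigma_n\rangle\bowtie\langle\tau_1,\dots,\tau_k\rangle$ is a proof given $R$. We then reapply the same clause to $\rho$. Every formula $\tau_j$ ($j\le k$) referenced by the side condition still occurs in $\rho$, since $\tau_1=\sigma_n$ and the other $\tau_j$ are kept. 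Hence the premises of the rule are available.

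The step needing care is the subproof clauses ($\vee$E, $\neg$I, and for $\mathsf{FN4}$ also $\Box$I and the $\Box$-proof variants). Here the subproofs are proofs given some $R'\subseteq R\cup\{\tau_j\mid \tau_j\text{ a formula}\}$ (or, for $\underline{\mathsf{IPC}}^-$, $R'\subseteq\{\tau_j\}$ with $|R'|\le1$; for $\mathsf{FN4}$, $R'$ a set of boxed formulas). Since the formulas of $\langle\tau_1,\dots,\tau_k\rangle$ form a subset of the formulas of $\rho$, the same subproofs witness the clause for $\rho$ unchanged. The only genuine check is that all restrictions on $R'$ — boxed formulas only for $\mathsf{FN4}$, at most one formula for $\underline{\mathsf{IPC}}^-$ — are preserved. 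They are, because $R'$ itself is unchanged. I expect this bookkeeping for $\mathsf{FN4}$, where proofs and $\Box$-proofs given $B$ are defined simultaneously, to be the main obstacle. The plan is to strengthen the induction to cover $\Box$-proofs given $B$ as well, with the same append-and-reapply argument. For $\Box$E, the set $B$ is fixed and is not affected by prefixing the outer proof.

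Part \ref{SameAssump} is the same argument with $*$ in place of $\bowtie$, inducting on the derivation of $\langle\tau_1,\dots,\tau_m\rangle$ from its base case $\langle\tau_1\rangle=\langle\sigma_1\rangle$. Every formula $\tau_j$ referenced by a clause is either $\tau_1=\sigma_1$, which is present in $\langle\sigma_1,\dots,\sigma_n\rangle*\langle\tau_1,\dots,\tau_k\rangle$, or one of the appended $\tau_j$ with $j\ge2$. The proof of part \ref{ProofFuse} therefore transfers verbatim.
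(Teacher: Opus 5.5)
Your proposal is correct. It is the routine induction on the generation of proofs that the paper leaves implicit: Lemma~\ref{ProoFEM} is stated without proof, and the proof of Lemma~\ref{ProoFEM2} flags only the first-order $\forall$I side condition on the assumption as a real obstacle to part~\ref{ProofFuse}, which cannot arise here because no propositional rule refers to the first line. The strengthening to $\Box$-proofs that you anticipate for $\mathsf{FN4}$ is not needed. The $\Box$-subproofs are copied verbatim as blocks, and the only condition to recheck, $B\subseteq\{\sigma_j\mid \sigma_j\mbox{ a formula}\}$ in $\Box$I, is preserved because the ambient proof only gains formulas.
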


Although reiteration is significantly restricted in $\underline{\mathsf{IPC}}^{-}$, the derivability relation does not change.

\begin{lemma}\label{SameInt} For any formula $\varphi,\psi\in\mathcal{L}(\wedge,\vee,\neg)$, $\varphi \vdash_{\mathsf{IPC}^{-}}\psi$ iff $\varphi \vdash_{\underline{\mathsf{IPC}}^{-}}\psi$.
\end{lemma}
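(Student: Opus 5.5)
The right-to-left direction is immediate. Every $\underline{\mathsf{IPC}}^{-}$-proof given $R$ is already an $\mathsf{IPC}^{-}$-proof given $R$, since the restricted clauses for $\vee$E and $\neg$I only allow choices of $R_1,R_2,R'$ that the unrestricted clauses also allow. A routine induction on the inductive definition confirms this.

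For left-to-right, I would show by induction on complexity something stronger than the statement: if $\sigma=\langle\sigma_1,\dots,\sigma_n\rangle$ is an $\mathsf{IPC}^{-}$-proof given a finite set $R$, then there is an $\underline{\mathsf{IPC}}^{-}$-proof given $\{\bigwedge R\}$ (or given $\varnothing$ if $R=\varnothing$) that begins with $\sigma_1\wedge\bigwedge R$ and ends with $\sigma_n\wedge\bigwedge R$. Taking $R=\varnothing$ then gives an $\underline{\mathsf{IPC}}^{-}$-proof from $\varphi$ to $\psi$.

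The idea is to carry the reiterable information along as an explicit conjunct on every line. A subproof then only ever needs to reiterate one formula, namely the current conjunction of the line it sits under, and this formula does occur in the proof to which the subproof directly belongs. The base case and the cases for Reiteration, $\top$I, $\wedge$I, $\wedge$E, $\vee$I and $\neg$E are handled as follows.
\begin{itemize}
\item Use $\wedge$E to extract the needed conjuncts from earlier lines of the form $\sigma_i\wedge\bigwedge R$.
\item Apply the rule.
\item Re-conjoin the result with $\bigwedge R$ by $\wedge$I.
\end{itemize}
Lemma~\ref{ProoFEM} (restriction, $\bowtie$, $*$) lets me splice in the translated initial segments $\langle\sigma_1,\dots,\sigma_i\rangle$, since their assumptions coincide.

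The main obstacle is the subproof cases, $\vee$E and $\neg$I, where a subproof $\tau$ is given some $R'\subseteq R\cup\{\sigma_j\}$. The induction hypothesis gives translated subproofs beginning with $\varphi\wedge\bigwedge R'$. I would handle them as follows.
\begin{itemize}
\item In the outer translated proof, first build a single line $\rho:=\bigwedge R'$ from available lines by $\wedge$E and $\wedge$I. The elements of $R$ are conjuncts of the current lines, and the $\sigma_j$ appear as conjuncts of earlier lines.
\item Open a subproof with assumption $\varphi$ and reiterate only $\rho$, which is permitted in $\underline{\mathsf{IPC}}^{-}$ because $\rho$ is a formula occurring in the directly enclosing proof.
\item Form $\varphi\wedge\rho$ and splice on the translated subproof via $\bowtie$. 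The tail formulas are then all conjoined with $\rho$, and reiteration inside the translated subproof was of $\bigwedge R'=\rho$, which is now available by $\wedge$E.
\item For $\vee$E, the subproofs end in $\chi\wedge\rho$; extract $\chi$ by $\wedge$E. For $\neg$I, they end in $\neg\psi\wedge\rho$; extract $\neg\psi$, where $\psi$ must be available as a line in the outer proof, which can be arranged by $\wedge$E.
\end{itemize}

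The delicate point is bookkeeping. Inside a translated subproof, any Reiteration of some $\theta\in R'$ becomes $\wedge$E applied to the reiterated $\rho$. This holds because the induction hypothesis was stated relative to the set given, so the translated subproof is itself given $\{\rho\}$, and $|\{\rho\}|\le 1$ as required. The strengthened induction statement is designed so that this works uniformly.
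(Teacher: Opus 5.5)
Your proposal is correct and follows essentially the same route as the paper: strong induction on complexity, translating a proof given $R$ into an $\underline{\mathsf{IPC}}^-$-proof given $\{\bigwedge R\}$. In the $\vee$E and $\neg$I cases, both arguments build $\bigwedge R'$ in the enclosing proof from spliced translated initial segments and reiterate only that single formula into the translated subproof.

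The one difference is your invariant. The paper keeps the endpoints $\sigma_1,\sigma_n$ unchanged and reiterates $\bigwedge R$ whenever it is needed. Carrying $\bigwedge R$ as an extra conjunct is therefore harmless but unnecessary, and it costs you the wrapper $\langle\varphi,\rho,\varphi\wedge\rho\rangle$ plus a final $\wedge$E in the subproof cases.
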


\begin{proof} The right-to-left direction is trivial.  For the left-to-right direction, we prove by strong induction on the complexity of proofs for $\mathsf{IPC}^{-}$ that for any such proof $\langle\sigma_1,\dots,\sigma_n\rangle$ given a finite set $R$ of reiterables, there is an $\underline{\mathsf{IPC}}^{-}$-proof  $\langle\tau_1,\dots,\tau_m\rangle$ given $\{\bigwedge R\}$ (where this is empty if $R$ is empty) such that $\sigma_1=\tau_1$ and $\sigma_n=\tau_m$. For the base case, if $\langle \varphi\rangle$ is an $\mathsf{IPC}^{-}$-proof given $R$, then $\langle \varphi\rangle$ is an $\underline{\mathsf{IPC}}^{-}$-proof given $\{\bigwedge R\}$. Then we must check the inductive clause for each rule of $\mathsf{IPC}^-$, given in Appendix~\ref{AppendixIPC}, which could be the last rule applied in a proof of $\langle\sigma_1,\dots,\sigma_n\rangle$ given a set $R$ of reiterables. The only non-trivial cases are Reiteration, $\vee$E, and $\neg$I. 

For Reiteration, if we have an $\mathsf{IPC}^{-}$-proof $\langle \sigma_1,\dots,\sigma_n\rangle$ given $R$  and $\varphi\in R$, the inductive hypothesis gives us an $\underline{\mathsf{IPC}}^{-}$-proof $\langle \tau_1,\dots,\tau_m\rangle$ given $\{\bigwedge R\}$ with $\sigma_1=\tau_1$ and $\sigma_n=\tau_m$. Then by Reiteration in $\underline{\mathsf{IPC}}^{-}$, $\langle \tau_1,\dots,\tau_m,\bigwedge R\rangle$ is an $\underline{\mathsf{IPC}}^{-}$-proof, which we can extend to an $\underline{\mathsf{IPC}}^{-}$-proof $\langle \tau_1,\dots,\tau_m,\bigwedge R,\dots,\varphi\rangle$ by sufficiently many applications of $\wedge$E, since $\varphi$ is a conjunct of $\bigwedge R$. 

For $\neg$I, suppose we have an $\mathsf{IPC}^{-}$-proof $\langle\sigma_1,\dots,\sigma_n\rangle$, $\sigma_i$ is a formula $\psi$, and we have an $\mathsf{IPC}^{-}$-proof $\tau$ given some $R'\subseteq R\cup\{\sigma_j\mid\sigma_j\mbox{ a formula}\}$ beginning with $\varphi$ and ending with $\neg\psi$. Let us write $R'\cap \{\sigma_j\mid\sigma_j\mbox{ a formula}\}$ as $\{\xi_1,\dots,\xi_s\}$ and $R'\setminus \{\sigma_j\mid\sigma_j\mbox{ a formula}\}$ as $\{\rho_1,\dots,\rho_u\}$. For each $\xi_t$, $\langle \sigma_1,\dots,\xi_t\rangle$ is an $\mathsf{IPC}^{-}$-proof given $R$ by Lemma~\ref{ProoFEM}.\ref{ProofRestrict} for $\mathsf{IPC}^{-}$. Hence by the inductive hypothesis, for each $\xi_t$, we have an $\underline{\mathsf{IPC}}^{-}$-proof $\Pi_t$ given $\{\bigwedge R\}$ with assumption $\sigma_1$ and conclusion $\xi_t$. By the same reasoning, we have an $\underline{\mathsf{IPC}}^{-}$-proof $\Psi$ given $\{\bigwedge R\}$ with assumption $\sigma_1$ and conclusion $\psi$. Now by repeated application of Lemma~\ref{ProoFEM}.\ref{SameAssump}, $\Psi*\Pi_1 *\dots * \Pi_s$ (with implicit left association of $*$) is an $\underline{\mathsf{IPC}}^{-}$-proof given $\{\bigwedge R\}$ (if $s=0$, this proof is simply $\Psi$). Next, using Reiteration in $\underline{\mathsf{IPC}}^{-}$,  $(\Psi*\Pi_1 *\dots * \Pi_s)^\frown \langle \bigwedge R\rangle$ is an $\underline{\mathsf{IPC}}^{-}$-proof given $\{\bigwedge R\}$. Then by repeated application of $\wedge$E starting with $\bigwedge R$ and then repeated application of $\wedge$I to deduce $\rho$ formulas from $R$ and $\xi$ formulas at the end of the $\Pi_j$ proofs, we can deduce $\bigwedge R'$ at the end of an $\underline{\mathsf{IPC}}^{-}$-proof $\Pi$ given $\{\bigwedge R\}$ that starts with $\sigma_1$. Applying the inductive hypothesis to the $\mathsf{IPC}^{-}$-proof $\tau$, we obtain an $\underline{\mathsf{IPC}}^{-}$-proof $\Theta$ given $\{\bigwedge R'\}$ with assumption $\varphi$ and conclusion $\neg\psi$. Then by $\neg$I in  $\underline{\mathsf{IPC}}^{-}$, $\Pi^\frown \langle \Theta, \neg\varphi\rangle$ is an $\underline{\mathsf{IPC}}^{-}$-proof given $\{\bigwedge R\}$ with assumption $\sigma_1$ and conclusion $\neg\varphi$, as desired.

The proof for $\vee$E is similar in spirit to that for $\neg$I.\end{proof}

As a final preliminary, it will help to take a detour through a variant of the standard GMT translation (also used in Section~\ref{ClassicalSect}).

\begin{definition}\label{mu+Def} Define the translation $\mu^+$ from $\mathcal{L}(\wedge,\vee,\neg)$ to $\mathcal{L}(\wedge,\vee,\neg,\Box)$ as follows:
\begin{enumerate}
    \item $\mu^+(\top)=\Box\top$ and $\mu^+(p) = \Box p$;
    \item $\mu^+(\phi \me \psi) = \Box(\mu^+(\phi) \me \mu^+(\psi))$;
    \item $\mu^+(\phi \jo \psi) = \Box(\mu^+(\phi) \jo \mu^+(\psi))$;
    \item $\mu^+(\neg \phi) = \Box \neg \mu^+(\phi)$.
\end{enumerate}
\end{definition}

It is easy to see that $\mathsf{FN4}$ can prove the equivalence of $\mu(\varphi)$ and $\mu^+(\varphi)$.

\begin{lemma}\label{PlusLem} For any formula $\varphi$, $\mu(\varphi)\dashv \vdash_{\mathsf{FN4}} \mu^+(\varphi)$.
\end{lemma}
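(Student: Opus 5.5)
We prove both directions simultaneously by induction on the complexity of $\varphi$. The claim we carry through the induction is slightly stronger: $\mu(\varphi)$ and $\mu^+(\varphi)$ are $\mathsf{FN4}$-interderivable, and $\mu(\varphi)\dashv\vdash_{\mathsf{FN4}}\Box\mu(\varphi)$. This second conjunct says that $\mu(\varphi)$ is, up to provable equivalence, a boxed formula. The core observation is that $\mu^+(\varphi)$ differs from $\mu(\varphi)$ only by extra outer boxes on $\top$, $\wedge$ and $\vee$. Those boxes can be removed by the \textsf{T} Rule and reinserted via the \textsf{4} Rule and $\Box$I, provided each immediate subformula is already provably equivalent to its own box.

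We then handle the cases in order.

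\begin{itemize}
\item For $\top$: $\top\vdash\Box\top$ by $\Box$I with an empty $\Box$-subproof, and $\Box\top\vdash\top$ by \textsf{T}.
\item For $p$: $\mu(p)=\mu^+(p)=\Box p$, and $\Box p\dashv\vdash\Box\Box p$ by \textsf{4} and \textsf{T}.
\item For $\neg\psi$: $\mu(\neg\psi)=\Box\neg\mu(\psi)$ is boxed, so it is equivalent to its box. For the equivalence with $\Box\neg\mu^+(\psi)$ we use the induction hypothesis $\mu(\psi)\dashv\vdash\mu^+(\psi)$. Fundamental logic proves antitonicity of $\neg$: from $\mu^+(\psi)\vdash\mu(\psi)$ we get $\neg\mu(\psi)\vdash\neg\mu^+(\psi)$ by $\neg$I and $\neg$E inside an ordinary subproof. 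Monotonicity of $\Box$, obtained by performing that derivation inside a $\Box$-subproof after $\Box$E, lifts this to $\Box\neg\mu(\psi)\vdash\Box\neg\mu^+(\psi)$, and symmetrically.
\item For $\psi\wedge\chi$: $\mu(\psi\wedge\chi)=\mu(\psi)\wedge\mu(\chi)$. By the induction hypothesis this yields $\Box\mu(\psi)\wedge\Box\mu(\chi)$. Applying \textsf{4} to each conjunct and then $\Box$E into a $\Box$-subproof, followed by $\wedge$I and $\Box$I, gives $\Box(\mu(\psi)\wedge\mu(\chi))$. Replacing the conjuncts inside the box by $\mu^+(\psi)$ and $\mu^+(\chi)$ uses the induction hypothesis under $\Box$ (monotonicity again), giving $\mu^+(\psi\wedge\chi)$. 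The converse direction applies \textsf{T} and the induction hypothesis.
\end{itemize}

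The main obstacle is the $\vee$ case, specifically the step $\mu(\psi)\vee\mu(\chi)\vdash\Box(\mu(\psi)\vee\mu(\chi))$. Plain $\mathsf{FS4}$ may not give this without some care, but it is available here. We apply $\vee$E to $\mu(\psi)\vee\mu(\chi)$. In the first case subproof, assuming $\mu(\psi)$, the induction hypothesis yields $\Box\mu(\psi)$ (this needs no reiteration, since it is derived from the subproof's own assumption), and \textsf{4} yields $\Box\Box\mu(\psi)$. We then open a $\Box$-subproof, import $\Box\mu(\psi)$ by $\Box$E, apply $\vee$I to get $\Box\mu(\psi)\vee\Box\mu(\chi)$, close with $\Box$I, and convert to $\Box(\mu(\psi)\vee\mu(\chi))$ by a further monotonicity step using \textsf{T} inside the box. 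The second case is symmetric, and $\vee$E concludes. So this case uses only the induction hypothesis inside the case subproofs, not $\Box$-Reiteration. Replacing $\mu$ by $\mu^+$ under the box again uses monotonicity, and the converse direction is \textsf{T} followed by $\vee$E with the induction hypothesis.

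Finally, the second conjunct for $\wedge$ and $\vee$ follows because $\mu$ is now equivalent to the boxed formula $\mu^+$, and any boxed formula is equivalent to its own box by \textsf{4} and \textsf{T}.
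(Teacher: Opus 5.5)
Your proof is correct and is essentially the routine induction on $\varphi$ that the paper leaves implicit (``it is easy to see''). It uses the same strengthening, that $\mu(\varphi)$ is provably equivalent to $\Box\mu(\varphi)$, which the paper uses for $\mathsf{QOS4}$; since you never use $\Box$-Reiteration, the equivalence in fact already holds in $\mathsf{FS4}$.

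One small correction in the $\neg$ case. Antitonicity of $\neg$ in $\mathsf{F}$ comes from two applications of $\neg$I, not from $\neg$E. Inside the subproof, derive $\neg\neg\mu(\psi)$ from $\mu(\psi)$, then discharge against the outer $\neg\mu(\psi)$. The rule $\neg$E cannot be used there because $\neg\mu(\psi)$ is not boxed and so cannot be reiterated into the subproof.
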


Another useful lemma is the following.

\begin{lemma}\label{mu+IntroElim} For any $\varphi,\psi\in\mathcal{L}(\wedge,\vee,\neg)$:
\begin{enumerate}
\item\label{mu+IntroElim1} $\mu^+(\varphi)\wedge\mu^+(\psi)\vdash_{\mathsf{FN4}}\mu^+(\varphi\wedge\psi)$;
\item $\mu^+(\varphi\wedge\psi) \vdash_{\mathsf{FN4}} \mu^+(\varphi)$ and $\mu^+(\varphi\wedge\psi) \vdash_{\mathsf{FN4}} \mu^+(\psi)$; 
\item $\mu^+(\varphi)\vdash_{\mathsf{FN4}}\mu^+(\varphi\vee\psi)$ and $\mu^+(\psi)\vdash_{\mathsf{FN4}}\mu^+(\varphi\vee\psi)$;
\item $\mu^+(\varphi)\wedge\mu^+(\neg\varphi)\vdash_{\mathsf{FN4}}\mu^+(\psi)$.
\end{enumerate}
\end{lemma}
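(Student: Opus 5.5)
Each of the four items can be shown by writing out an explicit $\mathsf{FN4}$-proof. Alternatively, and more quickly, we can check that the corresponding inequalities hold in every $\mathbf{FN4}$-algebra, since $\mathsf{FN4}$ is complete for $\mathbf{FN4}$-algebras. I would take the algebraic route and use three basic facts. First, every $\mu^+$-formula begins with $\Box$, so its value $a=\Box a'$ satisfies $a=\Box\Box a'$, that is, $a=\Box a$ by the \textsf{4} and \textsf{T} clauses of Definition~\ref{FN4AlgDef}.\ref{FN4AlgDef1}. Second, $\Box$ is monotone, because it preserves binary meets. Third, $\Box a\leq a$.

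For item 1, set $a=\tilde\theta(\mu^+(\varphi))$ and $b=\tilde\theta(\mu^+(\psi))$. Then $a\wedge b=\Box a\wedge\Box b=\Box(a\wedge b)$, which is exactly the value of $\mu^+(\varphi\wedge\psi)$. For item 2, we have $\Box(a\wedge b)\leq a\wedge b\leq a$, and the same bound gives $b$. For item 3, we have $a=\Box a\leq\Box(a\vee b)$ by monotonicity. Syntactically, item 3 is a $\Box$-subproof that uses $\Box$E on $\Box a$ and then $\vee$I, followed by $\Box$I and the \textsf{4} rule.

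Item 4 needs a little more care. The value of $\mu^+(\neg\varphi)$ is $\Box\neg a$. Since $a=\Box a$, we get $a\wedge\Box\neg a=\Box a\wedge\Box\neg a=\Box(a\wedge\neg a)=\Box 0$. Semicomplementation gives $a\wedge\neg a=0$, and \textsf{T} gives $\Box 0\leq 0$, so the meet is $0\leq\tilde\theta(\mu^+(\psi))$. Syntactically, we apply \textsf{T} to $\mu^+(\varphi)$ to obtain its body, apply \textsf{T} to $\mu^+(\neg\varphi)$ to obtain $\neg\mu^+(\varphi)$, and then $\neg$E yields $\mu^+(\psi)$. Since $\mu^+(\varphi)$ itself is available, the simplest version is $\neg$E on $\mu^+(\varphi)$ and $\neg\mu^+(\varphi)$.

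None of the steps is a real obstacle. The place that needs attention is that the appendix aims at \emph{constructive} proof translations, so I would present explicit Fitch derivations rather than cite completeness. The only bookkeeping that needs checking is that each $\Box$I step closes a $\Box$-subproof that starts with $\top$ and imports only boxed formulas through $\Box$E, as the formal clauses in Appendix~\ref{AppendixModal} require. None of the four items needs $\Box$-Reiteration.
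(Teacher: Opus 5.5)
Your proposal is correct, and your final plan of giving explicit Fitch derivations that use no $\Box$-Reiteration is what the paper does. The paper writes out the derivation for item 1 in Figure~\ref{MuAndFig} and calls the other items straightforward.

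The difference is that your main justification is semantic: you check the inequalities in $\mathbf{FN4}$-algebras and then invoke completeness. That is a valid proof of the lemma as stated. However, the lemma's job is to feed Theorem~\ref{ProofTransform}, which exists to give a proof-theoretic alternative to the algebraic proof of Corollary~\ref{IPC-toFN4}. Routing the lemma through completeness would make that detour pointless, which is why the paper writes out the derivation and why you are right to do the same.

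Your algebra transcribes step for step into the paper's derivation:
\begin{itemize}
\item $a\leq\Box a$ for a boxed value $a$ is the \textsf{4} rule.
\item $\Box a\wedge\Box b\leq\Box(a\wedge b)$ is a $\Box$-subproof with two $\Box$E steps and $\wedge$I, closed by $\Box$I.
\end{itemize}
Together these are exactly Figure~\ref{MuAndFig}. Only $a\leq\Box a$ is needed here, not the full equation $a=\Box a$.

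Two small bookkeeping points on your syntactic sketches:
\begin{itemize}
\item In item 3, the \textsf{4} rule must be applied to $\mu^+(\varphi)$ \emph{before} you open the $\Box$-subproof, so that $\Box\mu^+(\varphi)$ is available for $\Box$E. Your phrase ``followed by $\Box$I and the \textsf{4} rule'' has the order backwards.
\item In item 4, applying \textsf{T} to $\mu^+(\varphi)$ is superfluous. The $\neg$E step must pair $\mu^+(\varphi)$ itself with $\neg\mu^+(\varphi)$, as in your ``simplest version''.
\end{itemize}
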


\begin{proof} For the first part, the proof is given in Figure~\ref{MuAndFig}. The other parts are also straightforward.
\end{proof}

\begin{figure}[H]
\[\begin{nd}
\have [1] {1} {\mu^+(\varphi)\wedge\mu^+(\psi)}
\have [2] {2}  {\mu^+(\varphi)} \ae{1}
\have [3] {3}  {\Box\mu^+(\varphi)}  \FourAx{2}
\have [4] {4}  {\mu^+(\psi)} \ae{1}
\have [5] {5}  {\Box\mu^+(\psi)} \FourAx{4}
\open
\have [6] {6}   {\hspace{-.24in}\Box\;\;\;\mu^+(\varphi)}\boxe{3} 
\have [7] {7}   {\mu^+(\psi)} \boxe{5}
\have [8] {8}   {\mu^+(\varphi)\wedge \mu^+(\psi)} \ai{6,7}
\close
\have [9] {9}   {\Box (\mu^+(\varphi)\wedge\mu^+(\psi))}\boxi{6-8}
\end{nd}\]
\caption{$\mathsf{FN4}$ proof for Lemma~\ref{mu+IntroElim}.\ref{mu+IntroElim1}. Officially a $\Box$-subproof begins with $\top$ (see Appendix~\ref{AppendixModal}), but we suppress this step in proof diagrams.}\label{MuAndFig}
\end{figure}

We are now ready for the main proof-theoretic work.

\begin{theorem}\label{ProofTransform} For every $\underline{\mathsf{IPC}}^{-}$-proof $\langle \sigma_1,\dots,\sigma_n\rangle$ given $R$, there is an $\mathsf{FN4}$ proof $\langle \pi_1,\dots,\pi_m\rangle$ given $\{\mu^+(\rho)\mid \rho\in R\}$ such that $\pi_1=\mu^+(\sigma_1)$ and $\pi_m=\mu^+(\sigma_n)$.
\end{theorem}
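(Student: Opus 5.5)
We argue by strong induction on the complexity of the $\underline{\mathsf{IPC}}^{-}$-proof $\langle\sigma_1,\dots,\sigma_n\rangle$ given $R$ (with $|R|\le 1$). The claim is that we can always produce an $\mathsf{FN4}$-proof given $R^+:=\{\mu^+(\rho)\mid\rho\in R\}$ from $\mu^+(\sigma_1)$ to $\mu^+(\sigma_n)$.

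The base case $\langle\varphi\rangle$ is immediate: we take $\langle\mu^+(\varphi)\rangle$.

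For the inductive step we split on the last rule applied. In every case we first use Lemma~\ref{ProoFEM}.\ref{ProofRestrict} to restrict to the initial segments ending in the formulas $\sigma_i,\sigma_j$ the rule uses. The induction hypothesis then gives $\mathsf{FN4}$-proofs $\Pi_i,\Pi_j$ given $R^+$ from $\mu^+(\sigma_1)$ to $\mu^+(\sigma_i)$ and to $\mu^+(\sigma_j)$. We combine these with $*$ (Lemma~\ref{ProoFEM}.\ref{SameAssump}) into one proof $\Pi$ containing both $\mu^+(\sigma_i)$ and $\mu^+(\sigma_j)$. The easy rules are handled as follows.
\begin{itemize}
\item Reiteration: $\mu^+(\rho)\in R^+$ is a boxed formula, since every $\mu^+$-image begins with $\Box$, so $\Box$-Reiteration applies.
\item $\top$I: we append $\Box\top$ by $\Box$I.
\item $\wedge$I, $\wedge$E, $\vee$I, $\neg$E: we append a fixed $\mathsf{FN4}$-derivation realizing the corresponding clause of Lemma~\ref{mu+IntroElim}. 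Such a derivation needs no reiteration, so it is a proof given $R^+$. We glue it onto $\Pi$ via $\bowtie$ (Lemma~\ref{ProoFEM}.\ref{ProofFuse}); for the binary clauses we first form the conjunction by $\wedge$I.
\end{itemize}

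The substantive cases are $\neg$I and $\vee$E. In $\underline{\mathsf{IPC}}^{-}$ the subproof reiterates at most one formula $\xi$, and $\xi$ occurs in the outer proof. By the induction hypothesis we get an $\mathsf{FN4}$ subproof given $\{\mu^+(\xi)\}$, and $\mu^+(\xi)$ is available in $\Pi$ by the restriction argument. Since $\mu^+(\xi)$ is boxed, $\Box$-Reiteration into the subproof is exactly what is allowed.

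For $\neg$I we have $\mu^+(\sigma_i)=\mu^+(\psi)$ in $\Pi$ and a subproof from $\mu^+(\varphi)$ to $\mu^+(\neg\psi)=\Box\neg\mu^+(\psi)$. We extend the subproof by the T rule to reach $\neg\mu^+(\psi)$, then apply $\neg$I to obtain $\neg\mu^+(\varphi)$. To reach $\mu^+(\neg\varphi)=\Box\neg\mu^+(\varphi)$ we need this conclusion inside a $\Box$-subproof, and this is the main obstacle. The plan is to perform the whole $\neg$I inside a $\Box$-subproof. First we apply the 4 rule to $\mu^+(\psi)$ and to $\mu^+(\xi)$ to get $\Box\mu^+(\psi)$ and $\Box\mu^+(\xi)$ in the outer proof. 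We then open a $\Box$-subproof and bring $\mu^+(\psi)$ and $\mu^+(\xi)$ in by $\Box$E. Inside it we run $\neg$I with the translated subproof; the $\Box$-proof clause of Appendix~\ref{AppendixFN4} permits $\Box$-Reiteration of $\mu^+(\xi)$ into $\neg$I-subproofs. Closing the $\Box$-subproof by $\Box$I gives $\Box\neg\mu^+(\varphi)$.

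For $\vee$E we argue the same way. We have $\mu^+(\varphi\vee\psi)=\Box(\mu^+(\varphi)\vee\mu^+(\psi))$ and two translated subproofs ending in $\mu^+(\chi)$. Working inside a $\Box$-subproof, after 4 and $\Box$E we have $\mu^+(\varphi)\vee\mu^+(\psi)$ and $\mu^+(\xi)$. We perform $\vee$E there, $\Box$-reiterating $\mu^+(\xi)$ into the case subproofs, and conclude $\mu^+(\chi)$. Closing with $\Box$I yields $\Box\mu^+(\chi)$, and the T rule returns $\mu^+(\chi)$; alternatively we may stay outside the $\Box$-subproof altogether.

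Finally we splice each new block onto $\Pi$ using the proof-building clauses and Lemma~\ref{ProoFEM}. The bookkeeping to check is that each translated subproof is a proof given a set $\subseteq\{\text{formulas of the enclosing proof}\}$. This is precisely why we pass through $\underline{\mathsf{IPC}}^{-}$ (justified by Lemma~\ref{SameInt}) and use $\mu^+$ rather than $\mu$: every reiterated formula is then a single boxed formula present at the right level.
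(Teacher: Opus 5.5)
Your proposal is correct and follows essentially the same route as the paper. This includes the key $\neg$I construction: apply the \textsf{4} Rule, bring the formulas into a $\Box$-subproof by $\Box$E, extend the translated subproof by the \textsf{T} Rule, apply $\neg$I inside, and close with $\Box$I. For $\vee$E the paper uses your ``stay outside'' alternative: it applies the \textsf{T} Rule to $\mu^+(\varphi\vee\psi)$ and performs $\vee$E at the outer level. Note also that the two case subproofs may each reiterate a different formula $\rho_1,\rho_2$, so you need both $\mu^+(\rho_1)$ and $\mu^+(\rho_2)$ in the outer proof, not a single $\xi$.
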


\begin{proof} By strong induction on the complexity of proofs for $\underline{\mathsf{IPC}}^{-}$. For the base case, given an $\underline{\mathsf{IPC}}^{-}$-proof $\langle\varphi\rangle$, $\langle \mu^+(\varphi)\rangle$ is an $\mathsf{FN4}$-proof. For the inductive step, we consider each of the following rules of $\underline{\mathsf{IPC}}^{-}$ that could be the last rule applied, where $R$, $R_1$, $R_2$, and $R'$ have cardinality at most one:
\begin{itemize}
\item If $\langle \sigma_1,\dots,\sigma_n\rangle$ is a proof given $R$ and $\varphi\in R$, then $\langle \sigma_1,\dots,\sigma_n,\varphi\rangle$ is a proof given $R$ (Reiteration).
\item If $\langle \sigma_1,\dots,\sigma_n\rangle$ is a proof given $R$, then so is $\langle \sigma_1,\dots,\sigma_n,\top\rangle$ ($\top$I).
\item If $\langle \sigma_1,\dots,\sigma_n\rangle$ is a  proof given $R$ and $\sigma_i,\sigma_j$  are formulas, then $\langle \sigma_1,\dots,\sigma_n,\sigma_i\wedge\sigma_j\rangle$ is a proof given $R$ ($\wedge$I).
\item If $\langle \sigma_1,\dots,\sigma_n\rangle$ is a  proof given $R$ and $\sigma_i$ is a formula of the form $\varphi\wedge\psi$, then $\langle \sigma_1,\dots,\sigma_n,\varphi\rangle$ and $\langle \sigma_1,\dots,\sigma_n,\psi\rangle$ are proofs given $R$ ($\wedge$E).
\item If $\langle \sigma_1,\dots,\sigma_n\rangle$ is a  proof given $R$ and  $\sigma_i$ is a formula, then for any formula $\varphi$,  both $\langle \sigma_1,\dots,\sigma_n,\sigma_i\vee\varphi\rangle$ and $\langle \sigma_1,\dots,\sigma_n,\varphi\vee \sigma_i\rangle$ are proofs given $R$ ($\vee$I).
\item If $\langle \sigma_1,\dots,\sigma_n\rangle$ is a  proof given $R$, $\sigma_i$ is a formula of the form $\varphi\vee\psi$, $\tau_1$ is a proof given some $R_1\subseteq \{\sigma_j\mid \sigma_j\mbox{ a formula}\}$ beginning with $\varphi$ and ending with $\chi$, and $\tau_2$ is a proof given some $R_2\subseteq \{\sigma_j\mid \sigma_j\mbox{ a formula}\}$  beginning with $\psi$ and ending with $\chi$, then $\langle \sigma_1,\dots,\sigma_n, \tau_1,\tau_2,\chi\rangle$ is a proof given $R$ ($\vee $E).
\item If $\langle \sigma_1,\dots,\sigma_n\rangle$ is a  proof given $R$, $\sigma_i$ is a formula  $\psi$, and $\tau$ is a proof given some $R'\subseteq \{\sigma_j\mid \sigma_j\mbox{ a formula}\}$ beginning with $\varphi$ and ending with $\neg\psi$, then $\langle \sigma_1,\dots,\sigma_n,\tau,\neg\varphi\rangle$ is a proof given $R$ ($\neg$I).
\item If $\langle \sigma_1,\dots,\sigma_n\rangle$ is a  proof given $R$ and $\sigma_i$ and $\sigma_j$ are formulas of the form $\varphi$ and $\neg\varphi$, respectively, then for any formula $\psi$, $\langle \sigma_1,\dots,\sigma_n,\psi\rangle$ is a proof given $R$ ($\neg$E). 
\end{itemize}

For the rule of Reiteration, assume $\langle \sigma_1,\dots,\sigma_n\rangle$ is an $\underline{\mathsf{IPC}}^{-}$-proof given $R$, so by the inductive hypothesis there is an $\mathsf{FN4}$-proof $\Pi$ given $\mu^+[R]$ with assumption $\mu^+(\sigma_1)$ and conclusion $\mu^+(\sigma_n)$. Then for $\varphi\in R$, since $\mu^+(\varphi)$ begins with a $\Box$, we have that $\Pi ^\frown \langle\mu^+(\varphi)\rangle$ is an $\mathsf{FN4}$-proof given $\mu^+[R]$ by $\Box$-Reiteration. 

For $\top$I, the inductive hypothesis gives us a proof $\Pi$ as in the previous paragraph. In $\mathsf{FN4}$, since $\langle \top\rangle$ is a proof given $\varnothing$, $\langle \Box, \langle \top\rangle\rangle$ is a $\Box$-proof given $\varnothing$ by the first bullet point in Appendix~\ref{AppendixModal}. Then by $\Box$I, $\Pi^\frown\big\langle\langle \Box, \langle \top\rangle\rangle, \Box\top \big\rangle$ is an $\mathsf{FN4}$-proof given $\mu^+[R]$ with assumption $\mu^+(\sigma_1)$ and conclusion $\mu^+(\top)$. 

For $\wedge $I, for any formulas $\sigma_i,\sigma_j$ in $\langle \sigma_1,\dots,\sigma_n\rangle$, both $\langle \sigma_1,\dots,\sigma_i\rangle$ and $\langle \sigma_1,\dots,\sigma_j\rangle$ are $\underline{\mathsf{IPC}}^{-}$-proofs given $R$ by Lemma~\ref{ProoFEM}.\ref{ProofRestrict}. Then by the inductive hypothesis, there are $\mathsf{FN4}$-proofs $\Pi_i$ and $\Pi_j$ given $\mu^+[R]$ where $\mu^+(\sigma_1)$ is the assumption of both, $\mu^+(\sigma_i)$ is the conclusion of $\Pi_i$, and $\mu^+(\sigma_j)$ is the conclusion of $\Pi_j$. Then by Lemma \ref{ProoFEM}.\ref{SameAssump}, $\Pi_i * \Pi_j$ is an $\mathsf{FN4}$-proof given $\mu^+[R]$. Then by $\wedge$I in $\mathsf{FN4}$, \[\Pi:=(\Pi_i * \Pi_j)^\frown \langle \mu^+(\sigma_i)\wedge\mu^+(\sigma_j)\rangle\] is an $\mathsf{FN4}$-proof given $\mu^+[R]$. Now by Lemma \ref{mu+IntroElim}.\ref{mu+IntroElim1}, there is an $\mathsf{FN4}$-proof $\Theta$ from $\mu^+(\sigma_i)\wedge \mu^+(\sigma_j)$ to $\mu^+(\sigma_i\wedge\sigma_j)$ given $\mu^+[R]$. So by Lemma~\ref{ProoFEM}.\ref{ProofFuse}, we can extend $\Pi$ to the $\mathsf{FN4}$-proof $\Pi\bowtie\Theta$  given $\mu^+[R]$ with assumption $\mu^+(\sigma_1)$ and conclusion $\mu^+(\sigma_i\wedge\sigma_j)$, as desired.

The proofs for $\wedge$E, $\vee$I, and $\neg$E are similar in spirit to the proof for $\wedge$I, using the other parts of Lemma~\ref{mu+IntroElim}.

For $\vee$E, assume $\langle \sigma_1,\dots,\sigma_n\rangle$ is an $\underline{\mathsf{IPC}}^{-}$-proof given $R$, $\sigma_i$ is a formula of the form $\varphi\vee\psi$, $\tau_1$ is an $\underline{\mathsf{IPC}}^{-}$-proof given some $R_1\subseteq \{\sigma_j\mid \sigma_j\mbox{ a formula}\}$ beginning with $\varphi$ and ending with $\chi$, and $\tau_2$ is an $\underline{\mathsf{IPC}}^{-}$-proof given some $R_2\subseteq \{\sigma_j\mid \sigma_j\mbox{ a formula}\}$  beginning with $\psi$ and ending with $\chi$. By Lemma~\ref{ProoFEM}.\ref{ProofRestrict}, $\langle \sigma_1,\dots,\sigma_i\rangle$ is also an $\underline{\mathsf{IPC}}^{-}$-proof given $R$. Let us assume that $R_1$ and $R_2$ are both nonempty; the proof is easy to adjust if either or both are empty. Then for the  $\rho_1\in R_1$ and $\rho_2\in R_2$, $\langle \sigma_1,\dots,\sigma_i, \rho_1\rangle$ and $\langle \sigma_1,\dots,\sigma_i, \rho_2\rangle$ are also $\underline{\mathsf{IPC}}^{-}$-proofs given $R$ by Lemma~\ref{ProoFEM}.\ref{ProofRestrict}.

By the inductive hypothesis, the following $\mathsf{FN4}$-proofs  exist:
\begin{itemize}
\item a proof $\Pi_1$ given $\mu^+[R]$ with assumption $\mu^+(\sigma_1)$ and conclusion $\mu^+(\rho_1)$;
\item a proof $\Pi_2$ given $\mu^+[R]$ with assumption $\mu^+(\sigma_1)$ and conclusion $\mu^+(\rho_2)$;
\item a proof $\Delta$  given $\mu^+[R]$ with assumption $\mu^+(\sigma_1)$ and conclusion $\mu^+(\varphi\vee\psi)$;
\item a proof $\Phi$ given $\{\mu^+(\rho_1)\}$ with assumption $\mu^+(\varphi)$ and conclusion $\mu^+(\chi)$;
\item a proof $\Psi$ given $\{\mu^+(\rho_2)\}$ with assumption $\mu^+(\psi)$ and conclusion $\mu^+(\chi)$.
\end{itemize}
By Lemma \ref{ProoFEM}.\ref{SameAssump}, since the first three types of proofs start with the same formula, $\Pi_1 * \Pi_2 * \Delta := (\Pi_1 * \Pi_2)*\Delta$
is an $\mathsf{FN4}$-proof given $\mu^+[R]$. Now since there is a one-step $\mathsf{FN4}$-proof
$\langle \mu^+(\varphi\vee\psi), \mu^+(\varphi)\vee \mu^+(\psi)\rangle$
using the \textsf{T} Rule, it follows by Lemma \ref{ProoFEM}.\ref{ProofFuse} that 
$(\Pi_1 * \Pi_2 * \Delta)^\frown \langle \mu^+(\varphi)\vee \mu^+(\psi)\rangle$ 
is an $\mathsf{FN4}$-proof given $\mu^+[R]$. Then by the $\vee$E rule in $\mathsf{FN4}$, 
\[\big((\Pi_1 * \Pi_2 * \Delta) ^\frown \langle \mu^+(\varphi)\vee \mu^+(\psi)\rangle\big) ^\frown \big\langle \Phi,\Psi,\mu^+(\chi)\big\rangle\] 
is an $\mathsf{FN4}$-proof given $\mu^+[R]$, since $\mu^+(\rho_1)$ and $\mu^+(\rho_2)$ occur in the proof before $\Phi$ and $\Psi$, respectively, at the ends of $\Pi_1$ and $\Pi_2$.

For $\neg$I, assume $\langle \sigma_1,\dots,\sigma_n\rangle$ is an $\underline{\mathsf{IPC}}^{-}$-proof given $R$, $\sigma_i$ is a formula $\psi$, and $\tau$ is a proof given some $R'\subseteq \{\sigma_j\mid \sigma_j\mbox{ a formula}\}$ beginning with $\varphi$ and ending with $\neg\psi$. By Lemma \ref{ProoFEM}.\ref{ProofRestrict}, $\langle\sigma_1,\dots,\psi\rangle$ is also an $\underline{\mathsf{IPC}}^{-}$-proof given $R$. Let us assume that $R'$ is nonempty; the proof is easy to adjust if $R'$ is empty. Then for the $\rho\in R'$, $\langle \sigma_1,\dots,\rho\rangle$ is also an $\underline{\mathsf{IPC}}^{-}$-proof given~$R$ by Lemma~\ref{ProoFEM}.\ref{ProofRestrict}.  

By the inductive hypothesis, the following $\mathsf{FN4}$-proofs exist:
\begin{itemize}
\item a proof $\Pi$ given $\mu^+[R]$ with assumption $\mu^+(\sigma_1)$ and conclusion $\mu^+(\rho)$;
\item a proof $\Psi$ given $\mu^+[R]$  with assumption $\mu^+(\sigma_1)$ and conclusion $\mu^+(\psi)$;
\item a proof $\Phi$ given $\{\mu^+(\rho)\}$ with assumption $\mu^+(\varphi)$ and conclusion $\mu^+(\neg\psi)$.
\end{itemize}
Then we claim that the following is an $\mathsf{FN4}$-proof given $\mu^+[R]$ with assumption $\mu^+(\sigma_1)$ and conclusion $\mu^+(\neg \varphi)=\Box\neg\mu^+(\varphi)$, as desired:
\[ (\Pi * \Psi) ^\frown \bigg\langle  \underset{\text{by \textsf{4} Rule}}{\underbrace{\Box\mu^+(\rho),\Box\mu^+(\psi)}}, \Big\langle \Box,\big\langle \top, \underset{\text{by $\Box$E}}{\underbrace{\mu^+(\rho),\mu^+(\psi)}}, \Phi ^\frown \langle \underset{\text{by \textsf{T} Rule}}{\underbrace{\neg \mu^+(\psi)}}\rangle, \underset{\text{by $\neg$I}}{\underbrace{\neg \mu^+(\varphi)}} \big\rangle \Big\rangle,  \underset{\text{by $\Box$I}}{\underbrace{\Box \neg \mu^+(\varphi)}}\bigg\rangle\]
Since $\mu^+(\rho)$, the last formula of $\Pi$, and $\mu^+(\psi)$, the last formula of $\Psi$, both begin with a $\Box$, we can use the \textsf{4} Rule to extend $\Pi*\Psi$ with $\langle \Box\mu^+(\rho), \Box \mu^+(\psi)\rangle$. Then since our goal is to prove $\mu^+(\neg\varphi)$, which is $\Box\neg\mu^+(\varphi)$, we open a $\Box$-subproof, which begins with $\top$ as always. We then immediately apply $\Box$E to derive $\mu^+(\rho)$ and $\mu^+(\psi)$ in the $\Box$-subproof. Our goal is to obtain $\neg \mu^+(\varphi)$ as the conclusion of the $\Box$-subproof, so we open a subproof for $\neg$I inside the $\Box$-subproof. That $\neg$I-subproof consists of $\Phi$, which is a proof given $\{\mu^+(\rho)\}$ with assumption $\mu^+(\varphi)$ and conclusion $\mu^+(\neg\psi)=\Box \neg\mu^+(\psi)$, which we can extend with the new conclusion $\neg\mu^+(\psi)$ using the \textsf{T} Rule. Then by the $\neg$I rule, we are entitled to leave the $\neg$I-subproof and derive $\neg\mu^+(\varphi)$ as the conclusion of the $\Box$-subproof, as desired.\end{proof}

Theorem~\ref{ProofTransform}, together with Lemmas \ref{SameInt} and \ref{PlusLem}, gives us another proof of Corollary~\ref{IPC-toFN4}.

\subsubsection{Conditional Extension}

To extend the results of the previous section to $\mathsf{IPC}$, as defined in Appendix~\ref{AppendixIPC}, we define $\underline{\mathsf{IPC}}$ similarly but only allow a subproof to reiterate a single formula from its parent subproof, so $|R'|\leq 1$ and $R'\subseteq \{\sigma_j\mid \sigma_j\mbox{ a formula}\}$ in $\to$I. Clearly the extension of Lemma~\ref{ProoFEM} to $\mathsf{IPC}$, $\underline{\mathsf{IPC}}$, and $\mathsf{FC4}$  continues to hold, and the proof of the following is similar to that of Lemma~\ref{SameInt}.

\begin{lemma}\label{SameInt2} For any formula $\varphi,\psi\in\mathcal{L}(\wedge,\vee,\neg,\to)$, $\varphi \vdash_{\mathsf{IPC}}\psi$ iff $\varphi \vdash_{\underline{\mathsf{IPC}}}\psi$.
\end{lemma}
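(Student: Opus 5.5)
The right-to-left direction is immediate, since every $\underline{\mathsf{IPC}}$-proof given $R$ is also an $\mathsf{IPC}$-proof given $R$: the restricted clauses are special cases of the unrestricted ones.

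For left-to-right, I would follow the proof of Lemma~\ref{SameInt} and prove a stronger invariant by strong induction on the complexity of $\mathsf{IPC}$-proofs. For every $\mathsf{IPC}$-proof $\langle\sigma_1,\dots,\sigma_n\rangle$ given a finite set $R$, there should be an $\underline{\mathsf{IPC}}$-proof given $\{\bigwedge R\}$ (given $\varnothing$ if $R=\varnothing$) with the same first formula $\sigma_1$ and last formula $\sigma_n$. Taking $R=\varnothing$ then yields the lemma.

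The base case and the rules that open no subproof ($\top$I, $\wedge$I, $\wedge$E, $\vee$I, $\neg$E, and now also $\to$E) are handled in the same way. We restrict the given proof to the relevant premises using Lemma~\ref{ProoFEM}.\ref{ProofRestrict} and apply the inductive hypothesis to each restriction. We then merge the resulting proofs with $*$ via Lemma~\ref{ProoFEM}.\ref{SameAssump} and apply the same rule in $\underline{\mathsf{IPC}}$. The Reiteration case is exactly as in Lemma~\ref{SameInt}: reiterate $\bigwedge R$ and then extract $\varphi$ by repeated $\wedge$E.

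The subproof rules $\vee$E, $\neg$I, and the new case $\to$I all follow one template; I spell it out for $\to$I. Suppose the subproof $\tau$ is given $R'\subseteq R\cup\{\sigma_j\mid\sigma_j\text{ a formula}\}$, runs from $\varphi$ to $\psi$, and the rule concludes $\varphi\to\psi$.

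First, split $R'$ into the formulas $\xi_1,\dots,\xi_s$ occurring in the main proof and the remaining formulas $\rho_1,\dots,\rho_u\in R$.

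Second, for each $\xi_t$, apply the inductive hypothesis to the restriction $\langle\sigma_1,\dots,\xi_t\rangle$. This gives $\underline{\mathsf{IPC}}$-proofs $\Pi_t$ given $\{\bigwedge R\}$ from $\sigma_1$ to $\xi_t$. If $n>1$, also apply it to the current proof $\langle\sigma_1,\dots,\sigma_n\rangle$ itself, which precedes the subproof.

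Third, fuse all of these with $*$ and append a reiteration of $\bigwedge R$. Then use $\wedge$E and $\wedge$I to derive $\bigwedge R'$ at the end of a single proof $\Pi$ given $\{\bigwedge R\}$ that starts at $\sigma_1$.

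Fourth, apply the inductive hypothesis to $\tau$, which has smaller complexity. This gives an $\underline{\mathsf{IPC}}$-proof $\Theta$ given $\{\bigwedge R'\}$ from $\varphi$ to $\psi$.

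Since $\bigwedge R'$ now occurs as a formula in $\Pi$, the restricted $\to$I clause applies with the singleton $\{\bigwedge R'\}$. Hence $\Pi^\frown\langle\Theta,\varphi\to\psi\rangle$ is the required $\underline{\mathsf{IPC}}$-proof.

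The case $\vee$E works the same way, except that we build conjunctions $\bigwedge R_1$ and $\bigwedge R_2$ for the two subproofs. These need to be derived in the main proof after $\sigma_i$ is already available.

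The main obstacle is bookkeeping rather than anything conceptual. Two points need checking:
\begin{itemize}
\item The conjunction $\bigwedge R'$ is derived \emph{inside} the main proof before the subproof is appended. This is what the restricted clause requires, since reiteration is allowed only from formulas of the parent proof.
\item The final formula of the new proof is the rule's conclusion, and the premises used by the original rule (for example $\varphi\vee\psi$ in $\vee$E) are still present in the merged proof.
\end{itemize}
Degenerate cases also need a brief word: when $R'$ is empty, the reiteration step is simply skipped; when $R'$ is a singleton, $\bigwedge R'$ is that formula itself.
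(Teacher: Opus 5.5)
Your proposal is correct and is essentially the paper's own argument. The paper says only that this lemma is proved like Lemma~\ref{SameInt}; that means the same strong induction with the invariant that an $\mathsf{IPC}$-proof given $R$ becomes an $\underline{\mathsf{IPC}}$-proof given $\{\bigwedge R\}$ with the same first and last formulas. Under that invariant, $\to$E is handled like the rules that open no subproof, and $\to$I exactly like $\neg$I, as you describe.
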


Next, we add the clause $\mu^+(\varphi\to\psi)=\Box (\mu^+(\varphi)\to\mu^+(\psi))$ to the variant $\mu^+$ of the GMT translation, for which we have the following extension of Lemma \ref{PlusLem}.

\begin{lemma}\label{PlusLemFC4} For any $\varphi\in \mathcal{L}(\wedge,\vee,\neg,\to)$, $\mu(\varphi)\dashv \vdash_{\mathsf{FC4}} \mu^+(\varphi)$.
\end{lemma}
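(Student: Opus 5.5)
We argue by induction on the complexity of $\varphi\in\mathcal{L}(\wedge,\vee,\neg,\to)$, proving both directions $\mu(\varphi)\vdash_{\mathsf{FC4}}\mu^+(\varphi)$ and $\mu^+(\varphi)\vdash_{\mathsf{FC4}}\mu(\varphi)$ simultaneously. We use the $\to$-free cases of Lemma~\ref{PlusLem} only as a template: the proof system has changed, so the induction must be rerun inside $\mathsf{FC4}$. Everything derivable in $\mathsf{FN4}$ is still derivable in $\mathsf{FC4}$, and the extension of Lemma~\ref{ProoFEM} lets us compose proofs.

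A preliminary fact carries most of the weight: for every $\varphi$, $\mu(\varphi)\vdash_{\mathsf{FC4}}\Box\mu(\varphi)$. We prove it by an auxiliary induction.
\begin{itemize}
\item For $p$, $\neg\varphi$ and $\varphi\to\psi$, the translation already begins with $\Box$, so the \textsf{4} Rule applies.
\item For $\top$, we use $\Box$I with an empty $\Box$-subproof.
\item For $\wedge$, we argue as in Figure~\ref{MuAndFig}.
\item For $\vee$, the induction hypothesis gives $\mu(\varphi)\vdash\Box\mu(\varphi)$ and $\mu(\psi)\vdash\Box\mu(\psi)$. By monotonicity of $\Box$ (via $\Box$I/$\Box$E), we get $\Box\mu(\varphi)\vdash\Box(\mu(\varphi)\vee\mu(\psi))$ and likewise for $\psi$. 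An application of $\vee$E then yields $\mu(\varphi)\vee\mu(\psi)\vdash\Box(\mu(\varphi)\vee\mu(\psi))$.
\end{itemize}
The same argument gives $\mu^+(\varphi)\vdash\Box\mu^+(\varphi)$ trivially, since $\mu^+(\varphi)$ always begins with $\Box$.

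With this in hand, the main induction goes as follows. For $\top$ and $p$ there is nothing to do beyond $\Box$I and the \textsf{T} Rule. For $\wedge$ and $\vee$:
\begin{itemize}
\item From $\mu(\varphi\star\psi)=\mu(\varphi)\star\mu(\psi)$, we pass componentwise to $\mu^+(\varphi)\star\mu^+(\psi)$ using the induction hypotheses. For $\vee$ this step uses $\vee$E with subproofs that need no reiteration. We then box the result using the preliminary fact for $\mu^+$ together with the argument above.
\item Conversely, the \textsf{T} Rule strips the outer $\Box$, and the induction hypotheses then apply componentwise.
\end{itemize}
For $\neg$, we must show that $\Box\neg\mu(\varphi)$ and $\Box\neg\mu^+(\varphi)$ are interderivable. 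We open a $\Box$-subproof, bring in $\neg\mu(\varphi)$ by $\Box$E, and derive $\neg\mu^+(\varphi)$ by $\neg$I. In the $\neg$I subproof we assume $\mu^+(\varphi)$, obtain $\mu(\varphi)$ by the induction hypothesis, and conclude $\neg\neg\mu(\varphi)$ by double inflationarity. For this to be legitimate, $\neg$I needs an established formula $\psi$ with $\neg\psi$ derived in the subproof; we take $\psi=\neg\mu(\varphi)$. The other direction is symmetric.

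The main obstacle is the $\to$ case: showing that $\Box(\mu(\varphi)\to\mu(\psi))$ and $\Box(\mu^+(\varphi)\to\mu^+(\psi))$ are interderivable. Without the congruence rules, $\to$ is not congruential in $\mathsf{FC4}$, so we must exploit $\Box$-Reiteration into $\to$I-subproofs. Inside a $\Box$-subproof, we use $\Box$E to obtain $\mu(\varphi)\to\mu(\psi)$. Since this formula is not boxed, we first apply the \textsf{4} Rule outside, giving $\Box\Box(\mu(\varphi)\to\mu(\psi))$. Then $\Box$E inside the $\Box$-subproof yields the boxed formula $\Box(\mu(\varphi)\to\mu(\psi))$, which may be $\Box$-reiterated into a $\to$I-subproof.

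Inside that $\to$I-subproof:
\begin{enumerate}
\item We assume $\mu^+(\varphi)$ and convert it to $\mu(\varphi)$ by the induction hypothesis.
\item We apply the \textsf{T} Rule to the reiterated formula to get $\mu(\varphi)\to\mu(\psi)$, and then $\to$E to get $\mu(\psi)$.
\item We convert $\mu(\psi)$ to $\mu^+(\psi)$ by the induction hypothesis.
\end{enumerate}
Closing with $\to$I and then $\Box$I gives $\Box(\mu^+(\varphi)\to\mu^+(\psi))$. The converse is symmetric.

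The delicate point is checking against Appendix~\ref{AppendixFN4} that this nesting is admissible. A $\to$I-subproof inside a $\Box$-proof may receive $\Box$-reiterated formulas only from the enclosing $\Box$-subproof's own lines, which is why the \textsf{4}-then-$\Box$E detour is needed. The inductively supplied subproofs also use only reiterables that are available at that point.
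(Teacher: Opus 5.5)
Your proof is correct and is essentially the argument the paper intends. The paper omits the proof, presenting the lemma as an easy extension of Lemma~\ref{PlusLem}, and your $\to$ case is the same maneuver the paper uses in the $\neg$I case of Theorem~\ref{ProofTransform}: apply the \textsf{4} Rule, recover the boxed formula by $\Box$E inside the $\Box$-subproof, then $\Box$-reiterate it into the inner subproof. One small remark: your preliminary fact $\mu(\varphi)\vdash_{\mathsf{FC4}}\Box\mu(\varphi)$ is never actually used, since the main induction only needs that every $\mu^+(\varphi)$ begins with $\Box$.
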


Now we can extend Theorem~\ref{ProofTransform} to the following.

\begin{theorem}\label{ProofTransformIPC} For every $\underline{\mathsf{IPC}}$-proof $\langle \sigma_1,\dots,\sigma_n\rangle$ given $R$, there is an $\mathsf{FC4}$ proof $\langle \pi_1,\dots,\pi_m\rangle$ given $\{\mu^+(\rho)\mid \rho\in R\}$ such that $\pi_1=\mu^+(\sigma_1)$ and $\pi_m=\mu^+(\sigma_n)$.\end{theorem}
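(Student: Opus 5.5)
We argue by strong induction on the complexity of $\underline{\mathsf{IPC}}$-proofs, exactly as in the proof of Theorem~\ref{ProofTransform}. All cases for the rules of $\underline{\mathsf{IPC}}^{-}$ (Reiteration, $\top$I, $\wedge$I, $\wedge$E, $\vee$I, $\vee$E, $\neg$I, $\neg$E) go through verbatim. This is because $\mathsf{FC4}$ contains every rule of $\mathsf{FN4}$, and because Lemma~\ref{ProoFEM} and Lemma~\ref{mu+IntroElim} remain valid for $\mathsf{FC4}$: their proofs only use rules already available in $\mathsf{FN4}$. So only two new cases need checking, namely when the last rule applied is $\to$E or $\to$I.

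For $\to$E, suppose $\sigma_i=\varphi$ and $\sigma_j=\varphi\to\psi$ occur in the $\underline{\mathsf{IPC}}$-proof given $R$. By Lemma~\ref{ProoFEM}.\ref{ProofRestrict} and the inductive hypothesis, we have $\mathsf{FC4}$-proofs $\Pi_i$ and $\Pi_j$ given $\mu^+[R]$. Both have assumption $\mu^+(\sigma_1)$, with conclusions $\mu^+(\varphi)$ and $\Box(\mu^+(\varphi)\to\mu^+(\psi))$ respectively. We combine them into $\Pi_i*\Pi_j$ using Lemma~\ref{ProoFEM}.\ref{SameAssump}. We then apply the \textsf{T} Rule to obtain $\mu^+(\varphi)\to\mu^+(\psi)$, and finally $\to$E yields $\mu^+(\psi)$.

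For $\to$I, suppose $\tau$ is an $\underline{\mathsf{IPC}}$-proof given $R'\subseteq\{\sigma_j\mid\sigma_j\text{ a formula}\}$, with $|R'|\le 1$, assumption $\varphi$ and conclusion $\psi$. We treat the case $R'=\{\rho\}$; the empty case is simpler. First, Lemma~\ref{ProoFEM}.\ref{ProofRestrict} and the inductive hypothesis give a proof $\Pi$ given $\mu^+[R]$ from $\mu^+(\sigma_1)$ to $\mu^+(\rho)$. Second, the inductive hypothesis applied to $\tau$ gives a proof $\Phi$ given $\{\mu^+(\rho)\}$ from $\mu^+(\varphi)$ to $\mu^+(\psi)$. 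The target conclusion is $\Box(\mu^+(\varphi)\to\mu^+(\psi))$, so we mimic the $\neg$I case of Theorem~\ref{ProofTransform} in four steps:
\begin{enumerate}
\item Extend $\Pi$ by the \textsf{4} Rule to obtain $\Box\mu^+(\rho)$.
\item Open a $\Box$-subproof, beginning with $\top$ as always, and use $\Box$E to bring in $\mu^+(\rho)$.
\item Inside the $\Box$-subproof, apply $\to$I with $\Phi$ as the subproof. This is legitimate because $\mu^+(\rho)$ is boxed, so it may be $\Box$-reiterated into the $\to$I-subproof, as the $\mathsf{FC4}$ clauses allow inside $\Box$-proofs. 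This derives $\mu^+(\varphi)\to\mu^+(\psi)$.
\item Close the $\Box$-subproof with $\Box$I, which gives $\mu^+(\varphi\to\psi)$.
\end{enumerate}

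The main obstacle is the bookkeeping in the formal inductive definition for the $\to$I case. The $\Box$-proof clause for $\to$I inside $\Box$-proofs must permit reiteration from formulas occurring earlier in the $\Box$-proof, namely $\mu^+(\rho)$ obtained by $\Box$E. This mirrors the modified $\vee$E and $\neg$I clauses in Appendix~\ref{AppendixFN4}, and it is exactly the purpose of the \textsf{4}/$\Box$E detour. Once this is checked, the proof that $\Phi$ (a proof given $\{\mu^+(\rho)\}$) embeds as that subproof is immediate.
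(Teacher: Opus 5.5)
Your proposal is correct and follows essentially the same route as the paper. You handle $\to$E with the \textsf{T} Rule plus an $\to$E step, and you treat $\to$I like the $\neg$I case: the \textsf{4} Rule and $\Box$E bring $\mu^+(\rho)$ into a $\Box$-subproof, it is $\Box$-reiterated into the $\to$I-subproof $\Phi$ given by the inductive hypothesis, and $\Box$I closes the $\Box$-subproof. You spell this out in more detail than the paper's sketch, and your reading of the $\to$I clause inside $\Box$-proofs matches how $\mathsf{FC4}$ is defined in the appendix.
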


\begin{proof} We need only extend the proof of Theorem~\ref{ProofTransform} with cases for $\to$I and $\to$E. The case for $\to$E is obvious, using $\to$E-steps in $\mathsf{FC4}$ to match those in $\mathsf{IPC}$. The case for $\to$I follows the same kind of strategy as the case for $\neg$I in the proof of Theorem~\ref{ProofTransform}: since $\mu^+(\varphi\to\psi)=\Box(\mu^+(\varphi)\to\mu^+(\psi))$, we need to use a $\Box$-subproof, into which we pull necessary materials using $\Box$E, so those materials can then be $\Box$-reiterated into the $\to$-subproof given by the inductive hypothesis.\end{proof}

\subsubsection{First-Order Extension}\label{AppendixIQC}

To extend the results of the previous section to $\mathsf{IQC}$, as defined in Appendix~\ref{AppendixQuant}, we define $\underline{\mathsf{IQC}}$ similarly but only allow a subproof to reiterate a single formula from its parent subproof, so $|R'|\leq 1$ and $R'\subseteq \{\sigma_j\mid \sigma_j\mbox{ a formula}\}$ in $\exists$E. The following can be proved by extending the inductive proofs of Lemmas~\ref{SameInt} and \ref{SameInt2} with cases for the quantifier rules.

\begin{lemma}\label{SameInt3} For any formula $\varphi,\psi\in\mathcal{L}(\wedge,\vee,\neg,\to,\forall,\exists)$, $\varphi \vdash_{\mathsf{IQC}}\psi$ iff $\varphi \vdash_{\underline{\mathsf{IQC}}}\psi$.
\end{lemma}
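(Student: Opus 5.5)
The right-to-left direction is immediate: every $\underline{\mathsf{IQC}}$-proof given $R$ is already an $\mathsf{IQC}$-proof given $R$. The clauses of $\underline{\mathsf{IQC}}$ are special cases of those of $\mathsf{IQC}$, with the reiterable sets merely restricted to size at most one and drawn from the parent proof. For left-to-right, I will follow the proof of Lemma~\ref{SameInt} and its conditional extension Lemma~\ref{SameInt2}. The argument is a strong induction on the complexity of $\mathsf{IQC}$-proofs, with the following hypothesis: for every $\mathsf{IQC}$-proof $\langle\sigma_1,\dots,\sigma_n\rangle$ given a finite set $R$, there is an $\underline{\mathsf{IQC}}$-proof given $\{\bigwedge R\}$ with assumption $\sigma_1$ and conclusion $\sigma_n$. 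The cases for Reiteration, $\top$I, the connective rules, $\vee$E, $\neg$I, $\to$I and $\to$E are carried over verbatim from the earlier proofs. So only the four quantifier clauses of Appendix~\ref{AppendixQuant} remain.

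The cases $\forall$E and $\exists$I need no subproofs or eigenvariable conditions. I apply the inductive hypothesis to the restricted proof $\langle\sigma_1,\dots,\sigma_i\rangle$ (available by Lemma~\ref{ProoFEM}.\ref{ProofRestrict}), append the corresponding quantifier step, and splice the result onto the main translated proof with $*$ as in the $\wedge$I case. For $\forall$I, the side condition in $\mathsf{IQC}$ is that $v$ is not free in $\sigma_1$ or in any formula of $R$. Hence $v$ is not free in $\bigwedge R$, so the $\underline{\mathsf{IQC}}$ side condition holds for the translated proof given $\{\bigwedge R\}$ and I can append $\forall v\varphi$ directly.

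The main obstacle is $\exists$E. Here the subproof $\tau$ is given some $R'\subseteq R\cup\{\sigma_j\}$, with $v$ not free in $\psi$ or in any formula of $R'$. I will proceed exactly as in the $\neg$I case of Lemma~\ref{SameInt}. First I build a translated proof $\Pi$ from $\sigma_1$ that derives $\exists v\varphi$ and, as its last line, $\bigwedge R'$. This is done by reiterating $\bigwedge R$, applying $\wedge$E, and combining with $\wedge$I the translations of the proofs of the $\xi_t\in R'\cap\{\sigma_j\}$. Next I apply the inductive hypothesis to $\tau$ to get an $\underline{\mathsf{IQC}}$-proof $\Theta$ given $\{\bigwedge R'\}$ from $\varphi$ to $\psi$, and finish with $\Pi^\frown\langle\Theta,\psi\rangle$. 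The delicate point is the eigenvariable check: $v$ must not be free in the single reiterable $\bigwedge R'$. This holds because $v$ is free in no member of $R'$. One also has to confirm that $\bigwedge R'$ is built only from formulas of $R'$, so that no spurious free occurrences of $v$ are introduced. I will also take care that the auxiliary $\wedge$I/$\wedge$E steps never require a forbidden reiteration.
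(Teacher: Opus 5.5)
Your proposal is correct and takes the same route as the paper: the paper proves this lemma by extending the strong induction of Lemmas~\ref{SameInt} and~\ref{SameInt2} (an $\mathsf{IQC}$-proof given $R$ becomes an $\underline{\mathsf{IQC}}$-proof given $\{\bigwedge R\}$) with cases for the quantifier rules, and the checks that matter are the ones you single out, namely that the eigenvariable conditions for $\forall$I and $\exists$E carry over to $\bigwedge R$ and $\bigwedge R'$. One precision for the $\forall$I case: append $\forall v\varphi$ to the translated proof of the prefix $\langle\sigma_1,\dots,\sigma_i\rangle$ (via Lemma~\ref{ProoFEM2}.\ref{QProofRestrict}), not to the translation of the whole proof, which need not contain $\sigma_i$. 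It is also good that you only ever use $*$ and appending, never the fusion $\bowtie$, whose plain form is not available in the first-order setting.
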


 As in the previous sections, it is helpful to work with the variant $\mu^+$ of the standard GMT translation.

\begin{definition} Define a translation $\mu^+$ from $\mathcal{L}(\wedge,\vee,\neg,\to,\forall,\exists)$ to $\mathcal{L}(\wedge,\vee,\neg,\to,\forall,\exists,\Box)$ as follows:
\begin{enumerate}
\item $\mu^+(P(t_1,\dots,t_n))=\Box P(t_1,\dots,t_n)$ for atomic formulas $P(t_1,\dots,t_n)$;
\item clauses for $\top$, $\wedge$, $\vee$, and $\neg$ from Definition~\ref{mu+Def}, plus
  $\mu^+(\varphi\to\psi)=\Box(\mu^+(\varphi)\to\mu^+(\psi))$;
\item $\mu^+(\forall x\varphi)=\Box \forall x\mu^+(\varphi)$;
\item $\mu^+(\exists x\varphi)=\Box \exists x\mu^+(\varphi)$.
\end{enumerate}
\end{definition}

The following lemma is easy to prove, using the argument in Figure~\ref{ForallExistBox} for the key step.

\begin{lemma}\label{PlusLemQFC4} For any $\varphi\in \mathcal{L}(\wedge,\vee,\neg,\to,\forall,\exists)$, $\mu(\varphi)\dashv \vdash_{\mathsf{QFC4}} \mu^+(\varphi)$.
\end{lemma}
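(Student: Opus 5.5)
We prove $\mu(\varphi)\dashv\vdash_{\mathsf{QFC4}}\mu^+(\varphi)$ by induction on the complexity of $\varphi$. The induction also carries the auxiliary claim that $\mu(\varphi)\vdash_{\mathsf{QFC4}}\Box\mu(\varphi)$, i.e., that $\mu(\varphi)$ is provably equivalent to a boxed formula. The converse $\Box\mu(\varphi)\vdash\mu(\varphi)$ is just the \textsf{T} Rule. The equivalence of $\mu^+(\varphi)$ with $\Box\mu^+(\varphi)$ is immediate from the \textsf{4} and \textsf{T} Rules, since every clause of $\mu^+$ begins with $\Box$.

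The propositional cases $\top$, atoms, $\wedge$, $\vee$, $\neg$, and $\to$ go through as in Lemmas~\ref{PlusLem} and~\ref{PlusLemFC4}.
\begin{itemize}
\item For $\wedge$: from $\mu(\varphi)\wedge\mu(\psi)$ we get $\Box\mu(\varphi)$ and $\Box\mu(\psi)$ by the induction hypothesis. Inside a $\Box$-subproof we $\Box$E both conjuncts, convert them to $\mu^+(\varphi),\mu^+(\psi)$ by the induction hypothesis, and close with $\Box$I, exactly as in Figure~\ref{MuAndFig}. The converse uses \textsf{T}.
\item For $\vee$: $\mu(\varphi)\vee\mu(\psi)$ yields $\Box(\mu^+(\varphi)\vee\mu^+(\psi))$ by $\vee$E. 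In each case, $\Box\mu(\varphi)\vdash\Box(\mu^+(\varphi)\vee\mu^+(\psi))$ follows by monotonicity of $\Box$ (a $\Box$-subproof with $\Box$E, the induction hypothesis, and $\vee$I). The converse is \textsf{T} followed by $\vee$E and the induction hypothesis.
\item For $\neg$ and $\to$: we have outer $\Box$s on both sides, and we use congruence under $\Box$. Inside a $\Box$-subproof, assume $\mu^+(\varphi)$ in a $\neg$I- or $\to$I-subproof. Convert it to $\mu(\varphi)$ using the induction-hypothesis derivations; this is legitimate because those derivations are genuine proofs usable as subproof material. Then apply $\neg$E or $\to$E against the $\Box$E'd formula.
\end{itemize}
Note that the subproofs here need no reiteration of non-boxed material. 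The $\Box$E'd formulas live in the $\Box$-subproof, and only boxed formulas must be reiterated into the inner subproofs. This is available because $\neg\mu(\psi)$ is boxed-equivalent by the auxiliary claim, and otherwise we instead reiterate $\Box\neg\mu(\psi)$.

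For $\forall$: $\mu(\forall x\varphi)=\Box\forall x\mu(\varphi)$ and $\mu^+(\forall x\varphi)=\Box\forall x\mu^+(\varphi)$. Open a $\Box$-subproof and $\Box$E to get $\forall x\mu(\varphi)$. Then $\forall$E gives $\mu(\varphi)$, the induction hypothesis gives $\mu^+(\varphi)$, $\forall$I gives $\forall x\mu^+(\varphi)$, and $\Box$I closes. The $\forall$I inside the $\Box$-subproof is licensed because $x$ is not free in the reiterable set $B=\{\Box\forall x\mu(\varphi)\}$, nor in the initial $\top$. The converse is symmetric.

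The key step, which we expect to be the main obstacle, is $\exists$: showing $\exists x\mu(\varphi)\dashv\vdash\Box\exists x\mu^+(\varphi)$ (this is Figure~\ref{ForallExistBox}).
\begin{itemize}
\item Left to right: by $\exists$E it suffices to derive $\Box\exists x\mu^+(\varphi)$ from $\mu(\varphi)$, with $x$ not free in the conclusion. From $\mu(\varphi)$ we get $\Box\mu(\varphi)$ by the auxiliary claim. Open a $\Box$-subproof, $\Box$E to $\mu(\varphi)$, pass to $\mu^+(\varphi)$ by the induction hypothesis, apply $\exists$I, and close with $\Box$I. No reiteration into the $\exists$E subproof is needed, since the whole argument starts from its assumption.
\item Right to left: \textsf{T} gives $\exists x\mu^+(\varphi)$, and $\exists$E with the induction hypothesis and $\exists$I gives $\exists x\mu(\varphi)$.
\item Auxiliary claim for $\exists$: $\exists x\mu(\varphi)\vdash\Box\exists x\mu(\varphi)$ follows by the same $\exists$E-then-box argument, and is the analogue of the existential case in the faithfulness lemma for $\mathsf{QOS4}$.
\end{itemize}
The care needed is in checking the free-variable side conditions for $\exists$E and $\forall$I inside $\Box$-subproofs under the Appendix~\ref{AppendixFN4} definitions. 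In particular, the $\exists$E subproof in the left-to-right direction must be a proof given $R=\varnothing$, which it is.
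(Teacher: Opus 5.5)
Your proposal is correct and follows the paper's route: induction on $\varphi$, with the $\exists$ case as the key step, handled by the pattern of Figure~\ref{ForallExistBox}; the only difference is that the paper runs that argument on $\exists x\mu^+(\varphi)$, where the \textsf{4} Rule applies directly because $\mu^+(\varphi)$ is syntactically boxed, so your auxiliary claim $\mu(\varphi)\vdash\Box\mu(\varphi)$ is not needed. One correction: your remark that $\neg\mu(\psi)$ is boxed-equivalent by the auxiliary claim is false in general ($\neg\Box p\vdash\Box\neg\Box p$ fails even in classical $\mathsf{S4}$), so in the $\neg$ and $\to$ cases you genuinely need your fallback of placing $\Box\neg\mu(\psi)$ (resp.\ $\Box(\mu(\varphi)\to\mu(\psi))$) in the $\Box$-subproof via \textsf{4} and $\Box$E, and then $\Box$-reiterating it into the inner subproof.
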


\begin{figure}[H]

\[\begin{nd}
\have [1] {1} {\exists x \mu^+(\varphi)}
\open
\hypo [2] {2}   {\mu^+(\varphi)}
\have [3] {3}   {\Box\mu^+(\varphi)} \FourAx{2}
\open
\have [4] {4}   {\hspace{-.24in}\Box\;\;\; \mu^+(\varphi)} \boxe{3}
\have [5] {5}   {\exists x \mu^+(\varphi)} \Ei{4}
\close
\have [6] {6}   {\Box\exists x \mu^+(\varphi)} \boxi{4-5}
\close
\have [7] {7} {\Box \exists x \mu^+(\varphi)} \Ee{1, 2-6}
\end{nd}
\]
\caption{Proof of $\Box\exists x \mu^+(\varphi)$ from $\exists x \mu^+(\varphi)$ in $\mathsf{QFC4}$.}\label{ForallExistBox}
\end{figure}

In the first-order case, we must slightly revise the second part of Lemma~\ref{ProoFEM} in the presence of $\forall$I.

\begin{lemma}\label{ProoFEM2} For $\mathsf{L}\in \{\mathsf{IQC}, \underline{\mathsf{IQC}},\mathsf{QFC4}\}$:
\begin{enumerate}
    \item\label{QProofRestrict} If $\langle \sigma_1,\dots,\sigma_n\rangle$ is an $\mathsf{L}$-proof given $R$ and $\sigma_i$ is a formula, then $\langle \sigma_1,\dots,\sigma_i\rangle$ is an $\mathsf{L}$-proof given $R$.
\item\label{QProofFuse} If $\langle \sigma_1,\dots,\sigma_n\rangle$ and $\langle \tau_1,\dots,\tau_m\rangle$ are $\mathsf{L}$-proofs given $R$ such that $\sigma_n=\tau_1$, then there is an $\mathsf{L}$-proof $\langle \pi_1,\dots,\pi_\ell\rangle$ given $R$ such that $\pi_1=\sigma_1$ and $\pi_\ell=\tau_m$.
\item\label{QSameAssump} If $\langle\sigma_1,\dots,\sigma_n\rangle$ and $\langle \tau_1,\dots,\tau_m\rangle$ are $\mathsf{L}$-proofs given $R$ with $\sigma_1=\tau_1$, then $\langle\sigma_1,\dots,\sigma_n\rangle*\langle \tau_1,\dots,\tau_m\rangle$ is an $\mathsf{L}$-proof given $R$.
\end{enumerate}
\end{lemma}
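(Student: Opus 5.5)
Part~\ref{QProofRestrict} and part~\ref{QSameAssump} I would prove by induction on the inductive definition of $\mathsf{L}$-proofs given $R$, exactly as in Lemma~\ref{ProoFEM}. For part~\ref{QProofRestrict}, the clauses defining proofs only ever \emph{append} items to a proof that is already a proof given $R$. So the initial segments of a proof ending in a formula are exactly the earlier stages of its construction. The side conditions of $\forall$I and $\exists$E mention only $\sigma_1$, $R$, and the appended subproof, so truncating never invalidates an earlier step.

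For part~\ref{QSameAssump}, I would induct on the construction of $\langle\tau_1,\dots,\tau_m\rangle$ and replay each step after $\langle\sigma_1,\dots,\sigma_n\rangle$. Indices $i,j$ referring to earlier items are shifted by $n-1$. Subproofs appended in $\vee$E, $\neg$I, $\exists$E, $\to$I, or $\Box$I keep their reiterable sets, because the set of formulas available in the parent proof only grows, so $R'\subseteq R\cup\{\text{formulas}\}$ still holds. The only rule whose side condition mentions the first item is $\forall$I, requiring that $v$ not be free in the assumption. Since $\sigma_1=\tau_1$, this condition transfers unchanged, and this is why $*$ survives the first-order setting verbatim.

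Part~\ref{QProofFuse} is where the naive $\bowtie$ breaks. A $\forall$I step in $\tau$ may generalize on a variable $v$ that is not free in $\tau_1=\sigma_n$ but is free in $\sigma_1$. I would therefore proceed by strong induction on the complexity of $\tau$, producing a proof from $\sigma_1$ to each formula $\tau_k$. The inductive claim is: for every formula $\tau_k$ of $\tau$, there is a proof given $R$ from $\sigma_1$ to $\tau_k$. For propositional and modal steps, I combine the proofs obtained for the relevant earlier items using part~\ref{QSameAssump} and then apply the same rule. In the subproof-forming cases, the formulas of $\tau$ that the subproof reiterates must be reproduced, so I collect the proofs of those items via $*$ before attaching the subproof.

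The main obstacle is the $\forall$I case, where $\tau_k=\forall v\,\tau_l$ with $v$ not free in $\tau_1$ or $R$ but possibly free in $\sigma_1$. My first attempt would be to rename variables: choose a fresh $w$ occurring nowhere. Apply the induction hypothesis to the proof of $\tau_l$ with $v$ replaced by $w$ throughout; substitution preserves proofhood for a fresh variable, which is a standard check on each rule. This gives a proof from $\sigma_1$ to $\tau_l[w/v]$, with $w$ not free in $\sigma_1$ or $R$. Then $\forall$I yields $\forall w\,\tau_l[w/v]$, and $\forall$E followed by $\forall$I returns the alphabetic variant $\forall v\,\tau_l$. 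That last step needs care when $v$ is free in $\sigma_1$. In that situation, I would instead prove from $\sigma_n$ the formula $\forall v\,\tau_l$ outright, by applying $\forall$I inside $\tau$'s own frame, and then transfer $\forall v\,\tau_l$ as a single formula that $\sigma_1$ reaches. Since $\forall v\,\tau_l$ itself has no free $v$, no later restriction is threatened.

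Concretely, I would strengthen the induction to transport whole closed-off blocks rather than individual lines. In the $\mathsf{QFC4}$ case I would also check that the $\Box$-proof clauses with their sets $B$ behave the same way.
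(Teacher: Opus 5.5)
Your arguments for parts~1 and~3 are fine. Your line-by-line induction for part~2 also handles every rule except the one you flag, but that $\forall$I case is left open.

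The renaming detour does no work. If $v$ is not free in $\sigma_1$, you can apply $\forall$I to $\tau_l$ directly. If $v$ is free in $\sigma_1$, the step from $\forall w\,\tau_l[w/v]$ back to $\forall v\,\tau_l$ by $\forall$E and then $\forall$I at the top level is not merely delicate but illegal, because that $\forall$I checks $v$ against the first line $\sigma_1$.

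Your fallback is circular. Taking a proof of $\forall v\,\tau_l$ from $\sigma_n$ and ``transferring'' it onto a proof of $\sigma_n$ from $\sigma_1$ is just another instance of part~2, and it contains the very same $\forall$I step. Likewise, ``transporting closed-off blocks'' names no rule of the system that would let a block whose $\forall$I steps refer to $\sigma_n$ sit below $\sigma_1$. The top level of $\tau$ is not closed off.

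The missing idea is to turn $\tau$ into an actual subproof discharged by an elimination rule, so that its $\forall$I steps are still checked against its own assumption $\sigma_n$. The paper does this as follows:
\begin{enumerate}
\item Pick a variable $u$ occurring nowhere in the two proofs.
\item Derive $\exists u\,\sigma_n$ from $\sigma_n$ by a vacuous $\exists$I.
\item Form $\langle \sigma_1,\dots,\sigma_n,\exists u\,\sigma_n,\tau,\tau_m\rangle$ by $\exists$E. This is legal because $\tau$ begins with the instance $\sigma_n$, $\tau$ is a proof given $R$, and $u$ is free neither in $\tau_m$ nor in $R$.
\end{enumerate}
Applying $\vee$I to get $\sigma_n\vee\sigma_n$ and then $\vee$E with $\tau$ as both subproofs works equally well.

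For $\underline{\mathsf{IQC}}$ you also have to respect the restriction that a subproof may only reiterate a formula occurring in its parent. So if $\tau$ is given $\{\rho\}$, first reiterate $\rho$ in the main proof immediately before attaching $\tau$.

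With this device, no induction on $\tau$ is needed at all. Without it, your induction does not close.
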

\begin{proof} The only part that is not straightforward is part \ref{QProofFuse}. First consider $\mathsf{L}\in \{\mathsf{IQC}, \mathsf{QFC4}\}$. The problem is that if $\forall$I is applied to obtain some $\tau_i$, which requires that the quantified variable $v$ not be free in $\tau_1$, then when we try to fuse $\sigma=\langle \sigma_1,\dots,\sigma_n\rangle$ and $\tau=\langle \tau_1,\dots,\tau_m\rangle$ together, we are changing the initial assumption of the resulting proof to $\sigma_1$, and $v$ may have a free occurrence in $\sigma_1$, in which case the $\forall$I step to derive $\tau_i$ will become illegal. The trick to overcome this problem is to use a vacuous $\exists$I step: where $u$ is a fresh variable not appearing in the original proofs, the sequence 
\[\langle \sigma_1,\dots,\sigma_n, \exists u \sigma_n, \tau, \tau_m\rangle\]
is an $\mathsf{L}$-proof given $R$, where $\tau_m$ is obtained by $\exists$E from $\exists u\sigma_n$ and the subproof $\tau$; note that the assumption of $\tau$ is $\tau_1=\sigma_n$, and $u$ does not occur free in $\tau_m$ or in any formula in $R\cup \{\sigma_j\mid \sigma_j\mbox{ a formula}\}\cup \{\exists u\sigma_n\}$, so this is a legal application of $\exists$E to obtain an $\mathsf{L}$-proof of $\tau_m$ given $R$. The problem and solution are essentially the same for $\underline{\mathsf{IQC}}$ except that if $\tau$ is a proof given $R=\{\rho\}$, then we need to make sure $\rho$ occurs in the parent proof if $\rho$ is reiterated into $\tau$ as the $\exists$E-subproof:
\[\langle \sigma_1,\dots,\sigma_n, \exists u \sigma_n, \rho, \tau, \tau_m\rangle.\]
Here we simply use Reiteration to deduce $\rho$ just before the $\tau$ subproof.\end{proof}

We will also use the following lemma about the relation between proofs and $\Box$-proofs.

\begin{lemma}\label{ProofToBoxproof} If $\langle \sigma_1,\dots,\sigma_n\rangle$ is a proof in $\mathsf{QFC4}$ given $R=\{\Box\alpha\}$, then $\langle \Box, \langle \top, \Box\alpha,  \sigma_1,\dots,\sigma_n\rangle\rangle$ is a $\Box$-proof in $\mathsf{QFC4}$ given $B=\{\Box\sigma_1,\Box\Box\alpha\}$.
\end{lemma}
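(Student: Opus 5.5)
I would prove the lemma by induction on the construction of $\mathsf{QFC4}$-proofs given $R=\{\Box\alpha\}$. The claim to carry through the induction is: whenever $\langle \sigma_1,\dots,\sigma_n\rangle$ is such a proof, the sequence $\langle \Box,\langle\top,\Box\alpha,\sigma_1,\dots,\sigma_n\rangle\rangle$ is a $\Box$-proof given $B=\{\Box\sigma_1,\Box\Box\alpha\}$.

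\emph{Base case.} The proof is $\langle\sigma_1\rangle$.
\begin{enumerate}
\item Start from the proof $\langle\top\rangle$, which gives the $\Box$-proof $\langle\Box,\langle\top\rangle\rangle$ given $B$.
\item Apply $\Box$E with $\Box\Box\alpha\in B$ to add $\Box\alpha$.
\item Apply $\Box$E with $\Box\sigma_1\in B$ to add $\sigma_1$.
\end{enumerate}
This is exactly why $B$ contains the boxed versions of both the assumption and the reiterable formula: the $\Box$-subproof has no assumption of its own, so both must be imported by $\Box$E.

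\emph{Inductive step.} For each clause of proofs given $R$, I would show that the matching clause for $\Box$-proofs given $B$ allows the same extension. Indices into the original proof shift by two, but every formula available before is still available.
\begin{itemize}
\item \textbf{$\Box$-Reiteration of $\Box\alpha$.} Inside the $\Box$-proof there is no Reiteration rule. Instead, $\Box\alpha$ already occurs as the second item, so I would obtain it again by $\Box$E from $\Box\Box\alpha\in B$, or equivalently by $\wedge$I then $\wedge$E. Strictly, this appends extra lines rather than the single formula $\Box\alpha$. The lemma asserts the exact sequence, so I would rather just use $\Box$E, which appends exactly $\Box\alpha$.
\item \textbf{$\top$I, $\wedge$I/E, $\vee$I, $\neg$E, T, 4, $\to$E, $\forall$E, $\exists$I.} These clauses are stated verbatim for $\Box$-proofs (Appendices~\ref{AppendixModal} and~\ref{AppendixFN4}), so they apply directly.
\item \textbf{$\Box$I.} A nested $\Box$-proof given some $B'\subseteq\{\sigma_j\}$ is still allowed, since the available formulas only grew.
\item \textbf{$\vee$E, $\neg$I, $\to$I.} The $\mathsf{FN4}$ clauses inside $\Box$-proofs permit the subproofs to be proofs given some $R_1\subseteq\{$formulas of the current $\Box$-proof$\}$. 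In the original proof, the subproofs were given some $R_1\subseteq R\cup\{\sigma_j\}$ (only boxed formulas are reiterable). Since $\Box\alpha$ now literally occurs in the $\Box$-proof, $R\cup\{\sigma_j\}$ is contained in its formula set, so the same subproofs are legal.
\end{itemize}

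\textbf{Main obstacle: the variable conditions for $\forall$I and $\exists$E.}

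For $\forall$I in a proof given $R$, the $\mathsf{IQC}$-style condition is that $v$ is not free in $\sigma_1$ or in $\Box\alpha$. The $\Box$-proof clause instead requires that $v$ is not free in the first line of the $\Box$-proof (which is $\top$) or in any formula of $B=\{\Box\sigma_1,\Box\Box\alpha\}$. The free variables of $\Box\sigma_1$ and $\Box\Box\alpha$ are exactly those of $\sigma_1$ and $\Box\alpha$, so the conditions coincide.

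For $\exists$E, the $\mathsf{QFC4}$ $\Box$-proof clause requires that $v$ not occur free in $\psi$ or in the chosen $R'$. This is the same condition as in the original proof, with $R'$ chosen identically.

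I expect the delicate bookkeeping to be confirming that nothing in the $\Box$-proof clauses refers to the "assumption" $\sigma_1$ in a way that changes meaning, given that the $\Box$-proof's first item is now $\top$ rather than $\sigma_1$. The $\forall$I restriction via $B$ is exactly what compensates for this shift.
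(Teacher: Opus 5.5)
Your proposal is correct and takes the same approach as the paper. Both argue by induction on the construction of the proof given $\{\Box\alpha\}$, matching each rule for proofs with its counterpart for $\Box$-proofs, obtaining $\Box\alpha$ and $\sigma_1$ at the top level by $\Box$E so that $\Box\alpha$ remains available for $\Box$-Reiteration into subproofs, and noting that the $\forall$I and $\exists$E side conditions are preserved because $\Box\sigma_1$ and $\Box\Box\alpha$ have the same free variables as $\sigma_1$ and $\Box\alpha$. Your version simply spells out details the paper's sketch leaves implicit, such as replacing top-level $\Box$-Reiteration with $\Box$E from $\Box\Box\alpha\in B$.
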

\noindent Lemma~\ref{ProofToBoxproof} is provable by induction on proofs using the fact that for each rule for proofs we have a corresponding rule for $\Box$-proofs, and for $\forall$I and $\exists$E, the side conditions are preserved since the formulas in $B$ have the same free variables as in $\sigma_1$ and $\Box\alpha$. Note that we get $\Box \alpha$ at the top level of the $\Box$-proof by $\Box$E and from there it is available for $\Box$-Reiteration into further subproofs just as it was in the original proof.

Now we can prove the first-order analogue of Theorem~\ref{ProofTransformIPC}.

\begin{theorem}\label{ProofTransformIQC} For every $\underline{\mathsf{IQC}}$-proof $\langle \sigma_1,\dots,\sigma_n\rangle$ given $R$, there is a $\mathsf{QFC4}$ proof $\langle \pi_1,\dots,\pi_m\rangle$ given $\{\mu^+(\rho)\mid \rho\in R\}$ such that $\pi_1=\mu^+(\sigma_1)$ and $\pi_m=\mu^+(\sigma_n)$.\end{theorem}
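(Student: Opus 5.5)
The proof will be a strong induction on the complexity of $\underline{\mathsf{IQC}}$-proofs, extending the proof of Theorem~\ref{ProofTransformIPC}. Wherever that proof uses Lemma~\ref{ProoFEM} we will use Lemma~\ref{ProoFEM2} instead. The propositional cases carry over verbatim except where fusing proofs ($\bowtie$) was used: there we invoke Lemma~\ref{ProoFEM2}.\ref{QProofFuse}, which gives a proof with the right assumption and conclusion, and that is all the induction hypothesis requires. We still need the first-order analogues of Lemma~\ref{mu+IntroElim}, namely
\begin{itemize}
\item[] $\mu^+(\forall x\varphi)\vdash\mu^+(\varphi^x_u)$ and $\mu^+(\varphi^x_u)\vdash\mu^+(\exists x\varphi)$.
\end{itemize}
Since $\mu^+$ commutes with substitution, both follow by \textsf{T}, $\forall$E or $\exists$I, and then re-boxing via \textsf{4}, $\Box$E and $\Box$I, as in Figure~\ref{ForallExistBox}. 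With these, $\forall$E and $\exists$I are handled exactly like $\wedge$E and $\vee$I.

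For $\forall$I, suppose $\langle\sigma_1,\dots,\sigma_n,\forall v\varphi\rangle$ with $\sigma_i=\varphi$ and $v$ not free in $\sigma_1$ or in $R$. The induction hypothesis gives a $\mathsf{QFC4}$-proof $\Pi$ given $\mu^+[R]$ from $\mu^+(\sigma_1)$ to $\mu^+(\varphi)$. We need $\Box\forall v\mu^+(\varphi)$, which requires a $\Box$-subproof in which $\forall$I is applied. Applying $\forall$I first at top level and then boxing would not work, because $\Box$E only imports boxed formulas already present. The plan is to apply \textsf{4} to get $\Box\mu^+(\sigma_1)$ and, when $R=\{\rho\}$, $\Box$-Reiteration and \textsf{4} to get $\Box\Box\mu^+(\rho)$. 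We then open a $\Box$-subproof and apply Lemma~\ref{ProofToBoxproof} to $\Pi$. This yields a $\Box$-proof given $B=\{\Box\mu^+(\sigma_1),\Box\Box\mu^+(\rho)\}$ that contains a copy of $\Pi$ and ends in $\mu^+(\varphi)$. Next we apply the $\Box$-proof version of $\forall$I. This is legal because $v$ is not free in the assumption $\top$ nor in the formulas of $B$, since these have the same free variables as $\sigma_1$ and $\rho$. Closing with $\Box$I gives $\Box\forall v\mu^+(\varphi)=\mu^+(\forall v\varphi)$.

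For $\exists$E, suppose $\sigma_i=\exists v\varphi$ and $\tau$ is a proof given $R'\subseteq\{\sigma_j\}$ with $|R'|\le1$, from $\varphi$ to $\psi$, with $v$ not free in $\psi$ or $R'$. We first obtain $\mu^+(\exists v\varphi)=\Box\exists v\mu^+(\varphi)$ and, via \textsf{T}, $\exists v\mu^+(\varphi)$. Then $\mu^+(\rho)$ for $\rho\in R'$ is brought in through a $*$-combined proof, exactly as in the $\vee$E case. The induction hypothesis gives $\Phi$ given $\{\mu^+(\rho)\}$ from $\mu^+(\varphi)$ to $\mu^+(\psi)$. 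The $\exists$E rule of $\mathsf{QFC4}$ then yields $\mu^+(\psi)$, with the variable side conditions inherited because $\mu^+$ preserves free variables. $\to$I and $\to$E are treated as in Theorem~\ref{ProofTransformIPC}.

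The main obstacle is the $\forall$I case. We must verify that Lemma~\ref{ProofToBoxproof} applies with exactly the right $B$, so that the eigenvariable condition for $\forall$I inside $\Box$-proofs is met, and that reiterables from $R$ remain available inside the $\Box$-subproof via $\Box\Box\mu^+(\rho)$ and $\Box$E. A secondary point of care is checking that the fusion in Lemma~\ref{ProoFEM2}.\ref{QProofFuse} preserves the reiteration set.
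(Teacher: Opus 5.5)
Your proposal is correct and follows the paper's proof essentially step for step. The quantifier cases $\forall$E, $\exists$I and $\exists$E are handled like $\wedge$E, $\vee$I and $\vee$E using Lemma~\ref{ProoFEM2}. The $\forall$I case goes through \textsf{4}, $\Box$-Reiteration, \textsf{4}, a $\Box$-subproof obtained from Lemma~\ref{ProofToBoxproof}, $\forall$I inside that subproof, and then $\Box$I. The only slip is harmless: since $\mu^+(\rho)=\Box\alpha$, a single application of \textsf{4} after $\Box$-Reiteration suffices, and Lemma~\ref{ProofToBoxproof} yields $B=\{\Box\mu^+(\sigma_1),\Box\mu^+(\rho)\}$ rather than a set containing $\Box\Box\mu^+(\rho)$.
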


\begin{proof} We need only extend the proof of Theorem~\ref{ProofTransformIPC} with cases for $\forall$I, $\forall$E, $\exists$I, and $\exists$E. The proofs for the latter three are analogous to those for $\wedge$E, $\vee$I, and $\vee$E, now using Lemma~\ref{ProoFEM2}.

For the $\forall$I case, assume $\langle\sigma_1,\dots,\sigma_n\rangle$ is an $\underline{\mathsf{IQC}}$ proof given $R$ and $\sigma_i$ is a formula $\psi$ such that $v$ does not occur free in $\sigma_1$ or in any formula in $R$. Suppose $R=\{\rho\}$; one can easily adapt the proof for the case where $R=\varnothing$. Then by the inductive hypothesis and Lemma~\ref{ProoFEM2}.\ref{QProofRestrict}, we obtain a proof $\Psi$ given $\{\mu^+(\rho)\}$  with assumption $\mu^+(\sigma_1)$ and conclusion $\mu^+(\psi)$. Then the following is a $\mathsf{QFC4}$-proof given $\{\mu^+(\rho)\}$ with assumption $\mu^+(\sigma_1)$ and conclusion $\mu^+(\forall v \psi)=\Box \forall v \mu^+(\psi)$, as desired:
\[ \big\langle \mu^+(\sigma_1), \underset{\text{by 4 Rule}}{\underbrace{\Box\mu^+(\sigma_1)}}, \underset{\text{by $\Box$-Reit}}{\underbrace{\mu^+(\rho)}},  \underset{\text{by 4 Rule}}{\underbrace{\Box\mu^+(\rho)}}, \Big\langle \Box,\big\langle \top, \mu^+(\rho)\rangle ^\frown \Psi^\frown\langle \underset{\text{by $\forall$I rule}}{\underbrace{{\forall v \mu^+(\psi)}}}\rangle  \Big\rangle,  \underset{\text{by $\Box$I}}{\underbrace{\Box \forall v \mu^+(\psi)}}\big\rangle,\]
where the claimed $\Box$-proof is indeed a $\Box$-proof given $B=\{\Box\mu^+(\sigma_1), \Box \mu^+(\rho)\}$ by Lemma~\ref{ProofToBoxproof} with $\mu^+(\rho)=\Box\alpha$, followed by an application of $\forall$I. We get $\mu^+(\rho)$ immediately after $\top$ by $\Box$E, and we get $\mu^+(\sigma_1)$ at the beginning of $\Psi$ by $\Box$E. Since $v$ does not occur free in $\sigma_1$ or in $\rho$, it also does not occur free in any formula in $B$, so the application of $\forall$I immediately after $\Psi$ is legal given how $\forall$I is stated for use inside $\Box$-proofs in Appendix~\ref{AppendixModal}.\end{proof} 

Combining Lemmas~\ref{SameInt3} and \ref{PlusLemQFC4} with Theorem \ref{ProofTransformIQC}, we obtain the following result.

\begin{corollary}\label{IQCtoFun} For any $\varphi,\psi\in\mathcal{L}(\wedge,\vee,\neg,\to,\forall,\exists)$, if $\varphi\vdash_{\mathsf{IQC}}\psi$, then $\mu(\varphi)\vdash_{\mathsf{QFC4}} \mu(\psi)$.
\end{corollary}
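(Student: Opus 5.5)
The plan is to chain three earlier results. Given an $\mathsf{IQC}$ derivation of $\psi$ from $\varphi$, Lemma~\ref{SameInt3} first converts it into a $\underline{\mathsf{IQC}}$-proof with assumption $\varphi$ and conclusion $\psi$. This proof is given $R=\varnothing$, since a top-level proof has no reiterables.

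Next, Theorem~\ref{ProofTransformIQC} with $R=\varnothing$ turns that proof into a $\mathsf{QFC4}$-proof given $\varnothing$ with assumption $\mu^+(\varphi)$ and conclusion $\mu^+(\psi)$. This yields $\mu^+(\varphi)\vdash_{\mathsf{QFC4}}\mu^+(\psi)$.

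Finally, Lemma~\ref{PlusLemQFC4} supplies $\mu(\varphi)\vdash_{\mathsf{QFC4}}\mu^+(\varphi)$ and $\mu^+(\psi)\vdash_{\mathsf{QFC4}}\mu(\psi)$. I will glue the three $\mathsf{QFC4}$-proofs together with Lemma~\ref{ProoFEM2}.\ref{QProofFuse}. In the first-order setting this is the version of fusion that sidesteps the $\forall$I side condition via a vacuous $\exists$I/$\exists$E detour. The glued proof has assumption $\mu(\varphi)$ and conclusion $\mu(\psi)$, which is exactly $\mu(\varphi)\vdash_{\mathsf{QFC4}}\mu(\psi)$.

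The one point that needs care is that all the pieces must be proofs given the same reiteration set, so that fusion applies. Since everything is at top level with $R=\varnothing$, and $\mu^+[\varnothing]=\varnothing$, this holds. The other thing to confirm is that the derivability relation $\vdash_{\mathsf{QFC4}}$ is defined via proofs given $\varnothing$; I will take this from the definitions in Appendix~\ref{AppendixFN4}. I do not expect a genuine obstacle. All the substantive work, such as the $\forall$I case inside a $\Box$-subproof and the restriction to single reiterables, has already been carried out in Theorem~\ref{ProofTransformIQC} and Lemma~\ref{SameInt3}.
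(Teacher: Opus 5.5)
Your proposal is correct and follows the paper's own argument: the paper obtains the corollary by combining Lemma~\ref{SameInt3}, Theorem~\ref{ProofTransformIQC} (applied with $R=\varnothing$), and Lemma~\ref{PlusLemQFC4}. The only difference is that you make the gluing step explicit via Lemma~\ref{ProoFEM2}.\ref{QProofFuse}, which the paper leaves implicit; that is the right tool here because of the $\forall$I side condition.
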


\subsection{Soundness and Completeness of \textsf{FN4}}\label{FN4CompAppendix}

In this Appendix, we prove that $\mathsf{FN4}$ is sound and complete with respect to the class of $\mathbf{FN4}$-frames (recall Section~\ref{RelationalFN4}). 

\subsubsection{Soundness}

The soundness of $\mathsf{FN4}$ with respect to $\mathbf{FN4}$-frames follows from the soundness of $\mathsf{FN4}$ with respect to $\mathbf{FN4}$-algebras together with the fact that the dual algebra of every $\mathbf{FN4}$-frame is an $\mathbf{FN4}$-algebra.

\begin{proposition}\label{DirectEnrichProp} Let $(X,\op,\shortdashleftarrow)$ be a fundamental modal frame with $\shortdashleftarrow$ transitive. If $(X,\op,\shortdashleftarrow)$ satisfies the direct enrichment condition of Definition~\ref{FNframes}, then for all propositions $A,B,C$,
\[\mbox{if } A\cap \Box C\subseteq \neg B\mbox{, then }B\cap \Box C\subseteq \neg A.\]
\end{proposition}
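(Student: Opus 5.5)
Assume $A\cap\Box C\subseteq\neg B$ and fix $x\in B\cap\Box C$. We must show $x\in\neg A$, i.e.\ that no $y\op x$ lies in $A$. The plan is to argue by contradiction from such a $y$, using direct enrichment to move $y$ to a state that lies in $A\cap \Box C$ and at which $x$ is still open. Pseudo-symmetry then turns that state against $x\in B$.

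Step 1 (enrich $y$). Suppose $y\op x$ with $y\in A$. Direct enrichment, applied with the roles $x\po y$ (i.e.\ $y\op x$), gives a prerefinement $y^+$ of $y$ such that $x\po y^+$ and $y^+\shortdashleftarrow x$. Since $A$ is a proposition and $y^+\leq_{\op} y$, Lemma~\ref{PrerefineLem} gives $y^+\in A$.

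Step 2 (transfer $\Box C$). We claim $y^+\in\Box C$. Take any $w\shortdashleftarrow y^+$. Because $y^+\shortdashleftarrow x$ and $\shortdashleftarrow$ is transitive, we get $w\shortdashleftarrow x$. Since $x\in\Box C$, this yields $w\in C$. Hence $y^+\in A\cap\Box C\subseteq\neg B$.

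Step 3 (contradiction). We now have $y^+\in\neg B$ and $y^+\op x$, from $x\po y^+$. Pseudo-symmetry, equivalently $B\subseteq\neg_{\op}\neg_{\op}B$ for propositions, applies to $x\in B$. It gives $x\in\neg\neg B$, so every state open to $x$ fails to be in $\neg B$. This contradicts $y^+\op x$ with $y^+\in\neg B$, so no such $y$ exists and $x\in\neg A$.

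The main point to check carefully is the orientation of the relations in direct enrichment. The statement ``$x\po y^+\shortdashleftarrow x$'' must be read as giving both $y^+\op x$ and $y^+$ accessible from $x$ in the sense used by $\Box$. Once the arrows are matched, the rest is routine: downward closure of propositions under prerefinement, and the characterization of pseudo-symmetry.
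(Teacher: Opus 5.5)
Your proof is correct and follows the paper's argument essentially step for step. You use direct enrichment to pass from $y$ to $y^+$, then closure of propositions under prerefinement together with transitivity to place $y^+$ in $A\cap\Box C\subseteq\neg B$, and finally pseudo-symmetry at $x\in B$ to get the contradiction. The only cosmetic difference is that the paper first records $x\in\Box\Box C$ and reads off $y^+\in\Box C$ from that, whereas you unfold the transitivity argument directly at $y^+$.
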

\begin{proof} Assume $A\cap \Box C\subseteq \neg B$ and $x\in B\cap \Box C$, which implies $x\in \Box\Box C$ by the transitivity of $\shortdashleftarrow$. Suppose toward a contradiction that $x\not\in \neg A$, so there is a $y\op x$ with $y\in A$. Then by direct enrichment, there is a prerefinement $y^+$ of $y$ such that $x\po y^+ \shortdashleftarrow x$. Since $y^+$ is a prerefinement of $y$, $y^+\in A$, and since $y^+\shortdashleftarrow x$, $y^+\in \Box C$. Thus, $y^+\in \neg B$. But this is inconsistent with $x\in B$, $x\po y^+$, and the pseudosymmetry of~$\op$. Thus, we conclude that there is no such $y$, so $x\in \neg A$.\end{proof}

\begin{proposition}\label{MergProp} If a modal frame $(X,\op,\shortdashleftarrow)$ satisfies the indirect enrichment condition of Definition~\ref{FNframes}, then for all propositions $A,B,C$,
\[(A\vee B)\cap\Box C\subseteq (A\cap C)\vee (B\cap C).\]
\end{proposition}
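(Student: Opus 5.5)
We unfold both sides using the description of joins in $\chi_{\op}(X)$ and $\Box$ in terms of $\shortdashleftarrow$. We then use indirect enrichment to produce a single witness that lies in $A$ (or $B$) and also in $C$. No transitivity or reflexivity of $\shortdashleftarrow$ is needed, only the indirect enrichment condition and the fact that propositions are closed under prerefinement (Lemma~\ref{PrerefineLem}).

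Fix $x\in (A\vee B)\cap\Box C$. By the description of joins, $(A\cap C)\vee(B\cap C)=c_{\op}\big((A\cap C)\cup(B\cap C)\big)$. So it suffices to show: for every $z$ with $z\op x$ (equivalently $x\po z$), there is some $w\po z$ with $w\in (A\cap C)\cup(B\cap C)$.

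So let $x\po z$. Indirect enrichment gives a postrefinement $z^-$ of $z$ with $x\po z^-$, with the property stated in Definition~\ref{FNframes}.
\begin{enumerate}
\item Since $z^-\op x$ and $x\in A\vee B=c_{\op}(A\cup B)$, there is some $y\po z^-$ with $y\in A\cup B$; say $y\in A$, the case $y\in B$ being symmetric.
\item Applying the second clause of indirect enrichment to this $y$, we get a prerefinement $y^+$ of $y$ with $z^-\op y^+$ and $y^+\shortdashleftarrow x$.
\item Because $A$ is a proposition and $y^+$ prerefines $y\in A$, Lemma~\ref{PrerefineLem} gives $y^+\in A$.
\item Because $x\in\Box C$ and $y^+\shortdashleftarrow x$, the definition of $\Box_{\shortdashleftarrow}$ gives $y^+\in C$.
\item Finally, $z^-\op y^+$ together with the fact that $z^-$ postrefines $z$ yields $z\op y^+$, i.e.\ $y^+\po z$. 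Here postrefinement $z^-\leq_{\po} z$ means that $z^-\op w$ implies $z\op w$.
\end{enumerate}
Thus $w:=y^+$ is the required witness, and $x\in (A\cap C)\vee(B\cap C)$.

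The only delicate point is keeping the directions of $\op$, $\po$, pre- versus postrefinement, and $\shortdashleftarrow$ straight. The step most likely to go wrong is the last one, transferring $y^+\po z^-$ to $y^+\po z$. I would check it directly against the definition of $\leq_{\po}$ given before the definition of prefactoring frames. I would also double-check that $\Box_{\shortdashleftarrow}$ quantifies over $y\shortdashleftarrow x$, so that $y^+\shortdashleftarrow x$ is exactly what is needed to conclude $y^+\in C$.
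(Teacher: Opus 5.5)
Your proof is correct and is essentially the paper's own argument: unfold $(A\cap C)\vee(B\cap C)$ as $c_{\op}$ of the union, use indirect enrichment to pass from $z$ to $z^-$, pick $y\po z^-$ in $A\cup B$, and move to the prerefinement $y^+$, which stays in $A$ (or $B$) by closure under prerefinement and lands in $C$ because $y^+\shortdashleftarrow x$. You then transfer $z^-\op y^+$ to $z\op y^+$ via postrefinement, and all your direction checks are right, including that only indirect enrichment (not transitivity of $\shortdashleftarrow$) is needed here.
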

\begin{proof} Suppose $x\in (A\vee B)\cap \Box C$. We must show that for every $z\op x$, there is a $v\po z$ such that $v\in (A\cap C)\cup (B\cap C)$. Suppose $z\op x$. Then by indirect enrichment, there is a postrefinement $z^-$ of $z$ such that $x\po z^-$ and for any $y\po z^-$, there is a prerefinement $y^+$ of $y$ such that $z^-\op y^+\shortdashleftarrow x$. Since $x\in A\vee B$ and $x\po z^-$, there is a $y\po z^-$ such that $y\in A\cup B$. Let $y^+$ be a prerefinement of $y$ as above, so $y^+\in A\cup B$. Since $y^+\shortdashleftarrow x$ and $x\in \Box C$, we have $y^+\in C$. Hence $y^+\in (A\cup B)\cap C = (A\cap C)\cup (B\cap C)$. Then since $z^-\op y^+$ and $z^-$ is a postrefinement of $z$, we have $z\op y^+$, so $y^+$ is our desired $v$.\end{proof}

\begin{corollary}\label{MergCor} If a modal frame $(X,\op,\shortdashleftarrow)$ satisfies the indirect enrichment condition and $\shortdashleftarrow$ is transitive,  then for all propositions $A,B,C$,
\[(A\vee B)\cap\Box C\subseteq (A\cap \Box C)\vee (B\cap \Box C).\]
\end{corollary}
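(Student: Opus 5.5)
The plan is to apply Proposition~\ref{MergProp} with $\Box C$ in place of $C$, and then use transitivity of $\shortdashleftarrow$ to recover the extra box on the conjuncts.

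First, since $C$ is a proposition and $(X,\op,\shortdashleftarrow)$ is a modal frame, the lemma on $\Box_{\shortdashleftarrow}$ tells us that $\Box C$ is again a proposition. So Proposition~\ref{MergProp} applies to the triple $A,B,\Box C$ and yields
\[(A\vee B)\cap\Box\Box C\subseteq (A\cap \Box C)\vee (B\cap \Box C).\]

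Second, transitivity of $\shortdashleftarrow$ gives $\Box C\subseteq \Box\Box C$. To see this, let $x\in\Box C$, $y\shortdashleftarrow x$ and $w\shortdashleftarrow y$. Transitivity gives $w\shortdashleftarrow x$, so $w\in C$. Hence $y\in\Box C$, and so $x\in\Box\Box C$. It follows that
\[(A\vee B)\cap\Box C\subseteq (A\vee B)\cap\Box\Box C,\]
and chaining the two inclusions gives the claim. Here $A\cap\Box C$ and $B\cap \Box C$ are propositions, so the join on the right is the one in the dual algebra.

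Alternatively, one could rerun the proof of Proposition~\ref{MergProp} directly. In that proof the witness $y^+$ satisfies $y^+\shortdashleftarrow x$ with $x\in\Box C$. Transitivity then gives $y^+\in\Box C$, because any $w\shortdashleftarrow y^+$ satisfies $w\shortdashleftarrow x$. So $y^+\in(A\cup B)\cap\Box C$, and the rest of that argument goes through unchanged.

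I expect no real obstacle here. The only point needing care is checking that $\Box C$ qualifies as a proposition, so that Proposition~\ref{MergProp} can be invoked, and this is exactly what the modal frame interaction condition guarantees.
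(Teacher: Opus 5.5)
Your proof is correct and is essentially the paper's own argument: the paper likewise notes that transitivity of $\shortdashleftarrow$ gives $(A\vee B)\cap\Box C\subseteq (A\vee B)\cap\Box\Box C$, and then applies Proposition~\ref{MergProp} with $\Box C$ in place of $C$. Your explicit check that $\Box C$ is a proposition, via the modal frame condition, is a detail the paper leaves implicit.
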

\begin{proof} Since $\shortdashleftarrow$ is transitive, $(A\vee B)\cap\Box C \subseteq (A\vee B)\cap\Box\Box C$, and by Proposition \ref{MergProp} with $\Box C$ in place of $C$, we have $(A\vee B)\cap\Box\Box C\subseteq (A\cap \Box C)\vee (B\cap \Box C)$.\end{proof}

Together Proposition~\ref{DirectEnrichProp} and Corollary~\ref{MergCor} yield the following. 

\begin{proposition} The dual algebra of an $\mathbf{FN4}$-frame is an $\mathbf{FN4}$-algebra.
\end{proposition}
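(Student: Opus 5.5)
The plan is to verify, clause by clause, that the dual modal algebra $(\chi_{\op}(X),\neg_{\op},\Box_{\shortdashleftarrow})$ of an $\mathbf{FN4}$-frame $(X,\op,\shortdashleftarrow)$ satisfies Definition~\ref{FN4AlgDef}.

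First, $(X,\op)$ is a fundamental frame, so its dual algebra is a fundamental lattice. Moreover, $(X,\op,\shortdashleftarrow)$ is a modal frame, so $\Box_{\shortdashleftarrow}$ maps propositions to propositions.

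For clause~\ref{FN4AlgDef1} I will use the frame semantics directly. The identity $\Box X=X$ is immediate. The identity $\Box(A\cap B)=\Box A\cap\Box B$ holds because $\Box$ is defined by universal quantification over $\shortdashleftarrow$-successors and meets in $\chi_{\op}(X)$ are intersections. The inclusion $\Box A\subseteq A$ follows from reflexivity of $\shortdashleftarrow$, and $\Box A\subseteq\Box\Box A$ follows from transitivity.

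For clause~\ref{FN4AlgDef3}, I will invoke Proposition~\ref{DirectEnrichProp}. Its hypotheses are exactly that the frame is a fundamental modal frame with $\shortdashleftarrow$ transitive satisfying direct enrichment, which holds by Definition~\ref{FNframes}.

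For clause~\ref{FN4AlgDef2}, I will invoke Corollary~\ref{MergCor}, since indirect enrichment and transitivity are again part of the definition. One point needs care here. Corollary~\ref{MergCor} states the inclusion with $\vee$ denoting the join of $\chi_{\op}(X)$, namely $c_{\op}$ of the union, and $A\cap\Box C$, $B\cap\Box C$ are meets of propositions and hence propositions. So the corollary's statement literally coincides with the algebraic inequality $(a\vee b)\wedge\Box c\leq(a\wedge\Box c)\vee(b\wedge\Box c)$ in the dual algebra.

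I do not expect a real obstacle, since the substantive work was done in Proposition~\ref{DirectEnrichProp} and Corollary~\ref{MergCor}. The only subtle point is the bookkeeping just mentioned: confirming that the joins in those results are the lattice joins of $\chi_{\op}(X)$, and that $\Box C$ is a proposition so that the intersections are genuine lattice meets.
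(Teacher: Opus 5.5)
Your proposal is correct and takes the same route as the paper, which obtains the result directly from Proposition~\ref{DirectEnrichProp} (for clause~\ref{FN4AlgDef3}) and Corollary~\ref{MergCor} (for clause~\ref{FN4AlgDef2}). The paper leaves the fundamental-lattice structure and clause~\ref{FN4AlgDef1} (from reflexivity and transitivity of $\shortdashleftarrow$) implicit; you spell these out, and you correctly note that the joins in Corollary~\ref{MergCor} are the lattice joins of $\chi_{\op}(X)$.
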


\subsubsection{Completeness}

The completeness of $\mathsf{FN4}$ with respect to $\mathbf{FN4}$-frames follows from the completeness of $\mathsf{FN4}$ with respect to $\mathbf{FN4}$-algebras together with the fact that every $\mathbf{FN4}$-algebra can be embedded into the dual algebra of an $\mathbf{FN4}$-frame. Recall from Theorem~\ref{FunModalRep} that given an $\mathbf{FN4}$-algebra $(L,\neg,\Box)$, the map $a\mapsto \{(F,I)\in X\mid a\in F\}$ is a fundamental modal embedding of $(L,\neg,\Box)$ into the dual modal algebra of the canonical modal frame $(X,\op,\shortdashleftarrow)$ of $(L,\neg,\Box)$. We just need to prove that the canonical modal frame is an $\mathbf{FN4}$-frame.

The following is a standard exercise in modal logic.

\begin{lemma} The canonical modal frame $(X,\op,\shortdashleftarrow)$ of an $\mathbf{FN4}$-algebra  $(L,\neg,\Box)$ is such that $\shortdashleftarrow$ is reflexive and transitive.
\end{lemma}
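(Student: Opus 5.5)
Here $X$ is the set of all pairs $(F,I)$ with $F$ a filter and $I$ an ideal such that $a\in F$ implies $\neg a\in I$. The accessibility relation is given by $(F,I)\shortdashleftarrow(G,J)$ iff $\Box a\in G$ implies $a\in F$. Only the filter components matter, and the ideal components play no role. So the plan is the standard canonical-model argument, verifying the two frame conditions directly from the algebraic axioms $\Box a\le a$ and $\Box a\le\Box\Box a$ of Definition~\ref{FN4AlgDef}.\ref{FN4AlgDef1}.

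\emph{Reflexivity.} Fix $(F,I)\in X$ and $a\in L$ with $\Box a\in F$. Since $\Box a\le a$ and $F$ is upward closed, $a\in F$. Hence $(F,I)\shortdashleftarrow(F,I)$. The only thing to check is that the filter is nonempty/upward closed, which holds by definition. Note that proper filters are not required in this canonical frame, so there is no edge case to handle.

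\emph{Transitivity.} Suppose $(F,I)\shortdashleftarrow(G,J)$ and $(G,J)\shortdashleftarrow(H,K)$; we must show $(F,I)\shortdashleftarrow(H,K)$. Let $\Box a\in H$. Since $\Box a\le\Box\Box a$ and $H$ is upward closed, $\Box\Box a\in H$. The second relation then gives $\Box a\in G$, and the first gives $a\in F$, as required.

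Neither step presents a real obstacle. The only point to keep straight is the direction of the arrow $\shortdashleftarrow$ in Theorem~\ref{FunModalRep}, so that axiom \textsf{4} is applied in the outermost state $(H,K)$.
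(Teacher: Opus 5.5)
Your proof is correct: reflexivity follows from $\Box a\le a$ and upward closure of $F$, and transitivity from $\Box a\le\Box\Box a$ applied in $H$ and then chaining the two accessibility conditions. This is exactly the standard canonical-frame argument that the paper has in mind when it calls the lemma ``a standard exercise in modal logic''; the paper gives no further proof.
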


It only remains to show that the canonical modal frame of an $\mathbf{FN4}$-algebra satisfies the enrichment conditions of Definition~\ref{FNframes}:
\begin{itemize}
\item direct enrichment: if $(F,I)\po(G,J)$, then there is a prerefinement $(G^+,J^+)$ of $(G,J)$ such that $(F,I)\po(G^+,J^+)\shortdashleftarrow(F,I)$;
\item indirect enrichment: if $(F,I)\po(H,K)$, then there is a postrefinement $(H^-,K^-)$ of $(H,K)$ such that $(F,I)\po(H^-,K^-)$ and for any $(G,J)\po(H^-,K^-)$, there is a prerefinement $(G^+,J^+)$ of $(G,J)$ such that $(H^-,K^-)\op(G^+,J^+)\shortdashleftarrow(F,I)$.
\end{itemize}

\begin{lemma}\label{CanDirect} The canonical modal frame $(X,\op,\shortdashleftarrow)$ of an $\mathbf{FN4}$-algebra  $(L,\neg,\Box)$ satisfies direct enrichment.
\end{lemma}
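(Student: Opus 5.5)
The plan is to build the required prerefinement explicitly, by adding to $(G,J)$ all the necessity information of $(F,I)$ and then closing off the ideal side. First I unpack the relations in the canonical modal frame of Theorem~\ref{FunModalRep}. The hypothesis $(F,I)\po(G,J)$ means $(G,J)\op(F,I)$, i.e., $F\cap J=\varnothing$.

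For a prerefinement it suffices to have $G\subseteq G^+$. Indeed, if $z=(H,K)\op(G^+,J^+)$, then $G^+\cap K=\varnothing$, hence $G\cap K=\varnothing$, so $z\op(G,J)$. For the two target conditions:
\begin{itemize}
\item $(F,I)\po(G^+,J^+)$ amounts to $F\cap J^+=\varnothing$;
\item $(G^+,J^+)\shortdashleftarrow(F,I)$ amounts to: $\Box a\in F$ implies $a\in G^+$.
\end{itemize}

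I will take $G^+$ to be the filter generated by $G\cup\{\Box c\mid \Box c\in F\}$, and $J^+$ to be the ideal generated by $\{\neg b\mid b\in G^+\}$. Then $(G^+,J^+)\in X$ by construction. If $\Box a\in F$, then $\Box a\in G^+$, and by $\Box a\leq a$ (Definition~\ref{FN4AlgDef}.\ref{FN4AlgDef1}) we get $a\in G^+$, so accessibility holds. Prerefinement holds since $G\subseteq G^+$.

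To analyse $J^+$, note two facts. $F$ is closed under meets and $\Box(c\wedge c')=\Box c\wedge\Box c'$, with $\Box 1=1$ covering the empty case. So every element of $G^+$ lies above some $g\wedge\Box c$ with $g\in G$ and $\Box c\in F$. Second, $\neg$ is antitone and finite meets in $G$ and in the boxed part of $F$ combine. Together these show that every element of $J^+$ lies below $\neg(g\wedge\Box c)$ for a single such pair $g,\Box c$.

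The main step, and the only place the $\mathbf{FN4}$ axiom is needed, is $F\cap J^+=\varnothing$. Suppose toward a contradiction that $f\in F$ and $f\leq\neg(g\wedge\Box c)$.
\begin{itemize}
\item By dual self-adjointness, $g\wedge\Box c\leq\neg f$.
\item By Definition~\ref{FN4AlgDef}.\ref{FN4AlgDef3} with $a=g$ and $b=f$, we get $f\wedge\Box c\leq\neg g$.
\item Since $f,\Box c\in F$, this gives $\neg g\in F$.
\item Since $g\in G$ and $(G,J)\in X$, we have $\neg g\in J$.
\end{itemize}
This contradicts $F\cap J=\varnothing$. Hence $(F,I)\po(G^+,J^+)\shortdashleftarrow(F,I)$, with $(G^+,J^+)$ prerefining $(G,J)$, which is direct enrichment.

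The main obstacle is choosing $J^+$ so that the disjointness reduces cleanly to the pseudocomplementation-with-box axiom. The normal form $\neg(g\wedge\Box c)$ for elements of $J^+$ is what makes that reduction work.
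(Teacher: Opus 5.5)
Your proposal is correct and follows essentially the same route as the paper: the same $G^+$ (generated by $G$ and the boxed elements of $F$), the same $J^+$, and the same reduction of $F\cap J^+=\varnothing$ to Definition~\ref{FN4AlgDef}.\ref{FN4AlgDef3}, yielding $\neg g\in F\cap J$. The only difference is cosmetic. You first normalize elements of $J^+$ to lie below a single $\neg(g\wedge\Box c)$ and apply the axiom with $b=f$. The paper instead passes through $\neg(\Box a\wedge b)\in F$ and applies the axiom with that element in the role of $b$.
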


\begin{proof} Suppose $(F,I) \po (G,J)$, so $F\cap J=\varnothing$. Let $G^+$ be the filter generated by 
\[\{\Box a\mid \Box a\in F\}\cup G\]
and $J^+$ the ideal generated by $\{\neg c\mid c\in G^+\}$, so $(G^+,J^+)\in X$. Since $G^+\supseteq G$, $(G^+,J^+)$ prerefines $(G,J)$. Since $\Box a\in F$ implies $\Box a\in G^+$, which with $\Box a\leq a$ implies $a\in G^+$, we have $(G^+,J^+)\shortdashleftarrow (F,I)$.

It only remains to show that $(F,I) \po (G^+,J^+)$, i.e., $F\cap J^+=\varnothing$. Suppose, toward a contradiction, that $x\in F\cap J^+$. Since $x\in J^+$, there are some $c_1,\dots,c_n\in G^+$ such that  $x\leq \neg c_1\vee\dots\vee \neg c_n$. Let $c=c_1\wedge\dots\wedge c_n$, which implies $c\in G^+$ and $\neg c_1\vee\dots\vee \neg c_n\leq \neg c$, which in turn implies $x\leq\neg c$ and then $\neg c\in F$. Since $c\in G^+$, there are $\Box a\in F$ and $b\in G$ such that $\Box a\wedge b\leq c$. It follows that $\neg c\leq \neg (\Box a\wedge b)$ and hence $\neg(\Box a\wedge b)\in F$, which with $\Box a\in F$ yields $\neg(\Box a\wedge b)\wedge \Box a\in F$. Now since $b\wedge \Box a\leq \neg \neg (\Box a\wedge b)$, Definition~\ref{FN4AlgDef}.\ref{FN4AlgDef3} implies $\neg(\Box a\wedge b)\wedge \Box a\leq \neg b$, so $\neg b\in F$. But since $b\in G$, we have $\neg b\in J$, which contradicts the fact that $F\cap J=\varnothing$. Thus, we conclude that $F\cap J^+=\varnothing$, which completes the proof of direct enrichment.\end{proof}

\begin{lemma} The canonical modal frame $(X,\op,\shortdashleftarrow)$ of an $\mathbf{FN4}$-algebra  $(L,\neg,\Box)$ satisfies indirect enrichment.
\end{lemma}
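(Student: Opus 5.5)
The plan is to follow the pattern of Lemma~\ref{CanDirect}: build the postrefinement $(H^-,K^-)$ by enlarging the ideal $K$, and build each prerefinement $(G^+,J^+)$ by enlarging the filter $G$ with the boxed elements of $F$. The distributivity law of Definition~\ref{FN4AlgDef}.\ref{FN4AlgDef2} is the ingredient that makes the enlarged ideal work.

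Unwinding the definitions in the canonical frame:
\begin{itemize}
\item $(F,I)\po(H,K)$ means $F\cap K=\varnothing$.
\item $(H^-,K^-)$ postrefines $(H,K)$ as soon as $K\subseteq K^-$.
\item $(G,J)\po(H^-,K^-)$ means $G\cap K^-=\varnothing$.
\item $(G^+,J^+)$ prerefines $(G,J)$ as soon as $G\subseteq G^+$.
\end{itemize}

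\emph{Choice of $(H^-,K^-)$.} Take $H^-=\{1\}$ and
\[K^-=\{k\in L\mid \exists\, \Box c\in F\ \exists\, k'\in K:\ k\wedge\Box c\leq k'\}.\]
The checks are as follows.
\begin{itemize}
\item $K^-$ is downward closed, and $K\subseteq K^-$ by taking $\Box c=\Box 1=1$.
\item $K^-$ is closed under binary joins. Given witnesses $\Box c_1,\Box c_2\in F$, put $\Box c=\Box(c_1\wedge c_2)=\Box c_1\wedge\Box c_2\in F$. Then Definition~\ref{FN4AlgDef}.\ref{FN4AlgDef2} gives $(k_1\vee k_2)\wedge\Box c\leq (k_1\wedge \Box c)\vee(k_2\wedge\Box c)\leq k_1'\vee k_2'\in K$.
\item $F\cap K^-=\varnothing$. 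If $f\in F$ and $f\wedge\Box c\leq k'\in K$ with $\Box c\in F$, then $k'\in F\cap K$, which is a contradiction. In particular $1\notin K^-$, so $K^-$ is proper.
\item $(\{1\},K^-)\in X$, since $\neg 1=0\in K^-$.
\end{itemize}
So $(F,I)\po(H^-,K^-)$, and $(H^-,K^-)$ postrefines $(H,K)$.

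\emph{Choice of $(G^+,J^+)$.} Given $(G,J)$ with $G\cap K^-=\varnothing$, let $G^+$ be the filter generated by $G\cup\{\Box a\mid \Box a\in F\}$, and let $J^+$ be the ideal generated by $\{\neg c\mid c\in G^+\}$. As in Lemma~\ref{CanDirect}:
\begin{itemize}
\item $(G^+,J^+)\in X$ and it prerefines $(G,J)$.
\item $(G^+,J^+)\shortdashleftarrow(F,I)$, using $\Box a\leq a$.
\end{itemize}

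The remaining requirement is $(H^-,K^-)\op(G^+,J^+)$, i.e.\ $G^+\cap K^-=\varnothing$. This is where the specific shape of $K^-$ is used. Suppose $g\wedge\Box a\leq k$ with $g\in G$, $\Box a\in F$, $k\in K^-$, and let $k\wedge\Box c\leq k'\in K$ with $\Box c\in F$. Then
\[g\wedge\Box(a\wedge c)\leq k\wedge\Box c\leq k',\]
and $\Box(a\wedge c)\in F$, so $g\in K^-$. This contradicts $G\cap K^-=\varnothing$. Hence $G^+\cap K^-=\varnothing$, and this also gives properness of $G^+$, since $0\in K^-$.

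The main obstacle is the choice of $K^-$. It must be large enough to absorb every meet with a boxed element of $F$, so that $G\cap K^-=\varnothing$ forces $G^+\cap K^-=\varnothing$. Yet it must stay an ideal disjoint from $F$, and closure under joins is exactly what the restricted distributivity of $\mathbf{FN4}$-algebras provides. The other verifications are routine.
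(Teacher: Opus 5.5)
Your proof is correct and follows the paper's argument. You use the same ideal $K^-$ (your formulation with $k\wedge\Box c\leq k'\in K$ is equivalent, since $K$ is downward closed), the same appeal to Definition~\ref{FN4AlgDef}.\ref{FN4AlgDef2} for closure under joins, and the same $G^+$ with the identical absorption argument for $G^+\cap K^-=\varnothing$. Your two deviations, taking $H^-=\{1\}$ instead of $H$ and $J^+$ generated by $\{\neg c\mid c\in G^+\}$ instead of the improper ideal, are immaterial: in the canonical frame, postrefinement depends only on inclusion of ideals and prerefinement only on inclusion of filters.
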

\begin{proof} Suppose $(F,I)\po (H,K)$, so $F\cap K=\varnothing$. Let $H^-=H$ and \[K^-=\{b\in L\mid \mbox{for some }\Box a\in F: \Box a\wedge b\in K\}.\] We claim that  $K^-$ is an ideal. We have $0\in K^-$, since $\Box1=1\in F$ and $\Box1\wedge0=0\in K$. For downward closure, suppose $b'\leq b\in K^-$, so for some $\Box a\in F$, $\Box a\wedge b\in K$. Then $\Box a\wedge b'\leq \Box a\wedge b$, so $\Box a\wedge b'\in K$ and hence $b'\in K^-$. For closure under joins, suppose $b_1,b_2\in K^-$, so there are $\Box a_1,\Box a_2\in F$ such that $\Box a_1\wedge b_1,\Box a_2\wedge b_2\in K$ and hence $(\Box a_1\wedge b_1)\vee(\Box a_2\wedge b_2)\in K$. Let $a=a_1\wedge a_2$. Since $\Box a\leq\Box a_i$ for $i\in\{1,2\}$, we have $(\Box a\wedge b_1)\vee(\Box a\wedge b_2)\leq(\Box a_1\wedge b_1)\vee(\Box a_2\wedge b_2)$ and hence $(\Box a\wedge b_1)\vee(\Box a\wedge b_2)\in K$. By Definition~\ref{FN4AlgDef}.\ref{FN4AlgDef2}, $\Box a\wedge(b_1\vee b_2)\leq(\Box a\wedge b_1)\vee(\Box a\wedge b_2)$, so we have $\Box a\wedge(b_1\vee b_2)\in K$. Then since $\Box a_1,\Box a_2\in F$ implies $\Box a\in F$, we conclude that $b_1\vee b_2\in K^-$. Thus, $K^-$ is indeed an ideal. Note that $\Box 1=1\in F$ implies $K\subseteq K^-$. Then since $H^-=H$, and $h\in H$ implies $\neg h\in K\subseteq K^-$, it follows that $(H^-,K^-)\in X$. We claim that $(F,I)\po(H^-,K^-)$, i.e., $F\cap K^-=\varnothing$. Indeed, if $x\in F\cap K^-$, so there is some $\Box a\in F$ such that $\Box a\wedge x\in K$, then $\Box a\wedge x\in F$ as well, contradicting $F\cap K=\varnothing$. In addition, $K\subseteq K^-$ implies that $(H^-,K^-)$ postrefines~$(H,K)$.

Now suppose $(H^-,K^-)\op (G,J)$, so $G\cap K^-=\varnothing$. Let $G^+$ be the filter generated by 
\[\{\Box a\mid \Box a\in F\}\cup G\]
and $J^+$ the improper ideal. Then as in the proof of Lemma~\ref{CanDirect}, we have $(G^+,J^+)\in X$, $(G^+,J^+)$ prerefines $(G,J)$, and $(G^+,J^+)\shortdashleftarrow (F,I)$. It remains to show that $(H^-,K^-)\op (G^+,J^+)$, i.e., $G^+\cap K^-=\varnothing$. Suppose for contradiction that $x\in G^+\cap K^-$. Then since $x\in G^+$, there are $\Box a\in F$ and $ b\in G$ such that $\Box a\wedge  b\leq x$. Since $x\in K^-$, there is some $\Box c\in F$ such that $\Box c\wedge x\in K$. Since $\Box a\wedge b\leq x$, we have $\Box c\wedge \Box a\wedge b\leq \Box c\wedge x$, so $\Box c\wedge \Box a\wedge b\in K$. Then since $\Box (c\wedge a)=\Box c\wedge\Box a\in F$, it follows that $b\in K^-$, contradicting $G\cap K^-=\varnothing$. Thus, we conclude that $G^+\cap K^-=\varnothing$, which completes the proof of indirect enrichment.\end{proof}

\bibliographystyle{plainnat}
\bibliography{orthomodal}

\end{document}